\numberwithin{equation}{section}
\theoremstyle{plain}
\newtheorem{theorem}{Theorem}[section]
\newtheorem{lemma}[theorem]{Lemma}
\newtheorem{proposition}[theorem]{Proposition}
\theoremstyle{definition}
\newtheorem{definition}[theorem]{Definition}
\newtheorem{notation}[theorem]{Notation}
\theoremstyle{remark}
\newtheorem{remark}[theorem]{Remark}
\newtheorem{example}[theorem]{Example}
\newtheorem*{claim}{Claim}
\newcommand{\les}{\lesssim}
\newcommand{\ges}{\gtrsim}
\newcommand{\bks}{\backslash}
\newcommand{\R}{\mathbb{R}}
\newcommand{\N}{\mathbb{N}}
\newcommand{\C}{\mathbb{C}}
\newcommand{\Z}{\mathbb{Z}}
\newcommand{\X}{\mathbb{X}}
\newcommand{\CC}{\mathcal{C}}
\newcommand{\QQ}{\mathcal{Q}}
\newcommand{\MM}{\mathcal{M}}
\newcommand{\HH}{\mathcal{H}}
\newcommand{\DD}{\mathcal{D}}
\newcommand{\II}{\mathcal{I}}
\newcommand{\JJ}{\mathcal{J}}
\newcommand{\Qrb}{Q_{\ov r}}
\newcommand{\Qrbq}{Q_{\ov r/4}}
\newcommand{\Qtrb}{{\widetilde Q}_{\ov r}}
\newcommand{\sh}{^{\mathrm h}}
\newcommand{\sv}{^{\mathrm v}}
\newcommand{\we}{\wedge}
\newcommand{\hel} {
\hskip2.5pt{\vrule height7pt width.5pt depth0pt}
\hskip-.2pt\vbox{\hrule height.5pt width7pt depth0pt}
\, }
\newcommand{\restr}{\hel}
\newcommand{\pf} {\,\!_\#\,} 
\newcommand{\mc}{\mathcal}
\newcommand{\sm}{\setminus}
\newcommand{\pt}{\partial}
\newcommand{\vhi}{\varphi}
\newcommand{\eps}{\varepsilon}
\newcommand{\te}{\theta}
\newcommand{\oo}{\infty}
\newcommand{\nb}{\nabla}
\newcommand{\ov}{\overline}
\newcommand{\wt}{\widetilde}
\newcommand{\dw}{\downarrow}
\newcommand{\up}{\uparrow}
\newcommand{\cd}{\cdot}
\newcommand{\longto}{\longrightarrow}
\newcommand{\un}{\bs1}
\newcommand{\Om}{\Omega}
\newcommand{\om}{\omega}
\newcommand{\lb}{[\hspace{-1.68pt}[}
\newcommand{\rb}{]\hspace{-1.65pt}]}
\newcommand{\sub}{\subset}
\newcommand{\be}{\begin{equation}}
\newcommand{\ee}{\end{equation}}
\newcommand{\U}{S}
\newcommand{\XXint}[3]{{\setbox0=\hbox{$#1{#2#3}{\int}$} 
      \vcenter{\hbox{$#2#3$}}\kern-.5\we0}}
\newcommand{\void}{\varnothing}
\newcommand{\st}{\stackrel}
\newcommand{\lt}{\left}
\newcommand{\rt}{\right}
\newcommand{\bs}{\boldsymbol}
\newcommand{\M}{\mathbf{M}}
\newcommand{\E}{\mathcal{E}}
\DeclareRobustCommand\onedot{\futurelet\@let@token\@onedot}
\newcommand{\@onedot}{\ifx\@let@token.\else.\null\fi\xspace}
\renewcommand{\ae}{a.e\onedot}
\newcommand{\ie}{\textit{i.e}\onedot}
\DeclareMathOperator{\Lip}{Lip}
\DeclareMathOperator{\supp}{supp}
\DeclareMathOperator{\diam}{diam}
\DeclareMathOperator{\Span}{span}
\DeclareMathOperator{\Vol}{Vol}
\DeclareMathOperator{\dens}{dens}
\DeclareMathOperator{\sign}{sign}
\DeclareMathOperator{\Sl}{Sl}
\newcommand{\cM}{\MM}
\newcounter{proof-step}
\title{Non-convex functionals penalizing simultaneous oscillations along two independent directions: structure of the defect measure}
\author{M. Goldman\footnote{CMAP, CNRS, \'Ecole polytechnique, Institut Polytechnique de Paris, 91120 Palaiseau,
France, email: michael.goldman@cnrs.fr} \and B. Merlet\footnote{Univ. Lille, CNRS, UMR 8524, Inria - Laboratoire Paul Painlev\'e, F-59000 Lille, email: benoit.merlet@univ-lille.fr}}
\begin{document}

 \maketitle
\begin{abstract}
We continue the analysis of a family of energies penalizing oscillations in oblique directions: they apply to functions $u(x_1,x_2)$ with $x_l\in\R^{n_l}$ and vanish when $u(x)$ is of the form $u_1(x_1)$ or $u_2(x_2)$. We mainly study the rectifiability properties of the \emph{defect measure} $\nb_1\nb_2u$ of  functions with finite energy. 

The energies depend on a parameter $\theta\in(0,1]$ and the set of functions with finite energy  grows with $\theta$.  For $\theta<1$ we prove that the defect measure is $(n_1-1,n_2-1)$-tensor rectifiable in $\Om_1\times\Om_2$. We first get the result for $n_1=n_2=1$ and deduce the general case through slicing using White's rectifiability criterion.

 When $\theta=1$ the situation is less clear as measures of arbitrary dimensions from zero to $n_1+n_2-1$ are possible. We show however, in the case $n_1=n_2=1$ and for Lipschitz continuous functions, that the defect measures are $1\,$-rectifiable. This case bears strong analogies with the study of entropic solutions of the eikonal equation.
\end{abstract}

 \section{Introduction} \label{SIntro}
 
As in~\cite{GM1}, we decompose the euclidean space $\X=\R^n$ as,
\[
\R^n=\X_1\oplus \X_2,\quad\text{with}\quad n_l:=\dim \X_l\ge 1\ \text{ for }l=1,2.
\]
The spaces $\X_1$ and $\X_2$ are assumed to be orthogonal. We then set 
\[
 K:=\X_1\cup \X_2.
\]

We consider a domain $\Om\sub\R^n$ which writes as $\Om=\Om_1+\Om_2$ where, for $l=1,2$, $\Om_l\sub \X_l$ is a nonempty bounded domain. In this paper, we push further the analysis of the differential inclusion

\be\label{eq:difincl1}
 \nb \times v=0 \qquad \text{and }\qquad v\in K\ \text{\ae} 
\ee

Here $\nb \times v=0$ means that $\pt_i v_j= \pt_j v_i$ for $1\le i,j\le n$. Writing then $v=\nb u$ and using the notation $\nb_l$ for the derivatives with respect to the variable in $\X_l$,~\eqref{eq:difincl1} is equivalent to 
\be\label{eq:difincluintro}
 |\nb_1 u||\nb_2 u|=0\quad\ \text{almost everywhere in }\Om.
\ee
As noticed in~\cite{GM1}, even under an $L^\oo$ assumption on $v$ (correspondingly in the class of Lipschitz functions $u$),~\eqref{eq:difincl1} is far from rigid. In particular it is not strong enough to characterize the subset of functions $u\in\Lip(\Om)$ of the form $u(x_1+x_2)=u_1(x_2)$ or $u(x_1+x_2)=u_2(x_2)$ for $x_1\in\Om_1$, $x_2\in\Om_2$. This motivates the introduction of an energy based on a discrete version of~\eqref{eq:difincluintro} (see also~\cite{GRun} for another motivation).
We fix $\te_1,\te_2>0$ and assume
\[
\te:=\te_1+\te_2\le1.
\]
We also fix a radial function $\rho\in L^1(\R^n,\R_+)$ supported in $B_1$ and such that $\int \rho=1$. Denoting $L(\Om)$ the space of measurable functions over $\Om$, for $\eps>0$ and $u\in L(\Om)$, we define the energy:
\be\label{Eeps}
\E_\eps(u):=\int_{\R^n}\int_{\Om^\eps}\dfrac{|Du(x,z_1)|^{\te_1}|Du(x,z_2)|^{\te_2}}{|z|^2}\,dx\, \rho_\eps(z)\, dz,
\ee
where we use the following conventions.
\begin{enumerate}[(i)] 
\item $\rho_\eps(z):=\eps^{-n}\rho(\eps^{-1} z)$;
\item for $l=1,2$, $z_l$ denotes the component in $\X_l$ of $z\in \X_1+\X_2$;
\item we use the notation
\[
Du(x,z):=u(x+z)-u(x);
\]
\item and we integrate over the restricted domain\footnote{Notice that  $\Om^\eps$ is empty when $\eps$ is too large.}
\[
\Om^\eps:=\Om_1^\eps +\Om_2^\eps,\qquad\text{where}\qquad  \Om_l^\eps:=\{x_l\in \Om_l: d(x_l, \X_l\sm\Om_l)>\eps\}.
\]
\end{enumerate}
We then send $\eps$ to 0 to obtain the energy:
\[
\E(u):=\liminf_{\eps\dw 0} \E_{\eps}(u).
\]
\begin{remark}
The definition of~\cite{GM1} allows for $\te>1$ and involves another parameter $p>0$ which corresponds to the exponent of $|z|$ in the denominator of~\eqref{Eeps}. Here we have fixed $p=2$ which is the relevant value in the case $\te\le1$ as shown in~\cite{GM1}.
\end{remark}

\begin{remark}
 As we will see in Proposition~\ref{prop:differincl}, if $u$ is Lipschitz continuous with $\E(u)<\oo$ then $v=\nb u$ satisfies~\eqref{eq:difincl1}. However, while~\eqref{eq:difincl1} requires at least $v\in L^1$, \ie $u\in W^{1,1}$ to make sense, $\E(u)$ is well defined as soon as $u$ is measurable.
\end{remark}

\begin{remark} Consider some parameters $\te'_l\ge \te_l>0$ for $l=1,2$ and assume that $\te':=\te'_1+\te'_2\le1$. For $u\in L^\oo(\Om)$ we have, with obvious notation,
\[
\E^{(\te_1,\te_2)}(u)<\oo\ \implies\ \E^{(\te'_1,\te'_2)}(u)<\oo.
\]
In particular, the larger $\theta_1,\theta_2$ are, the less coercive the energy is. In the limit case $\te=1$, the set of functions with finite energy is much larger and we will see that the results are of a different nature than in the case $0<\te<1$. 
\end{remark}

In~\cite[Theorem~I]{GM1}, we established the equivalence $\E(u)=0$ if and only if $u$ depends only on $x_1$ or only on $x_2$. That is, $u$ lies in the \emph{non convex} set
\[
\U(\Om):=\{u\in L(\Om) : \exists\,l\in\{1,2\},\ \exists\,u_l\in L(\Om_l) \text{ such that } u(x)=u_l(x_l) \text{ in }\Om \}.
\] 
We also established some quantitative versions of this fact by showing that $\E(u)$ controls the distance of $u$ to $\U(\Om)$ in a strong sense, see~\cite[Theorem~R $\And$ Theorem~S]{GM1}.  
 In the proofs of these results a key step is the control of the $\X_1\otimes \X_2$-valued distribution
\[
\mu[u]:=\nb_1\nb_2 u.
\] 
 We established~\cite[Proposition~M(a)]{GM1} that if $u\in L^\oo(\Om)$ has finite energy then $\mu[u]$ is a Radon measure with
\be\label{controlmu}
|\mu[u]|(\Om)\les \|u\|_\oo^{1-\te}\E(u).
\ee 
Obviously, the functions for which $\mu[u]=0$ are exactly the functions of the form $u_1(x_1)+u_2(x_2)$, that is the elements of $\Span (\U(\Om))$. The distribution $\mu[u]$ measures how much the function $u$ deviates from $\Span (\U(\Om))$. \medskip

The aim of this paper, is to study the  regularity and geometric structure of $\mu[u]$ that we call from now on: the \emph{defect measure}. As a by-product of our analysis, for $n=2$ and $\theta<1$, we are able to improve the quantitative results obtained in~\cite[Theorem~R]{GM1}.\\

Before going further let us recall a useful result from~\cite{GM1}.

\begin{proposition}[{\cite[Lemma~3.4, Remark~3.5]{GM1}}]
\label{prop:goodqk}
 If $u\in L^1_{loc}(\Om)$ is such that $\E(u)<\oo$, then up to a change of variables,  there exist:
\begin{enumerate}[(i)] 
\item sequences  $\eps_k\ge r_k>0$ tending to $0$;
\item  orthonormal bases $(e_1,\cdots,e_{n_1})$ of $\X_1$ and  $(f_1,\cdots,f_{n_2})$ of $\X_2$;  
\end{enumerate}
 such that\footnote{We use the notation $a\les b$ to indicate that there exists a constant $C>0$ which can depend only on $n_1$, $n_2$, $\te_1$, $\te_2$ and $\rho$, such that $a\le Cb$.}
\be\label{eq:goodqk}
\E'(u):=\limsup_{k\up \oo}\sum_{1\le i \le n_1,1\le j\le n_2} \int_{\Om^{\eps_k}} \dfrac{q(x,  r_k(e_i+f_j))}{ r_k^2} \,dx 
 \les  \E(u),
\ee
with the notation 
\be\label{defq}
 q(x,z):=\lt(|Du(x+ z_2,z_1)|^{\te_1} +|Du(x,z_1)|^{\te_1}\rt)\lt(|Du(x+z_1,z_2)|^{\te_2}+|Du(x,z_2)|^{\te_2}\rt).
\ee
As a consequence, if $u\in L^\oo$,
\be\label{eq:goodEk}
 \E''(u):=\liminf_{k\to \oo} \sum_{1\le i \le n_1,1\le j\le n_2} \int_{\Om^{\eps_k}} \dfrac{|D[Du(\cdot, r_k f_j)](x,r_k e_i) |}{ r_k^2} \,dx 
 \les \|u\|_\oo^{1-\theta}  \E(u).
\ee

\end{proposition}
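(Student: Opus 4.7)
The plan is to combine an extraction of good scales with an averaging argument over orthonormal bases. Since $\E(u)=\liminf_{\eps\dw 0}\E_\eps(u)$, I first pick $\eps_k\dw 0$ with $\E_{\eps_k}(u)\to\E(u)$. Because $\rho$ is radial, $\E_\eps$ is invariant under rotations of $\X_1$ and $\X_2$ separately, so the bases $(e_i),(f_j)$ in the statement will be fixed once and for all after a global rotation (the ``change of variables''); they will be selected from a positive-measure subset of $\SO(\X_1)\times\SO(\X_2)$.

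The core estimate, for each $k$, is a control of a simultaneous radial-and-rotational average of the target axial sum by $\E_{\eps_k}(u)$. Write the $z$-integral in $\E_{\eps_k}(u)$ in spherical coordinates $z_l=s_l\omega_l$ with $\omega_l\in S^{n_l-1}$ and $s_l\ge 0$ (so $|z|^2=s_1^2+s_2^2$), and localize on (a)~the angular caps $K_i:=\{\omega_1:|\omega_1-e_i|\le c\}$ and $L_j:=\{\omega_2:|\omega_2-f_j|\le c\}$ for a small but fixed $c>0$, on (b)~the radial diagonal $s_1,s_2\in[r/2,r]$ where $r\in(0,\eps_k]$ is to be chosen, and on (c)~the region where $\rho_{\eps_k}(z)\sim\eps_k^{-n}$ (a positive fraction of its support, for generic $\rho$). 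There $|z|^2\sim r^2$ and $\int_{[r/2,r]^2}s_1^{n_1-1}s_2^{n_2-1}\,ds_1\,ds_2\sim r^n$. Using the $L^1$-continuity of $\zeta\mapsto\int|Du(\cdot,\zeta)|^{\te_l}\,dx$ (valid since $u\in L^1_{loc}$ and $\te_l\le 1$) to pass from the angular integrand to the axial one on the caps, and translating $x$ by $\pm re_i,\pm rf_j$ (absorbed by shrinking $\Om^{\eps_k}$ to $\Om^{C\eps_k}$, harmless in the limit) to symmetrize the axial product into $q(x,r(e_i+f_j))/4$, one obtains
$$\sum_{i,j}\int_{\Om^{C\eps_k}}\frac{q(x,r(e_i+f_j))}{r^2}\,dx\ \lesssim\ \E_{\eps_k}(u),$$
with constants depending only on $c,\rho,n_1,n_2,\te_l$. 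After integration in $r\in(0,\eps_k]$ and averaging over $B=((e_i),(f_j))\in\SO(\X_1)\times\SO(\X_2)$ against Haar measure, Fubini and Chebyshev yield, for each $k$, a pair $(B_k,r_k)$ realizing the bound. A Fatou argument in $B$ (using compactness of $\SO(\X_1)\times\SO(\X_2)$ and the boundedness of $\E_{\eps_k}(u)$) then selects a single basis $B=((e_i),(f_j))$ and a subsequence $(\eps_{k_l},r_{k_l})$ along which $\limsup_l\sum_{i,j}\int q(x,r_{k_l}(e_i+f_j))/r_{k_l}^2\,dx\lesssim\E(u)$, yielding~\eqref{eq:goodqk} after relabeling.

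The $L^\infty$ estimate~\eqref{eq:goodEk} follows from the pointwise inequality
$$|D[Du(\cdot,rf_j)](x,re_i)|\le(4\|u\|_\oo)^{1-\te}\,q(x,r(e_i+f_j)),$$
derived by noting that the second difference equals both $Du(x+re_i,rf_j)-Du(x,rf_j)$ and $Du(x+rf_j,re_i)-Du(x,re_i)$ (commutativity of mixed differences), applying $|a|^{\te_l}+|b|^{\te_l}\ge(|a|+|b|)^{\te_l}\ge|a-b|^{\te_l}$ to each factor of~\eqref{defq} to get $q\ge|a-b|^\te$, and absorbing the leftover $|a-b|^{1-\te}\le(4\|u\|_\oo)^{1-\te}$. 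The main expected obstacle is producing a single basis $B$ that works uniformly along the whole sequence of scales: Chebyshev at each fixed $k$ gives a $k$-dependent basis, and passing to a limit requires compactness together with the uniform (in $k$) Haar-average bound to ensure that a fixed-basis statement survives under a subsequence. The bookkeeping with $\Om^{\eps_k}\to\Om^{C\eps_k}$ and the $L^1$-continuity constants is routine but must be handled carefully at this extraction step.
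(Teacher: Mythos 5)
The high-level plan (rotational averaging over $\SO(\X_1)\times\SO(\X_2)$ and scales, Chebyshev extraction, and a pointwise inequality for the $L^\infty$ part) is a sensible framework, and the pointwise bound $|D[Du(\cdot,rf_j)](x,re_i)|\le(4\|u\|_\oo)^{1-\te}\,q(x,r(e_i+f_j))$ you use to derive~\eqref{eq:goodEk} from~\eqref{eq:goodqk} is correct. But the argument for the core estimate~\eqref{eq:goodqk} has a genuine gap, and it is not cosmetic.

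The problematic step is passing from the angular cap / radial shell $s_1,s_2\in[r/2,r]$ to the exact diagonal value $z_1=re_i$, $z_2=rf_j$. After the translation symmetrization and the $\SO$-average, what one controls is the angular average of $\int_{\Om^\eps}|Du(x,r\omega_1)|^{\te_1}|Du(x,r\omega_2)|^{\te_2}/r^2\,dx$, with the \emph{same} radius in both factors. But $\E_\eps$ integrates $z=z_1+z_2$ against the radial weight $\rho_\eps(z)/|z|^2$, and the diagonal $\{|z_1|=|z_2|\}$ has Lebesgue measure zero in $\R^n$. A Fubini/Chebyshev selection inside $\E_\eps$ therefore returns a pair $(s_1,s_2)$ with $s_1\ne s_2$ in general, and there is no mechanism in your argument to force $s_1=s_2=r_k$. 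Invoking ``$L^1$-continuity of $\zeta\mapsto\int|Du(\cdot,\zeta)|^{\te_l}$'' does not repair this: that continuity is merely qualitative (translation continuity in $L^1$), not quantitative, and to absorb an error at scale $\sim r$ into the estimate one would need a modulus of size $o(r^2\E(u))$, which fails for general $u\in L^1_{loc}$. The comparison between the diagonal average $\frac1\eps\int_0^\eps\int_{S^{n_1-1}\times S^{n_2-1}}\int|Du(x,r\omega_1)|^{\te_1}|Du(x,r\omega_2)|^{\te_2}/r^2\,dx\,d\omega\,dr$ and $\E_\eps$ simply does not follow from Fubini and a Chebyshev inequality and requires an additional idea.

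Two smaller but real issues. First, the extraction of a single basis $B$ across $k$ via ``compactness of $\SO(\X_1)\times\SO(\X_2)$'' runs into the same non-quantitative continuity problem at the ``pass to the limit'' step: if $B_k\to B$, comparing the estimate at $B_k$ and at $B$ requires controlling $\int|Du(\cdot,r_k R_1 e_i)|^{\te_1}\cdots$ under a small rotation of $R_1$ at scale $r_k$, and this is not bounded uniformly in $k$. The cleaner route is to apply Fatou's lemma \emph{in $B$ first}: set $g(B):=\liminf_k \inf_{r\in[c_1\eps_k,c_2\eps_k]}\sum_{i,j}\int q(x,r(R_1^B e_i+R_2^B f_j))/r^2\,dx$; once the core average bound is established, Fatou gives $\int g\,dB\lesssim\E(u)$ and one selects a good $B$ and then good $r_k$ by Chebyshev, with no limit-passing in $B$. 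This avoids the continuity issue entirely, but it still routes through the unresolved diagonal estimate above. Second, $\rho$ is an arbitrary radial $L^1$ function with unit mass supported in $B_1$; it need not satisfy $\rho_\eps\sim\eps^{-n}$ on a positive fraction of its support. This is fixable (since $\int\rho=1$ there is a radial annulus where $\rho$ has a positive lower bound, and one localizes $|z|/\eps$ there), but the ``for generic $\rho$'' phrasing should be replaced by an actual argument.
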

Throughout the article we implicitly assume that the sequences $r_k$, $\eps_k$ and the bases $(e_1,\cdots,e_{n_1})$, $(f_1,\cdots, f_{n_2})$ are those provided by the proposition. There are however some exceptions where the symbols $\eps_k$, $r_k$ are used for other purposes (as in Lemma~\ref{lem:Diracmeasure}, Remark~\ref{remark_not_eps_kr_k_of_prop} or Proposition~\ref{prop:counterexamp}) but this is clear from the context.\\
When $n_1=n_2=1$, we can take $(e_1,f_1)$ to be the standard basis of $\R^2$ and we denote it by $(e_1,e_2)$.

\begin{remark}\label{rem:controlmu}
 In~\cite{GM1} we actually derive~\eqref{controlmu} from the stronger estimate, $ |\mu[u]|(\Om)\les \E''(u)$, see~\cite[Lemma~3.6]{GM1}.
\end{remark}

\subsection{The case $\theta<1$}
We first consider the case $\theta<1$ with $n_1=n_2=1$ so that $\Om=I_1\times I_2$ with $I_l$ open intervals. As shown in~\cite[Proposition~P]{GM1}, the typical example of a function with $\E(u)<\oo$ in this case is given by the characteristic function of a polyhedron with sides parallel to the axes (see Figure~\ref{Figure_polyhedron}, left). The defect measure is then a sum of Dirac masses sitting at the vertices of the polyhedron that lie in $\Om$.\\
In the setting of characteristic functions the situation simplifies in the sense that we only need the following consequence  of~\eqref{controlmu}.
\be\label{eq:diffinclu}
\mu[u]= \pt_1\pt_2 u\in \MM(\Om)\   \text{ and } \  u(x)\in \{0,1\}\  \text{ almost everywhere in } \Om.  
 \ee

\begin{figure}[h]
\begin{tikzpicture}[scale=1.1]
\begin{scope}
\clip (0,.1) rectangle (6.2,5.2);
\draw (1.5,4) node{$\Om$};
\draw[fill, color=gray!45] (3,5) -- (3,3) -- (6,3) -- (6,4) -- (4,4)  -- (4,5) -- cycle;
\draw (4,4) node[above right]{$x^7$};
\draw[fill, color=gray!45] (1,1) -- (5,1) -- (5,2) -- (3,2) -- (3,3) --(1,3) -- cycle;
\draw (1,1)  node {$\bullet$} node[below left]{$x^1$}  (5,1) node {$\bullet$} node[below right]{$x^2$} (5,2) node {$\bullet$} node[above right]{$x^3$} (3,2) node {$\bullet$} node[above right]{$x^4$} (3,3) node {$\bullet$} node[above left]{$x^5$}(1,3) node {$\bullet$} node[above left]{$x^6$};
\draw (2,2) node{$A$};

\draw (4,4) node{$\bullet$};
\draw[thin] (0.2,0.3) rectangle (6,5);
\end{scope}

\begin{scope}[xshift=.5\textwidth]
\clip (0,.1) rectangle (6.2,5.2);
\draw (1.5,4) node{$\Om$};
\draw[fill, color=gray!45] (1,1)   -- (5,1)  -- (5,2)  -- (3,2) -- (3,3) --(1,3) -- cycle;
\draw (2,2) node{$A'$};
\draw[fill, color=gray!45] (3,5) -- (3,3) -- (6,3) -- (6,4) -- (4,4)  -- (4,5) -- cycle;
\draw[fill, color=gray!45] (.2,.35) rectangle (6,.55) ;
\draw[fill, color=gray!45] (.2,.65) rectangle (6,.75) ;
\draw[fill, color=gray!45] (.2,.8) rectangle (6,.85) ;
\draw[fill, color=gray!45] (.2,.875) rectangle (6,.9) ;
\draw[fill, color=gray!45] (.2,.9125) rectangle (6,.925) ;
\draw[thin] (0.2,0.3) rectangle (6,5);
\end{scope}
\end{tikzpicture}
\caption{\label{Figure_polyhedron}A polygon $A$ with $\un_A$ of finite energy and with
$ \mu[\un_A]=\delta_{x^1}-\delta_{x^2}+\delta_{x^3}-\delta_{x^4}+2\delta_{x^5}-\delta_{x^6}-\delta_{x^7}$
and a set $A'$ with the same defect measure but with infinite perimeter.}
\end{figure}
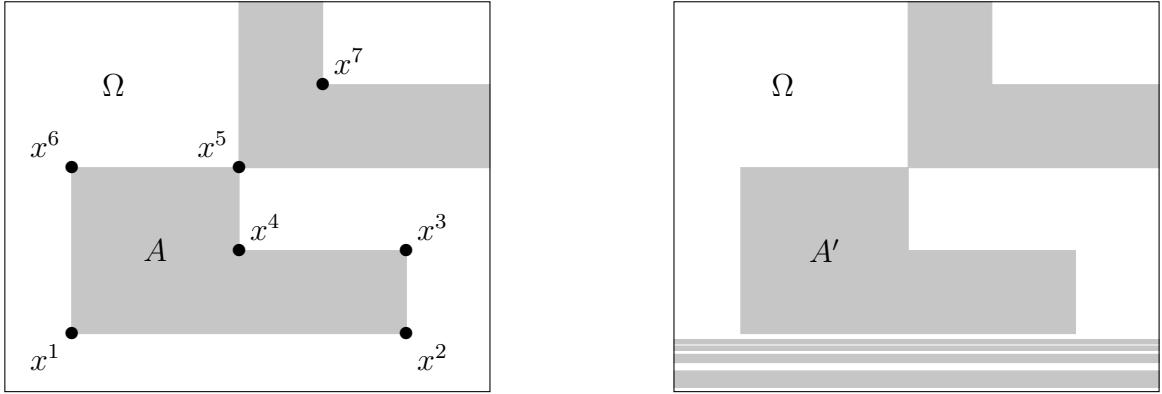

\begin{theorem}\label{theo:dirac1}
Let $n_1=n_2=1$, $\Om=I_1\times I_2\sub \R^2$ be a nonempty open box and let $A\sub \Om$ be measurable. If  $u=\un_A$  satisfies~\eqref{eq:diffinclu}  then the following properties hold.
 \begin{enumerate}[(i)]
 \item There exist finite sequences $m_1,\cdots,m_N\in\{\pm1,\pm2\}$ and $x^1,\cdots,x^N\in \Om$ with the $x^i$'s pairwise distinct such that
\[ \mu[u]=\sum_{j=1}^N m_j \delta_{x^j}.
 \]
 \item The set  $A$ is the union of finitely many polygons with sides parallel to the axes and stripes which are either all vertical or all horizontal. We can thus find measurable subsets  $A_1\sub I_1$, $A_2\sub I_2$ with either $\HH^0(\pt A_1)<\oo$ or $\HH^0(\pt A_2)<\oo$ and such that 
\[
u(x)=\un_{A_1}(x_1)\pm\un_{A_2}(x_2)+w(x)\qquad
\text{where}\quad 
w(x):=\mu[u]( (0,x_1]\times(0,x_2]).
\]
Besides $w=0$ (and $\mu[u]=0$) whenever  $|\mu[u]|(\Om)<1$ (see Figure~\ref{Fig:Lshape} in Section~\ref{sec:th<1d2} for an example with $u(x)=\un_{A_1}(x_1)-\un_{A_2}(x_2)$).

\item As a consequence, there exists $c>0$ such that if $\E(u)<c$ then $u\in \U(\Om)$, that is, up to a negligible set,
\[
A= I_1\times A'\quad \text{ or }\quad  A= A'\times I_2\quad \text{ for some measurable set } A'.
\] 
\end{enumerate}
\end{theorem}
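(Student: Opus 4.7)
The plan is to first establish part (i) by a rigidity argument on rectangle masses. Since $u=\un_A\in L^\oo(\Om)$ and $\mu[u]=\pt_1\pt_2 u\in\MM(\Om)$, approximating $\un_{[\alpha_1,\beta_1]\times[\alpha_2,\beta_2]}$ by smooth tensor products in the integration-by-parts identity yields, for Lebesgue-a.e.\ quadruple of corners,
\[
\mu[u]\bigl([\alpha_1,\beta_1]\times[\alpha_2,\beta_2]\bigr) = u(\beta_1,\beta_2) - u(\alpha_1,\beta_2) - u(\beta_1,\alpha_2) + u(\alpha_1,\alpha_2),
\]
where $u$ is read at its approximate limits; since $u\in\{0,1\}$ at density points of $A$, the right-hand side lies in $\{-2,-1,0,1,2\}$. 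Now $\mu[u](R)\to\mu[u](\{x\})$ as a rectangle $R$ shrinks generically to a point $x$, so atomic weights automatically lie in $\{\pm1,\pm2\}$, and the diffuse residual $\nu:=\mu[u]-\sum_x\mu[u](\{x\})\delta_x$ satisfies $\nu(R)=0$ on all small generic rectangles, hence $\nu\equiv 0$ by a Vitali covering of $\Om$. Finiteness of the number of atoms follows from $\sum_j|m_j|\le|\mu[u]|(\Om)<\oo$ and $|m_j|\ge 1$.

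For part (ii), the explicit $w$ in the theorem satisfies $\pt_1\pt_2 w=\mu[u]$ distributionally, so $\pt_1\pt_2(u-w)=0$ and the classical splitting lemma gives $u-w=g_1(x_1)+g_2(x_2)$ with measurable $g_1,g_2$; since $u,w$ are integer-valued, so may $g_1,g_2$ be taken. The atom coordinates $\{x^j_1\},\{x^j_2\}$ partition $\Om$ into finitely many open cells $R_{ij}=(p_i,p_{i+1})\times(q_j,q_{j+1})$ on which $w\equiv c_{ij}$, so the constraint $u\in\{0,1\}$ reads $g_1(x_1)+g_2(x_2)\in\{-c_{ij},1-c_{ij}\}$ on each $R_{ij}$. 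A $2\times 2$ comparison --- if $g_1(x_1)\ne g_1(x_1')$ and $g_2(x_2)\ne g_2(x_2')$ in the same cell, the four sums would span at least three integers, contradiction --- forces either $g_1$ to be essentially constant on $(p_i,p_{i+1})$ or $g_2$ on $(q_j,q_{j+1})$. Propagating across rows: if $g_1$ is non-constant on some column, then $g_2$ must be constant on every row, giving $\HH^0(\pt A_2)<\oo$; the symmetric case is analogous. Bookkeeping the admissible values finally identifies $g_1$ with $\un_{A_1}$ and $g_2$ with $\pm\un_{A_2}$ up to an integer shift, closing (ii).

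For the last sentence of (ii) and for (iii), note that $w$ is an integer combination of the $m_j$'s, so $|\mu[u]|(\Om)<1$ forces all weights to vanish; then $u=g_1+g_2\in\{0,1\}$ with $\mu[u]=0$ immediately puts $u$ in $\U(\Om)$. Combining with $|\mu[u]|(\Om)\les\E(u)$ from Remark~\ref{rem:controlmu} yields (iii) for a suitable constant $c$. The main obstacle I anticipate is making the rectangle-integer identity of (i) rigorous: the identity holds only for generic corner choices, and the shrinking/Vitali argument must carefully avoid the Lebesgue-null exceptional set of bad corners as well as the atoms of $\mu[u]$ themselves. The $2\times 2$ combinatorial step in (ii) is conceptually clear but requires some care to ensure global consistency of $g_1,g_2$ between non-adjacent cells.
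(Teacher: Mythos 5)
Your proof of part (i) has a genuine gap at the ``Vitali covering'' step. You correctly observe that the four-point identity forces $\mu[u](R)\in\{0,\pm1,\pm2\}$ for rectangles, and that any atom therefore has weight in $\pm\{1,2\}$. But then you claim that the atom-free residual $\nu$ satisfies $\nu(R)=0$ on small generic rectangles, ``hence $\nu\equiv0$ by a Vitali covering''. This does not follow: Vitali produces a \emph{countable disjoint} family $\{R_i\}$ covering $|\nu|$-almost all of the atom-free region, and what you know is $\nu(R_i)=0$, not $|\nu|(R_i)=0$. For a signed measure, $\nu(R_i)=0$ for all $i$ tells you nothing about $|\nu|(\cup R_i)$ --- positive and negative mass could cancel inside each $R_i$. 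The paper fixes this with a four-way dyadic subdivision: assume $\mu(Q)\neq0$ for a box $Q$ away from the atoms; since $\mu(Q)\in\{\pm1,\pm2\}$ and the four equal sub-boxes sum to $\mu(Q)$, at least one sub-box again has $\mu\in\{\pm1,\pm2\}$; iterating and passing to the limit with continuity from above produces a point $x^*\in Q$ with $\mu(\{x^*\})\neq0$, i.e.\ a new atom outside $\supp\mu^a$, a contradiction. This shows $\mu(Q)=0$ for \emph{every} rectangle $Q$ avoiding the atoms, and only then does the rectangle $\pi$-system argument give $\mu\restr(\Om\setminus\supp\mu^a)=0$. The ``bad corner'' issue you flag at the end is a red herring --- the paper dispatches it by outer approximation before the subdivision even starts.

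Your treatment of part (ii) is a genuinely different and essentially sound route. The paper picks $z$ so that the $L$-shaped region $\mathcal{L}=Q_{0,(\ell_1,z_2)}\cup Q_{0,(z_1,\ell_2)}$ contains no atoms, applies the Dolzmann--M\"uller two-value rigidity separately on each arm (where $w\equiv0$), matches the two representations on the overlap $Q_{0,z}$, and propagates $u-w=u_1+u_2$ from $\mathcal{L}$ to $\Om$ via $\pt_1\pt_2(u-w)=0$. You instead use the grid of atom coordinates to tile $\Om$ into cells $R_{ij}$, apply a $2\times2$ comparison in every cell, and then propagate. This works, but the ``bookkeeping'' you defer is not entirely trivial: you still need to use the bottom row and left column (where $w=0$) to pin $g_1\in\{0,1\}$ and $g_2\in\{-a_0,1-a_0\}$ after a common integer shift, and to reconcile the sign $\pm$ in $g_2=\pm\un_{A_2}$; the paper's $L$-shape does this in one step. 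Part (iii) and the ``$|\mu[u]|(\Om)<1\Rightarrow w=0$'' remark match the paper's argument.
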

\begin{remark}\label{rem1.5}
Notice that point (iii) improves~\cite[Theorem~R]{GM1} when $u$ is a characteristic function. Such result does not hold for functions which can take values into a set with more than two elements. For instance,  if for $l=1,2$, $A_l$ is a finite union of intervals with $\void\ne A_l\varsubsetneq I_l$, the function $u(x):=\un_{A_1}(x_1)+\un_{A_2}(x_2)$ satisfies $\mu[u]=\pt_1\pt_2 u\equiv 0$ (and even $0<\E(u)<\oo$) but $u\notin\U(\Om)$. 
\end{remark}
\begin{remark}
Let us stress that the theorem does not state that assumption~\eqref{eq:diffinclu} implies that $u$ is the characteristic function of a finite union of polygons. This is only true up to vertical or horizontal stripes since such stripes do not contribute to $\mu[u]$ (see Figure~\ref{Figure_polyhedron}). 
\end{remark}

We now turn to the case of generic functions $u$ when  $\te<1$. Unlike the case of characteristic functions where~\eqref{eq:diffinclu} is rigid, we need the stronger assumption $\E(u)<\oo$. 

\begin{theorem}\label{theo:dirac2}
Let $n_1=n_2=1$ and let $\Om=I_1\times I_2\sub \R^2$ be a nonempty open box. Assume that $\te<1$ and let $u\in L^\oo(\Om)$ with $\E(u)<\oo$.\\
Then $\mu[u]=\sum_{j\ge1} m_j \delta_{x^j}$ for some $x^j\in \Om$ and $m_j\in \R\bks\{0\}$ and we have the estimate
\be\label{boundai2d}
\sum_{j\ge 1} |m_j|^{\te}\les \E(u).
\ee
Moreover, if we assume that $u$ is integer-valued then the $m_j$'s are integers.
\end{theorem}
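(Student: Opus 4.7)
The strategy is to discretize $u$ at the scales $r_k$ supplied by Proposition~\ref{prop:goodqk} and transfer an $\ell^\theta$ bound on the resulting lattice of Dirac atoms to $\mu[u]$ through weak convergence. For a translation $y_0\in[0,r_k]^2$ (well-defined for a.e.\ $y_0$ after fixing a representative of $u$), let $u_k^{y_0}$ be constant on each cell of the grid $y_0+r_k\Z^2$, equal to the value of $u$ at the lower-left corner. Then $\mu[u_k^{y_0}]=\pt_1\pt_2 u_k^{y_0}$ is a lattice of Diracs $\sum_v m_v\delta_v$ at the grid vertices, where $m_v=a-b-c+d$ is the alternating sum of the four adjacent cell values. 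The heart of the argument is the elementary inequality
\[
|a-b-c+d|^\theta\ \le\ (|a-c|^{\theta_1}+|b-d|^{\theta_1})\,(|a-b|^{\theta_2}+|c-d|^{\theta_2}),
\]
proved by combining the triangle inequality $|a-b-c+d|\le\min(|a-b|+|c-d|,\,|a-c|+|b-d|)$, the trivial bound $\min(A,B)\le A^{\theta_2/\theta}B^{\theta_1/\theta}$ (using $\theta_1+\theta_2=\theta$), and the subadditivity $(X+Y)^{\theta_l}\le X^{\theta_l}+Y^{\theta_l}$, valid since $\theta_l\le\theta<1$. The right-hand side is precisely $q(v-r_k(e_1+e_2),r_k(e_1+e_2))$. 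A Fubini computation in $y_0\in[0,r_k]^2$, combined with the bound $\int_{\Om^{\eps_k}}q(x,r_k(e_1+e_2))/r_k^2\,dx\les\E(u)$ from Proposition~\ref{prop:goodqk} and the $L^1_{\mathrm{loc}}$ continuity of translations, produces a choice of $y_0^k$ for which $u_k:=u_k^{y_0^k}\to u$ in $L^1_{\mathrm{loc}}(\Om)$ and
\[
\sum_v|m_v^k|^\theta\ \les\ \E(u).
\]

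Since $|m_v^k|\le 4\|u\|_\oo$, the bound $|\mu[u_k]|(\Om)\le(4\|u\|_\oo)^{1-\theta}\sum_v|m_v^k|^\theta\les\|u\|_\oo^{1-\theta}\E(u)$ is uniform in $k$, so the distributional convergence $\mu[u_k]\to\mu[u]$ improves to weak-$*$ convergence of Radon measures. Atomicity follows from a concentration-compactness decomposition at level $\delta>0$: isolating atoms of mass $\ge\delta$ yields $\mu[u_k]=\nu_k^\delta+\eta_k^\delta$ with $\nu_k^\delta$ having at most $C\E(u)/\delta^\theta$ atoms of uniformly bounded mass and $|\eta_k^\delta|(\Om)\le\delta^{1-\theta}\sum_v|m_v^k|^\theta\les\delta^{1-\theta}\E(u)$. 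Extracting a diagonal subsequence, $\nu_k^\delta\rightharpoonup\nu^\delta$ finitely atomic, $\eta_k^\delta\rightharpoonup\eta^\delta$ with $|\eta^\delta|(\Om)\les\delta^{1-\theta}\E(u)$, and $\mu[u]=\nu^\delta+\eta^\delta$. Sending $\delta\to 0$ presents $\mu[u]$ as a total-variation limit of atomic measures, hence $\mu[u]=\sum_j m_j\delta_{x^j}$ is purely atomic. For the $\ell^\theta$ bound, pick disjoint balls $B_j$ around finitely many atoms with $\mu[u](\pt B_j)=0$; then $\mu[u_k](B_j)\to m_j$ and
\[
|m_j|^\theta\ =\ \lim_k\Big|\sum_{v\in B_j}m_v^k\Big|^\theta\ \le\ \liminf_k\sum_{v\in B_j}|m_v^k|^\theta
\]
by $|\sum a_i|^\theta\le\sum|a_i|^\theta$, valid for $\theta\le 1$. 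Summing over disjoint $B_j$ and letting the count grow yields $\sum_j|m_j|^\theta\les\E(u)$. Finally, if $u$ is integer-valued then so is each $u_k$, every $m_v^k\in\Z$, each $\mu[u_k](B_j)\in\Z$, and $m_j$ is a limit of integers, hence an integer.

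The main obstacle is the pointwise factorization inequality of the first paragraph: converting the linear quantity $a-b-c+d$ into a product involving the subunit exponents $\theta_1$ and $\theta_2$ relies on the subadditivity of $t\mapsto t^{\theta_l}$ on $\R_+$, valid only for $\theta_l\le 1$. This is precisely where the assumption $\theta<1$ enters in an essential way, consistent with the very different structural picture announced in the introduction for the critical case $\theta=1$.
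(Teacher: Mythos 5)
Your proof is correct but takes a genuinely different route from the paper's. Both arguments hinge on the same pointwise factorization inequality $|a-b-c+d|^{\theta}\le (|a-c|^{\theta_1}+|b-d|^{\theta_1})(|a-b|^{\theta_2}+|c-d|^{\theta_2})$, which is exactly the paper's bound $q(x,z)\gtrsim |\mu[u](Q_{x,z})|^{\theta}$ fed into Proposition~\ref{prop:goodqk}. Where the two proofs diverge is in how atomicity is extracted from the resulting $\ell^{\theta}$-type bound on small squares. The paper first invokes Lemma~\ref{lemma:4points} to identify $\mu[u](Q_{x,z})$ with the second difference $D[Du(\cdot,z_2)](x,z_1)$ (this step requires knowing in advance that $\mu[u]$ is a Radon measure, which the paper imports from~\cite{GM1}), and then applies the abstract criterion of Lemma~\ref{lem:Diracmeasure}, proved by Egoroff plus a covering argument (Lemma~\ref{lemma:tech}). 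You instead discretize $u$ on grids of step $r_k$ so that the discrete defect measures are literally lattices of Diracs satisfying the $\ell^{\theta}$ bound, and then pass to the limit by a concentration-compactness decomposition. Your route is self-contained and produces the Radon-measure property of $\mu[u]$ as a byproduct; the paper's route isolates a reusable general lemma that is applied again in the proof of Theorem~\ref{thmBVtheta}. Both are valid, modulo the usual care near $\partial\Om$ and in selecting a representative of $u$, which you flag.

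One small misattribution in your closing remark: the factorization inequality does not actually require $\theta<1$ — it only needs $\theta_l\le 1$, which already follows from $\theta_1,\theta_2>0$ and $\theta\le 1$, and the paper uses precisely this factorization in the $\theta=1$ case in the proof of Theorem~\ref{theo:lines}. In your argument, the assumption $\theta<1$ is essential not in the pointwise inequality but in the concentration-compactness step, where you need $|\eta^\delta|(\Om)\lesssim \delta^{1-\theta}\E(u)\to 0$ as $\delta\to 0$; this fails at $\theta=1$, consistently with the fact that at $\theta=1$ one may have defect measures concentrated on lines rather than atoms.
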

The  proof of Theorem~\ref{theo:dirac2} is based on the following observation. For every $r>0$ and almost every $x\in\Om$ such that $Q_{x,r}:=x+[0,r)^2\subset\Om$, there holds,
\begin{align*}
\mu[u](Q_{x,r})=& u(x+r(e_1+e_2))-u(x+r e_1)-u(x+r e_2)+u(x)\\
=&D[Du(\cdot, r e_2)](x,re_1).
\end{align*}
For a rigorous justification, see Lemma~\ref{lemma:4points}.  
By Proposition~\ref{prop:goodqk}, this shows that 
\[
 \liminf_{k\up \oo}\int_{\Om^{\eps_k}} \dfrac{|\mu[u]( Q_{x,r_k})|^{\te}}{r_k^2} \,dx\les \E(u).
\]
Lemma~\ref{lem:Diracmeasure} then implies that $\mu[u]$ is atomic and that the estimate~\eqref{boundai2d} holds true.
\begin{remark}\label{rem1.6}
We see from Remark~\ref{rem1.5} that $\mu[u]=0$ does not imply $\E(u)=0$. In particular we cannot expect the energy $\E$ to concentrate  on $\supp \mu[u]$.
\end{remark}

\smallskip

Building on the structure of $\mu[u]$ provided by the theorem, we are able to  improve~\cite[Theorem~R]{GM1}. In order to state the result let us introduce the set $SBV_\te(\Om)$ as the subset of functions of bounded variation  in $\Om $ (see~\cite{Am_Fu_Pal}) whose distributional derivative has only jump part, \ie $\nb u= (u_+-u_-) \nu_u \HH^{n-1}\restr J_u$, and such that 
\[
 |\nb  u |_\te:=\int_{J_u} |u_+-u_-|^\te\,d\HH^{n-1}<\oo.
\]  
\begin{theorem}\label{thmBVtheta}
Assume that $n_1=n_2=1$, that $\Om=I_1\times I_2$ is an open box and that $\te<1$. Let $u\in L^\oo(\Om)$ with $\E( u)<\oo$ and $\|u\|_\oo\le 1$.
\begin{enumerate}[(i)]
\item   There exists $\bar u\in \U(\Om)$  such that $u-\bar u\in SBV_\te(\Om)\cap L^\oo(\Om)$ with the estimate
\be\label{quantrig2dtheta}
\|u-\bar u\|_\oo+|\nb[u-\bar u]|_\te(\Om) \les (1+ \HH^1(I_1)+ \HH^1(I_2))\lt(\E( u)+ \sqrt{\E( u)\,}\rt).
\ee
\item  If moreover $\mu[u]=0$ and $u\notin \U(\Om)$, then there exist two non-constant functions  $u_l\in SBV_{\theta_l}(I_l)$ for $l=1,2$  such that $u(x)=u_1(x_1)+u_2(x_2)$ and 
\be\label{quantrig2dtheta2}
|\nb u_1|_{\te_1}(I_1)\, |\nb u_2|_{\te_2}(I_2)\,\les\,\E(u).
\ee  
\item As a consequence, if $u\in L^\oo(\Om)\cap BV(\Om)$ is such that $\E(u)<\oo$ and $\nb u$ has no jump part then $u\in \U(\Om)$.
\end{enumerate}
\end{theorem}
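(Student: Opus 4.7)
The plan is to leverage the atomic structure of $\mu[u]$ from Theorem~\ref{theo:dirac2} together with the product structure of $\E$ on sums of functions of one variable.

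For part~(i), Theorem~\ref{theo:dirac2} yields $\mu[u]=\sum_j m_j\,\delta_{x^j}$ with $\sum_j|m_j|^\te\les\E(u)$. Since $\|u\|_\oo\le 1$, the four-corners identity of Lemma~\ref{lemma:4points} gives $|m_j|\le 4$, whence $\sum_j|m_j|\le 4^{1-\te}\sum_j|m_j|^\te\les\E(u)$. The primitive $w(x):=\sum_j m_j\,\un_{[x_1^j,\oo)\times[x_2^j,\oo)}(x)$ satisfies $\pt_1\pt_2 w=\mu[u]$, $\|w\|_\oo\les\E(u)$, and counting its jumps along the vertical lines $\{x_1=x_1^j\}$ and horizontal lines $\{x_2=x_2^j\}$, together with the subadditivity of $|\cdot|^\te$ for $\te<1$, gives $w\in SBV_\te(\Om)$ with $|\nb w|_\te(\Om)\les(\HH^1(I_1)+\HH^1(I_2))\E(u)$. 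Since $\mu[u-w]=0$, one obtains $u-w=g_1(x_1)+g_2(x_2)$ for some bounded $g_1,g_2$. To select $\bar u\in\U(\Om)$, I would invoke Theorem~R of~\cite{GM1} to produce some $\bar u_0\in\U(\Om)$ close to $u$; assuming WLOG that $\bar u_0$ depends only on $x_2$, a Fubini argument combined with the uniform bound on $w$ shows $g_1$ is $\sqrt{\E(u)}$-close to some constant $c_1$. Setting $\bar u(x_2):=g_2(x_2)+c_1\in\U(\Om)$ yields $u-\bar u=(g_1-c_1)+w$, and combining the bounds produces~\eqref{quantrig2dtheta}.

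For part~(ii), $\mu[u]=0$ gives $u(x)=u_1(x_1)+u_2(x_2)$ a.e., with both $u_l$ bounded and non-constant. The energy factorizes as
\[
\E_\eps(u)=\int_{\R^n}\rho_\eps(z)\,\frac{1}{|z|^2}\Bigl(\int_{I_1^\eps}|Du_1(x_1,z_1)|^{\te_1}dx_1\Bigr)\Bigl(\int_{I_2^\eps}|Du_2(x_2,z_2)|^{\te_2}dx_2\Bigr)dz.
\]
A scaling argument ($z=\eps w$) shows that any absolutely continuous or Cantor contribution in $u_l$ produces $\E_\eps(u)\ges\eps^{\te_l-1}\to\oo$, so $\E(u)<\oo$ forces $u_l\in SBV_{\te_l}(I_l)$. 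For such $u_l$, each jump of size $h$ contributes $|h|^{\te_l}|z_l|$ to the one-dimensional integral, giving $\int_{I_l^\eps}|Du_l(\cdot,z_l)|^{\te_l}dx_l\ges|z_l|\,|\nb u_l|_{\te_l}(I_l)$ for small $|z_l|$. Substituting and using that $\int\rho(w)|w_1||w_2||w|^{-2}\,dw$ is a finite positive constant yields~\eqref{quantrig2dtheta2}. Part~(iii) then follows: Theorem~\ref{theo:dirac2} gives $\mu[u]$ atomic, while a slicing argument shows that an atom of $\pt_1\pt_2 u$ at $(x_1^*,x_2^*)$ forces a jump of $u$ across $\{x_1=x_1^*\}$ or $\{x_2=x_2^*\}$; so the no-jump hypothesis yields $\mu[u]=0$, hence $u=u_1(x_1)+u_2(x_2)$ with each $u_l\in BV(I_l)$ jump-free by slicing. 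If $u\notin\U(\Om)$ both $u_l$ would be non-constant, so~(ii) would give $u_l\in SBV_{\te_l}(I_l)$ and hence $u_l\in W^{1,1}(I_l)$, but the scaling step of~(ii) would then force $\E(u)=\oo$, a contradiction.

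The main obstacle is the $L^\oo$ and $SBV_{\te_1}$ control on $g_1-c_1$ in part~(i): upgrading the Theorem~R rigidity into simultaneous pointwise and jump-measure estimates requires carefully combining the slice-wise atomic structure of $\mu[u]$ with the one-dimensional product bound from part~(ii).
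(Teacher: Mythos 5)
Your proposal tracks the paper closely through the decomposition $u=u_1(x_1)+u_2(x_2)+w(x)$ of Lemma~\ref{lemma:4points}, the atomic structure of $\mu[u]$ from Theorem~\ref{theo:dirac2}, the bound $|m_j|\le 4$, and the estimates $\|w\|_\oo\les\E(u)$, $|\nb w|_\te\les(\HH^1(I_1)+\HH^1(I_2))\E(u)$. Part~(ii)'s factorization of $\E_\eps$ for $u=u_1+u_2$ and the jump-size heuristic are also in the right spirit, though the paper reaches the same conclusion more cleanly by passing through Proposition~\ref{prop:goodqk} to get a discretized product bound, splitting into $\lambda_1\lambda_2\les\E(u)$, and then reusing the Lemma~\ref{lem:Diracmeasure} machinery to conclude $u_l\in SBV_{\te_l}$.

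However, you have a genuine gap precisely where you flag it, and it is not a gap the paper needs to fill in the way you suggest. To control $u_1$ (equivalently $g_1-c_1$) in both $L^\oo$ and $SBV_\te$, the paper does \emph{not} invoke Theorem~R of~\cite{GM1}. Instead it exploits Proposition~\ref{prop:goodqk} directly: from $Du(x,z_l)=Du_l(x_l,z_l)+Dw(x,z_l)$ and the explicit formula for $Dw$ in terms of $\mu$, the triangle inequality yields $\psi_l(x_l,z_l):=\big[|Du_l(x_l,z_l)|-|\mu|(\text{strip})\big]_+\le|Du(x,z_l)|$. Feeding this into the product bound of Proposition~\ref{prop:goodqk} and applying an arithmetic-geometric split gives, for at least one $l$, $\limsup_k \int_{I_l^{\eps_k}}[\psi_l(x_l,r_k)]^{\te_l}/r_k\,dx_l\les\sqrt{\E(u)}$. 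The strip term is then absorbed using the atomic decomposition and subadditivity of $s\mapsto s^{\te_*}$, and taking $\te_*=1$ gives the $BV$/$L^\oo$ bound while taking $\te_*=\te$ together with Lemma~\ref{lem:Diracmeasure} in dimension one gives the $SBV_\te$ bound. This self-contained argument is what replaces your appeal to Theorem~R, and it is the essential new ingredient; the paper's point (see Remark~\ref{rem1.5}) is that this result \emph{improves} Theorem~R, so using Theorem~R as a black box would at best produce the weaker estimates, and would not by itself give simultaneous $L^\oo$ and $|\nb\cdot|_\te$ control on $g_1-c_1$.

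A smaller but real error is in your part~(iii): from $u_l\in SBV_{\te_l}(I_l)$ you write ``hence $u_l\in W^{1,1}(I_l)$''. This is false: $SBV_{\te_l}$ means the distributional derivative is \emph{pure jump}, which is the opposite of $W^{1,1}$. The correct contradiction is that if $\nb u$ has no jump part then, by slicing, neither $\nb u_1$ nor $\nb u_2$ has a jump part, so combining this with the pure-jump structure from (ii) forces $\nb u_l=0$, i.e.\ $u_l$ constant, contradicting non-constancy. The paper instead derives $\mu[u]=0$ first from $\nb^{\mathrm j}u=0$ via $0=\nb^{\mathrm j}_1 u=\pt_1 w+\nb^{\mathrm j}_1 u_1$ (then applying $\pt_2$), and only then invokes (ii) to conclude; your ``atom forces a jump'' slicing claim is plausible but you do not prove it, and the paper's route through the explicit $w$ is cleaner and already available from part~(i).
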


We now turn to the  higher dimensional case $n\ge 3$.  Our main result is the $(n-2)$- rectifiability of $\mu[u]$, more precisely, its $(n_1-1,n_2-1)\,$-tensor rectifiability.

\begin{theorem}\label{coromurect}
Assume $\te<1$ and $u\in L^\oo(\Om)$ with $\E(u)<\oo$ then the following hold.
\begin{enumerate}[(i)]
\item $\mu[u]$ is a $(n-2)\,$-rectifiable measure, \ie there exist a $(n-2)\,$-rectifiable set $\Sigma\sub \Om$ and a Borel mapping $m:\Om\to \R$  such that:\\ 
($*$) the approximate tangent space at $\HH^{n-2}\,$-almost every $x\in\Sigma$ is of the form $(\nu_1(x),\nu_2(x))^\perp$ where $\nu_l(x)\in \X_l\sm\{0\}$ for $l=1,2$,\\
($*$) \[
\mu [u]= m (\nu_1\otimes\nu_2)\, \HH^{n-2}\restr \Sigma.
\]
\item We have the estimate, 
\[
\M_\theta(\mu[u]):=\int_{\Sigma} |m|^\te\,d\HH^{n-2}\ \les\ \E(u). 
\]
\item We can choose $\Sigma$ such that $\Sigma\sub \Sigma_1 + \Sigma_2$ for some $(n_l-1)\,$-rectifiable subsets $\Sigma_l\sub\Om_l$ for $l=1,2$. We say that $\mu[u]$ is $(n_1-1,n_2-1)$-tensor rectifiable.  
\end{enumerate}
\end{theorem}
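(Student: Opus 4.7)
The plan is to reduce Theorem~\ref{coromurect} to its 1D predecessor Theorem~\ref{theo:dirac2} via slicing, and to repackage the 2D slicewise atomicity into $(n-2)$-dimensional rectifiability through White's rectifiability criterion. Using the orthonormal bases $(e_i)$ of $\X_1$ and $(f_j)$ of $\X_2$ provided by Proposition~\ref{prop:goodqk}, I start by decomposing
\[
\mu[u] = \sum_{i,j} \mu_{ij}[u]\, e_i \otimes f_j, \qquad \mu_{ij}[u] = \partial_{e_i}\partial_{f_j} u,
\]
each factor being a scalar Radon measure on $\Om$ by Remark~\ref{rem:controlmu}. Each pair $(i,j)$ can then be analysed independently and the results assembled at the end.

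For fixed $(i,j)$, write $P_{ij}:=\Span(e_i,f_j)$ and, for $y\in P_{ij}^\perp$, let $u^{ij}_y$ denote the restriction of $u$ to the 2D slice $y+P_{ij}$. Because the good-basis energy~\eqref{eq:goodqk} involves only differences along $e_i$ and $f_j$, a Fubini argument combined with Fatou as $\eps_k\dw 0$ gives
\[
\int_{P_{ij}^\perp} \E(u^{ij}_y)\, d\HH^{n-2}(y) \lesssim \E'(u) \lesssim \E(u),
\]
so that $\E(u^{ij}_y)<\oo$ for $\HH^{n-2}$-a.e.\ $y$ and $\|u^{ij}_y\|_\oo\le\|u\|_\oo$. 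On the distributional side, testing $\mu_{ij}[u]$ against smooth test functions of product form produces the disintegration identity
\[
\mu_{ij}[u] = \int_{P_{ij}^\perp} \mu[u^{ij}_y]\, d\HH^{n-2}(y),
\]
which identifies the slice of $\mu_{ij}[u]$ by $y+P_{ij}$ with the 2D defect measure of $u^{ij}_y$.

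Applying Theorem~\ref{theo:dirac2} slicewise then gives $\mu[u^{ij}_y]=\sum_k m_k(y)\delta_{x^k(y)}$ with $\sum_k |m_k(y)|^\te\lesssim\E(u^{ij}_y)$ for $\HH^{n-2}$-a.e.\ $y$; integrating in $y$ produces the crucial estimate
\[
\int_{P_{ij}^\perp}\sum_k |m_k(y)|^\te\, d\HH^{n-2}(y)\lesssim \E(u).
\]
With this at hand, White's rectifiability criterion, in its Radon-measure form for slicing in a single fixed direction $P_{ij}^\perp$, will yield that $\mu_{ij}[u]$ is $(n-2)$-rectifiable, supported on a set $\Sigma^{ij}$ with approximate tangent $P_{ij}^\perp$ at $\HH^{n-2}$-a.e.\ point, with density $m^{ij}$ satisfying $\int_{\Sigma^{ij}}|m^{ij}|^\te\,d\HH^{n-2}\lesssim\E(u)$. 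Setting $\Sigma:=\bigcup_{i,j}\Sigma^{ij}$ and reassembling the components into a tensor density $m(\nu_1\otimes\nu_2)$ then proves parts (i) and (ii). For part (iii), I would exploit the finer structure of Theorem~\ref{theo:dirac2}(ii): each atom $x^k(y)$ in a 2D slice corresponds to a corner of a polygonal set whose two edges lie in the 1D jump sets of the traces of $u^{ij}_y$ along $e_i$ and along $f_j$; disintegrating these jump sets back over $y\in P_{ij}^\perp$ produces $(n_l-1)$-rectifiable sets $\Sigma^{ij}_l\sub\Om_l$ with $\Sigma^{ij}\sub\Sigma^{ij}_1+\Sigma^{ij}_2$, and taking unions over $(i,j)$ delivers the tensor-rectifiable structure.

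The hard part, I expect, will not be Theorem~\ref{theo:dirac2} itself but rather its insertion into the slicing machinery: one must rigorously justify the disintegration identity for the second-order distribution $\mu_{ij}[u]$ and, more importantly, apply White's criterion with slicing restricted to the single direction $P_{ij}^\perp$ rather than over the whole Grassmannian, the slicewise $\te$-mass bound taking over the role usually played by Grassmann integrability. Establishing the tensor refinement (iii) from the atomic structure without losing track of the two factors $\X_1$ and $\X_2$ across varying slices is the other main technical hurdle.
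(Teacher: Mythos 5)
Your high-level strategy (slice, apply Theorem~\ref{theo:dirac2}, invoke White's criterion) matches the paper's, but there is a genuine gap in how you set up White's criterion. You propose to treat each scalar component $\mu_{ij}[u]$ separately and slice it only by the single family of planes parallel to $P_{ij}=\Span(e_i,f_j)$, then appeal to ``White's criterion with slicing restricted to the single direction $P_{ij}^\perp$.'' No such version of the criterion exists in general: a Radon measure on $\R^n$ whose slices by one fixed coordinate $(n-2)$-plane direction are all atomic need not be $(n-2)$-rectifiable, and White's rectifiability theorem for flat chains requires that the $0$-slices be rectifiable for \emph{all} coordinate $(n-2)$-plane directions (equivalently, all 2-plane projections). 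Your decomposition into independent scalar pieces $\mu_{ij}$ destroys exactly the information needed to make the other slicings tractable.

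The paper's key move, which you are missing, is to keep the tensor structure and pass to the $(n-2)$-current $T[u]=\pt_1\pt_2\lb u\rb$ (Proposition~\ref{equivTmu}). Because $T[u]$ has the algebraic form $\pt_1\pt_2$, its slices by coordinate $(n-2)$-planes orthogonal to $\Span(e_{i_1},e_{i_2})$ or to $\Span(f_{j_1},f_{j_2})$ \emph{vanish identically} (this is the computation with $d_2(\om_1\we dx^2)=0$ in the proof of Theorem~\ref{thm:thetamass}). Only the mixed slices orthogonal to $\Span(e_i,f_j)$ are nontrivial, and those are precisely the ones handled by the 2D Theorem~\ref{theo:dirac2} after the Fubini--Fatou step you correctly identify (formula~\eqref{commuteslice} and estimate~\eqref{fatouG}). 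With all coordinate slices shown rectifiable, White's criterion legitimately applies to the single current $T[u]$. A side benefit is that the tensor form $m(\nu_1\otimes\nu_2)$ of the density in (i) falls out of the simple $(n-2)$-vector orienting the rectifiable current, whereas in your component-wise setting you would still have to prove compatibility of the densities $m^{ij}$ across $(i,j)$. Finally, for part~(iii), your sketch (``disintegrate the 2D polygonal corners over $y$'') is far from a proof in the general case $n_1,n_2\ge 2$: the paper points out that this step is genuinely hard and requires the tensor-flat-chain machinery of~\cite{GM_tfc} (an intermediate case $n_1=1$ is Proposition~\ref{prop:n1=1}, and Remark~\ref{rem:counterdecomp} warns that the naive decomposition fails for $n_1,n_2\ge 2$); the reduction to~\cite[Theorem~1.3]{GM_tfc} in \emph{Step 2.3} of the paper's proof is not something your corners argument replaces.
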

The main two observations in the proof of (i) and (ii) in Theorem~\ref{coromurect} are the following. First, we can identify the defect measure $\mu[u]$ with a $(n-2)\,$-current $T[u]$ (see Proposition~\ref{equivTmu}). Moreover, since $\mu[u]=\nb_1\nb_2 u$ is a finite measure, we get that $T[u]$ is a finite mass cycle, \ie $\pt (T[u])=0$ and $\M(T[u])<\oo$.\\
 We then argue by slicing (Theorem~\ref{thm:thetamass}) and apply the slicing rectifiability criterion of White~\cite{White1999-2}, see also~\cite{Jerrard}. For this we have to show that that the $0\,$-slices of $T[u]$ are rectifiable. 
 \begin{enumerate}[($*$)]
 \item  We first notice that as a consequence of the formula $\mu[u]=\nb_1\nb_2 u$, all the  slices of $T[u]$ with respect  to  coordinate $(n-2)\,$-spaces orthogonal to a plane of the form $\Span(e_{i_1},e_{i_2})$ or $\Span(f_{j_1},f_{j_2})$ vanish. 
 \item Next, we consider instead a $2\,$-plane of the form $\Span(e_i,f_j)$. On the one hand, slicing and partial boundary operations commute, see~\eqref{commuteslice}. On the other hand, by Fubini and Fatou, the energy $\E''(u)$ (recall~\eqref{eq:goodEk}) also behaves well with respect to slicing. We may thus apply the two dimensional result Theorem~\ref{theo:dirac2} to conclude that the $0\,$-slices of $T[u]$ are rectifiable. 
 \end{enumerate}
    Once rectifiability is obtained, (ii) follows from the corresponding bound~\eqref{boundai2d} in the case $n_1=n_2=1$. The proof of (iii) is considerably more involved  and motivated the development of the theory of tensor-rectifiable flat chains in~\cite{GM_tfc}. In a nutshell, the idea of the proof of~\cite[Theorem~1.3]{GM_tfc} is to first treat the case $n_1=1$, $n_2\ge 2$. In this case, based on the decomposition of $T[u]$ in indecomposable components proven in~\cite{GM_decomp} we obtain in~\cite[Proposition~6.2]{GM_tfc} the stronger result below (rephrased in the language of the present paper),
\begin{proposition}\label{prop:n1=1}
 Let $n_1=1$ and $n_2\ge 2$ so that $\Om=I_1+\Om_2$ for some open interval $I_1$. Assume that $\te<1$ and that $u\in L^\oo(\Om)$ is such that $\E(u)<\oo$.\\ 
 Then there exist  sequences $y^j_1\in I_1$ and $u_2^j\in SBV^\theta(\Om_2)$ for $j\ge1$ such that 
 \be\label{eq:mun1=1}
  \mu[u]=\sum_{j\ge1} \delta_{x^j_1}\otimes \nb_2 u_2^j
 \ee
 and 
 \be\label{Mthetan1=1}
  \M_\theta(\mu[u])=\sum_{j\ge1} |\nb_2 u_2^j|_\theta(\Om_2).
 \ee
 As a consequence, there exist $u^0_1\in L^\oo(I_1)$ and $u^0_2\in L^\oo(\Om_2)$ such that 
 \[
 u(x)= u^0_1(x_1)+u^0_2(x_2) +\sum_{j\ge1}\un_{(-\oo, x^j_1)}(x_1)\,  u_2^j(x_2).
\]
\end{proposition}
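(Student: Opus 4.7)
The plan is to identify $\mu[u]$ with the $(n_2-1)\,$-dimensional current $T[u]$ of Proposition~\ref{equivTmu} and to combine the cycle condition $\pt T[u]=0$ with the tensor-rectifiable structure already supplied by Theorem~\ref{coromurect}. The hypothesis $n_1=1$ is the key simplification: by Theorem~\ref{coromurect}(i), $\mu[u]$ is concentrated on an $(n_2-1)\,$-rectifiable set $\Sigma$ whose approximate tangent at $\HH^{n_2-1}$-a.e.\ point is $(\nu_1,\nu_2)^\perp$ with $\nu_1\in\X_1\setminus\{0\}$. Since $\X_1=\R$, the vector $\nu_1$ already spans $\X_1$, so the tangent space lies inside $\{0\}\oplus\X_2$. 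Decomposing $\Sigma$ into $C^1$ rectifiable pieces, each such piece has constant $x_1$-coordinate, and therefore $\Sigma\subset \bigcup_j(\{x_1^j\}\times\Om_2)$ for some countable collection $\{x_1^j\}\subset I_1$, up to an $\HH^{n_2-1}$-null set. Disintegrating $\mu[u]$ along the $x_1$-variable yields $\mu[u]=\sum_j\delta_{x_1^j}\otimes\mu_j$ with each $\mu_j$ an $\X_2$-valued $(n_2-1)$-rectifiable measure on $\Om_2$ of finite $\te$-mass.

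The core step, and the one that truly requires the machinery cited in the statement, is to promote each slice-measure $\mu_j$ to a distributional gradient $\nb_2 u_2^j$ of some $u_2^j\in SBV^\te(\Om_2)$. I would invoke the indecomposable decomposition of the cycle $T[u]$ from \cite{GM_decomp}: because of the preceding structural observation, each indecomposable component of $T[u]$ is concentrated in a single horizontal slice $\{x_1^j\}\times\Om_2$, and an $(n_2-1)\,$-rectifiable cycle inside the $n_2$-dimensional space $\{x_1^j\}\times\Om_2$ is necessarily of gradient form $\nb_2 g$ for some $g\in BV(\Om_2)$. Summing over the indecomposable pieces sitting in slice $j$ produces a single function $u_2^j$ with $\mu_j=\nb_2 u_2^j$, whose $\te$-mass is controlled by the contribution of slice $j$ to $\M_\te(\mu[u])$. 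The equality~\eqref{Mthetan1=1} then follows from the pairwise disjointness of the horizontal slices in the $x_1$-variable, which makes $\M_\te$ split additively over the decomposition.

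Finally, to reconstruct $u$, I set $w(x_1,x_2):=u(x_1,x_2)-\sum_{j\ge1}\un_{(-\oo,x_1^j)}(x_1)\,u_2^j(x_2)$. An $L^\oo$-bound on the partial sums, obtained alongside the indecomposable decomposition, makes the series converge in $L^1_{loc}$; then $\nb_1\nb_2 w=0$ in $\DD'(\Om)$, and a standard elementary lemma for functions with vanishing mixed second distributional derivative forces the splitting $w(x_1,x_2)=u_1^0(x_1)+u_2^0(x_2)$. The main obstacle is the second step: producing a single scalar function $u_2^j$ per slice from a merely rectifiable $\X_2$-valued measure requires the indecomposable decomposition of \cite{GM_decomp} together with its $\te$-mass refinement developed in \cite{GM_tfc}, and it is there that the sharp additivity~\eqref{Mthetan1=1} (rather than a mere subadditive inequality) becomes nontrivial.
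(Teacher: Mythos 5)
Your plan is close to the approach the paper indicates (the result is quoted from~\cite[Proposition~6.2]{GM_tfc}, whose proof rests on the indecomposable decomposition of~\cite{GM_decomp}), but the intermediate step where you extract countability of the slices \emph{before} invoking that decomposition contains a genuine gap.

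The assertion ``decomposing $\Sigma$ into $C^1$ rectifiable pieces, each such piece has constant $x_1$-coordinate, and therefore $\Sigma\subset\bigcup_j(\{x_1^j\}\times\Om_2)$ for a countable collection'' is false for a general $(n_2-1)$-rectifiable set with approximate tangent in $\X_2$. A $C^1$ piece covering part of $\Sigma$ need not have $x_1$ constant: it only has the differential of $x_1$ vanishing on the part carrying $\Sigma$, and the image under $\pi_1$ of that part can be uncountable. Concretely, with $n_2=2$, take $\phi\in C([0,1],\R_+)$ vanishing exactly on a fat Cantor set $C$ of positive measure, set $g(s)=\int_0^s\phi$ (so $g\in C^1$, $g'=\phi$, $g$ strictly increasing, hence injective), pick any $C^1$ embedding $\gamma:(0,1)\to\R^2$, and let $\Sigma=\{(g(s),\gamma(s)):s\in C\}\subset M:=\{(g(s),\gamma(s)):s\in(0,1)\}$. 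Then $\HH^1(\Sigma)>0$, the approximate tangent to $\Sigma$ at every point equals $(0,\gamma'(s))\in\X_2$, yet $\pi_1(\Sigma)=g(C)$ is an uncountable set (of Lebesgue measure zero, consistent with Sard, but not countable). So rectifiability plus a horizontal tangent \emph{cannot} give the countable slice structure by itself; countability is part of the theorem's content, not a free consequence of Theorem~\ref{coromurect}(i).

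This matters because the rest of your argument leans on it: you state that ``because of the preceding structural observation, each indecomposable component of $T[u]$ is concentrated in a single horizontal slice''. A corrected argument has to reverse the order of logic: decompose the cycle $T[u]=\sum_k T_k$ into indecomposable components first (a countable, $\M$- and $\M_\te$-additive sum), then show that each $T_k$ — being itself a horizontal cycle, since horizontality is preserved under the decomposition — must be concentrated in a single hyperplane $\{x_1=c_k\}$; it is from this that the countable slice structure, hence~\eqref{eq:mun1=1}, follows. The constraint coming from $\mu[u]=\nb_1\nb_2 u$ (equivalently $\pt T[u]=0$, together with $\nb_2\times\mu[u]=0$) is what rules out the Cantor-type example above, and it enters precisely through this decomposition. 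Your final reconstruction of $u$ and the disjointness argument for~\eqref{Mthetan1=1} are reasonable once that reordering is made, but as written the proof derives the key countability from a false geometric claim.
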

In the general case $n_1, n_2\ge 2$, we can  formally interpret $T[u]$ as a $(n_1-1)\,$-flat chain over $\Om_1$ with coefficients in the infinite dimensional space of $(n_2-1)\,$-flat chains over $\Om_2$. By~\eqref{eq:mun1=1} we see that if we slice this flat-chain with respect to a hyperplane of $\X_1$, we obtain a $0\,$-rectifiable flat chain in $\Om_1$ (still with coefficients in the space of $(n_2-1)\,$-flat chains over $\Om_2$). Applying White's rectifiability criterion then concludes the proof.
\begin{remark}\label{rem:counterdecomp}
 Let us point out that for $n_1, n_2\ge 2$ we cannot expect a decomposition analog to~\eqref{eq:mun1=1} satisfying also the counterpart of identity~\eqref{Mthetan1=1} (see the counterexample of~\cite[Proposition~6.5]{GM_tfc}).
\end{remark}

\begin{remark}\label{rem:hmass}
 The energy $\M_\theta$ coincides with a so-called $h\,$-mass studied for instance in~\cite{White1999-1,CdRMS2017}. Partly due to its connection with branched transport models, this type of functionals has received a lot of attention in the past few years, see \textit{e.g.}~\cite{BraWir,CFM2019a,colombo2021well}. It would be interesting to understand further this connection with the energy $\E(u)$.
\end{remark}


\begin{remark}\label{rem:DeP}
 Let us finally observe that since $\mu=\mu[u]=\nb_1\nb_2 u$, it satisfies the linear PDE constraints
\[\nb_1\times \mu=0 \qquad \text{and} \qquad \nb_2\times \mu=0,\]
where for instance $\nb_1\times \mu=0$ means that for every $i,k\in[1,n_1]$ and every $j\in [1,n_2]$,
\[
 \dfrac{\pt \mu_{i,j}}{\pt e_k}= \dfrac{\pt \mu_{k,j}}{\pt e_i}.
\]
Letting $\mathbb{A}$ be the associated symbol, we have for $\xi=\xi_1+\xi_2\in \R^n$ with $\xi_i\neq 0$
\[\ker(\mathbb{A}(\xi))=\R\, \xi_1\otimes \xi_2\]
and for $\xi_1\neq 0$, $\ker(\mathbb{A}(\xi_1))= \xi_1\otimes \X_2$ and similarly for $\ker(\mathbb{A}(\xi_2))$. In the language of~\cite{DePal}, we thus find $\Lambda_{\mathcal{A}}^{n-2}=\{0\}$ so that  we could appeal to~\cite{DePRin,DePal} and obtain that the most singular part of $\mu$ is $(n-2)\,$-rectifiable. In comparison with Theorem~\ref{coromurect} this would however not exclude the presence of a more diffuse part of the measure.
\end{remark}

\subsection{The case $\theta=1$ (with $n_1=n_2=1$)}

As proven in~\cite[Proposition~P]{GM1}, when $\te=1$ the set of possible measures $\mu[u]$ is much richer. The study being much more difficult, we restrict ourselves to the case $n_1=n_2=1$. 
Indeed, besides atomic measures we can also have measures concentrated on lines (or a mixture of both). The typical example is given by the ``roof'' function $u(x_1,x_2):= \min(x_1,x_2)$. It has finite energy (when restricted to cubes)  and $\mu[u]=(1/\sqrt2)\HH^1\restr L$ where $L=\{x_1=x_2\}$ is the diagonal.  It turns out that  $\mu[u]$ cannot be more diffuse. We establish this through a far-reaching refinement of the method used for Theorem~1.8 (but now in the case $\theta=1$).
\begin{theorem}
\label{theo:lines}
Let $n_1=n_2=1$ and  $\te=1$. Let  $u\in L^1_{loc}(\Om)$  be such that $\E(u)<\oo$. Then $\mu[u]=\mu[u]\restr A$ for some Borel subset $A\sub \Om$ such that $\HH^1\restr A$ is $\sigma\,$-finite.
\end{theorem}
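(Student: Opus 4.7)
The plan is to refine the method used for Theorem~\ref{theo:dirac2} to the critical case $\te=1$. Since $\te=1$, Remark~\ref{rem:controlmu} ensures that $\mu:=\mu[u]$ is a finite Radon measure on $\Om$ with $|\mu|(\Om)\les \E(u)$, and the four-point identity combined with Proposition~\ref{prop:goodqk} gives, exactly as in the proof of Theorem~\ref{theo:dirac2},
\[
 \liminf_{k\to\oo}\int_{\Om^{\eps_k}}\frac{|\mu(Q_{x,r_k})|}{r_k^2}\,dx\,\les\, \E(u).
\]
For $\te<1$, raising the integrand to the power $\te$ and invoking subadditivity of $t\mapsto t^\te$ forces $\mu$ to be purely atomic. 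When $\te=1$ this rigidification disappears at a single scale (both $1\,$-dimensional concentrations on curves and $2\,$-dimensional diffuse measures are compatible with the above bound) and a genuinely multi-scale argument is needed.

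The heart of the proof is to show that $|\mu|$-almost every point has strictly positive upper $1\,$-density:
\[
 \limsup_{r\dw 0}\frac{|\mu|(Q_{x,r})}{r}>0\qquad\text{for }|\mu|\text{-a.e. }x\in\Om. \qquad (*)
\]
Granting $(*)$, define $A_j:=\{x\in\Om\,:\,\limsup_{r\dw 0}|\mu|(Q_{x,r})/r\ge 1/j\}$ and $A:=\bigcup_{j\ge 1} A_j$. Then $\mu=\mu\restr A$ by $(*)$, while a standard density comparison of Vitali type gives $\HH^1(A_j)\les j\,|\mu|(\Om)\les j\,\E(u)<\oo$, so that $\HH^1\restr A$ is $\sigma\,$-finite and Theorem~\ref{theo:lines} follows.

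The main obstacle is the proof of $(*)$. Heuristically, its failure on a set of positive $|\mu|$-mass would mean that $\mu$ carries a nontrivial $2\,$-dimensional diffuse component, corresponding to genuine simultaneous oscillation of $u$ in both variables on a set of positive Lebesgue measure; a direct calculation shows that already $\E(g(x_1)h(x_2))=\oo$ for smooth non-constant $g,h$, suggesting that such a diffuse component is incompatible with $\E(u)<\oo$. To make this rigorous I plan to argue by contradiction: on a Borel set $E\sub\Om$ of positive $|\mu|$-mass on which $|\mu|(Q_{x,r_k})/r_k$ tends uniformly to $0$ (after a quantitative Egorov extraction), one upgrades the single-scale estimate above to a multi-scale bound by summing the four-point identity along a geometric sequence of scales between $r_k$ and $\eps_k$ and exploiting the full integration in $z$ encoded in $\E_\eps(u)$ rather than only its discrete version at scale $r_k$. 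The resulting multi-scale sum admits a lower bound that diverges with the number of dyadic levels, while its upper bound remains controlled by a multiple of $\E(u)$, yielding the desired contradiction. This is the ``far-reaching refinement'' of Theorem~\ref{theo:dirac2} referred to in the statement.
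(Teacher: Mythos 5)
Your reduction of the theorem to the density statement $(*)$, together with the covering bound $\HH^1(A_j)\les j\,|\mu|(\Om)$, is correct and close to what the paper does (it applies Besicovitch to the maximal function $N_k(y)=\sup\{|\mu|(Q_{x,r_k})/r_k:y\in Q_{x,r_k}\}$). The gap is in the proof of $(*)$, which is the whole content of the theorem. To begin with, the single-scale bound you quote, $\liminf_k\int|\mu(Q_{x,r_k})|/r_k^2\,dx\les\E(u)$, does not rule out diffuse mass: for $d\mu=f\,dx$ on a set of positive Lebesgue measure one has $|\mu(Q_{x,r})|\approx f(x)r^2$, and that integral stays bounded as $r\dw 0$. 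What the paper actually extracts from Proposition~\ref{prop:goodqk} and the four-point identity~\eqref{id4pts} is the finer estimate
\[
 \limsup_{k}\int_{\Om^{\eps_k}} \frac{|\mu(Q_{x,r_k})|^{\te_1}\bigl(|Du(x,r_ke_2)|+|Du(x+r_ke_1,r_ke_2)|\bigr)^{\te_2}}{r_k^2}\,dx\ \les\ \E(u),
\]
in which the second factor, inherited from the definition~\eqref{defq} of $q$, is essential and is not present in your bound.

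Moreover, your proposed contradiction mechanism --- a multi-scale sum over a geometric ladder between $r_k$ and $\eps_k$ --- is not what the proof needs, and it is not clearly executable: Proposition~\ref{prop:goodqk} gives no lower bound on $\eps_k/r_k$, so there need not be many intermediate dyadic scales, and you do not explain how a uniformly small $|\mu|(Q_{x,r_k})/r_k$ on the Egorov carrier would produce a divergent multi-scale lower bound. The paper's argument stays at the single scale $r_k$: after the Egorov step yielding $|\mu|(Q_{x,r_k})\le t_kr_k$ on a carrier $\mc Y$, it splits the grid of $r_k$-squares into a good set $\mc G$, where $|Du(x_Q,r_ke_2)|+|Du(x_Q+r_ke_1,r_ke_2)|\ge\Lambda t_kr_k$, and a bad set $\mc B$. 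On $\mc G$, writing $|\mu(Q)|=|\mu(Q)|^{\te_1}|\mu(Q)|^{\te_2}\le|\mu(Q)|^{\te_1}(t_kr_k)^{\te_2}$ and dividing by the large second factor gives $\sum_{\mc G}|\mu(Q)|\les\E(u)/\Lambda^{\te_2}$. On $\mc B$, since $u$ barely oscillates vertically across the endpoints of a horizontal run of consecutive bad squares, the four-point identity telescopes to bound $|\mu|$ of each merged run by $2\Lambda t_kr_k$, and there are $O(1/(\Lambda^2 t_k))$ such runs per row, so $\sum_{\mc B}|\mu(R)|\les 1/\Lambda$. Feeding the resulting covering of $\mc Y$ into Lemma~\ref{lemma:tech} and letting $\Lambda\to\oo$ gives $|\mu|(\mc Y)=0$, the contradiction. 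This good--bad split, which exploits the two-factor structure of the energy density, is the missing key idea; no multi-scale argument is involved.
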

\begin{remark}\label{rq:alldim}
 Measures of all dimensions between zero and one are possible. Indeed, fix  $\nu$ a finite measure on $(-1,1)$ and, for $l=1,2$, let $y_l:(-1,1)\to(-1,1)$  be a smooth and  increasing function. Setting for $x=(x_1,x_2)\in(-1,1)^2$,
\[
u(x) :=\int_{-1}^1\un_{(y_1(t),1)}(x_1)\un_{(1,y_2(t))}(x_2)\, d\nu(t),
\]
then $u$ has finite energy and $\mu[u]=(y_1\otimes y_2)\pf\nu$.
\end{remark}

From the examples discussed above it seems then natural to restrict ourselves to the case of Lipschitz functions. Our last main result states that for such functions, the defect measure is $1\,$-rectifiable.
\begin{theorem}\label{thm:rectiftheta=1}
 Let $n_1=n_2=1$ and $\theta=1$. Let $u\in L^\oo(\Om)$ be such that $\|\nb u\|_\oo\le 1$ and $\mu:=\mu[u]$ is a Radon measure. Then $\mu$ is $\HH^1$-rectifiable, that is 
 \[
 \mu=m \HH^1\restr \Sigma
 \]
 for some $1\,$-rectifiable set $\Sigma\sub \Om$ and some Borel measurable function $m:\Sigma\to \R$.\\
 Moreover, $\nb u$ has strong traces on $\Sigma$ of the form 
  \be\label{eq:tracesintro}
  (v^\oo_1,0) \qquad  \text{and } \qquad (0,v^\oo_2)
 \ee
with $v^\oo_1\ne0$, $v^\oo_2\ne0$ and
\[
 |m|=\dfrac{|v^\oo_1| |v^\oo_2|}{\sqrt{|v^\oo_1|^2+ |v^\oo_2|^2}}.
\]
\end{theorem}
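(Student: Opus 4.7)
The starting point is Theorem~\ref{theo:lines}: combined with the Lipschitz hypothesis (which locally ensures $\E(u)<\oo$ via the structure $\pt_l u\in L^\oo$ and the BV-in-one-direction behaviour encoded in $\mu$ being a measure), it yields that $|\mu|$ is $\sigma$-finite with respect to $\mathcal{H}^1$. Radon--Nikodym differentiation then produces $\mu = m\,\mathcal{H}^1 \restr \Sigma$ for a Borel set $\Sigma \sub \Om$ and a Borel density $m$, and the task reduces to verifying that $\Sigma$ is $\mathcal{H}^1$-rectifiable and that $\nb u$ has the announced strong traces on it.

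The plan is a blow-up analysis at $|\mu|$-typical points. At $|\mu|$-a.e.\ $x_0\in\Sigma$ one has $0<\Theta^1_*(|\mu|,x_0)\le\Theta^{*1}(|\mu|,x_0)<\oo$ and Lebesgue differentiability for $\pt_l u$. Define $u_r(y):=r^{-1}\bigl(u(x_0+ry)-u(x_0)\bigr)$; the family is uniformly $1$-Lipschitz, so a subsequence $u_{r_k}$ converges locally uniformly to some $1$-Lipschitz $u_0$. A direct change of variables shows that $\mu[u_r]$ equals $r^{-1}$ times the pushforward of $\mu$ by the dilation $z\mapsto(z-x_0)/r$, and weak-$*$ compactness for measures yields $\mu[u_{r_k}]\rightharpoonup \mu[u_0]$, producing a non-trivial $1$-homogeneous tangent measure of $\mu$ at $x_0$.

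The main step is to classify the blow-ups: up to relabelling of signs and possibly swapping the two axes, $u_0(y)=\min(a\,y_1,b\,y_2)$ for some $a,b\neq 0$. To establish this, first transfer the non-convex constraint $\nb u\in K$ to $u_0$. The identities $\pt_2\pt_1 u=\pt_1\pt_2 u=\mu$ combined with $\|\nb u\|_\oo\le 1$ show that $\pt_1 u$ is BV in the $x_2$-direction (with directional derivative $\mu$), and symmetrically for $\pt_2 u$; one-directional BV compactness provides strong $L^1_{\mathrm{loc}}$-convergence of $\pt_l u_{r_k}$ along $\mathcal{H}^1$-a.e.\ slice parallel to $e_l$, so the pointwise product condition $\pt_1 u\cd\pt_2 u=0$ (which holds for $u$ by Proposition~\ref{prop:differincl}) survives in the limit. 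Next, Theorem~\ref{theo:lines} applied to $u_0$, together with the positive lower density at $x_0$ and the scale-invariance of blow-ups (tangent measures of tangent measures remain tangent measures), forces $|\mu[u_0]|$ to be supported on a single line $L$ through the origin; this is where the diffuse one-dimensional configurations allowed by Remark~\ref{rq:alldim} are ruled out by a dimension-reduction argument exploiting the Lipschitz hypothesis. Since $\pt_1 u_0$ has its $x_2$-derivative supported on $L$, it is constant in $y_2$ on each side of $L$; combined with $\pt_1 u_0\cd\pt_2 u_0=0$ and the Lipschitz continuity of $u_0$ across $L$, this yields the two-linear-piece form.

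Given the classification, rectifiability of $\Sigma$ follows from the standard characterization of $1$-rectifiability via the existence of approximate tangent lines ($\mathcal{H}^1$-a.e.\ the tangent direction $L$ is unique), and the strong $L^1$-convergence of $\pt_l u_{r_k}$ on either side of $L$ upgrades the blow-up to genuine strong traces $(v_1^\oo,0)=(a,0)$ and $(0,v_2^\oo)=(0,b)$ for $\nb u$ on $\Sigma$. The density formula is then an elementary distributional computation: $\pt_2 u_0 = b\,\mathbf{1}_{\{a y_1 > b y_2\}}$, whence $\mu[u_0]=\pt_1\pt_2 u_0 = \frac{ab}{\sqrt{a^2+b^2}}\,\mathcal{H}^1\restr L$, matching $|m|(x_0)$. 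The main obstacle is the blow-up classification --- specifically, combining the non-convex constraint, the positivity of density, and Theorem~\ref{theo:lines} to force $\mu[u_0]$ onto a single line. This is the \emph{far-reaching refinement} alluded to in the introduction: quantitative self-similarity estimates at $\mu$-typical points, exploiting the Lipschitz bound, are needed to exclude the richer configurations otherwise permitted when $\te=1$.
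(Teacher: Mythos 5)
Your proposal takes a genuinely different route from the paper, but the central step is not established and the argument, as sketched, does not close.

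The paper's proof (Section~4.2) does not classify blow-ups of $u$ directly. Instead it uses a layer-cake decomposition: Proposition~\ref{prop:decompmu} shows that $\mu = \int_\R \kappa_t\,dt$ with $\kappa_t = \pt_1\pt_2\un_{\{u>t\}}$, and then Proposition~\ref{prop:polyomegat} invokes the rigidity of Theorem~\ref{theo:dirac1} (for characteristic functions, $\te<1$ not needed since $\kappa_t$ is finite) to conclude that for \ae\ $t$ the superlevel set $\{u>t\}$ is a finite union of axes-parallel polygons and $\kappa_t$ is a finite sum of unit Dirac masses at the corners. The measure $\mu$ is then parametrized locally by the corner trajectory $t\mapsto x(t)$ (Lemma~\ref{lem:corner}), whose differentiability at a Lebesgue-type set of levels is extracted from monotonicity of the level-set volume functions (Lemmas~\ref{lem:step1},~\ref{lem:h_i'}); the tangent measure formula~\eqref{eq:rectifLipclaim} and the traces~\eqref{eq:strongtracesclaim} follow from these differentiability facts. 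This is very much a ``follow the corner'' argument, not a ``classify the tangent measure'' argument.

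Your plan is a classical blow-up program, and its crux --- showing that at $|\mu|$-\ae\ $x_0$ every blow-up $u_0$ is of the form $\min(ay_1,by_2)$ up to symmetries --- is precisely the step you acknowledge to be the main obstacle and for which you offer only a sketch. The sketch does not withstand scrutiny for several reasons. First, you assert strong $L^1_{\mathrm{loc}}$ convergence of $\pt_l u_{r_k}$ from one-directional BV bounds, but a bound on $|v_l|\le 1$ together with $\pt_{\bar l} v_l=\mu$ being a measure gives compactness only in the $x_{\bar l}$ direction; oscillations in the $x_l$ direction are not controlled (think of $v_1(x_1,x_2)=\sin(kx_1)$), so passage of the constraint $v\in K$ to the limit must go through the div-curl/Young-measure argument of Proposition~\ref{prop:compactnessintro} rather than strong convergence. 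Second, and more seriously, the claim that positive lower $1$-density plus scale-invariance of the tangent cone forces $\mu[u_0]$ onto a single line is not an elementary ``dimension-reduction'' fact: it is essentially the rectifiability conclusion itself, and positive lower density at $|\mu|$-\ae\ point is not known a priori (it must be proven, and the paper derives it as an output of the corner-tracking argument). For the analogous eikonal/Aviles--Giga problem the paper explicitly flags the blow-up classification as a major open problem; the reason the present problem is tractable is exactly the polygonal rigidity of level sets, which your blow-up program does not exploit. Third, the opening step ``Theorem~\ref{theo:lines} gives $\sigma$-finiteness of $\HH^1\restr A$, then Radon--Nikodym gives $\mu=m\HH^1\restr\Sigma$'' is a non sequitur: $\mu$ concentrated on an $\HH^1$-$\sigma$-finite set need not be absolutely continuous with respect to $\HH^1$ (e.g.\ a Dirac mass), so the density representation cannot precede the upper density bound, which again comes out of the geometric structure.

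In short: the final picture you describe is correct, and the two-piece linear form of the blow-up and the density formula you give do match the paper's conclusions, but the mechanism that forces this structure --- the quantitative self-similarity you allude to --- is exactly what is missing. The paper supplies that mechanism by looking at the level sets of $u$ rather than at blow-ups of $u$.
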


For the proof of Theorem~\ref{thm:rectiftheta=1}, we first show in Proposition~\ref{prop:differincl} that if $u$ satisfies the hypothesis of the theorem then $v=\nb u$ satisfies the differential inclusion~\eqref{eq:difincl1} with the additional constraints that $|v|\le 1$ and that $\mu[v]$ is a measure. We then prove in Theorem~\ref{theo:rectifLipmain} the analog of Theorem~\ref{thm:rectiftheta=1} but for  bounded vector fields $v$ satisfying~\eqref{eq:difincl1} and $\mu[v]\in \MM(\Om)$. Writing $v=\nb u$ for some  Lipschitz function $u$, the idea is to use the layer-cake formula to decompose it on its superlevel-sets $\om_t:=\{u>t\}$. Defining
\[
 \kappa_t:=\mu[\un_{\om_t}]=\pt_1\pt_2 \un_{\om_t},
\]
we show in Proposition~\ref{prop:decompmu} that
\[
 \mu[v]=\int \kappa_t\,dt.
\]
In particular, for almost every $t$, $\kappa_t$ is a finite measure. Therefore, $\un_{\om_t}$ satisfies the differential inclusion~\eqref{eq:diffinclu}. Applying Theorem~\ref{theo:dirac1} we conclude that $\om_t$ is a finite union of polygons with sides parallel to the axes and $\kappa_t$ is nothing else than the sum of the Dirac masses located at the corners. Moreover, $|\mu[v]|\,$-almost every point $\bar x$ is given by such a corner corresponding to a level $\bar t$ at which the function $t\mapsto |\om_t|$ is continuous. As a consequence, up to discarding sets of small measures, for $t$ close to $\bar t$, every $\om_t$ contains exactly one corner $x(t)$ in a small neighborhood of $\bar x$, see Lemma~\ref{lem:corner}. This gives a sort of  local parametrization of $\mu[v]$ around $\bar x$. The main point is then to prove that $x$ is differentiable in an appropriate sense at $t=\bar t$. 
The central insight is that since $u(x(t))=t$, the velocity $x'(t)$ is governed by $\nb u$. We prove in Lemma~\ref{lem:h_i'} that, with the notation~\eqref{eq:tracesintro},
\[
 x'_l(\bar t\,)= \dfrac1{v_l^\infty(x(\bar t\,))}\qquad\text{ for }l=1,2.
\]

\begin{remark}
We call entropy any mapping $\Phi:\R^2\to\R^2$ which writes as $\Phi(v)=\vhi_1(v_2) e_1 +\vhi_2(v_1) e_2$ for some pair of smooth functions $\vhi_1,\vhi_2:\R\to \R$. We say that $v$ is an entropy solution to~\eqref{eq:difincl1}
if for every entropy $\Phi$,
\[
 \mu_\Phi:=\nb\cdot [\Phi(v)]\in \MM(\Om).
\]
It is then not hard to see that  if $v$ satisfies~\eqref{eq:difincl1} it is an entropy solution if and only if  $\mu[v]:=\mu_{\Phi^0}$ is a measure for the entropy $\Phi^0$ associated with $(\vhi_1^0,\vhi_2^0)=(\text{Id},0)$. In this light our result can be compared with~\cite{DelOt} where a similar question is addressed for entropic solutions of the eikonal equation. This corresponds to replacing 
\[
K\cap B_1=([-1,1]\times\{0\}) \cup(\{0\}\times[-1,1])
\] by $\pt B_1$ as the nonlinear constraint. Thanks to the strong constraint on the level-sets of the stream function $u$ the analysis in our case turns out to be substantially simpler than for~\cite{DelOt}. In particular we are able to obtain a stronger result which is the rectifiability of the defect measure $\mu[v]$. This is still a major open problem for the eikonal equation, see~\cite{Marconi2,Marconi1} for recent results for related models.
\end{remark}
Partly motivated by this analogy with the Aviles-Giga functional, we investigate in Section~\ref{sec:compactness} the compactness properties of sequences in 
\[
S^\oo(\Om):= \{v\in L^\oo(\Om,\R^2) : \|v\|_\oo\le 1,\  \nb\times v=0,\ v\in K \text{ \ae\,and }\,\mu[v]\in \MM(\Om)\}.
\]
\begin{proposition}\label{prop:compactnessintro}
 If $v^k\in S^\oo(\Om)$ is such that 
 \[
  \sup_k |\mu[v^k]|(\Om)<\oo,
 \]
\begin{enumerate}[(i)]
\item Then, up to extraction, $v^k\longto v$ for some $v\in S^\oo(\Om)$ in the weak-$*$ topology of $L^\oo$.
\item  If moreover
\[
 \limsup_{k\to \oo} |\mu[v^k]|(\Om)=0
\]
then for some $l\in \{1,2\}$, $v= v_l e_l$ with $v_l(x)=v_l(x_l)$ and $v_{\bar l}^k$ converges strongly to $0$ in $L^1$ (here $\{l,\bar l\}=\{1,2\}$).
\end{enumerate}
\end{proposition}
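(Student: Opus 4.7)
The plan is to combine weak-$*$ extraction with a compensated-compactness identity that propagates the pointwise constraint $v\in K$ to the limit. Since $\|v^k\|_\oo\le 1$, Banach--Alaoglu gives a subsequence with $v^k\rightharpoonup v$ weakly-$*$ in $L^\oo(\Om,\R^2)$, and the linear constraint $\nb\times v^k=0$ passes to the limit distributionally. Writing $v^k=\nb u^k$ with $u^k$ a $1\,$-Lipschitz potential (normalised by its mean), Arzel\`a--Ascoli extracts a further subsequence along which $u^k\to u$ locally uniformly and $v=\nb u$. The uniform bound on $|\mu[v^k]|(\Om)$ then allows the extraction of $\mu[v^k]\rightharpoonup\mu[v]$ weakly-$*$ in $\MM(\Om)$, so $\mu[v]\in\MM(\Om)$ with $|\mu[v]|(\Om)\le\liminf_k|\mu[v^k]|(\Om)$. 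The remaining nonlinear constraint $v_1v_2=0$ a.e.\ follows from the Leibniz identity
\[
v^k_1v^k_2\;=\;\pt_1(u^kv^k_2)-u^k\,\mu[v^k]\qquad\text{in }\mathcal{D}'(\Om),
\]
obtained by expanding $\pt_1(u^kv^k_2)$ and using $\pt_1v^k_2=\mu[v^k]$ (rigorously via a standard mollification of $v^k_2$ in $x_1$). The right-hand side converges in $\mathcal{D}'$ to $\pt_1(uv_2)-u\mu[v]=v_1v_2$, since $u^k\to u$ uniformly while $v^k_2\rightharpoonup v_2$ weakly-$*$ in $L^\oo$ and $\mu[v^k]\rightharpoonup\mu[v]$ weakly-$*$ in $\MM(\Om)$. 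As the left-hand side vanishes identically, $v_1v_2=0$ in $\mathcal{D}'$, hence a.e.

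\textbf{Part (ii), structure of the limit.} Assume now $|\mu[v^k]|(\Om)\to 0$. By the lower semi-continuity from part (i), $\mu[v]=0$, so $u(x_1,x_2)=f(x_1)+g(x_2)$ and $v=(f'(x_1),g'(x_2))$. The constraint $v\in K$ becomes $f'(x_1)\,g'(x_2)=0$ a.e.\ on $\Om$, which by a straightforward tensor-product argument (if both $\{f'\ne 0\}$ and $\{g'\ne 0\}$ had positive measure, the product would not vanish on their Cartesian product) forces $f'\equiv 0$ or $g'\equiv 0$ a.e. Hence $v=v_le_l$ for some $l\in\{1,2\}$ with $v_l=v_l(x_l)$.

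\textbf{Part (ii), strong $L^1$ convergence.} To obtain $v^k_{\bar l}\to 0$ in $L^1(\Om)$, I plan to decompose each $v^k$ through its super-level sets $\om^k_t:=\{u^k>t\}$ and apply Theorem~\ref{theo:dirac1}. Setting $\kappa^k_t:=\mu[\un_{\om^k_t}]$, Proposition~\ref{prop:decompmu} gives $\mu[v^k]=\int_\R\kappa^k_t\,dt$, whence $\int_\R|\kappa^k_t|(\Om)\,dt=|\mu[v^k]|(\Om)\to 0$. Let $c>0$ be the threshold from Theorem~\ref{theo:dirac1}(iii) and set $G^k:=\{t:|\kappa^k_t|(\Om)<c\}$; then $|\R\sm G^k|\le c^{-1}|\mu[v^k]|(\Om)\to 0$. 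On $G^k$, Theorem~\ref{theo:dirac1}(iii) identifies $\om^k_t$ with an axis-aligned stripe, and the nesting of super-level sets forces the orientation to be constant on the non-trivial range of $t$; after a final extraction it coincides with the orientation $l$ of the limit, so $\pt_{\bar l}\un_{\om^k_t}=0$ in the interior of $\Om$ for $t\in G^k$. The coarea identity
\[
\int_\Om|v^k_{\bar l}|\,dx\;=\;\int_\R|\pt_{\bar l}\un_{\om^k_t}|(\Om)\,dt
\]
reduces the question to the bad set $\R\sm G^k$. On this set, Theorem~\ref{theo:dirac1}(ii) describes $\om^k_t$ as a finite union of axis-parallel polygons plus $l$-aligned stripes (the stripes contribute nothing to $|\pt_{\bar l}\un_{\om^k_t}|(\Om)$), and a vertex-versus-edge count in the polygonal part yields an elementary geometric estimate $|\pt_{\bar l}\un_{\om^k_t}|(\Om)\le C\,|\kappa^k_t|(\Om)$ with $C$ depending only on $|I_l|$. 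Combining,
\[
\int_\Om|v^k_{\bar l}|\,dx\;\le\;C\int_{\R\sm G^k}|\kappa^k_t|(\Om)\,dt\;\le\;C\,|\mu[v^k]|(\Om)\longrightarrow 0.
\]

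\textbf{Main obstacle.} The technical heart of the argument is the geometric bound $|\pt_{\bar l}\un_{\om^k_t}|(\Om)\le C|\kappa^k_t|(\Om)$ on $\R\sm G^k$. Its proof relies on the rigid polygonal structure from Theorem~\ref{theo:dirac1}(ii) together with the sandwich property obtained from nesting against the adjacent $l$-stripes in $G^k$: this sandwich confines the $\bar l$-perpendicular boundary of $\om^k_t$ to thin regions where each edge has length at most $|I_l|$ and is bracketed by at least one corner of $\Om$ counted by $\kappa^k_t$. Once this estimate is in hand, the rest of the proof is a fairly routine combination of weak compactness, the layer-cake identity and the two-dimensional rigidity of Theorem~\ref{theo:dirac1}.
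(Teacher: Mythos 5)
Your argument for the inclusion $v\in K$ is correct but takes a different route from the paper. The paper uses the div--curl lemma together with entropies $\Phi_1(v)=\vhi_1(v_2)e_1$, $\Phi_2(v)=\vhi_2(v_1)e_2$ and the $H^{-1}$-compactness of $\nb\cdot\Phi_l(v^k)$; you instead exploit the potential structure $v^k=\nb u^k$ and a Leibniz identity. Both are compensated-compactness arguments; yours is a bit more elementary, while the paper's sets up exactly the div--curl/Young-measure machinery that it then localizes for Part (ii). Your mollification remark to justify $\pt_1(uv_2)=v_1v_2+u\,\mu[v]$ at the limit is the right fix (mollify in $x_1$ only, use continuity of $u$ against the measure $\mu[v]$). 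This part is sound.

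\textbf{Part (ii).} The reduction $\mu[v]=0\Rightarrow v=v_le_l$ with $v_l(x)=v_l(x_l)$ is the same as in the paper. For the strong $L^1$ convergence of $v^k_{\bar l}$, however, you take a genuinely different route: instead of the paper's Young-measure argument (tensorization of $\lambda_x$, the decomposition $\lambda_x=\alpha_x\delta_0\otimes\delta_0+\lambda_x^{\mathrm h}\otimes\delta_0+\delta_0\otimes\lambda_x^{\mathrm v}$, and the $\pt_1$-/$\pt_2$-constraints coming from $\mu[v]=0$), you pass through the layer-cake decomposition $\mu[v^k]=\int\kappa^k_t\,dt$ of Proposition~\ref{prop:decompmu} and the rigidity of Theorem~\ref{theo:dirac1}. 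This is a legitimate strategy in spirit, but as written it has a genuine gap at precisely the step you flag as the ``main obstacle''.

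Concretely, the claim $|\pt_{\bar l}\un_{\om^k_t}|(\Om)\le C\,|\kappa^k_t|(\Om)$ for $t\in\R\sm G^k$ is not established. It is \emph{false} in general for a set that is a finite union of polygons plus stripes: $\bar l$-aligned stripes contribute fully to $|\pt_{\bar l}\un_{\om^k_t}|(\Om)$ but not at all to $\kappa^k_t$. So the estimate can only hold after you have excluded $\bar l$-aligned stripes on the bad set, which is exactly what your sandwich/nesting argument is supposed to do; but that argument is only sketched. Making it rigorous requires: (a) showing that on the non-trivial range of $u^k$ the stripe orientation of the good levels $G^k$ is well defined and, after extraction, constant in $k$; (b) producing, for $|\cdot|$-a.e.\ $t\in\R\sm G^k$, two good levels $t_\pm\in G^k$ with $\om^k_{t_+}\ne\void$ and $\om^k_{t_-}\ne\Om$ that bracket $t$ (this can fail near $\inf u^k$ and $\sup u^k$, and those boundary ranges must be controlled separately, e.g.\ by the perimeter bound~\eqref{boundH1Gamma}); and (c) carrying out the edge/corner count in the polygonal part, including polygons touching $\pt\Om$ whose boundary corners are not charged by $\kappa^k_t$. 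None of these is done. The paper's Young-measure proof avoids all of this: it only uses the localized div--curl identity to tensorize $\lambda_x$ and the two scalar PDE constraints $\pt_1[\int\vhi_1(z_2)\,d\lambda_x]=0$, $\pt_2[\int\vhi_2(z_1)\,d\lambda_x]=0$ to show that the ``horizontal'' and ``vertical'' supports cannot both have positive measure, and is both shorter and self-contained within Part (i)'s framework. If you want to pursue the layer-cake route you would need to prove (a)--(c) carefully; otherwise I'd recommend the paper's approach.
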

The proof follows the same scheme as in~\cite{DKMO01} using the div-curl lemma and Young measures. \bigskip

For $x\in \R^2$ and $r>0$ we set $Q_r(x):=x+(-r,r)^2$ and simply write $Q$ for $(-1,1)^2$. Returning to the setting (and to the notation) of Theorem~\ref{thm:rectiftheta=1}, we introduce for $x^*\in\Om$ the family of blow-ups of $v:=\nb u$ at $x^*$ as the functions $v^{x^*,r}\in S^\oo(Q)$  defined for $0<r<d(x^*,\R^2\sm\Om)$ by
\[
v^{x^*,r}(x):=v(x^*+r x)=\nb u(x^*+r x)\qquad\text{for }x\in Q.
\]
We notice that for every Borel subset $A\sub Q$ we have
\[
\mu\lt[v^{x^*,r}\rt](A)=\dfrac1r\mu[v]({x^*+r A})
\]
so that the assumption $|\mu|(\Om)<\oo$ implies by~\cite[Theorem~2.56]{Am_Fu_Pal} that for $\HH^1$-almost every $x^*\in\Om\sm\Sigma$ there holds
\[
\limsup_{r\dw0} |\mu[v^{x^*,r}]|(Q)=\limsup_{r\dw0} \dfrac{|\mu[v]|(Q_{r}(x^*))}r=0.
\]
Applying Proposition~\ref{prop:compactnessintro} to the family $v^{x^*,r}\in S^\oo(Q)$ we deduce that for $\HH^1$-almost every $x^*\in\Om\sm\Sigma$, there exists $l\in \{1,2\}$ such that, \emph{up to extraction} of a subsequence $r_k\dw0$, there hold
\be\label{compactness_intro}
v^{x^*,r_k}_l\ \text{ converges weakly-$*$ in }L^\oo(Q)\qquad\ \text{ and }\ \qquad v^{x^*,r_k}_{\bar l}\to0\text{ in }L^1(Q).
\ee
These points parallel the ``VMO points''  of~\cite{DelOt} in the setting of the eikonal equation which are conjectured to be $\HH^1$-almost all Lebesgue points of $v=\nb u$ (see~\cite{lamy2022lebesgue} for recent results in this direction).\\
A natural question is whether we can choose the integer $l=1,2$ in~\eqref{compactness_intro} independently of the choice of the subsequence.  In such a case, $x^*$ would be a Lebesgue point of $v_{\ov l}$. The answer is no in general. Indeed, we construct in Proposition~\ref{prop:counterexamp} a vector field $v\in S^\oo(Q)$ satisfying
\[
\limsup_{r\dw0} \dfrac{|\mu[v]|(Q_r(0))}r=0
\]
but such that the integer $l\in\{1,2\}$ in~\eqref{compactness_intro} does depend on the choice of the subsequence. In particular 0 is neither a Lebesgue point of  $v_1$ nor of $v_2$.

\subsection{Conventions and notation}
In all the paper, we consider $\te_1,\te_2>0$, we note $\te=\te_1+\te_2$ their sum and we assume that $\te\le1$. For $x,z\in \R^n$ and $u:\Om\to\R$ we define 
\[
Du(x,z):=u(x+z)-u(x).
\]  
We denote by $(e_1,\cdots, e_{n_1})$ (respectively $(f_1,\cdots, f_{n_2})$)  an orthonormal basis of $\X_1$ (respectively of $\X_2$). \\
If $n_1=n_2=1$,  for $x\in\R^2$ and $z\in(0,+\oo)^2$, we write
\[
Q_{x,z}:=[x_1,x_1+z_1)\times [x_2,x_2+z_2).
\]
We also use the notation  $Q_{x,r}:= x+ [0,r)^n$  for $x\in \R^n$ and $r>0$. \\
The open ball in $\R^n$ with radius $r>0$ and centered at $x$ is denoted $B_r(x)$ and we write $B_r$ when $x=0$. The dimension $n$ is always clear from the context.\\
Given $\mathcal{S}\subset \X$ where $\X$ is some vector space, $\Span(\mathcal{S})$ is the space spanned by $\mathcal{S}$.
We denote $\HH^k(A)$ the $k$-dimensional Hausdorff measure of a set $A\sub\R^n$ and if $A\sub \R^n$ is a measurable subset, we note $|A|$ its volume.\\
We write  $a\les b$ when  $a\le C b$ for some  $C>0$ which may only depend on $\te_1,\te_2$, $n$ or on the kernel $\rho$.\\
We use standard notation for functional spaces such as $L^p(\om)$, $W^{1,p}(\om)$, $BV(\om)$. $\MM(\om)$ is the space of Radon measures on $\om$.\\
Unless otherwise specified the sequences are indexed from $1$. We often write $\sup a_j$  as a shortcut for $\sup_{j\ge1}a_j$ and similarly for series, we write $\sum a_j$ for $\sum_{j\ge1} a_j$.

\subsection{Outline of the paper}
The paper is organized as follows. In Section~\ref{sec:th<1} we consider the case $\theta<1$ and start by treating the two dimensional case in Section~\ref{sec:th<1d2}. We first prove Theorem~\ref{theo:dirac1} on the structure of the defect measures for characteristic functions. We then consider the case of arbitrary functions, \ie Theorem~\ref{theo:dirac2}. Finally, we use it to prove Theorem~\ref{thmBVtheta}. In Section~\ref{S3} we consider the higher dimensional case and prove Theorem~\ref{coromurect}. In Section~\ref{sec:theta=1} we turn to the case $\theta=1$. We first prove Theorem~\ref{theo:lines} for arbitrary functions and in Section~\ref{sec:Lip}, we consider the case of Lipschitz functions. We derive the differential inclusion satisfied by functions of finite energy in Section~\ref{sec:DerivdifLip} before proving Theorem~\ref{theo:rectifLipmain} about the rectifiability of the defect measure in Section~\ref{sec:rectifLip}. We finally prove the compactness result, Proposition~\ref{prop:compactnessintro} in Section~\ref{sec:compactness}.

\section{The case $\theta<1$}\label{sec:th<1}
\subsection{The two-dimensional case}\label{sec:th<1d2}
In this section we assume that $n_1=n_2=1$.
We first show the following simple lemma used in several places. 
\begin{lemma}\label{lemma:4points}
Let $\Om= \mathring{Q}_{\bar x,\bar z}\sub \R^2$ be a nonempty open box (that is $\bar x\in\R^2$ and $\bar z\in(0,+\oo)^2$) and let $u\in L^1_{loc}(\Om)$ be such that $\mu[u]=\pt_1\pt_2 u$ is a Radon measure.\\
Defining $w$ as 
\be\label{defw}
 w(x):=\mu((\bar x_1, x_1]\times(\bar x_2, x_2]) \qquad \text{ for } x\in \Om,
\ee
we have $w\in BV(\Om)$ with the estimate
\be\label{estimw}
 \|w\|_\oo+ |\nb w|(\Om)\les (1+ |\bar z|)|\mu[u]|(\Om),
\ee
and there exist functions $u_1(x_1)$ and $u_2(x_2)$ in $\U(\Om)$ (which are bounded if $u$ is bounded) such that 
\be\label{decompu}
 u(x)=u_1(x_1)+u_2(x_2)+w(x) \qquad \text{ for  almost every } x\in \Om.
\ee
As a consequence, for every $z\in (0,+\oo)^2$,  we have for almost every $x\in \Om$ with $Q_{x,z}\sub \Om$, 
\be
\label{id4pts}
\mu[u](Q_{x,z})=D[Du (\cdot, z_2)](x,z_1) \qquad \text{ and } \qquad |\mu[u]|(\pt Q_{x,z})=0.
\ee
In particular,  $|\mu[u](Q_{x,z})|\le 4\|u\|_\oo$.
\end{lemma}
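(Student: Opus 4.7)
The plan is to proceed in three stages: establish the $BV$ bound on $w$, derive the splitting $u = u_1+u_2+w$ from the vanishing of $\pt_1\pt_2(u-w)$, and finally obtain~\eqref{id4pts} together with the boundary-mass statement.

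For the first stage, a direct Fubini computation on the defining formula of $w$ shows that $\pt_1\pt_2 w = \mu[u]$ in $\DD'(\Om)$. The pointwise bound $\|w\|_\oo \le |\mu[u]|(\Om)$ is immediate since the relevant half-open rectangle lies in $\Om$. To control $\pt_1 w$, I would test against $\vhi\in C^\infty_c(\Om)$, integrate by parts once, and apply Fubini to rewrite the result as a double integral against $d\mu[u](y)\,dx$; the inner $dx$-integral is taken over a slab of width at most $\ov z_2$, yielding $|\langle \pt_1 w,\vhi\rangle| \le \ov z_2 \|\vhi\|_\oo |\mu[u]|(\Om)$, and symmetrically for $\pt_2 w$. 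Summing gives~\eqref{estimw}.

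For the second stage, since $\pt_1\pt_2 (u-w) = 0$ in $\DD'(\Om)$, a classical mollification argument combined with the smooth identity $v_\eps(x) = v_\eps(x_1,x_2^0) + v_\eps(x_1^0,x_2) - v_\eps(x_1^0,x_2^0)$ (valid on any product domain as soon as $\pt_1\pt_2 v_\eps = 0$) produces, after passing to the limit, functions $u_1,u_2 \in \U(\Om)$ with $u = u_1+u_2+w$ almost everywhere. When $u \in L^\oo$ so is $w$, and the explicit slice representatives $u_1(x_1) := (u-w)(x_1,x_2^0)$, $u_2(x_2) := (u-w)(x_1^0,x_2) - (u-w)(x_1^0,x_2^0)$ are bounded for a.e.\ choice of base point $(x_1^0,x_2^0)$.

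For the third stage, plugging the decomposition into the cross-difference $D[Du(\cdot,z_2)](x,z_1)$ cancels the $u_1$ and $u_2$ contributions and leaves the analogous four-point expression for $w$, which by the inclusion-exclusion structure of the distribution function equals $\mu[u]((x_1,x_1+z_1]\times(x_2,x_2+z_2])$. To identify this with $\mu[u](Q_{x,z})$ and to secure $|\mu[u]|(\pt Q_{x,z}) = 0$, I consider the two marginals of $|\mu[u]|$ under the coordinate projections: each is a finite measure on $\R$ and therefore carries at most countably many atoms, so for almost every $x$ none of the four sides of $\pt Q_{x,z}$ carries any $|\mu[u]|$-mass. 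The estimate $|\mu[u](Q_{x,z})| \le 4\|u\|_\oo$ is then immediate from~\eqref{id4pts}. The only slightly delicate point is the Fubini bookkeeping underlying stage one, which simultaneously delivers the distributional identity $\pt_1\pt_2 w = \mu[u]$ and the variation bounds needed for~\eqref{estimw}.
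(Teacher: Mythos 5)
Your proposal is correct and follows essentially the same path as the paper: establish $\pt_1\pt_2 w=\mu[u]$ and the $BV$ bound, invoke $\pt_1\pt_2(u-w)=0$ to split off $u_1,u_2$, then compute the four-point cross-difference and kill the boundary by the countable-atoms-of-marginals argument. The paper is terser (it disposes of \eqref{estimw} in one sentence and cites~\cite[Theorem~3.7]{GM1} for the additive decomposition), whereas you spell out the Fubini/integration-by-parts computation behind $|\pt_l w|(\Om)\le\bar z_{\bar l}|\mu|(\Om)$ and name the mollification identity; one small point the paper is careful to include that you leave implicit is that the decomposition $u=u_1+u_2+w$ only holds a.e., so one must either fix Lebesgue-point representatives of $u_1,u_2$ or note, as you essentially do, that for fixed $z$ all four vertices of $Q_{x,z}$ fall in the full-measure good set for a.e.\ $x$.
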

\begin{proof}~
Estimate~\eqref{estimw} is a direct consequence of the definition of $w$. Since $\pt_1\pt_2 (u-w)=0$,~\eqref{decompu} is also readily obtained (see for instance the proof of~\cite[Theorem~3.7]{GM1}). We thus only need to show~\eqref{id4pts}.

Let $I_l:=(\bar x_l,\bar x_l+\bar z_l)$ for $l=1,2$ (so that $\Om=I_1\times I_2$).
 Since $\mu:=\mu[u]$ is a Radon measure, there exist $J_1\sub I_1$ and $J_2\sub I_2$ with full Lebesgue  measures such that for every $x_1\in J_1$, $|\mu|(\{x_1\}\times I_2)=0$ and for every $x_2\in J_2$, $|\mu|(I_1\times\{x_2\})=0$. 
 Hence, for every box $Q$ with vertices  in $J_1\times J_2$, we have 
$|\mu|(\pt Q)=0$. We may assume moreover that all the points of $J_l$ are Lebesgue points of $u_l$ for $l=1,2$. \\
Then, using~\eqref{decompu}, for every box $Q_{x,z}$ with vertices in $J_1\times J_2$ we have
\begin{align*}
D[Du (\cdot, z_2)](x,z_1)
&=D[Dw (\cdot, z_2)](x,z_1)\\
&=w(x)-w(x_1,x_2+z_2)-w(x_1+z_1,x_2)+w(x+z).
\end{align*}
By definition of $w$ this implies~\eqref{id4pts}. Eventually, if  $z\in (0,+\oo)$ is fixed then almost every box  $Q_{x,z}\sub \Om$ has vertices in $J_1\times J_2$. This concludes the proof.
\end{proof}

Let us prove Theorem~\ref{theo:dirac1} which deals with the case of characteristic functions. 
\begin{proof}[Proof of Theorem~\ref{theo:dirac1}]~
Let $u=\un_A$ be such that $\mu:=\pt_1\pt_2 u\in \MM(\Om)$ as in the theorem. 
\smallskip

(i) We first prove that $\mu$ is atomic. Notice that since $u\in\{0,1\}$,~\eqref{id4pts} implies that for every fixed $z$, Lebesgue almost every  rectangle $Q_{x,z}\sub \Om$ satisfies
\be\label{-2,2}
\mu(Q_{x,z})\in\{0,\pm1,\pm2\}
\ee
and by approximation this actually holds for every $Q_{x,z}\sub \Om$.
Therefore, if $m\delta_x$  is a non trivial atom of $\mu$, we deduce that $m=\mu(\{x\})=\lim_k \mu(Q_{x^k,z^k}) \in\pm \{1,2\}$,
where we take the limit over a sequence of boxes $Q_{x^k,z^k}$ with $x\in Q_{x^k,z^k}$ and  $|z^k|\to0$.
Since $\mu$ is a finite measure on $\Om$, its atomic part writes as a finite sum $\mu^a=\sum_{j=1}^Nm_j\delta_{x^j}$ with $m_j\in\{\pm1,\pm2\}$, $x^j\in \Om$ and  $N\le\sum  |m_j| \le|\mu|(\Om)$. \\
Let us prove that $\mu(Q_{x,z})=0$ for every rectangle $Q_{x,z}\sub \Om':=\Om\bks \supp\mu^a$.
Let us assume by contradiction that $\mu(Q_{x,z})\neq 0$ so that $\mu(Q_{x,z})\in\{\pm1,\pm2\}$. We split $Q_{x,z}$ into four equal disjoint
rectangles $Q_{x^1,z^1},\cdots,Q_{x^4,z^4}$.
By~\eqref{-2,2}, we have $\mu(Q_{x^j,z^j})\in \pm\{0,1,2\}$ for $j=1,\cdots,4$. Since  $\sum  \mu(Q_{x^j,z^j})=\mu(Q_{x,z})\neq0$,  we can pick $j$ with $\mu(Q_{x^j,z^j})\in \{\pm1,\pm2\}$. 
We note  $Q^1:=Q_{x^j,z^j}$ and iterate the construction to produce a sequence of nested boxes $Q^k$ with $\mu(Q^k)\in\{\pm1,\pm2\}$ for $k\ge1$ and $\diam Q^k\to 0$. We deduce from the monotone convergence theorem that $\cap_k Q^k=\{x^*\}$ for some $x^*\in Q_{x,z}$ with $\mu(\{x^*\})\ne0$. This contradicts $Q_{x,z}\cap \supp\mu^a=\void$ and proves the claim. Eventually, since the rectangles of the form $Q_{x,z}\sub \Om'$ 
generate the $\sigma\,$-algebra of Borel sets of $\Om'$, we conclude that $\mu\restr\Om'=0$ so that $\mu=\mu^a$ as claimed and (i) is proved.

\smallskip 
(ii) We turn to the proof of  (ii). Without loss of generality, we may assume that $I_l=(0,\ell_l)$ for $l=1,2$. \\
We start with the following simple observation. If $Q_{0,z}\sub \Om$ is such that $\mu\restr Q_{0,z}=0$, then  $w\restr Q_{0,z}=0$ (recall the definition~\eqref{defw}) and thus by~\eqref{decompu},
\[
 u(x)=u_1(x_1)+u_2(x_2) \qquad \text{for }x\in Q_{0,z}
\]
for some functions $u_1$, $u_2 \in \U(Q_{0,z})$. However, since $u$ takes only two values this implies that $u\in \U(Q_{0,z})$ (see~\cite{DolMul} for an application of this argument in a different context). 
Notice that since $\mu$ is quantized this implies in particular (iii).\\
Let $z$ be such that $\mu$ does not contain any Dirac mass in the $L\,$-shaped domain $\mathcal{L}:=Q_{0,(\ell_1,z_2)}\cup Q_{0,(z_1,\ell_2)}$, see Figure~\ref{Fig:Lshape}. 
\begin{figure}[h]
 \begin{center}
\begin{tikzpicture}[scale=1]
\pgfmathsetmacro{\L}{10}
\pgfmathsetmacro{\l}{6}
\pgfmathsetmacro{\xa}{.18}
\pgfmathsetmacro{\xb}{.41}
\pgfmathsetmacro{\xc}{.52}
\pgfmathsetmacro{\xd}{.68}
\pgfmathsetmacro{\xe}{.88}
\pgfmathsetmacro{\ya}{.24}
\pgfmathsetmacro{\yb}{.45}
\pgfmathsetmacro{\yc}{.68}
\pgfmathsetmacro{\yd}{.88}
\draw[very thin] (0,0) rectangle (\L,\l);

\draw[fill, color=gray!45]   (0,0) rectangle (\xa*\L,\ya*\l);
\draw[fill, color=gray!45]   (0,\yb*\l) rectangle (\xa*\L,\yc*\l);
\draw[fill, color=gray!45]   (0,\yd*\l) rectangle (\xa*\L,\l);
\draw[fill, color=gray!45]   (\xa*\L,\ya*\l) rectangle (\xb*\L,\yb*\l);
\draw[fill, color=gray!45]   (\xa*\L,\yc*\l) rectangle (\xb*\L,\yd*\l);
\draw[fill, color=gray!45]   (\xb*\L,0) rectangle (\xc*\L,\ya*\l);
\draw[fill, color=gray!45]   (\xb*\L,\yb*\l) rectangle (\xc*\L,\yc*\l);
\draw[fill, color=gray!45]   (\xb*\L,\yd*\l) rectangle (\xc*\L,\l);
\draw[fill, color=gray!45]   (\xc*\L,\ya*\l) rectangle (\xd*\L,\yb*\l);
\draw[fill, color=gray!45]   (\xc*\L,\yc*\l) rectangle (\xd*\L,\yd*\l);
\draw[fill, color=gray!45]   (\xd*\L,0) rectangle (\xe*\L,\ya*\l);
\draw[fill, color=gray!45]   (\xd*\L,\yb*\l) rectangle (\xe*\L,\yc*\l);
\draw[fill, color=gray!45]   (\xd*\L,\yd*\l) rectangle (\xe*\L,\l);
\draw[fill, color=gray!45]   (\xe*\L,\ya*\l) rectangle (\L,\yb*\l);
\draw[fill, color=gray!45]   (\xe*\L,\yc*\l) rectangle (\L,\yd*\l);
\draw[very thick, dashed] (0,0) rectangle  (\xa*\L,\ya*\l);
\draw[very thick] (0,0) -- (\L,0) -- (\L,\ya*\l) -- (\xa*\L,\ya*\l) -- (\xa*\L,\l) -- (0,\l) -- cycle;
\draw (\xa*\L,\ya*\l) node[below right]{$z$};
\draw (0,\ya*\l) node[left]{$(0,z_2)$};
\draw (\xa*\L,0) node[below]{$(z_1,0)$};
\draw (\L,0) node[below]{$(l_1,0)$};
\draw (0,\l) node[left]{$(0,l_2)$};
\draw (0,0) node[left]{$0$};
\draw  (\L*1.11, .5*\ya*\l) node {$\Bigg\}\!\!\!\!\!\Bigg\}\, Q_{0,(l_1,z_2)}$};
\draw (.5*\xa*\L,\l+0.085*\L) node {$Q_{0,(z_1,l_2)}$};
\draw(.5*\xa*\L,\l+0.025*\L) node {$\overbrace{\phantom{a\,aaaaaa}}$};   
\end{tikzpicture}
\caption{\label{Fig:Lshape} The $L\,$-shaped domain $\mathcal{L}=Q_{0,(\ell_1,z_2)}\cup Q_{0,(z_1,\ell_2)}$. Here  $u=1$ on the gray zones.} 
\end{center}
\end{figure}
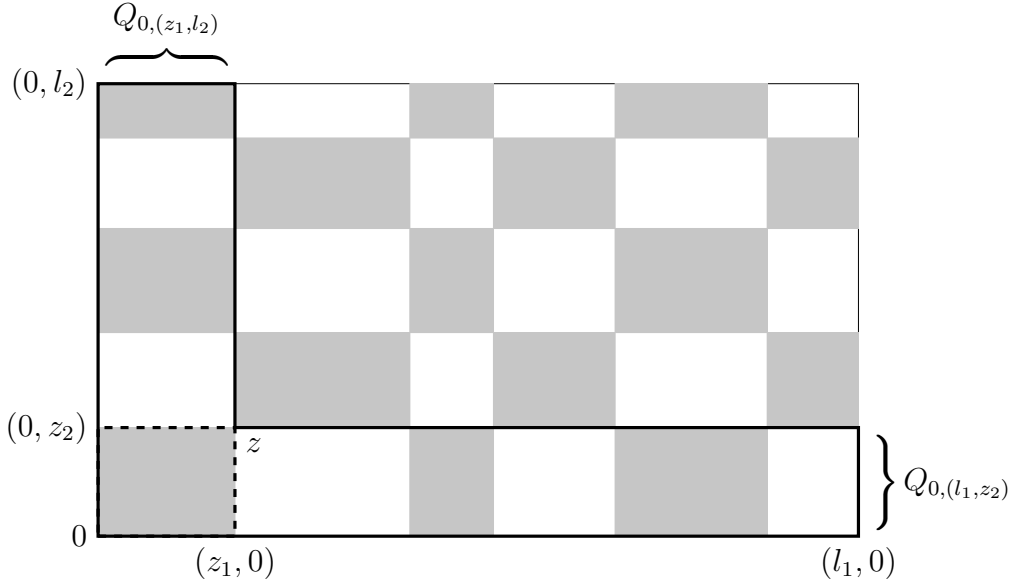
 By the above observation applied in $Q_{0,(\ell_1,z_2)}$ and then in $Q_{0,z_1,\ell_2}$ and using obvious notation, there exist $u\sh_1,u\sh_2\in\U(Q_{0,(\ell_1,z_2)})$ and $u\sv_1,u\sv_2\in\U(Q_{0,z_1,\ell_2})$ such that\footnote{Throughout the article, the superscripts $\_\sh$ and $\_\sv$ stand for ``horizontal'' and ``vertical''.} 
\begin{align*}
u(x)&=u_1\sh(x_1)+u_2\sh(x_2)\qquad\text{for }x\in Q_{0,(\ell_1,z_2)},\\
u(x)&=u_1\sv(x_1)+u_2\sv(x_2)\qquad\text{for }x\in Q_{0,(z_1,\ell_2)}.
\end{align*}
In the intersection $Q_{0,z}$, there holds
\[
u_1\sh(x_1)+u_2\sh(x_2)=u(x)=u_1\sv(x_1)+u_2\sv(x_2),
\]
and substituting $(u_1\sv-c,u_2\sv+c)$ for $(u_1\sv,u_2\sv)$ for some $c\in\R$ we may assume that $u_l\sv=u_l\sh$ in $Q_{0,z}$ for $l=1,2$. Then, setting $u_1=u\sh_1$ and  $u_2=u\sv_2$ we have 
\[
u(x)=u_1(x_1)+u_2(x_2) \quad\text{for }x\in\mathcal{L}\qquad\text{and}\qquad u\in \U(Q_{0,(\ell_1,z_2)})\cap \U(Q_{0,(z_1,\ell_2)}).
\]
If $u_1$ is not constant in $(0,\ell_1)$, then $u_2$ is constant in $(0,z_2)$. Up to the addition of a constant, we may assume without loss of generality that $u_2=0$ in $(0,z_2)$. This implies that $u=u_1$ in $Q_{0,(\ell_1,z_2)}$ and since $u$ is a characteristic function we see that in any case $u_1=\un_{A_1}$ for some measurable set $A_1\sub (0,\ell_1)$.   The exact same considerations in $ Q_{0,(z_1,\ell_2)}$ show that $u_2=\pm \un_{A_2}$ for some measurable set $A_2\sub (0,\ell_2)$. Eventually, since $\pt_1\pt_2(u-w)=0=\pt_1\pt_2 (u_1+u_2)$ in the rectangle $\Om$, the identity $u-w=u_1+u_2$, valid in $\mathcal{L}$, propagates to $\Om$. This concludes the proof.
\end{proof}


We now establish an elementary measure theoretical lemma used in the proofs of Theorem~\ref{theo:dirac2} and Theorem~\ref{theo:lines}.

\begin{lemma}\label{lemma:tech}
Let $\mc X$ be a locally compact, complete and separable metric space, let $\mu$ be a finite Radon measure on $\mc X$. For $k\ge 1$, let $\QQ^k$ be a collection of pairwise disjoint Borel subsets of $\mc X$ such that $d_k:=\sup\{ \diam Q \,:\, Q\in \QQ^k\}$ goes to $0$ as $k\up \oo$. Then, 
\[
|\mu|\lt(\cap_{k}\cup \QQ^k\rt)\le  \liminf_{k\up\oo} \sum_{Q\in\QQ^k} |\mu(Q)|.
\]
\end{lemma}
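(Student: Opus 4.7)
The plan is to use the Jordan--Hahn decomposition $\mu=\mu^+-\mu^-$ to convert $\sum_Q|\mu(Q)|$ into genuine signed mass on well-chosen compact sets, at which point the triangle inequality loses no information. Fix $\eps>0$. By the Hahn decomposition combined with inner regularity of the finite Radon measure $|\mu|=\mu^++\mu^-$ on the locally compact Polish space $\mc X$, I select two disjoint compact subsets $P,N\sub A$ lying respectively in a positive and a negative set of $\mu$, with
\[
\mu^+(P)\,\ge\,\mu^+(A)-\eps,\qquad \mu^-(N)\,\ge\,\mu^-(A)-\eps.
\]
In particular $\mu\restr P\ge0$, $\mu\restr N\le0$, and the separation $\delta:=d(P,N)$ is strictly positive.

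For $k$ large enough that $d_k<\delta$, no single $Q\in\QQ^k$ can meet both $P$ and $N$, so the subfamilies
\[
\QQ^k_P:=\{Q\in\QQ^k: Q\cap P\ne\void\},\qquad \QQ^k_N:=\{Q\in\QQ^k: Q\cap N\ne\void\}
\]
are disjoint inside $\QQ^k$. Set $V_k^P:=\bigcup\QQ^k_P$ and $V_k^N:=\bigcup\QQ^k_N$. Since $P\sub A\sub\bigcup\QQ^k$, every point of $P$ lies in some $Q\in\QQ^k_P$, so $P\sub V_k^P$, and analogously $N\sub V_k^N$. Moreover every $Q\in\QQ^k_P$ has diameter at most $d_k$ and meets $P$, so $V_k^P$ sits inside the $d_k$-neighborhood $N_{d_k}(P)$ of $P$. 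Since $P$ is closed and $|\mu|$ is finite, $N_\eta(P)\dw P$ as $\eta\dw0$ and hence $|\mu|(N_\eta(P)\sm P)\dw0$; this gives $|\mu|(V_k^P\sm P)\to0$ as $k\up\oo$, and likewise $|\mu|(V_k^N\sm N)\to0$.

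Applying the triangle inequality on each of the two subfamilies,
\[
\sum_{Q\in\QQ^k}|\mu(Q)|\,\ge\,\sum_{Q\in\QQ^k_P}|\mu(Q)|+\sum_{Q\in\QQ^k_N}|\mu(Q)|\,\ge\,|\mu(V_k^P)|+|\mu(V_k^N)|,
\]
and using $\mu(V_k^P)=\mu(P)+\mu(V_k^P\sm P)\ge\mu^+(A)-\eps-|\mu|(V_k^P\sm P)$ together with the symmetric bound $-\mu(V_k^N)\ge\mu^-(A)-\eps-|\mu|(V_k^N\sm N)$, one obtains
\[
\sum_{Q\in\QQ^k}|\mu(Q)|\,\ge\,|\mu|(A)-2\eps-o_k(1).
\]
Letting $k\up\oo$ and then $\eps\dw0$ yields the claim. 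The one genuinely nontrivial step is the thin-shell estimate $|\mu|(V_k^P\sm P)\to0$, which is where the hypothesis $d_k\to0$ plays an essential role: without a diameter bound there is no reason for the $|\mu|$-mass of $V_k^P$ to concentrate on $P$, and signed cancellations away from $P$ could destroy the inequality. Everything else is routine: Jordan decomposition, inner regularity of finite Radon measures, and countable additivity on the disjoint family $\QQ^k_P$ (only countably many $Q$ contribute, since each $|\mu|(Q)>0$ costs positive mass from the finite total $|\mu|(\mc X)$).
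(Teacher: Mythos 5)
Your proof is correct, but takes a genuinely different route from the paper's. The paper uses the polar decomposition $\mu=\zeta|\mu|$ with $\zeta$ valued in $\{\pm1\}$, approximates $\mathbf{1}_{\mathcal{Y}}\zeta$ in $L^1(|\mu|)$ by a compactly supported continuous function $\zeta_\eps$, bounds $\sum_Q|\mu(Q)|\ge\sum_Q\zeta_\eps(Q)\mu(Q)$ using the $|\mu|$-averages $\zeta_\eps(Q)$, and controls the resulting oscillation error by the \emph{uniform continuity} of $\zeta_\eps$ combined with $d_k\to0$. You instead use the Hahn decomposition together with inner regularity to extract compact subsets $P,N\subset A$ of the positive/negative sets, exploit the strict separation $d(P,N)>0$ so that the subfamilies $\QQ^k_P$, $\QQ^k_N$ become disjoint once $d_k<d(P,N)$, and then control the error via the thin-shell estimate $|\mu|(V_k^P\setminus P)\to0$ coming from continuity from above of the finite measure $|\mu|$. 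Both arguments hinge on $d_k\to0$, but through different mechanisms (modulus of continuity of $\zeta_\eps$ versus shrinking neighborhoods of a compact), and your version is arguably more geometrically transparent while the paper's is a single clean approximation step. Two minor remarks, neither a gap: for $V_k^P$ and $V_k^N$ to be Borel you implicitly take $\QQ^k$ countable (as does the paper, and as holds in its applications where $\QQ^k$ is finite); and in the chain you silently use $\mu(P)=\mu^+(P)$ and $-\mu(N)=\mu^-(N)$, which hold because $P$, $N$ were chosen inside the positive/negative Hahn sets.
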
 
\begin{remark}
The lemma is false if we do not assume that the elements of $\QQ^k$ are disjoint.
In the sequel we apply the result with $\mathcal{X}=\Om$, a bounded open set of $\R^n$ and with $\QQ^k$ a finite set of disjoint boxes.
\end{remark}
\begin{proof}[Proof of Lemma~\ref{lemma:tech}]~In the proof we denote ${\mc Y}^k:=\cup\QQ^k$ and $\mc Y:=\cap_k{\mc Y}^k$.\\
Writing $\mu=\zeta |\mu|$ the polar decomposition of $\mu$, the function $\zeta$ is Borel measurable and takes values in $\{-1,1\}$. Let $\eps>0$, there exists $\zeta_\eps\in C_c(\mc X,[-1,1])$  such that
\be
\label{lemma:tech_prf1}
\|\zeta_\eps -\mathbf1_{\mc Y}\zeta \|_{L^1(\mc X,|\mu|)} <\eps.
\ee
For  $k\ge 1$ and  $Q\in\QQ^k$, we set
\[
\zeta_\eps(Q):=\dfrac1{|\mu|(Q)} \int_Q \zeta_\eps\, d|\mu|\quad \text{ with the convention }\zeta_\eps(Q)=0\text{ if } |\mu|(Q)=0.
\] 
By construction, $-1\le \zeta_\eps(Q)\le 1$ and we deduce  
\be\label{lemma:tech_prf2}
\sum_{Q\in \QQ^k} |\mu (Q)|  \ge  \sum_{Q\in \QQ^k}  \zeta_\eps(Q) \mu(Q) = \int_{{\mc Y}^k}\zeta_\eps \,d\mu + \sum_{Q\in \QQ^k} \int_Q\lt(\zeta_\eps(Q) -\zeta_\eps\rt)\,d\mu. 
\ee
The last term is bounded from below by 
\[
-|\mu|(\mathcal{X}) \max\{|\zeta_\eps(x)-\zeta_\eps(y)|:x,y\in \mathcal{X},\,|x-y|\le d_k\}.
\]
By uniform continuity of $\zeta_\eps$ and the assumption $d_k\dw0$, this goes to $0$ as $k\up \oo$. \\
Taking the infimum limit of~\eqref{lemma:tech_prf2} as $k\up\oo$, we obtain,
\[
\liminf_{k\up \oo} \sum_{Q\in\QQ^k} |\mu(Q)| \,\ge\,  \liminf_{k\up \oo} \int_{{\mc Y}^k}\zeta_\eps \,d\mu  \st{\eqref{lemma:tech_prf1}}\ge \int_{\mc Y} \zeta\,d\mu-\eps = |\mu|(\mc Y) -\eps.
\]
Recalling that $\eps>0$ is arbitrary, the lemma is proved.
\end{proof}

In the proof of Theorem~\ref{theo:dirac2} we use the following characterization of discrete measures. Let us first recall the notation $Q_{x,r}=x+ r[0,1)^n$ for $x\in \Om$ and $r>0$ and let us notice that for $x\in \Om^\eps$ and $0<\sqrt{n}\,r\le\eps$, we have $Q_{x,r}\sub\Om$.
\begin{lemma}\label{lem:Diracmeasure}
 Let $n\ge 1$,  $\Om\sub \R^n$ be a bounded open set and $\mu$ be a finite Radon measure on $\Om$ such that there exist $0<\te<1$  and  sequences $r_k$, $\eps_k$ with $0<\sqrt{n}\,r_k\le\eps_k\dw0$ for which
 \be\label{hyp:Diracmeasure}
  \liminf_{k\up \oo} \int_{\Om^{\eps_k}} \dfrac{|\mu(Q_{x,r_k})|^\te}{r_k^n} \,dx<\oo. 
 \ee
 Then there exist sequences $x^j\in\Om$, $m_j\in\R\bks\{0\}$ such that $\mu=\sum m_j\delta_{x^j}$ with the estimate
  \be\label{conclusion:Dirac}
  \sum  |m_j|^\te\le \liminf_{k\up \oo} \int_{\Om^{\eps_k}} \dfrac{|\mu(Q_{x,r_k})|^\te}{r_k^n} \,dx.
 \ee
\end{lemma}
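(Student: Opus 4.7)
The plan is to discretize the integral hypothesis by an averaging argument. Decomposing $\R^n$ as the disjoint union $[0,r_k)^n + r_k\Z^n$ and performing the change of variable $x=y+r_k j$ with $y\in[0,r_k)^n$, $j\in\Z^n$, the Fubini identity
\[
\int_{[0,r_k)^n}\sum_{j:\,y+r_k j\in\Om^{\eps_k}}|\mu(Q_{y+r_k j,r_k})|^\te\,dy \,=\, \int_{\Om^{\eps_k}}|\mu(Q_{x,r_k})|^\te\,dx
\]
provides an offset $y^k\in[0,r_k)^n$ such that the family $\QQ^k=\{Q_{y^k+r_k j,r_k}:\,j\in\Z^n,\ y^k+r_k j\in\Om^{\eps_k}\}$ of disjoint cubes of side $r_k$, all contained in $\Om$ (by $\sqrt n\,r_k\le\eps_k$), satisfies $\sum_{Q\in\QQ^k}|\mu(Q)|^\te\le r_k^{-n}\int_{\Om^{\eps_k}}|\mu(Q_{x,r_k})|^\te\,dx$. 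Passing to a subsequence, I may assume this sum converges to the liminf $C$ of the hypothesis.

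The bound on the atomic part of $\mu$ follows at once: for any finite list of atoms $x^1,\cdots,x^N$ with masses $m_1,\cdots,m_N$, for $k$ large these points lie in $N$ distinct cubes $Q^k_1,\cdots,Q^k_N$ of $\QQ^k$ (since $\diam Q^k_j=\sqrt n\,r_k\to0$). Continuity from above of the finite measure $|\mu|$ along $Q^k_j\sub B(x^j,\sqrt n\,r_k)$ gives $\mu(Q^k_j)\to m_j$, so passing to the limit in $\sum_{j=1}^N|\mu(Q^k_j)|^\te\le\sum_{Q\in\QQ^k}|\mu(Q)|^\te$ yields $\sum_{j=1}^N|m_j|^\te\le C$. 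Letting $N\up\oo$ gives $\sum|m_j|^\te\le C$.

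It remains to rule out a diffuse part, \ie to show $|\mu|(\Om\sm A)=0$ where $A=\{x^j\}$ is the countable set of atoms of $\mu$. I fix $\tau>0$ and let $\QQ^k_\tau\sub\QQ^k$ be the subfamily of ``small'' cubes $Q$ with $|\mu(Q)|<\tau$. The elementary inequality $|\mu(Q)|\le\tau^{1-\te}|\mu(Q)|^\te$ on these cubes combined with the previous bound gives $\sum_{Q\in\QQ^k_\tau}|\mu(Q)|\le\tau^{1-\te}C+o(1)$. Moreover, any $x\in\Om\sm A$ lies in a unique grid cube $Q^k(x)\in\QQ^k$ for all $k$ large, and $|\mu(Q^k(x))|\le|\mu|(B(x,\sqrt n\,r_k))\to0$ by continuity from above of $|\mu|$ at $x$ (using that $|\mu|(\{x\})=0$ since $x$ is not an atom), so $Q^k(x)\in\QQ^k_\tau$ eventually.

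The main technical obstacle is that Lemma~\ref{lemma:tech} as stated controls $|\mu|$ of $\bigcap_k\bigcup\QQ^k_\tau$, while a non-atom $x$ only belongs to $\bigcup\QQ^k_\tau$ \emph{eventually} in $k$. I would circumvent this by applying Lemma~\ref{lemma:tech} to each tail $\{\QQ^k_\tau\}_{k\ge k_0}$ separately, obtaining $|\mu|\big(\bigcap_{k\ge k_0}\bigcup\QQ^k_\tau\big)\le\tau^{1-\te}C$, and then using continuity from below of $|\mu|$ along the increasing sequence $\bigcap_{k\ge k_0}\bigcup\QQ^k_\tau$ (whose union over $k_0$ equals $\liminf_k\bigcup\QQ^k_\tau$) to upgrade this to $|\mu|\big(\liminf_k\bigcup\QQ^k_\tau\big)\le\tau^{1-\te}C$. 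The inclusion $\Om\sm A\sub\liminf_k\bigcup\QQ^k_\tau$ then gives $|\mu|(\Om\sm A)\le\tau^{1-\te}C$, and sending $\tau\dw0$ concludes the proof.
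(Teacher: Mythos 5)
Your proof is correct and takes a genuinely different route from the paper's. The paper introduces a maximal function $M_k(y):=\sup\{|\mu|(Q_{x,r_k}):y\in Q_{x,r_k}\sub\Om\}$, identifies the atom set as $\Sigma=\{y:\limsup_k M_k(y)>0\}$, and then applies Egoroff on the complement to obtain a set of nearly full $|\mu|$-mass on which $M_k\to0$ \emph{uniformly} at some rate $t_k\dw 0$; the interpolation $|\mu(Q)|\le t_k^{1-\te}|\mu(Q)|^\te$ then forces $\liminf_k\sum|\mu(Q)|=0$ over the averaged grid meeting that set, and Lemma~\ref{lemma:tech} finishes, the covering constraint being only a deterministic tail issue. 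You dispense with Egoroff and the maximal function altogether: you fix the grid offset once and for all, cut at a fixed threshold $\tau$, and use the same interpolation (with $\tau$ in place of $t_k$) to bound the total mass of the small cubes by $\tau^{1-\te}$ times the $\te$-mass budget $C$; non-atoms are eventually captured only by small cubes, so $\Om\sm A\sub\liminf_k\bigcup\QQ^k_\tau$. You correctly identified that this ``eventually'' is now pointwise rather than uniform, so Lemma~\ref{lemma:tech} does not apply verbatim (it controls $\bigcap_k$, not $\liminf_k$), and you resolved this cleanly by applying the lemma to tails and passing to the limit via continuity from below of $|\mu|$. Your derivation of the mass estimate~\eqref{conclusion:Dirac}, read directly off the good grid via $\mu(Q^k_j)\to m_j$, is also a small simplification compared with the paper's separate averaging with the $(1-t)$-slack. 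Both arguments rest on the same two tools — averaging over grid offsets via Fubini and Lemma~\ref{lemma:tech} — but your fixed-threshold scheme trades the uniformity supplied by Egoroff for one extra elementary measure-theoretic step, arriving at the same quantitative conclusion.
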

\begin{remark}
\label{remark_not_eps_kr_k_of_prop}
In the statement of the lemma, the sequences $\eps_k$ and $r_k$ are not \textit{a priori} those of Proposition~\ref{prop:goodqk}.
\end{remark}
\begin{proof}[Proof of Lemma~\ref{lem:Diracmeasure}]~
 For $y\in\Om$, we define 
 \[
M_k(y):= \sup\lt\{ |\mu|(Q_{x,r_k}): x\in \Om\text{ such that } y\in Q_{x,r_k}\sub\Om\rt\}.
 \]
and then for $\eta>0$,
\[
\Sigma_\eta:=\lt\{y\in \Om\, :\, \limsup_{k\up \oo}M_k(y)\ge\eta\rt\}.
\]
For every $y\in \Sigma_\eta $ we have $|\mu(\{y\})|\ge \eta$ and thus $\Sigma_\eta$ is a finite set. Taking the union of the nested family $\{\Sigma_\eta\}_{\eta>0}$, we  see that the set 
\begin{equation*}
\Sigma:=\,\!\!\bigcup_{\eta\dw 0} \Sigma_{\eta} = \lt\{y\in \Om\, :\, |\mu|(\{y\})>0\rt\}
\end{equation*}
is at most countable.

We claim  that $|\mu|( \Om \sm  \Sigma)=0$.  By definition, for every $\eps>0$, 
\[
\Om^\eps \sm\Sigma = \lt\{y\in \Om^\eps\, :\, \lim_{k\up\oo} M_k(y)=0 \rt\}.
\]
By Egoroff theorem, there exists a Borel set  $ \mc Y \sub\Om^\eps \sm \Sigma$ with $|\mu|(\mc Y)\ge|\mu|( \Om^\eps \sm\Sigma)/2$ and
\be\label{OmepsmN}
 \lim_{k\up \oo} t_k=0\qquad \text{ where }\ t_k:=\sup_{y\in{\mc Y}}M_k(y).
\ee
For $k\ge 1$ and $x\in Q_{0,r_k}$ we consider the set 
\[
 \QQ^k_x:=\{ Q\ : \ Q=Q_{x+r_k z, r_k} \ \text{ for some  }z\in \Z^n,\ Q\sub \Om \text{ and } Q\cap \mc Y \neq \void\}.
\]
Using the definition of $\mc Q^k_x$, we have,
\[
\dfrac1{r_k^n} \int_{Q_{0,r_k}} \sum_{Q\in\QQ^k_x} |\mu(Q)|\,dx\,  \le \, \dfrac{t_k^{1-\te}}{r_k^n}  \int_{Q_{0,r_k}}\sum_{Q\in\mc Q^k_x} |\mu(Q)|^{\te}\, dx\, \le\,  t_k^{1-\te} \int_{\Om^{\eps_k}} \dfrac{|\mu(Q_{y,r_k})|^\te}{r_k^n} \,dy,
\]
where we used Fubini for the last identity. Taking the infimum limit as $k\up\oo$, we see that by assumption~\eqref{hyp:Diracmeasure}, the right-hand side tends to zero. In particular, there exists a sequence  $(x^k)_{k\ge 1}$ with $x^k\in Q_{0,r_k}$ such that 
\be
\nonumber
 \liminf_{k\up\oo} \sum_{Q\in\QQ^k_{x^k}} |\mu(Q)|\, =\, 0.
\ee
By~\eqref{OmepsmN}, $\mc Q^k_{x^k}$ covers $\mc Y$ for $k$ large enough (as soon as $r_k<\eps/\sqrt{n}$). Applying Lemma~\ref{lemma:tech} to the measure $\mu$ and to the sequence $\{\mc Q^k_{x^k}\}$, we obtain 
\[
|\mu|(\Om^\eps\sm\Sigma)/2\le |\mu|(\mc Y)  \le \liminf_{k\up\oo}  \sum_{Q\in\QQ^k_{x^k}} |\mu(Q)|\, =\, 0.
\]
Since $\eps>0$ is arbitrary we conclude that $\mu=\mu \restr \Sigma$.\smallskip

Recalling that $\Sigma$ is at most countable, we write $\Sigma=(x^j)_{j\ge1}$ with $x^j$ pairwise distinct. We then have $\mu=\sum   m_j\delta_{x^j}$ for some summable sequence of real numbers $(m_j)$. For every $N\in \N$ and every $t>0$, there exists $r(N,t)>0$ such that for $0<r\le r(N,t)$ and $x\in \Om$, 
\begin{align*}
&Q_{x,r}\cap \{x^1,\cdots,x^N\} \text{ has at most one element}\\ 
&\text{and if }x^j\in Q_{x,r} \text{ for some }1\le j\le N \text{ then } |\mu(Q_{x,r})|\ge (1-t)|m_j|.
\end{align*}
We thus have 
\[
 \liminf_{k \up \oo}  \int_{\Om^{\eps_k}}  \dfrac{|\mu(Q_{x,r_k})|^\te}{r_k^n} \,dx
 \ge  \liminf_{k\up \oo} \sum_{j=1}^N\int_{\Om}\un_{Q_{x,r_k}}(x^j)\dfrac{|\mu(Q_{x,r_k})|^\te}{r_k^n} \,dx\ge (1-t)^{\te}\sum_{j=1}^N |m_j|^\te.
\]
Sending $t\dw 0$ and $N\up \oo$, we obtain~\eqref{conclusion:Dirac}. 
\end{proof}
\begin{proof}[Proof of Theorem~\ref{theo:dirac2}]~
The proof  goes as explained in the introduction. Using Lemma~\ref{lemma:4points}, the theorem follows from Proposition~\ref{prop:goodqk} and Lemma~\ref{lem:Diracmeasure} together with the triangle inequality in the form of 
\[
 q(x,z)\ges |\mu[u](Q_{x,z})|^\te.
\]
 Finally, if $u$ is integer-valued, we can argue as in the proof of Theorem~\ref{theo:dirac1} that for every $x\in \supp \mu[u]$ there exists a sequence $x^k\to x$ and $z^k\to 0$ such that~\eqref{id4pts} holds. Therefore 
$\mu[u](\{x\})=\lim_{k\to \oo} \mu[u](Q_{x^k,z^k})=\lim_{k\to \oo}  D[Du(\cdot,z_2^k)](x^k,z^k_1)\in \Z.
$
\end{proof}

\begin{proof}[Proof of Theorem~\ref{thmBVtheta}]~
The proof is a refinement of the proof of~\cite[Theorem~4.1]{GM1} using the additional information given by Theorem~\ref{theo:dirac2}.    
Without loss of generality, we assume that $I_l=(0,\ell_l)$ for $l=1,2$.    Let $u\in L^\oo(\Om)$ with $\|u\|_\oo\le 1$ and $\E( u)<\oo$. \medskip

\noindent 
\textit{Proof of (i), Step 1. Decomposition of $u$ and control of $\|w\|_\oo+|\nb w|_\te(\Om)$.}\\
Let us first recall that by~\eqref{decompu}, there exist $u_1\in L^\oo(I_1)$, $u_2\in L^\oo(I_2)$ such that 
\be\label{decompurap}
u(x)=u_1(x_1)+u_2(x_2)+w(x) \qquad\text{\ae in } \Om.\smallskip
\ee
By Theorem~\ref{theo:dirac2}, $\mu:=\mu[u]=\pt_1\pt_2 u$ is a finite Radon measure which writes as
\be\label{encoresommempowteta}
\mu=\sum m_j \delta_{x^j}\qquad\text{ with }\qquad\sum |m_j|^\te\les\E(u).
\ee 
By direct computation, the definition~\eqref{defw}  of $w$ yields  the identities
\[
\pt_1 w=\sum m_j \HH^1\restr I_1^j,\qquad\qquad\pt_2w =\sum m_j \HH^1 \restr I_2^j,
\]
where, for $j\ge 1$, we denote,  $I_1^j:=\{x_1^j\}\times (0,x_2^j)$ and $I_2^j:= (0,x_1^j)\times \{x_2^j\}$. We deduce that $w\in SBV_\te(\Om)$ with the estimate 
\be\label{SBVtew}
|\nb w|_\te(\Om)
=\sum  (|x_1^j|+|x_2^j|)|m_j|^\te 
\le( \ell_1+\ell_2)\sum  |m_j|^\te \stackrel{\eqref{encoresommempowteta}}\les ( \ell_1+\ell_2)\E(u).
\ee
Besides, by definition of $w$ we have $w\in L^\oo(\Om)$ with
\be\label{SBVtew0}
\|w\|_\oo\le |\mu|(\Om)\les\E(u). 
\ee
Eventually, let us notice for later use that by Lemma~\ref{lemma:4points} there holds, for every $j\ge1$, 
\be\label{supml}
|m_j|\le \sup_{Q\sub\Om} |\mu(Q)| \le 4\|u\|_\oo\le4.
\ee


\noindent
\textit{Proof of (i), Step 2. Control of the oscillations of $u_1$ or of $u_2$.}\\
We proceed as in the proof of~\cite[Theorem~4.1]{GM1}. For every fixed  $z\in \R^2$,  almost every $x\in\Om$  with $Q_{x,z}\sub \Om$ satisfies $Du(x,z_l)=Du_l(x_l,z_l)+Dw(x,z_l)$ for $l=1,2$. Since, $Dw(x,z_1)=\mu((x_1,x_1+z_1]\times (0,x_2])$ and $Dw(x,z_2)=\mu((0,x_1]\times (0,x_2+z_2])$, we deduce from the triangle inequality, 
\begin{align}\label{g1}
\psi_1(x_1,z_1)&:= \Big[|D u_1(x_1,z_1)|-|\mu|((x_1,x_1+z_1]\times I_2)\Big]_+\le  |Du(x,z_1)|,\\
\label{g2}
 \psi_2(x_2,z_2)&:=\Big[|D u_2(x_2,z_2)|-|\mu|(I_1\times(x_2,x_2+z_2])\Big]_+\le |Du(x,z_2)|, 
\end{align}
where, as usual, $a_+:=\max(a,0)$ denotes the positive part of  $a$.   
Raising inequality~\eqref{g1} to the power $\te_1$ and~\eqref{g2} to the power $\te_2$, taking the product and integrating, we have by Proposition~\ref{prop:goodqk}, 
\[
\limsup_{k\up \oo} \lt(\int_{I_1^{\eps_k}} \dfrac{[\psi_1(x_1,z_1^k)]^{\te_1}}{r_k} \,dx_1\rt) \lt(\int_{I_2^{\eps_k}} \dfrac{[\psi_2(x_2,z_2^k)]^{\te_2}}{r_k} \,dx_2\rt) \, \les \, \E( u).
\]
We obtain that either for $l=1$ or for $l=2$ and  up to extraction, 
\be\label{borneintgi}
\limsup_{k\up \oo} \int_{I_l^{\eps_k}} \dfrac{\lt[\psi_l(x_l,z_l^k)\rt]^{\te_l}}{r_k} \,dx_l\, \les \, \E( u)^{1/2}.
\ee
Let us assume for instance that~\eqref{borneintgi} holds with $l=1$ and let $\te_* \in [\te,1]$.  Since $\te_*\le1$, the function $s\in\R_+\mapsto s^{\te_*}$ is subaddititive and we have for every $k\ge 1$,
\begin{multline}\label{borne_int_Du1}
 \int_{I_1^{\eps_k}} |D u_1(x_1,r_k)|^{\te_*} \,dx_1\\
 \le  \int_{I_1^{\eps_k}} \lt[\psi_1(x_1, r_k e_1)\rt]^{\te_*} \,dx_1 + \int_{I_1^{\eps_k}} \lt[|\mu|((x_1,x_1+r_k]\times I_2)\rt]^{{\te_*}} \,dx_1.
\end{multline}
To estimate the second term in the right-hand side, we use the atomic decomposition of $\mu$ and the subadditivity of $s\in\R_+\mapsto s^{\te_*}$. We have,
\begin{multline}
\int_{I_1^{\eps_k}} \lt[|\mu|((x_1,x_1+r_k]\times(0,\ell_2))\rt]^{{\te_*}} \,dx_1
= \int_{I_1^{\eps_k}} \lt(\sum_{\{ 0<x_1^j-x_1\le r_k\}} |m_j|\rt)^{\te_*}\,dx_1 \\
\le \int_{I_1^{\eps_k}} \sum_{\{ 0<x_1^j-x_1\le r_k\}} |m_j|^{\te_*}\,dx_1
\le (\sup |m_j|)^{\te_*-\te}  \int_{I_1^{\eps_k}} \sum_{\{j\,:\,0<x_1^j-x_1\le r_k\}} |m_j|^\te\,dx_1\\
\label{borne_mu_x1,x_1+rk}
\le (\sup |m_j|)^{\te_*-\te} \sum\lt[|m_j|^\te \HH^1((x_1^j-r_k,x_1^j])\rt]
\stackrel{\eqref{encoresommempowteta}\eqref{supml}}\les r_k\E(u).
\end{multline}
To recover the conclusion of~\cite[Theorem~4.1]{GM1}, we first pick $\te_*=1$. Dividing~\eqref{borne_int_Du1} by $r_k$, taking the superior limit as $k\up \oo$ and using~\eqref{borneintgi} and~\eqref{borne_mu_x1,x_1+rk}, we obtain
\[
\limsup_{k\up\oo}  \int_{I_1^{\eps_k}} \dfrac{|D_1 u_1(x_1,r_k)|}{r_k} \,dx_1\les \E(u)^{1/2}+\E(u).
\]
This readily yields $u_1\in BV(I_1)$ and up to the subtraction of a constant, we have
\be\label{estimBVu1}
\|u_1\|_\oo+|\nb_1u_1|(I_1)\les \E(u)^{1/2}+\E(u).
\ee
Next, we use $\te_*=\te$ to get 
\[
\limsup_{k\up\oo}  \int_{I_1^{\eps_k}} \dfrac{|D_1 u_1(x_1,r_k)|^\te}{r_k} \,dx_1\les \E(u)^{1/2}+\E(u).
\]
We apply Lemma~\ref{lem:Diracmeasure} with $n=1$ to the measure $\mu_1:= \pt_{e_1}u_1$, we can write
\[
\mu_1=\sum \widetilde m_j \delta_{\widetilde x_1^j}
\]
for some sequences $(\widetilde x_1^j)\sub I_1$ and $\widetilde m_j\in\R$ with $\sum  |\widetilde m_j|^\te<\oo$. We deduce that $u_1\in SBV_\te(I_1)$, with  
\be\label{estimBVteu1}
|\nb_1u_1|_\te(I_1)\les \E(u)^{1/2}+\E(u).
\ee
Eventually, we set $\bar u(x)=u_2(x_2)$, we have $\bar u\in \U(\Om)$ and $u(x)-\bar u(x)=u_1(x_1)+w(x)$.  Collecting~\eqref{SBVtew},~\eqref{SBVtew0},~\eqref{estimBVu1} \&~\eqref{estimBVteu1}, we conclude that $u-\bar u\in SBV_\te(\Om)$ with the estimate~\eqref{quantrig2dtheta}.\medskip

\noindent
\textit{Proof of (ii)}.\\
Let us now turn to the proof of the second part of the theorem and  assume that  $\mu[u]=0$ as well as $u\notin \U(\Om)$. This implies that $w=0$ in~\eqref{decompurap}.  By Proposition~\ref{prop:goodqk}, we have 
\[
\limsup_{k\up \oo} \lt(\int_{I_1^{\eps_k}} \dfrac{|D u_1(x_1,z_1)|^{\te_1}}{r_k} \,dx_1\rt) \lt(\int_{I_2^{\eps_k}} \dfrac{|D u_2(x_2,z_2)|^{\te_2}}{r_k} \,dx_2\rt) \, \les \, \E( u).
\]
Therefore, there exist constants $\lambda_1,\lambda_2\in[0,\oo]$ such that $\lambda_1\lambda_2\les \E(u)$ and for $l=1,2$,
\[
\liminf_{k\up \oo} \int_{I_l^{\eps_k}} \dfrac{|D u_l(x_l,z_l)|^{\te_l}}{r_k} \,dx_l \,\le \lambda_l.
\]
Since both $u_1$ and $u_2$ are not constant, this implies that $\lambda_l>0$ for $l=1, 2$ and thus also $\lambda_1,\lambda_2<\oo$. Arguing exactly as in 
(i)  we get that  $u_l\in SBV_{\te_l}(I_l)$ for $l=1,2$ with $|\nb u_l|_{\te_l}(I_l)\le \lambda_l$ with the estimate~\eqref{quantrig2dtheta2}.\medskip

\noindent
\textit{Proof of (iii)}.\\
We finally assume that $u\in L^\oo(\Om)\cap BV(\Om)$ has finite energy and that moreover\footnote{we note $\nb^{\mathrm j}v$ the jump part of the distributional gradient of a $BV$ function $v$.} $\nb^{\mathrm j}u=0$. 
We have $\nb^{\mathrm j}w =\nb w$,
so that  $0=\nb^{\mathrm j}_1u=\pt_1 w +\nb^{\mathrm j}_1 u_1$. Taking the derivative with respect to $x_2$, we obtain $0=\pt_2\pt_1 w=\mu[u]$. 
By (ii), $u\in \U(\Om)$ since otherwise we could write it as $u=u_1+u_2$ where $u_l\in SBV_{\te_l}(I_l)$ are non constant functions for $l=1,2$ and  would contradict the hypothesis $\nb^j u=0$.  
\end{proof}

%
%

\subsection{The higher dimensional case}\label{S3}
We now turn to the case $n=n_1+n_2>2$. We recall that we have fixed orthonormal bases $(e_1,\cdots,e_{n_1})$ and $(f_1,\cdots,f_{n_2})$ of $\X_1$ and $\X_2$ and have set
\be\label{expressmuu}
\mu[u]=\nb_1\nb_2 u=\sum_{\substack{1\le i\le n_1\\1\le j \le n_2}} \dfrac{\pt^2 u}{\pt x^1_{i}\pt x^2_{j}} \, e_i\otimes f_j=\sum_{\substack{1\le i\le n_1\\1\le j \le n_2}} \mu_{i,j} \, e_i\otimes f_j.
\ee
As outlined in the introduction we will identify $\mu[u]$ with a $(n-2)\,$-dimensional current. To this aim we recall some notation from Geometric Measure Theory, see~\cite{Federer,KrantzPark}. We note $\DD^k(\Om)$ the space of smooth and compactly supported $k\,$-differential forms on $\Om$ on which acts the differential operator $d:\DD^k(\Om)\to\DD^{k+1}(\Om)$. Its dual space is the space of $k\,$-currents $\DD_k(\Om)$ on which acts the dual operator  $\pt :\DD_k(\Om)\to\DD_{k-1}(\Om)$. We use the standard notation $\we$ for the exterior product. We recall that a $k\,$-current $T$ is rectifiable if there exists a $k\,$-rectifiable set  $\Sigma\sub \Om$ oriented by a unitary simple $k\,$-vector field $\xi$ and a Borel measurable multiplicity function $m:\Sigma\to \R$ such that for $\om\in \DD^k(\Om)$,
\be\label{def:rectifiableT}
 \lt<T,\om\rt>=\int_\Sigma m\,\lt<\om,\xi\rt> d\HH^{k}.
\ee
We  introduce the ``partial'' differentials $d_1$, $d_2$ as follows:
\[
\begin{array}{rclcrcl}
d_1:\DD^{k_1}(\Om_1) \we\DD^{k_2}(\Om_2)  &\to& \DD^{k_1+k_2+1}(\Om)&&d_2:\DD^{k_1}(\Om_1) \we\DD^{k_2}(\Om_2) &\to& \DD^{k_1+k_2+1}(\Om) \\
\om_1\we \om_2&\mapsto & d\om_1\we\om_2,&&\om_1\we\om_2&\mapsto & \om_1\we d\om_2.
\end{array}
\]
For $0\le k\le n$, we then extend by linearity and density the operators $d_1$, $d_2$ on
\[
\DD^k(\Om)=\ov{\Span \lt(\oplus_{l=0}^k \DD^l(\Om_1)\we\DD^{k-l}(\Om_2)\rt)}.
\]
  By duality, this defines continuous partial boundary operators on the space of currents:
\[
\lt<\pt_1 T,\om\rt>:=\lt<T,d_1\om\rt>,\quad \lt<\pt_2 T,\om\rt>:=\lt<T,d_2\om\rt>,\quad\text{for }T\in \mc{D}_k(\Om),\ \om\in \mc{D}^{k-1}(\Om).
\]
 For $u\in L^1_{loc}(\Om)$, we define the current
\[
\lb u\rb:=u\,e_1\we\cdots\we e_{n_1}\we f_1\we \cdots \we f_{n_2}\ \in\DD_n(\Om),
\]
from which, we derive the $(n-2)\,$-dimensional current
\[
T[u]:=\pt_1\pt_2 \lb u\rb.
\]
For every $(n-2)\,$-current $T$ we define the $\theta\,$-mass of $T$ by
\[
 \M_\theta(T):=\begin{cases}
\displaystyle  \int_{\Sigma} |m|^\theta\,d\HH^{n-2} & \text{if } T \text{ is rectifiable,}\\[8pt]
\qquad +\oo &\text{otherwise}.
\end{cases}
\]
We say that a rectifiable $(n-2)\,$-current is tensor-rectifiable if we can choose the set $\Sigma$ from~\eqref{def:rectifiableT} such that $\Sigma\sub \Sigma^1+\Sigma^2$ where $\Sigma^l\sub \Om_l$ is $(n_l-1)\,$-rectifiable. Notice that in this case, the $k\,$-vector field $\xi$ tangent to $\Sigma$ must be of the form $\xi=\xi^1\we\xi^2$ with $\xi^l$ tangent to $\Sigma^l$.
For $l=1,2$ we set
\[
 dx^l=dx_1^l\wedge \cdots \wedge dx_{n_l}^l
\]
and introduce the following simple multi-covectors for $1\le i\le n_l$,
\[
d x^l_{\bar\imath}:=dx^l_1\we\cdots\we dx^l_{i-1}\we dx^l_{i+1}\we\cdots\we dx^l_{n_l}.
\]
Similarly, we define the simple multi-vectors
\[
e_{\bar\imath}:=e_1\we\cdots\we e_{i-1}\we e_{i+1}\we\cdots\we e_{n_1},\qquad f_{\bar\jmath}:=f_1\we\cdots\we f_{j-1}\we f_{j+1}\we\cdots\we f_{n_2}.
\]
We define the Hodge star operator on $\X_1$, first on the simple $(n_1-1)\,$-vectors $e_{\bar\imath}$ by $\star e_{\bar\imath}=(-1)^i e_i$, and then extend it by linearity. Similarly we set $\star f_{\bar \jmath}=(-1)^j f_j$.\\
Every $\om\in \DD^{n-2}(\Om)$ decomposes as
\be\label{decompoomega}
\om =\lt[\sum_{\substack{1\le i\le n_1\\1\le j \le n_2}} \om_{i,j}\,d x^1_{\bar\imath}\we dx^2_{\bar\jmath}\rt] +\om_1\, \we dx^2 \,+\,dx^1\we\om_2.
\ee

\begin{proposition}\label{equivTmu}
 For every $u\in L^1_{loc}(\Om)$  we have with  the notation~\eqref{expressmuu},
 \be\label{Tuomegaij}
  T[u]=\sum_{\substack{1\le i\le n_1\\1\le j \le n_2}} (-1)^{i+j} \mu_{i,j}\,  e_{\bar\imath} \wedge f_{\bar\jmath}.
 \ee
As a consequence, $\mu[u]$ is rectifiable (respectively tensor-rectifiable) if and only if $T[u]$ is rectifiable (respectively tensor-rectifiable). Moreover $\pt T[u]=0$ (we say that $T[u]$ is a cycle),
\be\label{eqmass}
 \M(T[u])\les |\mu[u]|(\Om)\qquad\text{and}\qquad \M_\theta(T[u])=\M_\theta(\mu[u]).
\ee

\end{proposition}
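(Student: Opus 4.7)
The first step is to establish identity~\eqref{Tuomegaij} by pairing with test forms. For $\omega\in\DD^{n-2}(\Om)$ decomposed as in~\eqref{decompoomega}, we have $\lt<T[u],\omega\rt>=\lt<\lb u\rb,d_2 d_1\omega\rt>$. The components $\omega_1\we dx^2$ and $dx^1\we\omega_2$ drop out of $d_2 d_1\omega$ for bidegree reasons: $d_1$ raises the $x^1$-bidegree by $1$ and $d_2$ raises the $x^2$-bidegree by $1$, so the $(n_1,n_2-2)$-piece $dx^1\we\omega_2$ is killed immediately by $d_1$, while the $(n_1-2,n_2)$-piece $\omega_1\we dx^2$ survives $d_1$ but is then killed by $d_2$. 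Applying $d_1$ and then $d_2$ to the surviving $(n_1-1,n_2-1)$-piece $\sum_{ij}\omega_{i,j}\,dx^1_{\bar\imath}\we dx^2_{\bar\jmath}$, and using $dx^1_i\we dx^1_{\bar\imath}=(-1)^{i-1}dx^1$ and similarly in $\X_2$, one obtains $d_2 d_1\omega=\sum_{ij}(-1)^{i+j}\pt_{e_i}\pt_{f_j}\omega_{i,j}\,dx^1\we dx^2$. Pairing with $\lb u\rb$ and integrating by parts in the distributional sense yields $\lt<T[u],\omega\rt>=\sum_{ij}(-1)^{i+j}\lt<\mu_{i,j},\omega_{i,j}\rt>$, which matches the pairing with the right-hand side of~\eqref{Tuomegaij} since $e_{\bar\imath}\we f_{\bar\jmath}$ is dual to $dx^1_{\bar\imath}\we dx^2_{\bar\jmath}$. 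The cycle property $\pt T[u]=0$ then follows from $d_l^2=0$ combined with $d_1 d_2=d_2 d_1$ (both immediate from the excerpt's unsigned definitions of $d_l$): on $\DD^{n-3}(\Om)$ one has $d=d_1+\sigma d_2$ for some sign, hence $d_2 d_1 d=\sigma d_2 d_1 d_2=\sigma d_2^2 d_1=0$. The mass bound $\M(T[u])\les|\mu[u]|(\Om)$ reads off from~\eqref{Tuomegaij} since the $e_{\bar\imath}\we f_{\bar\jmath}$ are unit simple $(n-2)$-vectors, giving $\M(T[u])\le\sum_{ij}|\mu_{i,j}|(\Om)\les|\mu[u]|(\Om)$ up to a dimensional constant.

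For the equivalence of rectifiability, the central observation is that~\eqref{Tuomegaij} can be recast as $T[u]=\widehat\star\,\mu[u]$, where $\widehat\star$ is the linear map defined on simple tensors by $\widehat\star(v\otimes w):=(\star^{-1}v)\we(\star^{-1}w)$, with $\star^{-1}:\X_l\to\Lambda^{n_l-1}\X_l$ the inverse of the Hodge star of the excerpt (so $\star^{-1}e_i=(-1)^i e_{\bar\imath}$). A direct basis computation shows that $\widehat\star$ is an isometry from $\X_1\otimes\X_2$ (with the Hilbert--Schmidt norm, for which $|\nu_1\otimes\nu_2|=|\nu_1||\nu_2|$) onto the bigraded subspace $\Lambda^{n_1-1}\X_1\we\Lambda^{n_2-1}\X_2\subset\Lambda^{n-2}\R^n$, and it exchanges rank-one tensors with simple $(n-2)$-vectors of the form $\xi^1\we\xi^2$. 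Hence, if $\mu[u]=m(\nu_1\otimes\nu_2)\,\HH^{n-2}\restr\Sigma$ as in Theorem~\ref{coromurect}(i) with $|\nu_l|=1$, then $T[u]=m\,(\star^{-1}\nu_1)\we(\star^{-1}\nu_2)\,\HH^{n-2}\restr\Sigma$ is rectifiable with the same support, the same scalar multiplicity $m$, and unit simple tangent. Conversely, if $T[u]$ is rectifiable, its tangent $(n-2)$-vector lies pointwise in the image of $\widehat\star$ by~\eqref{Tuomegaij}, hence factors as $\xi^1\we\xi^2$, and inverting $\widehat\star$ recovers the structure of $\mu[u]$ required by Theorem~\ref{coromurect}(i).

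Tensor-rectifiability passes through the correspondence immediately because $\Sigma$ is unchanged and the condition $\Sigma\subset\Sigma_1+\Sigma_2$ concerns only the support. Finally, the isometry property of $\widehat\star$ together with the preservation of the scalar multiplicity $m$ gives the identity of total variation measures $|\mu[u]|=|m|\HH^{n-2}\restr\Sigma=\|T[u]\|$, whence $\M_\theta(\mu[u])=\int_\Sigma|m|^\theta\,d\HH^{n-2}=\M_\theta(T[u])$. The main technical point to verify with care is the linear-algebraic fact that $\widehat\star$ is an isometry identifying rank-one tensors in $\X_1\otimes\X_2$ with simple $(n-2)$-vectors in $\Lambda^{n_1-1}\X_1\we\Lambda^{n_2-1}\X_2$; this is an elementary check against the orthonormal bases $(e_i\otimes f_j)$ and $(e_{\bar\imath}\we f_{\bar\jmath})$, noting that a $(n-2)$-vector lying in this bigraded subspace is automatically simple exactly when each $\X_l$-factor is itself a simple $(n_l-1)$-vector.
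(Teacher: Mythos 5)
Your proof is correct and follows essentially the same route as the paper's: pairing with the decomposition~\eqref{decompoomega} to get~\eqref{Tuomegaij}, using the bidegree structure to kill the $\om_1\we dx^2$ and $dx^1\we\om_2$ pieces, deducing the cycle property from $d_l^2=0$, reading off the mass bound, and translating rectifiability via the Hodge star on each factor. The only cosmetic difference is that you package the correspondence into an explicit isometry $\widehat\star:\X_1\otimes\X_2\to\Lambda^{n_1-1}\X_1\we\Lambda^{n_2-1}\X_2$, whereas the paper works directly with components ($\xi^l_{\bar\imath}=(-1)^i\nu^l_i$, etc.); both rest on the same linear-algebra fact that a simple $(n-2)$-vector in the bigraded subspace necessarily factors as $\xi^1\we\xi^2$ with $\xi^l\in\Lambda^{n_l-1}\X_l$, which you flag correctly as the point to check (your last sentence is phrased a bit loosely --- every $(n_l-1)$-vector in $\X_l$ is automatically simple, so the real statement is that $\xi$ is simple iff it factors --- but the intent and the computation are right).
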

\begin{proof}
 We first establish~\eqref{Tuomegaij}. Fix $\om\in \DD^{n-2}(\Om)$ and decompose it as in~\eqref{decompoomega}.  Since
 \be
 \label{d2dx2=0}
 d_2(\om_1\we dx^2)=\om_1\we d(dx^2)=0= d_1(dx^1\we \om_2),
 \ee
 we have 
 \begin{multline*}
  \lt< T[u],\om\rt>=\lt< \lb u\rb, d_2 d_1 \om\rt>=\sum_{i,j} \lt< \lb u\rb, d_2 d_1 \lt( \om_{i,j} \,d x^1_{\bar\imath}\we dx^2_{\bar\jmath}\rt)\rt>\\
  =\sum_{i,j} (-1)^{i+j} \int_\Om \dfrac{\pt^2 \om_{i,j}}{\pt x^1_i \pt x^2_j} u\,dx =\sum_{i,j} (-1)^{i+j}\int_\Om \om_{i,j}\,d\mu_{i,j}.
 \end{multline*}
This proves~\eqref{Tuomegaij}.\\
Assume now that $T[u]$ is rectifiable. Let $(\Sigma, \xi, m)$ be as in~\eqref{def:rectifiableT}. First, by~\eqref{Tuomegaij} we can write $\xi=\xi^1\wedge \xi^2$ for some simple $(n_l-1)\,$-vectors $\xi^l$ of $X_l$. We let $\nu^l= \star \xi^l\in X_l$ be the normals to $\Sigma$. If we decompose $\xi^1$ as
\[
 \xi^1=\sum_i \xi^1_{\bar \imath} e_{\bar \imath}
\]
we have by definition of the Hodge star operator $\xi^1_{\bar \imath}=(-1)^i \nu^1_i$. Similarly $\xi^2_{\bar \jmath}=(-1)^j \nu^2_j$. Using
\eqref{Tuomegaij} we find
\[
 \mu_{i,j}=m (-1)^{i+j} \xi^1_{\bar\imath} \xi^2_{\bar\jmath}\, \HH^{n-2}\restr \Sigma=  m\nu^1_i \nu^2_j\, \HH^{n-2}\restr \Sigma.
\]
This proves that $\mu[u]= m \nu^1\otimes \nu^2 \, \HH^{n-2}\restr \Sigma$ and thus that $\mu[u]$ is rectifiable. If moreover $T[u]$ is tensor rectifiable we see that also $\mu[u]$ is tensor rectifiable. Since we can revert the argument, this also proves that if $\mu[u]$ is rectifiable (respectively tensor rectifiable), then $T[u]$ is rectifiable (respectively tensor rectifiable).\\
Since on $\DD^{k_1}(\Om_1)\we \DD^{k_2}(\Om_2)$, $d =d_1+ (-1)^{k_1} d_2$ we have $d_2d_1 d =0$ and thus $\pt T[u]=0$. We finally prove~\eqref{eqmass}. For the first identify we write using the decomposition~\eqref{decompoomega} and~\eqref{Tuomegaij},
\[
 \M(T[u])=\sup_{|\om|\le 1} \lt<T[u],\om\rt>=\sup_{|\om|\le 1} \sum_{i,j} (-1)^{i+j}\int_{\Om}  \om_{i,j}\,d\mu_{i,j}\les |\mu|(\Om).
\]
By the previous discussion, the equality $\M_\theta(T[u])=\M_\theta(\mu[u])$ is immediate (notice that  both terms are finite only if $T[u]$ is rectifiable). 
\end{proof}

Thanks to Proposition~\ref{equivTmu}, we can reduce the proof of Theorem~\ref{coromurect} to the analog result for $T[u]$.
\begin{theorem}\label{thm:thetamass} 
Let $u\in L^\oo(\Om)$ be such that 
\[\E''(u):= \liminf_{k\up \oo}\sum_{1\le i \le n_1,1\le j\le n_2} \int_{\Om^{\eps_k}} \dfrac{|D[D u(\cdot, r_k f_j)](x,r_k e_i)|^\te}{ r_k^2} \,dx<\oo.\]
Then $T[u]$ is a tensor rectifiable cycle with 
\be\label{estimTheorem}
 \M(T[u])\les \|u\|_\oo^{1-\theta} \E''(u) \qquad \text{and } \qquad \M_\theta(T[u])\les \E''(u).
\ee
Moreover, if  $u$ is integer-valued then $T[u]$ is integer rectifiable.
\end{theorem}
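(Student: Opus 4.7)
The strategy is the slicing argument sketched in the introduction after Theorem~\ref{coromurect}: combine the two-dimensional result Theorem~\ref{theo:dirac2} with White's slicing rectifiability criterion~\cite{White1999-2,Jerrard}. By Proposition~\ref{equivTmu} the current $T[u]$ is already a cycle, so the task reduces to proving its rectifiability, the mass and $\te$-mass bounds~\eqref{estimTheorem}, and the tensor structure.

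First I would identify the non-trivial coordinate $2$-slices of $T[u]$. In the decomposition~\eqref{Tuomegaij}, every simple $(n-2)$-vector carrying $T[u]$ has the form $e_{\bar\imath}\we f_{\bar\jmath}$, involving $n_1-1$ factors from $\X_1$ and $n_2-1$ factors from $\X_2$. A direct computation then shows that the $0$-slice of $T[u]$ along an oriented coordinate $2$-plane $P$ vanishes identically whenever $P\sub\X_1$ or $P\sub\X_2$, and can be non-trivial only when $P$ is ``mixed'', i.e.\ $P=P_{i,j}:=\Span(e_i,f_j)$; in that case only the $(i,j)$-component $\mu_{i,j}$ of $\mu[u]$ contributes.

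Next I would analyze these mixed slices by Fubini. Fix $(i,j)$ and denote by $V_{i,j}$ the orthogonal $(n-2)$-plane $\Span(\{e_k\}_{k\ne i}\cup\{f_l\}_{l\ne j})$; we write $y$ for its variable. For $\HH^{n-2}$-a.e.\ $y\in V_{i,j}$ the slice $\langle T[u],P_{i,j},y\rangle$ identifies, via the two-dimensional case of Proposition~\ref{equivTmu}, with the defect measure $\mu[u_y]$ of the restriction $u_y$ of $u$ to the affine plane $y+P_{i,j}$. Fubini applied to the expression of $\E''(u)$ in the hypothesis combined with Fatou's lemma gives, for $\HH^{n-2}$-a.e.\ such $y$,
\[
\liminf_{k\up\oo}\int\dfrac{|D[Du_y(\cdot,r_kf_j)](x,r_ke_i)|^\te}{r_k^2}\,dx\,<\,\oo.
\]
By the four-point identity~\eqref{id4pts} the integrand is exactly $|\mu[u_y](Q_{x,r_k(e_i+f_j)})|^\te/r_k^2$, so Lemma~\ref{lem:Diracmeasure} applied to $\mu[u_y]$ yields that it is purely atomic with $\te$-summable weights; integrating the resulting bound against $\HH^{n-2}(dy)$ gives $\M_\te(T[u])\les\E''(u)$. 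Since by Lemma~\ref{lemma:4points} each atomic weight is at most $4\|u\|_\oo$, the elementary estimate $|m|\le(4\|u\|_\oo)^{1-\te}|m|^\te$ then produces the mass bound $\M(T[u])\les\|u\|_\oo^{1-\te}\E''(u)$. White's slicing criterion now applies to the cycle $T[u]$ (the non-mixed slices contribute trivially and the mixed-plane slices have just been shown rectifiable), yielding that $T[u]$ is a rectifiable cycle. The integer case is immediate, because Theorem~\ref{theo:dirac2} delivers integer multiplicities on each slice when $u$ is integer-valued and White's criterion preserves integer coefficients.

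The tensor structure is the main obstacle and is not accessible by this direct slicing. I would invoke the results of~\cite{GM_tfc}: the case $n_1=1$ is Proposition~\ref{prop:n1=1}, while for $n_1,n_2\ge2$ one interprets $T[u]$ as an $(n_1-1)$-flat chain on $\Om_1$ with coefficients in the Banach space of $(n_2-1)$-flat chains on $\Om_2$, slices this Banach-valued chain by hyperplanes of $\X_1$ to reduce to the previous case, and applies the Banach-valued analogue of White's criterion developed there to obtain the decomposition $\Sigma\sub\Sigma_1+\Sigma_2$.
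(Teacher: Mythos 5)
Your proposal follows essentially the same route as the paper's: identify the coordinate slices, show the non-mixed ones vanish, commute slicing with $\pt_1\pt_2$, pass the finiteness of $\E''$ to a.e.\ slice by Fubini--Fatou, invoke the two-dimensional atomicity result and Lemma~\ref{lem:Diracmeasure}, apply White's rectifiability criterion, and appeal to~\cite{GM_tfc} for the tensor structure. Two small differences are worth flagging. First, the paper's \emph{Step 1} establishes the mass bound $\M(T[u])\les\|u\|_\oo^{1-\theta}\E''(u)$ directly from Remark~\ref{rem:controlmu} and Proposition~\ref{equivTmu}, which is needed \emph{before} applying White's criterion (White requires a normal flat chain of finite mass). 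Your plan derives the mass bound from the slice-wise estimates via the co-area formula, but co-area in that form presupposes rectifiability, which would make the logic circular; the fix is exactly the paper's a priori step, and it is cheap. Second, you implicitly work as if $\Om$ were a product of boxes so that every $2$-slice is a rectangle and Theorem~\ref{theo:dirac2} applies directly; for general $\Om=\Om_1+\Om_2$ the paper inserts a Whitney covering into cubes (\emph{Step 3}) and sums the localized $\E''$ energies using the finite-overlap property. Neither issue is a conceptual obstruction, but both should be spelled out.
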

\begin{proof}
 \setcounter{proof-step}{0}
\noindent{\textit{Step \stepcounter{proof-step}\arabic{proof-step}. Preliminary observations.}}
We first notice that by Remark~\ref{rem:controlmu}, we have $|\mu[u]|(\Om)\les \|u\|_\oo^{1-\theta} \E''(u)$ and thus by Proposition~\ref{equivTmu}, $T[u]$ is a normal current and the first inequality in~\eqref{estimTheorem} holds. We thus need to show that $T[u]$ is tensor rectifiable and the second inequality in~\eqref{estimTheorem}.\medskip

\noindent{\textit{Step \stepcounter{proof-step}\arabic{proof-step}. The case where $\Om$ is a cube.}}\\
In this step we prove the claim assuming that $\Om$ is a $n$-cube. By scaling we assume without loss of generality that $\Om=Q^n:=(0,1)^n$.\\
We start by recalling the definition of slicing of currents by coordinate $(n-2)\,$-planes (see~\cite{GMS1,Federer,White1999-2,Jerrard} for more details). For this we introduce some notation. For $\alpha^1\sub \{1,\cdots,n_1\}$ and $\alpha^2\sub\{1,\cdots,n_2\}$ we set 
\[
\alpha:=\alpha_1\cup (n_1+\alpha_2)\qquad\text{and}\qquad\bar{\alpha}:=\{1,\cdots,n\}\sm\alpha.
\]
We then define 
$\X_{\alpha^1}=\Span\{ e_i\}_{i\in \alpha^1}$, $\X_{\alpha^2}=\Span\{ f_j\}_{j\in \alpha^2}$ and set $\X_\alpha=\X_{\alpha^1}\oplus \X_{\alpha^2}$ so that $\R^n=\X_\alpha\oplus \X_{\bar \alpha}$. We decompose correspondingly every $x\in \R^n$ as $x=x^{\alpha}+x^{\bar \alpha}$. With a slight abuse of notation we set $Q^\alpha=Q^n\cap \X_\alpha$. For a function $u\in L^1(Q^n)$ and $x\in Q^n$ we set $u_{x^{\bar \alpha}}(x^\alpha):=u(x)$ so that by Fubini $u_{x^{\bar \alpha}}\in L^1(Q^{\alpha})$ for almost every $x^{\bar \alpha}$. For every $\alpha$ with $|\alpha|=2$ and every $T\in \DD_{n-2}(Q^n)$, we define the slices $\Sl_{\bar \alpha}^{x^{\bar \alpha}} T\in \DD^0(Q^{\alpha})$ by the requirement (see~\cite[Theorem~4.3.2]{Federer} or~\cite[Section 2.5]{GMS1}) that for every $\om\in \DD^0(Q^n)$,
\be\label{defslice}
 \lt<T,\om\,dx^{\bar\alpha}\rt>=\int_{Q^{\bar \alpha}} \lt<\Sl_{\bar \alpha}^{x^{\bar \alpha}} T, \om_{x^{\bar \alpha}}\rt>\, dx^{\bar \alpha}.
\ee
Here $dx^{\bar\alpha}$ denotes the canonical $(n-2)\,$-form. \\

\noindent \textit{Step \arabic{proof-step}.1.} We claim that for every $\alpha$ with $|\alpha|=2$ and almost every $x^{\bar \alpha}$, the slices $\Sl_{\bar \alpha}^{x^{\bar \alpha}} T[u]$ are $0\,$-rectifiable. Using White's rectifiability criterion~\cite{White1999-2,Jerrard} this would prove that $T[u]$ is $(n-2)\,$-rectifiable.\\
By~\eqref{Tuomegaij}, if $\alpha_1=\emptyset$ or $\alpha_2=\emptyset$, then $\Sl_{\bar \alpha}^{x^{\bar \alpha}} T[u]=0$ and there is nothing to prove. Indeed, if for instance $\alpha_2=\emptyset$ then $\{n_1+1,\dots,n\}\sub\bar\alpha$ so that  for every $\om\in \DD^0(Q^n)$ we have $\om \,dx^{\bar\alpha}=\om_1\wedge dx^{\bar \alpha}$ in the decomposition~\eqref{decompoomega}. Hence, using~\eqref{d2dx2=0} as in the proof of~\eqref{Tuomegaij}, we compute
\[
 \lt<T[u],\om\,dx^{\bar\alpha}\rt>=\lt<\lb u\rb,d_2d_1[\om dx^{\bar\alpha}]\rt>=\lt<\lb u\rb,d_20\rt>=0.
\]
 We conclude by the definition~\eqref{defslice} that $\Sl_{\bar \alpha}^{x^{\bar \alpha}} T[u]=0$. The case $\alpha_1=\emptyset$ is similar using $d_2d_1=-d_1d_2$.\\
We may thus assume that $\alpha_1=\{i\}$ and $\alpha_2=\{j\}$ so that $ dx^{\bar\alpha}=dx^1_{\bar\imath}\wedge dx^2_{\bar\jmath}$. Let us first prove that the operator $\pt_1\pt_2$ commutes with slicing in the sense that 
\be\label{commuteslice}
 \Sl_{\bar \alpha}^{x^{\bar \alpha}} T[u] = (-1)^{i+j} \pt_1\pt_2\lb  u_{x^{\bar \alpha}}\rb\qquad \text{ for \ae } \, x^{\bar \alpha}\in Q^{\bar \alpha}.
\ee
Indeed, for $\om\in \DD^0(Q^n)$, we compute using~\eqref{Tuomegaij} and~\eqref{expressmuu}
\[
 \lt<T[u],\om dx^1_{\bar\imath}\wedge dx^2_{\bar\jmath} \rt>=(-1)^{i+j}\int_{Q^n} \om\,d\mu_{i,j} =(-1)^{i+j}\int_{Q^n} \dfrac{\pt^2\om}{\pt x^1_i\pt x^2_j}\, u\,dx.
\]
 Then by Fubini,
\begin{align*}
  \lt<T[u],\om dx^1_{\bar\imath}\wedge dx^2_{\bar\jmath} \rt>&=(-1)^{i+j} \int_{Q^{\bar \alpha}}\lt[\int_{Q^\alpha} \dfrac{\pt^2\om_{x^{\bar \alpha}}}{\pt x^1_i\pt x^2_j}\,  u_{x^{\bar \alpha}}\,dx^{\alpha}\rt]\,dx^{\bar \alpha}\\
 &=(-1)^{i+j} \int_{Q^{\bar \alpha}}\lt<\pt_1\pt_2\lb  u_{x^{\bar \alpha}}\rb, \om_{x^{\bar \alpha}}\rt>\,dx^{\bar \alpha}.
\end{align*}
By definition~\eqref{defslice} of $\Sl_{\bar \alpha}^{x^{\bar \alpha}} T[u]$, this concludes the proof of~\eqref{commuteslice}.\\
We now prove that for almost every $x^{\bar \alpha}\in Q^{\bar \alpha}$, $T[u_{x^{\bar \alpha}}]=\pt_1\pt_2\lb  u_{x^{\bar \alpha}}\rb$ is $0\,$-rectifiable. To simplify notation we write $Q^n_{\eps}$ for $(Q^n)^\eps$. Let us prove that 
\be\label{fatouG}
 \E''(u)\ge \int_{Q^{\bar \alpha}} \E''(u_{x^{\bar \alpha}}) \,dx^{\bar \alpha}.
\ee
For this we use  Fubini and Fatou to obtain 
\begin{align*}
 \E''(u)&\ge \liminf_{k\up \oo} \int_{Q^n_{\eps_k}} \dfrac{|D[D u(\cdot, r_k f_j)](x,r_k e_i)|^\te}{ r_k^2} \,dx\\
  &=\liminf_{k\up \oo} \int_{Q^{\bar \alpha}} \un_{Q^{\bar \alpha}_{\eps_k}}(x^{\bar \alpha})\lt[\int_{Q^\alpha_{\eps_k}}\dfrac{|D[D u_{x^{\bar \alpha}}(\cdot, r_k f_j)](x^\alpha,r_k e_i)|^\te}{ r_k^2} \,dx^{\alpha}\rt] \,dx^{\bar \alpha}\\
 &\ge  \int_{Q^{\bar \alpha}} \liminf_{k\up \oo} \un_{Q^{\bar \alpha}_{\eps_k}}(x^{\bar \alpha})\lt[\int_{Q^\alpha_{\eps_k}}\dfrac{|D[D u_{x^{\bar \alpha}}(\cdot, r_k f_j)](x^\alpha,r_k e_i)|^\te}{ r_k^2} \,dx^{\alpha}\rt] \,dx^{\bar \alpha}\\
 &=\int_{Q^{\bar \alpha}} \liminf_{k\up \oo} \lt[\int_{Q^\alpha_{\eps_k}}\dfrac{|D[D u_{x^{\bar \alpha}}(\cdot, r_k f_j)](x^\alpha,r_k e_i)|^\te}{ r_k^2} \,dx^{\alpha}\rt] \,dx^{\bar \alpha}\\
 &=\int_{Q^{\bar \alpha}} \E''(u_{x^{\bar \alpha}}) \,dx^{\bar \alpha}.
\end{align*}
This proves~\eqref{fatouG}.
Therefore, for almost every $x^{\bar \alpha}\in Q^{\bar \alpha}$, $\E''(u_{x^{\bar \alpha}})<\oo$. By Theorem~\ref{theo:dirac2}, for every such $x^{\bar \alpha}$, there exist countable sequences $x^l\in Q^\alpha$ and $m_l\in \R\sm\{0\}$ with $T[u_{x^{\bar \alpha}}]=\sum_l m_l \delta_{x^l}$ and such that 
\be\label{estimsliceMtheta}
 \sum_{l} |m_l|\les \|u\|_{\oo}^{1-\theta} \E''(u_{x^{\bar \alpha}}) \qquad \text{ and } \qquad  \M_\theta(T[u_{x^{\bar \alpha}}])=\sum_{l} |m_l|^\theta\les \E''(u_{x^{\bar \alpha}}).
\ee
In particular, $T[u_{x^{\bar \alpha}}]$ is $0\,$-rectifiable. Moreover, if $u$ is integer-valued then by Theorem~\ref{theo:dirac2}, so is $T[u_{x^{\bar \alpha}}]$.
\medskip

\noindent \textit{Step \arabic{proof-step}.2.} We now prove the second inequality in~\eqref{estimTheorem}. For this we notice that if $(\Sigma,\xi,m)$ are as in~\eqref{def:rectifiableT} for $T=T[u]$, then as in the proof of Proposition~\ref{equivTmu}, we can write $\xi=\xi^1\wedge \xi^2$ for some simple (and unitary) $(n_l-1)\,$-vectors $\xi^l\in \X_l$. We write
\[
 \xi^1=\sum_i \xi^1_i e_{\bar \imath} \qquad \text{and } \qquad \xi^2=\sum   \xi^2_j f_{\bar\jmath}
\]
so that by triangle inequality
\begin{multline}\label{decompMthetaij}
 \M_\theta(T[u])=\int_{\Sigma} |m|^\theta\,d\HH^{n-2}=\int_{\Sigma} |m|^\theta |\xi^1\wedge\xi^2|\,d\HH^{n-2}
 \le \sum_{i,j} \int_{\Sigma} |m|^\theta |\xi^1_i \xi^2_j|\,d\HH^{n-2}.
\end{multline}
Fix $i\in \{1,\cdots, n_1\}$, $j\in \{1,\cdots, n_2\}$ and let $\alpha_1=\{i\}$, $\alpha_2=\{j\}$. By the co-area formula for rectifiable sets (see~\cite[Theorem~3.2.22]{Federer} or~\cite[Theorem~5.4.9]{KrantzPark}),
\[
 \int_{\Sigma} |m|^\theta |\xi^1_i \xi^2_j|\,d\HH^{n-2}=\int_{Q^{\bar \alpha}} \sum_{x^\alpha\in \Sigma\cap (Q^\alpha+x^{\bar \alpha})} |m(x)|^\theta dx^{\bar \alpha}= \int_{Q^{\bar \alpha}}\M_\theta (\Sl_{\bar \alpha}^{x^{\bar \alpha}} T[u])dx^{\bar \alpha}.
\]
Using~\eqref{commuteslice} and~\eqref{estimsliceMtheta}  we find
\[
 \int_{\Sigma} |m|^\theta |\xi^1_i \xi^2_j|\,d\HH^{n-2}\les \int_{Q^{\bar \alpha}}\E''(u_{x^{\bar \alpha}})dx^{\bar \alpha}\stackrel{\eqref{fatouG}}{\le} \E''(u).
\]
Plugging this in~\eqref{decompMthetaij} proves 
\[
 \M_\theta(T[u])\les \E''(u).
\]

\medskip

\noindent \textit{Step \arabic{proof-step}.3.} We finally show that $T[u]$ is tensor-rectifiable.  Since it is a local statement it is enough to prove that around each point $\bar x\in Q^n$ there is a ball $B_r(\bar x)\sub Q^n$ such that $T[u]\restr B_r(\bar x)$ is tensor rectifiable. As $T[u]$ is  normal and rectifiable, $T_{\bar x,r}=T[u]\restr B_r(\bar x)$  is also a normal and rectifiable current for every $\bar x\in Q^n$ and almost every $r>0$ (depending on $\bar x$) such that $B_r(\bar x)\sub Q^n$. In particular $T_{\bar x,r}$ is a rectifiable flat chain (notice however that in general $T_{\bar x,r}$ is not a cycle). Moreover, the tangent $(n-2)\,$-vector $\xi$ to $\Sigma$ can be written as $\xi^1\wedge \xi^2$ with $\xi^l\in \X^l$ so that for $\HH^{n-2}$-every $x\in \Sigma\cap B_r(\bar x)$ the approximate tangent $(n-2)$-plane $T_x \Sigma$ is of the form $L^1(x)\times L^2(x)$ for some the hyperplanes $L^l(x)=(\xi^l)^\perp\cap\X^l$ for  $l=1,2$. Therefore we may appeal to~\cite[Theorem~1.3]{GM_tfc} which yields that $T_{\bar x,r}$ is tensor rectifiable. \medskip

\noindent{\textit{Step \stepcounter{proof-step}\arabic{proof-step}. The general case.}}\\ 
In this final step we use a covering argument to prove the claim in a more general domain $\Om$. For this we need to introduce localized versions of the energy. For any open set $A=A_1+A_2\sub \Om$, we set 
\[
\E''(u,A):= \liminf_{k\up \oo}\sum_{1\le i \le n_1,1\le j\le n_2} \int_{A^{\eps_k}} \dfrac{|D[D u(\cdot, r_k f_j)](x,r_k e_i)|^\te}{ r_k^2} \,dx.\]
We then set for $u\in L^\oo(\Om)$, $u_A:=u\un_A\in \DD_n(A)$ so that as elements of $\DD_{n-2}(A)$ (this is of course not true in $\DD_{n-2}(\Om)$), 
\[
 T[u_A]=T[u]\restr A.
\]
Let $\mathcal{Q}'$ be a Whitney partition of $\Om$, see e.g.~\cite[Appendix J]{Grafakos}. By definition, there exists $\lambda=\lambda(d)>0$ such that 
\[
 \mathcal{Q}=\{\lambda Q \ : \ Q\in \mathcal{Q'}\}
\]
is a cover of $\Om$ with finite overlap, \ie
\be\label{finiteoverlap}
 \un_\Om\le \sum_{Q\in\mathcal{Q}} \un_Q\les  \un_\Om.
\ee
Therefore, letting 
\[
 F_k(x)=\sum_{1\le i \le n_1,1\le j\le n_2}\dfrac{|D[D u(\cdot, r_k f_j)](x,r_k e_i)|^\te}{ r_k^2},
\]
we have
\begin{multline*}
 \E''(u)=\liminf_{k\up \oo} \int_{\Om^{\eps_k}} \un_\Om(x) F_k(x) \,dx
 \ges \liminf_{k\up \oo}\sum_{\mathcal{Q}} \int_{\Om^{\eps_k}} \un_Q(x) F_k(x) \,dx\\
 \ge \liminf_{k\up \oo}\sum_{\mathcal{Q}} \int_{Q^{\eps_k}} F_k(x) \,dx
 \ge \sum_{\mathcal{Q}} \liminf_{k\up \oo}\int_{Q^{\eps_k}} F_k(x) \,dx
 =\sum_{\mathcal{Q}} \E''(u,Q).
\end{multline*}
As a consequence, for every $Q\in \mathcal{Q}$ we have $\E''(u_Q)=\E''(u,Q)<\oo$ so that by {\it Step 2}, $T[u_Q]$ is tensor rectifiable with 
\[
 \M_\theta(T[u_Q])\les \E''(u,Q).
\]
On the one hand this yields that  $T[u]$ is also tensor rectifiable. On the other hand, by~\eqref{finiteoverlap},
\[
 \M_\theta(T[u])\le \sum_{ \mathcal{Q}} \M_\theta(T[u]\restr Q)=\sum_{ \mathcal{Q}} \M_\theta(T[u_Q])\les \sum_{ \mathcal{Q}} \E''(u,Q)\les \E''(u). 
\]
This concludes the proof.
\end{proof}

\section{The case $\theta=1$}\label{sec:theta=1}
When $\theta=1$, we only consider the case $n_1=n_2=1$. In this last section we assume without loss of generality that $\Om=(-1,1)^2$.
\subsection{The general case}\label{sec:theta=1general}
In the case $\te=1$ (and $n_1=n_2=1)$ the defect measure does not in general concentrate on a set of Hausdorff dimension $n_1+n_2-2 =0$. The examples of Remark~\ref{rq:alldim} show that it may concentrate on a set with Hausdorff dimension~$s$ for any $s\in [0,1]$. We establish that 1 is the largest possible dimension provided $u$ is  integrable and $\E(u)<\oo$.
\begin{proof}[Proof of Theorem~\ref{theo:lines}]~
Let  $u\in L^1_{loc}(\Om)$ be such that  $\E(u)<\oo$. By~\eqref{controlmu}, we know that $\mu=\pt_1\pt_2 u$ is a measure with
 $|\mu|(\Om)\lesssim \E(u)$. 
By Lemma~\ref{lemma:4points} we have for $k\ge 1$ and almost every $x\in \Om^{\eps_k}$, 
\be\label{id4pts2}
\mu(Q_{x,r_k}) =Du(x+r_k e_2,r_k e_1) - Du(x,r_ke_1) \qquad \text{and } \qquad |\mu|(\pt Q_{x,r_k})=0. 
\ee
Recalling the definition of $q(x,z)$ in~\eqref{defq} and the subadditivity of $s\mapsto s^{\te_l}$,  we deduce that
\[
|\mu(Q_{x,r_k})|^{\te_1} (|Du(x,z^k_2)| +|Du(x+z^k_1,z^k_2)|)^{\te_2}\le q(x,z^k).
\]
Here we used the notation $z^k_1:=r_k e_1$, $z^k_2:=r_k e_2$ where $(e_1,e_2)$ is the standard basis of $\R^2$. Denoting $z^k:=z^k_1+z^k_2=(r_k,r_k)$, the inequality~\eqref{eq:goodqk} then leads to 
\be\label{firstestimlines}
  \limsup_{k\up \oo} \int_{\Om^{\eps_k}} \dfrac{|\mu(Q_{x,r_k})|^{\te_1} (|Du(x,z^k_2)| +|Du(x+z^k_1,z^k_2)|)^{\te_2}}{r_k^2} \,dx\les \E(u).
\ee
The rest of the proof is of the same flavor as the proof of Lemma~\ref{lem:Diracmeasure} but the construction of the partitions of $\Om$ into rectangles in Steps 2.1---2.3 below is more involved.\medskip\\
{\it Step 1.}
For $y\in\Om$, we define 
 \[
N_k(y):= \sup\lt\{ \dfrac{|\mu|(Q_{x,r_k})}{r_k}: x\in \Om^{\eps_k}\text{ such that } y\in Q_{x,r_k}\rt\}.
 \]
Then for $\eta>0$, we consider the set  
\[
\Sigma_\eta:=\lt\{y\in \Om\, :\, \limsup_{k\up \oo} N_k(y)\ge \eta\rt\}.
\]
By Besicovitch covering theorem~\cite[Theorem~2.17]{Am_Fu_Pal}, we have
$\HH^1(\Sigma_\eta)\lesssim |\mu|(\Om)/\eta$. Next, the sets $\{\Sigma_\eta\}_{\eta>0}$ form a decreasing family  of Borel sets   and their union is  
\[
\Sigma:=\lt\{y\in \Om\, :\, \limsup_{k\up \oo} N_k(y)>0\rt\}. \]
We deduce that $\Sigma$ is a Borel subset of $\Om$ and that the measure $\HH^1\restr \Sigma$ is $\sigma\,$-finite. \medskip

\noindent
{\it Step 2.}  
Let us show that $|\mu|(\Om\bks \Sigma)=0$. Let $\eps>0$. By definition, a point $y$ of $\Om^\eps$ belongs to $\Om\bks \Sigma$ if and only if 
\be
\label{scdestimlines}
\lim_{k\up \oo}\lt[ \sup \lt\{ \dfrac{|\mu|(Q_{x,r_k})}{r_k}\, :\, Q_{x,r_k}\sub \Om\text{ with }y\in Q_{x,r_k} \rt\}\rt] = 0.
\ee

By Egoroff theorem,  there exists a measurable set $\mc Y \sub \Om^\eps\sm\Sigma$ with 
\[
|\mu|(\mc Y)\ge |\mu|( \Om^\eps\sm\Sigma)/2
\]
such that~\eqref{scdestimlines} holds uniformly in $\mc Y$. In particular, there exists a sequence $t_k>0$ with $t_k\dw0$ and 
\be
\label{thirdestimlines}
|\mu|(Q_{x,r_k}) \le t_k r_k\quad\text{for every }x\text{ such that }Q_{x,r_k}\cap \mc Y\neq\void\text{ and every }k\ge1.
\ee 
Substituting $\max(t_k,r_k)$ for $t_k$ we assume without loss of generality that $t_k\ge r_k$.\medskip

\noindent
{\it Step 2.1. Covering of $\mc Y$.} Let us fix $k\ge 1$ and  $x\in Q_{0,r_k}$ and let us consider the sets 
{\setlength{\jot}{4pt}
\begin{align*}
 {\mc P}^k_x&:=\{ Q\ : \ Q=Q_{x+r_k z,r_k} \ \text{ for some  }z\in \Z^2\text{ such that }x+r_k z\in \Om^{\eps_k}\},\\
 \QQ^k_x&:=\{ Q\in {\mc P}^k_x : Q\cap \mc Y\neq \void\}.
\end{align*}
}
We then define $\mc Y^k_x:=\cup  \QQ^k_x$. Notice that by~\eqref{thirdestimlines},
\be
\label{fourthestimlines}
\text{for } k\text{ large enough, }\mc Y\sub \mc Y^k_x\text{ for every }x\in Q_{0,r_k}.
\ee
Our task is now to build a covering of some $\mc Y^k_x$ (with $x$ depending on $k$) by a collection of boxes $Q$ such that $\sum|\mu(Q)|$ tends to $0$ as $k$ goes to $+\oo$.   For this, we introduce a large number $\Lambda\ge 1$ and we cover $\QQ^k_x$ with a disjoint union, $\QQ^k_x
\sub{\mc G}^k_x\cup \mathcal{B}^k_x$ defined as follows. For $Q$ of the form $Q_{y,r_k}$, let us note  $y=x_Q$ its bottom left corner. We set, 
{\setlength{\jot}{5pt}
\begin{eqnarray}
\label{defGk}
{\mc G}^k_x & := & \lt\{Q\in  \QQ^k_x \, :\, |Du(x_Q,z^k_2)| +|Du(x_Q+z^k_1,z^k_2)| \ge \Lambda t_k r_k\rt \},\\
\label{defBk}
\mathcal{B}^k_x& := & \lt\{Q\in  \QQ^k_x\, :\,  |Du(x_Q,z^k_2)| +|Du(x_Q+z^k_1,z^k_2)|  < \Lambda t_k r_k\rt \}.
\end{eqnarray}
}

\noindent
{\it Step 2.2. Estimation of $|\mu|$ on the good set ${\mc G}^k_x$ and selection of $x=x^k$.}\\
 We first bound the average over $x$ of the sums $\sum_{{\mc G}_x^k}|\mu(Q)|$. By~\eqref{defGk} and using $\te= \te_1+\te_2=1$ we get,
 \begin{multline*}
\dfrac1{r_k^2} \int_{Q_{0,r_k}} \sum_{Q\in{\mc G}_x^k}|\mu(Q)|\,dx \\
\le \dfrac1{r_k^2} \int_{Q_{0,r_k}} \sum_{Q\in{\mc G}_x^k}|\mu(Q)|^{\te_1}|\mu(Q)|^{\te_2} \dfrac{\lt(|Du(x_Q,z^k_2)| +|Du(x_Q+z^k_1,z^k_2)|\rt)^{\te_2}}{(\Lambda t_k r_k)^{\te_2}} \,dx.
\end{multline*}
By~\eqref{fourthestimlines} we can use~\eqref{thirdestimlines} for $k$ large enough. We obtain,  
 \begin{align*}
\dfrac1{r_k^2} \int_{Q_{0,r_k}} \sum_{Q\in{\mc G}_x^k}&|\mu(Q)|\,dx \\
&\le \, \dfrac1{\Lambda^{\te_2}} \dfrac1{r_k^2} \int_{Q_{0,r_k}} \sum_{Q\in{\mc P}_x^k}|\mu(Q)|^{\te_1}( |Du(x_Q,z^k_2)| +|Du(x_Q+z^k_1,z^k_2)|)^{\te_2} \,dx\\ 
&= \dfrac1{\Lambda^{\te_2}} \int_{\Om^{\eps_k}} \dfrac{|\mu(Q_{y,r_k})|^{\te_1} (|Du(y,z_2^k)| +|Du(y+z_1^k,z_2^k)|)^{\te_2}}{r_k^2} \,dy,
 \end{align*}
where we used Fubini for the last identity. By~\eqref{firstestimlines} the last integral is of the order of $O(\E(u)/\Lambda^{\te_2})$ as $k\up \oo$. Hence, there exists a sequence $x^k\in Q_{0,r_k}$ such that 
\be\label{forgotestimlines}
\limsup_{k\up \oo}  \sum_{Q\in{\mc G}_{x^k}^k}|\mu(Q)|\, \les \,\dfrac{\E(u)}{\Lambda^{\te_2}}.
\ee
Moreover,  we may assume that~\eqref{id4pts2} holds for every square $Q\sub \Om$ of the form $Q_{x^k+r_k z ,r_k}$ for some $z\in \Z^2$. 
 
From now on, we select $x=x^k$ and we \,drop the subscripts $x^k$: we write ${\mc P}^k$ for ${\mc P}_{x^k}^k$, $\QQ^k$ for $\QQ_{x^k}^k$, ${\mc G}^k$ for ${\mc G}_{x^k}^k$ and $\mathcal{B}^k$ for $\mathcal{B}_{x^k}^k$.\medskip

\noindent
{\it Step 2.3. Covering of $\cup\mathcal{B}^k$}.\\
Let $z_2\in \Z$, we denote ${\mc P}^k_{z_2}$ the row of squares $Q\in{\mc P}^k$ such that $x_Q=x^k+r_k(z_1,z_2)$ for some $z_1\in \Z$. The set ${\mc P}^k_{z_2}$ is totally ordered  by the relation ``$<$'' defined by
\[
Q_{y_k+r_k(z_1,z_2)}< Q_{y_k+r_k(z_1',z_2)}\qquad\text{ whenever }\,z_1< z_1'.
\]
\begin{figure}[h]
\centering
\begin{tikzpicture}[scale=.6]
\newcommand{\carre}[2]{\draw[fill, color=gray!45]  (#1,#2) rectangle (#1 + 1,#2 + 1); }
\clip (0.3,0.4) rectangle (19.6,11.4);
\carre12 \carre22 
\foreach \x in {7,...,10}\carre{\x}2
\carre{5}{4} \carre{6}{4} \carre{7}{4}
\foreach \x in {12,...,16}\carre{\x}{5}
\carre2{7} \carre{3}{7} \carre{6}{7} \carre{7}{7} \carre{8}{7} \carre{12}{7} \carre{15}{7} \carre{17}{7} \carre{18}{7}
\draw[very thick] (2,7) rectangle (9,8); \draw[very thick] (12,7) rectangle (19,8);
\draw (5.5,8.5) node{$R_{z_2,1}$}; \draw (15.5,8.5) node{$R_{z_2,2}$};
\draw (2.6,6.5) node {$Q_1^-$}; \draw (8.5,6.5) node {$Q_1^+$};
\draw (12.6,6.5) node {$Q_2^-$}; \draw (18.5,6.5) node {$Q_2^+$};
\carre{3}{9}\carre{7}{9} \carre{8}{9}   \carre{15}{9} \carre{16}{9} \carre{17}{9} \carre{18}{9}
\draw (5.5,2.5) node{$\mathcal{B}_{z_2,1}$}; \draw (15.5,2.5) node{$\mathcal{B}_{z_2,2}$};
\draw[->] (5.1,2.9) -- (3.5,7.5); \draw[->] (5.3,2.9) -- (7.5,7.5);
\draw[->] (15.1,2.9) -- (12.5,7.5); \draw[->] (15.2,2.9) -- (15.5,7.5); \draw[->] (15.3,2.9) -- (17.9,7.5);
\foreach \x in {0,...,20}
	\draw[very thin] (\x,0)--(\x,12);
\foreach \y in {0,...,12}
	\draw[very thin] (0,\y)--(20,\y);
\end{tikzpicture}
   \caption{The covering $\mathcal{B}$ of the bad set. The gray squares are the elements of $\mathcal{B}^k$. \label{Fig:badset}}
\end{figure}
%
\noindent
 Let us define 
\[
\mathcal{B}^k_{z_2}:=\mathcal{B}^k\cap {\mc P}^k_{z_2}.
\]
It is easy to see that $\mathcal{B}^k_{z_2}$ admits a partition into a sequence $\mathcal{B}^k_{z_2,1},\cdots,\mathcal{B}^k_{z_2,s_{z_2}}$ with the following properties.
\begin{enumerate}[(a)]
\item
$s_{z_2}\lesssim 1/(\Lambda^2t_k)$;
\item 
every subset $\mathcal{B}^k_{z_2,s}$ satisfies $\diam\lt(\bigcup_{\mathcal{B}^k_{z_2,s}}Q\rt) \lesssim \Lambda^2 t_k$;
\item
the sets $\mathcal{B}^k_{z_2,1},\cdots,\mathcal{B}^k_{z_2,s_{z_2}}$ are ordered as follows: if $1\le s_1 < s_2\le s_{z_2}$ then $\max \mathcal{B}^k_{z_2,s_1}<\min \mathcal{B}^k_{z_2,s_2}$.
\end{enumerate}
 Let $s\in\{1,\cdots ,s_{z_2}\}$ we denote $Q^-_s=\min \mathcal{B}^k_{z_2,s}$, $Q^+_s=\max \mathcal{B}^k_{z_2,s}$ and we define the box $R_{z_2,s}$ by gluing together the elements of $\QQ^k_{z_2}$ between $Q^-_s$ and $Q^+_s$, namely (see Figure~\ref{Fig:badset}),
\[
R_{z_2,s}:=
\bigcup\lt\{Q\in \QQ^k_{z_2}:  Q^-_s \le Q\le  Q^+_s\rt\}.
\]
By construction the sets $R_{z_2,s}$ are disjoint, satisfy $\diam(R_{z_2,s})\les \Lambda^2 t_k$, their number, for $z_2$ fixed is $s_{z_2}\lesssim 1(\Lambda^2t_k)$ (so that their total number is estimated by  $1/(r_k \Lambda^2t_k)$)
and their union covers $\cup\mathcal{B}^k_{z_2}$. Moreover, for $s$ fixed, denoting $x^-_s=x_{Q^-_s}$, $x^+_s=x_{Q^+_s}$, we have 
\begin{multline*}
|\mu(R_{z_2,s})| \, \stackrel{\eqref{id4pts}}=\, 
|u(x^-_s) - u(x^+_s+z_1^k) +  u(x^+_s+z^k) - u(x^-_s+z_2^k)  |
\\
=\, \lt|Du(x^+_s+ z_1^k, z_2^k) - Du(x^-_s, z_2^k) \rt| 
\, \st{\eqref{defBk}}\le \, 2 \Lambda t_k r_k,
\end{multline*}
by triangle inequality and because ${Q^-_s}$ and ${Q^+_s}$ belong to $\mathcal{B}^k$.  Denoting $\widetilde{\mathcal{B}}^k$ the collection of the sets $R_{z_2,s}$ for $z_2\in \Z$ and $1\le s\le s_{z_2}$, we deduce from the above discussion that 
\be\label{fifthestimlines}
\sum_{R\in\widetilde{\mathcal{B}}^k}|\mu(R)|\,\lesssim\, \dfrac1{r_k}\, \dfrac1{\Lambda^2 t_k}\, \Lambda t_k r_k\, \lesssim\, \dfrac1\Lambda,\qquad\text{and}\qquad \max\limits_{R\in \widetilde{\mathcal{B}}^k} \diam(R)\,\les\,\Lambda^2 t_k.
\ee
Moreover, by construction $\bigcup\mathcal{B}^k\,\sub\, \bigcup\widetilde{\mathcal{B}}^k$.\medskip\\
Eventually we define $\widetilde{{\mc G}}^k$ as the set of elements $Q\in {\mc G}^k$ such that $Q\not\sub  \bigcup\widetilde{\mathcal{B}}^k$ and  set $\widetilde{\QQ}^k:=\widetilde{{\mc G}}^k\cup \widetilde{\mathcal{B}}^k$. \medskip

\noindent
{\it Step 2.4. Sending $k\up \oo$.}  By construction the elements of $\widetilde{\QQ}^k$ are disjoint rectangles in $\Om$ and recalling~\eqref{fourthestimlines},~\eqref{forgotestimlines},\eqref{fifthestimlines}, we have 
\be\nonumber
\mc Y \,\sub\, \bigcup\widetilde{\QQ}^k,\qquad \sum_{Q\in\widetilde{\QQ}^k}|\mu(Q)|\lesssim\dfrac{\E(u)}{\Lambda^{\te_2}}+\dfrac1\Lambda \quad\text{and}\quad \max\limits_{Q\in \widetilde{\QQ}^k} \diam(Q)\,\st{k\up \oo}\longto\,0.
\ee
Applying Lemma~\ref{lemma:tech} to the measure $\mu$ and to the family $\{\widetilde{\QQ}^k\}$, we obtain
\[
|\mu|(\mc Y)\,\le\, \liminf_{k\up \oo} \sum_{Q\in \widetilde{\QQ}^k}|\mu(Q)|\, 
\,\les\, \dfrac{\E(u)}{\Lambda^{\te_2}} + \dfrac1\Lambda.
\]
Since $\Lambda\ge 1$ is arbitrary, we obtain that $|\mu|(\Om^\eps\sm \Sigma)\le 2 |\mu|(\mc Y)=0$ and sending $\eps$ to 0 we conclude that $\mu=\mu\restr \Sigma$. Recalling Step 1, $\HH^1\restr \Sigma$ is $\sigma\,$-finite and the theorem is proved.
\end{proof}

\subsection{The case of Lipschitz continuous functions}\label{sec:Lip}
From now on we assume that $u$ is Lipschitz continuous with $\|\nb u\|_\oo\le 1$. The results of this section are stated for $v=\nb u$ instead of $u$ (we always assume that $u$ and $v$ are related by the identity $\nb u=v$). \smallskip

\noindent
Let us recall that $\Om=(-1,1)^2$ and  letting $K=(\R\times\{0\})\cup (\{0\}\times \R)$, we are interested in the mappings $v\in L^{\oo}(\Om,\R^2)$ such that 
\begin{align}\label{curl0main}
&\nb\times v=\pt_1 v_2-\pt_2 v_1=0,&\\
\label{inKmain}
 &v(x)\in K \text{ for almost every }x\in\Om.&
\end{align}
Setting $\mu[v]=\pt_1 v_2$ (which coincides with $\mu[u]$) we consider the set\footnote{This set was already defined in the introduction.} 
\[
S^\oo(\Om):= \{v\in L^\oo(\Om,\R^2) : \|v\|_\oo\le 1,\ \text{~\eqref{curl0main},\eqref{inKmain} hold and }\mu[v]\in \MM(\Om)\}.
\]

\subsubsection{Derivation of the differential inclusion}\label{sec:DerivdifLip}
Our first result states that if $u$ is Lipschitz continuous with $\|\nb u\|_\oo\le 1$ and $\E(u)<\oo$ then $\nb u\in S^\oo(\Om)$. We actually prove a stronger statement.
\begin{proposition}\label{prop:differincl}
 Let $u_\eps$ be a sequence of $1\,$-Lipschitz functions such that
 \[\E_0:=\liminf_{\eps\to 0} \E_\eps(u_\eps)<\oo.
 \]
Then, up to extraction and subtraction of constants, $u_\eps$ converges uniformly to a $1\,$-Lipschitz function $u$. Moreover $\nb u\in S^\oo(\Om)$ with $|\mu[\nb u]|(\Om)\les\E_0$.
\end{proposition}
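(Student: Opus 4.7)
The plan is to combine compactness with a sequential version of Proposition~\ref{prop:goodqk} and a Young-measure argument. First, since the $u_\eps$ are equi-$1$-Lipschitz on the bounded set $\Om$, after subtracting $u_\eps(x_0)$ for some fixed $x_0\in\Om$, Arzel\`a--Ascoli yields a subsequence (still denoted $u_{\eps_k}$) converging uniformly in $\overline\Om$ to a $1$-Lipschitz function $u$, along which I also arrange $\E_{\eps_k}(u_{\eps_k})\to\E_0$. The Fubini/averaging argument behind Proposition~\ref{prop:goodqk} carries over to sequences and produces scales $r_k\asymp\eps_k$ and (after a change of coordinates) the standard basis $(e_1,e_2)$ of $\R^2$ such that
$$\limsup_k \int_{\Om^{\eps_k}} \frac{q_{u_{\eps_k}}(x,r_k(e_1+e_2))}{r_k^2}\,dx \,\les\, \E_0.$$

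For the defect measure, since $\te=1$ the pointwise bound $q_{u_{\eps_k}}(x,r_k(e_1+e_2)) \ge |M_{r_k}^{u_{\eps_k}}(x)|$ holds, where $M_r^v(x) := v(x+r(e_1+e_2)) - v(x+re_1) - v(x+re_2) + v(x)$ is the $4$-point mass function. For any $\phi\in C_c^\oo(\Om)$ with $\|\phi\|_\oo\le 1$, the uniform convergence $M_{r_k}^\phi/r_k^2 \to \pt_1\pt_2\phi$ together with $u_{\eps_k}\to u$ in $L^1$ and discrete integration by parts give
$$|\langle \mu[u],\phi\rangle| = \lim_k\lt|\int \phi(y)\,\frac{M_{r_k}^{u_{\eps_k}}(y-r_k(e_1+e_2))}{r_k^2}\,dy\rt| \,\le\, \limsup_k \int \frac{|M_{r_k}^{u_{\eps_k}}|}{r_k^2}\,dx \,\les\, \E_0,$$
so $\mu[u]\in\MM(\Om)$ with $|\mu[u]|(\Om)\les\E_0$.

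For the differential inclusion $\nb u\in K$ almost everywhere, I pick the sharper lower bound $q_{u_{\eps_k}} \ge |Du_{\eps_k}(x,r_ke_1)|^{\te_1}|Du_{\eps_k}(x,r_ke_2)|^{\te_2}$; combined with $\te_1+\te_2=1$ this yields $\int|\pt_1^{r_k}u_{\eps_k}|^{\te_1}|\pt_2^{r_k}u_{\eps_k}|^{\te_2}\,dx\le Cr_k\E_0\to 0$, where $\pt_l^r v(x):=Dv(x,re_l)/r$ denotes the discrete derivative. Since $(\pt_1^{r_k}u_{\eps_k},\pt_2^{r_k}u_{\eps_k})\to\nb u$ weak-$*$ in $L^\oo$, I combine this vanishing with the curl-free structure of $\nb u_{\eps_k}$ through a Young-measure / div--curl argument in the spirit of Proposition~\ref{prop:compactnessintro}: the generated Young measure $\nu_x$ is supported in $K=\X_1\cup\X_2$ at almost every $x$, and the curl-free constraint rigidifies $\nu_x$ to a Dirac mass, yielding $\nb u(x)\in K$ a.e.

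The hard part is this last step. The density $(a,b)\mapsto|a|^{\te_1}|b|^{\te_2}$ is \emph{not} weakly lower semicontinuous (a sequence oscillating between the values $(1,0)$ and $(0,1)$ with vanishing nonlinear integral has weak limit $(\tfrac12,\tfrac12)\notin K$), so the vanishing of the integral alone cannot force the differential inclusion. Exploiting the curl-free constraint on $\nb u_{\eps_k}$ together with the defect-measure information obtained in Step~3 is essential to rule out such oscillating configurations, and amounts to a compactness statement in the spirit of the div--curl lemma.
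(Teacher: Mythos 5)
Your Steps~1--3 (compactness via Arzel\`a--Ascoli, the $4$-point discrete integration by parts giving $\mu[\nb u]\in\MM(\Om)$ with $|\mu[\nb u]|(\Om)\les\E_0$) are essentially correct and match the paper's Step~1. The trouble is in the final step, and you are right to flag it: the argument as written does not close.

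The issue with the Young-measure/div--curl route is not merely that the density $(a,b)\mapsto|a|^{\te_1}|b|^{\te_2}$ is not weakly lower semicontinuous; it is that the machinery you want to invoke is not available at this stage. First, appealing to the scheme of Proposition~\ref{prop:compactnessintro} is circular: that proposition takes as hypothesis that the sequence lies in $S^\oo(\Om)$, \emph{i.e.}\ that each $v^k$ already satisfies $v^k\in K$ a.e., which is exactly what you are trying to prove here for the limit. Second, and more fundamentally, the div--curl argument requires $H^{-1}$-compactness of the entropy divergences $\nb\cdot\Phi_l(\nb u_{\eps_k})=\varphi_l'(\pt_{\bar l}u_{\eps_k})\,\pt_1\pt_2 u_{\eps_k}$, and this in turn requires an $L^1$ (or measure) bound on $\pt_1\pt_2 u_{\eps_k}$ \emph{uniform in $k$}. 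But the hypothesis only bounds the discrete quantity at the single scale $\eps_k$; it gives no control on $\pt_1\pt_2 u_{\eps_k}$ at scales $\ll\eps_k$, so $\nb\cdot\Phi_l(\nb u_{\eps_k})$ may well blow up. The defect-measure information you obtain in your Step~3 concerns the limit $u$, not the approximants $u_{\eps_k}$, so it cannot be fed back into a div--curl estimate along the sequence. In short, the gap you point at is real and the proposed repair does not work.

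The paper's own proof of $\nb u\in K$ a.e.\ is of a completely different nature and much more hands-on. It is a blow-up argument: at a point $\bar x$ of approximate differentiability where both components of $\nb u(\bar x)=p$ are bounded away from zero, the rescaled functions $u_{\eps}^{r,\bar x}$ are uniformly close to the bilinear-like profile $p\cdot x$. One then proves a self-contained elementary Claim (Step~2.2 of the paper): if a $1$-Lipschitz function on the unit square is $\delta$-close to $p\cdot x$ with $|p_1|,|p_2|\ge\alpha$, then the discrete energy at any sufficiently small scale $\eps$ is bounded below by a constant $c(\alpha)>0$. The proof of the Claim is purely combinatorial, using the decomposition $u=u_1+u_2+w$ and counting, via a pigeonhole/measure estimate, the grid squares on which both discrete partials are $\gtrsim\alpha\eps$. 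On the other hand, a Fubini/Markov selection shows that for almost every blow-up center $\bar x$ the rescaled discrete energies tend to zero as $r\dw0$. These two facts contradict each other unless $p_1p_2=0$, \emph{i.e.}\ $\nb u(\bar x)\in K$. You should replace your Young-measure step with this blow-up/lower-bound argument.
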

\begin{proof}~
Since the functions $u_\eps$ are $1\,$-Lipschitz, up to extraction and subtraction of constants, they converge uniformly to a $1\,$-Lipschitz function $u$. Let $v:=\nb u$.
Using Fatou and the  rescaling  $z=\eps z'$ we obtain 
\[
 \int_{\R^2}\dfrac{\rho(z)}{|z|^2} \lt[\liminf_{\eps\to 0} \int_{\Om^\eps} \dfrac{|Du_\eps(x,\eps z_1)|^{\theta_1}|Du_\eps(x,\eps z_2)|^{\theta_2}}{\eps^2}\,dx\rt] dz
 \le\E_0. 
\]
Arguing as in~\cite[Proposition~M]{GM1}, we find the  existence of  $\sigma_1\in \X_1\sm\{0\}$ and $\sigma_2\in \X_2\sm\{0\}$ such that 
\begin{align}\label{enereps}
 \liminf_{\eps\to 0}\int_{\Om^\eps}\dfrac{|Du_\eps(x,\eps\sigma_1)|^{\theta_1}|Du_\eps(x,\eps\sigma_2)|^{\theta_2}}{\eps^2}\,dx&\les \E_0,\\
\label{cross}
 \liminf_{\eps\to 0}\int_{\Om^\eps}\dfrac{|D[Du_\eps(\cdot, \eps \sigma_1)](x,\eps\sigma_2)|}{\eps^2}\,dx&\les\E_0.
\end{align}

\noindent
\textit{Step 1. $\mu[v]$ is a measure.}\\
 We now prove that $\mu[v]\in \MM(\Om)$  (with $|\mu[v]|(\Om)\les \E_0$). Fix $\vhi\in C^{\oo}_c(\Om)$, set $z^\eps:=- \eps (\sigma_1+\sigma_2)$ and consider
\[
 \eta_\eps:=\dfrac1{|\sigma_1||\sigma_2|\eps^2}\un_{Q_{0,-z^\eps}}.
\]
Then $u_\eps*\eta_\eps$ still converges to $u$ uniformly and we have 
\begin{align*}
\lt<\mu[v],\vhi\rt>= \int_\Om u\,\pt_1\pt_2 \vhi&=\lim_{\eps\to 0} \int_{\Om^\eps} (u_\eps* \eta_\eps) \pt_1\pt_2 \vhi\\
 &=\lim_{\eps\to 0} \int_{\Om^\eps}\dfrac1{|\sigma_1||\sigma_2|\eps^2} \lt[\int_{\Om^\eps} \un_{Q_{y,z^\eps}}(x) u_\eps(y)\,dy\rt] \pt_1\pt_2\vhi(x)\,dx\\
&= \lim_{\eps\to 0} \int_{\Om^\eps} \dfrac1{|\sigma_1||\sigma_2|\eps^2}\lt[\int_{Q_{y,z^\eps}} \pt_1\pt_2\vhi(x)\,dx \rt] u_\eps(y)\,dy \\
&=\lim_{\eps\to 0} \int_{\Om^\eps} \dfrac1{|\sigma_1||\sigma_2|\eps^2}D[D\vhi(\cdot,-\eps \sigma_1)](y,-\eps\sigma_2)\,u_\eps(y)\,dy \\
 &=\lim_{\eps\to 0} \int_{\Om^\eps} \dfrac1{|\sigma_1||\sigma_2|\eps^2}D[Du_\eps(\cdot,\eps \sigma_1)](x,\eps\sigma_2)\, \vhi(x)\,dx.
\end{align*}
In combination with~\eqref{cross} we obtain that indeed $\mu[v]\in \MM(\Om)$ with $|\mu[v]|(\Om)\les \E_0$.\medskip

\noindent\textit{Step 2. Proof that $v(x)\in K$ almost everywhere.}\\
Let us denote here $\om:=Q^2=(0,1)^2$. We claim the following.
\begin{claim} Let $\alpha\in(0,1)$, there exist $\eps_0,c,\delta>0$ satisfying the following property.\\
 Let $p=(p_1,p_2)\in \R^2$ with  $|p_1|,|p_2|\ge \alpha$ and let $u:\om\to\R$ be a $1$-Lipschitz function. If 
\be\label{hyp1:diffincl}
|u(x)-p\cdot x|\le\delta\qquad\text{for every }x\in \om,
\ee
then there holds, for $0<\eps<\eps_0$,
\be
\label{hyp2:diffincl}
\int_{\om^{\eps|\sigma|}} \dfrac{|Du(x,\eps\sigma_1)|^{\theta_1}|Du(x,\eps\sigma_2)|^{\theta_2}}{\eps^2} \,dx+\int_{\om^{\eps|\sigma|}}\dfrac{|D[Du(\cdot, \eps \sigma_1)](x,\eps \sigma_2)|}{\eps^2}\,dx\ge c.
\ee
\end{claim}

\noindent\textit{Step 2.1. Proof that $v(x)\in K$ almost everywhere, assuming the claim.}\\
Let us first derive from the claim the conclusion $v\in K$ almost everywhere. For this we argue by contradiction and  assume this is not the case. Recalling that $v\notin K$ means that $v_1 v_2\neq 0$ we see that  there exists $0<\alpha<1$ such that the set $M=\{|v_1|>\alpha, |v_2|>\alpha\}$ has positive Lebesgue measure. For $x\in Q^2$ such that $\bar x+ rx\in\Om$ we set 
\[
u^{r,\bar x}(x)=\dfrac{u(\bar x+ rx)-u(\bar x)}r\qquad\text{and similarly}\qquad u_\eps^{r,\bar x}(x)=\dfrac{u_\eps(\bar x+ rx)-u_\eps(\bar x)}r.
\] 
Then, at every point of differentiability $\bar x$ of $u$ we have
\[
\lim_{r\to 0}|u^{r,\bar x}(x)-v(\bar x)\cdot x|= 0\qquad\text{uniformly with respect to }x.
\]
By Egoroff we may further assume that this limit is uniform in $M$. Let $\delta>0$ be given by the claim. We deduce that for $r$ small enough, there holds  for every $\bar x\in M$ and every $x\in Q^2$ such that $\bar x+ rx\in\Om$, 
\[
 |u^{r,\bar x}(x)-v(\bar x)\cdot x|\le \delta/2.
\]
Let us fix such $r$. Since $\|u_\eps-u\|_\oo\to0$ we deduce that there exists $\eps_1>0$ depending on $\delta$ and $r$ such that for $0<\eps<\eps_1$, we have 
\[
 |u_\eps^{r,\bar x}(x)-v(\bar x)\cdot x|\le \delta
\]
for $\bar x\in M$ and $x\in Q^2$ such that $\bar x+ rx\in\Om$.\\
Therefore, $u_\eps^{r,\bar x}$ satisfies~\eqref{hyp1:diffincl} with $p=v(\bar x)$ and moreover $\min(|p_1|,|p_2|)\ge\alpha$ as in the statement of the  claim. Furthermore, since $u$ is $1\,$-Lipschitz, the functions $u^{r,\bar x}$ are also $1\,$-Lipschitz.\smallskip

Let us now show that we can find $\bar x\in M$ such that~\eqref{hyp2:diffincl} is violated for $u_\eps^{r,\bar x}$ and $\eps$ small enough. Thanks to the claim, this would provide the desired contradiction. From~\eqref{enereps},~\eqref{cross}, Markov inequality, Fubini and Fatou, we find $\bar x\in M$ such that
\[
\liminf_{\eps\to 0} \int_{Q_{\bar x,r}} \dfrac{|Du_\eps(x,\eps\sigma_1)|^{\theta_1}|Du_\eps(x,\eps\sigma_2)|^{\theta_2}}{\eps^2}\,dx+\int_{Q_{\bar x,r}}\dfrac{|D[Du_\eps(\cdot, \eps \sigma_1)](x,\eps \sigma_2)|}{\eps^2}\,dx\les \E_0 r^2.
\]
Notice that using the change of variable $x=ry$ and denoting $\eps'=\eps/r$, we have 
\[
 \dfrac1r\int_{Q_{\bar x,r}} \dfrac{|Du_\eps(x,\eps\sigma_1)|^{\theta_1}|Du_\eps(x,\eps\sigma_2)|^{\theta_2}}{\eps^2}\,dx=\int_{Q^2} \dfrac{|Du^{r,\bar x}_{r\eps'}(y,\eps'\sigma_1)|^{\theta_1}|Du^{r,\bar x}_{r\eps'}(y,\eps'\sigma_2)|^{\theta_2}}{(\eps')^2}\,dy.
\]
Therefore 
\[
 \liminf_{\eps\to 0}\int_{Q^2} \dfrac{|Du^{r,\bar x}_{r\eps}(x,\eps\sigma_1)|^{\theta_1}|Du^{r,\bar x}_{r\eps}(x,\eps\sigma_2)|^{\theta_2}}{\eps^2}\,dx\les \E_0 r.
\]
Arguing similarly for the second term and choosing $r>0$ and then $\eps>0$ small enough, we get that $u^{r,\bar x}_{\eps r}$ contradicts the claim. Hence $v\in K$ almost everywhere in $\Om$. This establishes the proposition assuming that the claim stated at the beginning of Step~2 holds true.\medskip

\noindent\textit{Step 2.2. Proof of the claim.}\\
As a preliminary remark, notice that if the claim holds for some constants $\alpha,\eps_0,c,\delta,$ it also holds for $\alpha,\eps_0,c',\delta',$ for $0<c'\le c$ and $0<\delta'\le\delta$.\\ 
We argue by contradiction. Let $\alpha\in(0,1)$,  $c>0$ and $\delta>0$. Let $p=(p_1,p_2)$ be such that  $|p_1|,|p_2|\ge\alpha$. Assume that there exists  $u$ such that~\eqref{hyp1:diffincl} holds true but not~\eqref{hyp2:diffincl} for some sequence  $\eps=\eps_k$ with $\eps_k\dw0$.  We show below that this leads to a contradiction.\medskip

Taking into account the preliminary remark we decrease $\delta>0$ or $c>0$ or both if necessary to ensure 
\be\label{delta_c}
\delta<\dfrac\alpha4\qquad\text{ and }\qquad c<\dfrac{\alpha^3}{2^6}.
\ee
Moreover, to simplify a bit the notation we denote $\eps$ for $\eps_k$ and we assume from now on that $\sigma_1=e_1$ and $\sigma_2=e_2$.\medskip

Recalling that assumption~\eqref{hyp2:diffincl} does not hold and using the following identity  valid for every integrable function $f$,
\[
 \int_\om f=\int_{Q_{0,\eps}} \lt(\sum_{z\in (\eps \Z)^2\cap (\om- x)} f(x+z)\rt)\,dx,
\]
we see that there exists $x^\eps\in Q_{0,\eps}$ such that with the notation $x^{(z)}= x^\eps+ z$, we have 
\be\label{prf_of_claim_prop:differincl_0}
 \sum_{z\in (\eps \Z)^2\cap [\om- x^\eps]^\eps} \lt[|Du(x^{(z)},\eps e_1)|^{\theta_1}|Du(x^{(z)},\eps e_2)|^{\theta_2}
 +|D[Du(\cdot, \eps e_1)](x^{(z)},\eps e_2)|\rt]<c.
\ee
Let us assume without loss of generality that $x^\eps=0$. Analogously to Lemma~\ref{lemma:4points}, we set
\be
\label{prf_of_claim_prop:differincl_1}
 w(x)=u(x)+u(0)-u(x_1,0)-u(0,x_2).
\ee
We thus have
\[
u(x)= u_1(x_1)+u_2(x_2)+w(x)
\]
with $u_1(x_1)=u(x_1,0)-u(0)$ and $u_2(x_2)= u(0,x_2)$.
We check by direct computation that for $x\in\om^\eps$ and $s,t\in\R$ with $|s|,|t|\le \eps$ we have the identity
\[
 D[Du(\cdot, s e_1)](x,t e_2)=D[Dw(\cdot, s e_1)](x,t e_2).
\]

\noindent 
Let us now establish the estimate 
\be\label{claimBVweps}
 \sum_{z\in (\eps \Z)^2\cap \om^\eps} \eps [|D w(x^{(z)},\eps e_1)|+|D w(x^{(z)},\eps e_2)|]<2 c.
\ee
We focus for definiteness on the first term. Using~\eqref{prf_of_claim_prop:differincl_1}, we compute 
\[
D w(x^{(z)},\eps e_1)=D u(x^{(z)},\eps e_1) - D u_1(x^{(z)}_1,\eps) = D u(x^{(z)},\eps e_1) - D u(x^{(z)}_1,\eps e_1).
\]
This leads to
\begin{align*}
 \sum_{z\in (\eps \Z)^2\cap\om^\eps} \eps |D w(x^{(z)},\eps e_1)|&= \sum_{z\in (\eps \Z)^2\cap\om^\eps} \eps |D u(x^{(z)},\eps e_1)-D u(x^{(z)}_1,\eps e_1)|\\
 &\le \sum_{z\in (\eps \Z)^2\cap\om^\eps} \eps \sum_{j\in (\eps\Z)\cap (0,x^{(z)}_2-\eps)} |D[Du(\cdot, \eps e_1)](x^{(z)}_1+ je_2,\eps e_2)|\\
 &\le \sum_{z\in (\eps \Z)^2\cap\om^\eps}|D[Du(\cdot, \eps e_1)](x^{(z)},\eps e_2)|\stackrel{\eqref{prf_of_claim_prop:differincl_0}}< c.
\end{align*}
To get the first inequality of the last line, we used Fubini (for sums) and the fact that for $z_1\in\eps\Z$ fixed, the cardinal of the set $\{z_2 \in\eps \Z:(z_1,z_2)\in \om^\eps\}$ is bounded by $1/\eps$.\\ 
Let us now define
\[
 F:=\lt\{z\in (\eps\Z)^2\cap \om^\eps\,:\,|D w(x^{(z)},\eps e_1)|+|D w(x^{(z)},\eps e_2)|\ge\dfrac{2^4 c \eps}{\alpha^2}\rt\}
\]
so that from~\eqref{claimBVweps},
\be\label{volF}
 \HH^0( F)< \dfrac{\alpha^2}{8\eps^2}.
\ee
Next, let 
\[
 E_1:=\lt\{z_1\in(\eps\Z)\cap Q^1 \ : \ |Du_1(z_1,\eps)|\ge \dfrac12 |p_1| \eps\rt\}
\]
and define similarly $E_2$. Let us estimate from below $\HH^0( E_1)$. Recall that  $u_1(x_1)=u(x_1,0)-u(0)$ and thus~\eqref{hyp1:diffincl} implies
\[
 \|u_1-p_1x_1\|_{L^{\oo}(Q^1)}\le \delta
\]
with $|p_1|\ge \alpha$. Moreover since $u_1$ is $1\,$-Lipschitz, there holds $|Du_1(x_1,\eps)|\le \eps$. With these observations, we compute
\begin{multline*}
 |p_1|-\delta\le |u_1(1)-u_1(0)|=\Big|\sum_{z_1\in (\eps \Z)\cap(0,1-\eps)} Du_1(z_1,\eps)\Big|\\
 \le \sum_{E_1} |Du_1(z_1,\eps)|+\sum_{E_1^c} |Du_1(z_1,\eps)|\le \HH^0( E_1) \eps +\dfrac{|p_1|}2 \HH^0( E_1^c) \eps.
 \end{multline*}
Now since $\HH^0( E_1^c)=\eps^{-1}-\HH^0(E_1)$, we deduce that
\be\label{volE}
 \HH^0(E_1)\ge \lt(\dfrac{ \dfrac{|p_1|}2-\delta}{1-\dfrac{|p_1|}2}\rt) \dfrac1{\eps}\,\st{\eqref{delta_c}}>\dfrac\alpha{4\eps}.
\ee
A similar estimate holds also for $\HH^0(E_2)$.  Then by~\eqref{volF} and~\eqref{volE},
\be\label{volgood}
 \HH^0 ((E_1\times E_2)\cap F^c)\ge \HH^0(E_1) \HH^0(E_2) -\HH^0(F)>\dfrac{\alpha^2}{4\eps^2}-\dfrac{\alpha^2}{8\eps^2}=\dfrac{\alpha^2}{8\eps^2}.
\ee
Eventually, let us estimate $ |D u(x^{(z)},\eps e_l)|$ for $l=1,2$ and $z\in (E_1\times E_2)\cap F^c$. Using the the triangle inequality and the definitions of $E_1$, $E_2$ and $F$, we compute 
\begin{multline}\label{goodestim:diffincl}
 |D u(x^{(z)},\eps e_l)|=|D u_l(x^{(z)}_l,\eps)-Dw(x^{(z)},\eps e_l)|\ge |D u_l(x^{(z)}_l,\eps)|-|Dw(x^{(z)},\eps e_l)|\\
 \ge \dfrac{|p_l|}2 \eps - \dfrac{2^4 c}{\alpha^2} \eps \ge \lt(\dfrac\alpha2 - \dfrac{2^4 c}{\alpha^2}\rt)\eps\st{\eqref{delta_c}}>\dfrac{\alpha\eps}4.
\end{multline}
We conclude with~\eqref{prf_of_claim_prop:differincl_0} that
\[
 c>
 \sum_{z\in (E_1\times E_2)\cap F^c}|Du(x^{(z)},\eps e_1)|^{\theta_1}|Du(x^{(z)},\eps e_2)|^{\theta_2}
 \,\st{\eqref{goodestim:diffincl}}\ge\,\HH^0\lt((E_1\times E_2)\cap F^c\rt) \dfrac{\alpha\eps}4
\,\stackrel{\eqref{volgood}}>\,\dfrac{\alpha^3}{2^5\eps} 
\]
which gives a contradiction for $\eps=\eps_k$ small enough. This concludes the proof of the claim and therefore of the proposition. \end{proof}

\subsubsection{Rectifiability of the defect measure}\label{sec:rectifLip}
Let us state the main result of this section which is a detailed version of Theorem~\ref{thm:rectiftheta=1}.
\begin{theorem}\label{theo:rectifLipmain}
 Let $v\in S^\oo(\Om)$ and assume that $\mu=\mu[v]\in\cM(\Om)$. Then, for $|\mu|\,$-\ae$\bar x$, there exists $v^\oo=(v_1^\oo,v_2^\oo)\in(\R\sm\{0\})^2$ such that letting
 \[
 L:=\Span \lt(v_2^\oo,v_1^\oo\rt)\qquad\text{ and }\qquad c(\bar x):=\dfrac{|v_1^\oo| |v_2^\oo|}{|v^\oo|},
 \]
 we have for every $\vhi\in C_c(\R^2)$,
 \be\label{eq:rectifLipclaim}
  \lim_{r\to 0} \dfrac1r\int_{\R^2} \vhi\lt(\dfrac{x-\bar x}{r}\rt) d|\mu|(x)= c(\bar x)\int_L\vhi\, d\HH^1.
 \ee
As a consequence $\mu= m\HH^1\restr \Sigma$ for some $1$-rectifiable set $\Sigma$ and some Borel function $m$ with $|m|=c$. Moreover, denoting
\[
 \nu:= \dfrac{\sign m}{|v^\oo|}(-v_1^\oo,v_2^\oo),
\]
a unit normal to $\Sigma$ at $\bar x$ and then
\[
 V^\oo(y):=\begin{cases}
              (v^\oo_1,0) & \text{if }y\cdot \nu>0,\\
              (0,v_2^\oo) &\text{if }y\cdot \nu <0,
             \end{cases}
\]
we have
\be\label{eq:strongtracesclaim}
 \lim_{r\to 0} \frac1{r^2}\int_{B_r}\lt|v (\bar x+ y)-V^\oo(y)\rt|\,dy=0.
\ee
Therefore $v$ has traces on $\Sigma$.
\end{theorem}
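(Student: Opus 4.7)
The plan is to work with the scalar potential: since $\nb\times v=0$ on the simply connected domain $\Om=(-1,1)^2$, we may write $v=\nb u$ for some $1\,$-Lipschitz function $u$, and then exploit a layer-cake decomposition on the superlevel sets $\om_t:=\{u>t\}$. Setting $\kappa_t:=\pt_1\pt_2\un_{\om_t}$, an earlier result of this section (Proposition~\ref{prop:decompmu}) yields $\mu=\int\kappa_t\,dt$, so that for \ae $t$, $\kappa_t$ is a finite Radon measure. Theorem~\ref{theo:dirac1} then applies: up to a negligible stripe part, $\om_t$ is a finite union of axis-parallel polygons, and
\[
\kappa_t=\sum_j m^t_j\,\delta_{x^t_j},\qquad m^t_j\in\{\pm1,\pm2\}.
\]

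A standard disintegration argument combined with the countability of discontinuities of the monotone map $t\mapsto|\om_t|$ then shows that, for $|\mu|\,$-\ae $\bar x$, there is a level $\bar t$ such that $\bar x$ is a corner of $\om_{\bar t}$ and such that $t\mapsto|\om_t|$ is continuous at $\bar t$. A topological lemma (Lemma~\ref{lem:corner}) will give, for $t$ in a small interval around $\bar t$, the existence and uniqueness of a corner $x(t)$ of $\om_t$ close to $\bar x$, together with the stability of the corner-type (the sign pattern of $\un_{\om_t}$ in the four adjacent quadrants of $x(t)$). Up to relabeling the coordinates and to an innocuous sign change, we may assume that in the local picture the two edges of $\pt\om_t$ incident to $x(t)$ point in the positive $e_1$ and positive $e_2$ directions.

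The heart of the argument is to establish the differentiability of the parametrization $t\mapsto x(t)$ at $\bar t$ and the identity
\[
x'_l(\bar t\,)=\dfrac1{v^\oo_l(\bar x)},\qquad l=1,2,
\]
for some nonzero scalars $v^\oo_l(\bar x)$ (Lemma~\ref{lem:h_i'}). The idea is that, since $u(x(t))\equiv t$ and $\om_t$ is locally an axis-aligned polygon with corner at $x(t)$, on each edge of $\pt\om_t$ incident to $x(t)$ the restriction of $u$ is a $1\,$-Lipschitz function of a single variable whose derivative coincides with one component of $v$; the numbers $v_l^\oo(\bar x)$ arise as the one-sided traces of $v_l$ obtained as $t\to\bar t$ along these shrinking edges. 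The bound $v_l^\oo\ne0$ is then enforced by the continuity of $t\mapsto|\om_t|$ at $\bar t$, since a vanishing trace would make an adjacent face of $\om_t$ sweep a set of positive area. Extracting these strong one-sided traces from the mere assumption $v\in L^\oo$ is the main obstacle.

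Once differentiability is obtained, the tangent direction to the parametrized curve $\{x(t)\}$ at $\bar x$ is proportional to $(1/v_1^\oo,1/v_2^\oo)$ and hence to $(v_2^\oo,v_1^\oo)$, giving the candidate tangent line $L=\Span(v_2^\oo,v_1^\oo)$. A change of variable in $\mu=\int\kappa_t\,dt$, together with $|\kappa_t(\{x(t)\})|=1$ (generically a wedge corner of $\om_t$, so $|m^t_j|=1$) and
\[
|x'(\bar t\,)|=\sqrt{\dfrac1{|v_1^\oo|^2}+\dfrac1{|v_2^\oo|^2}}=\dfrac{|v^\oo|}{|v_1^\oo||v_2^\oo|},
\]
produces the blow-up identity~\eqref{eq:rectifLipclaim} with $c(\bar x)=|v_1^\oo||v_2^\oo|/|v^\oo|$. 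Standard characterizations of $1\,$-rectifiable measures via blow-ups then give $\mu=m\,\HH^1\restr\Sigma$ with $|m|=c$, and the strong-trace statement~\eqref{eq:strongtracesclaim} follows by revisiting the polygonal picture: in a blow-up of size $r\dw0$ around $\bar x$, the polygon $\om_t\cap B_r(\bar x)$ degenerates to a half-plane bounded by $L$ on whose sides $v$ is respectively constant equal to $(v_1^\oo,0)$ and $(0,v_2^\oo)$.
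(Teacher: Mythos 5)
Your sketch reproduces the high-level plan that the paper's Introduction already outlines, and at that level it is correct: pass to the potential $u$, disintegrate $\mu=\int\kappa_t\,dt$ over the superlevel sets, use Theorem~\ref{theo:dirac1} to identify a.e.~level set as an axis-aligned polygon, and convert the differentiability of the corner trajectory $t\mapsto x(t)$ into the blow-up identity and the strong-trace statement. However, precisely at the point you flag as "the main obstacle" the proposal has genuine gaps, and two of the technical devices that make the argument work are missing or misattributed.

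First, you attribute the "stability of the corner-type" to Lemma~\ref{lem:corner}, but that lemma only yields a unique corner $x(t)\in Q_{\ov r}(\bar x)$ for $t$ near $\bar t$; it gives no control on whether the corner changes orientation, exactly the phenomenon of Example~\ref{example:bifurcation} where $\om_t$ flips from locally concave to locally convex and $x'(t)$ jumps. Ruling this out is what the control squares $Q^{\mathrm h}_\pm$, $Q^{\mathrm v}_\pm$ and the sets $\JJ^{\mathrm h}_\pm,\JJ^{\mathrm v}_\pm$ of Definition~\ref{def:JJ} do, and $0\in\dens(\JJ)$ hinges on $\bar t$ being a Lebesgue point of $s\mapsto\HH^1(\Gamma_s\cap Q)$ for the countably many $Q\in\QQ$. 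Second, and more substantially, the existence and finiteness of $x'(\bar t)$ cannot be inferred from the continuity of $t\mapsto|\om_t|$ at $\bar t$, as you claim: continuity excludes a jump (hence $|\Gamma_{\bar t}|=0$), but it does not force $h_l$ to be classically differentiable at $\bar t$, and without differentiability $v^\oo_l=1/h_l'(\bar t)$ is undefined or zero. The paper's solution is to parametrize $x(t)=h(t)$ via the rational volume functions $\Vol_Q$ of Definition~\ref{def:QQ} and to restrict to the full-measure set $\II_\QQ$ of levels at which all the $\Vol_Q$ are differentiable and Lebesgue points of their derivatives; combined with the $1$-Lipschitz bound (Lemma~\ref{lem:Dxi_Dh_i}) this gives $h_l'(0)\ge 1$, hence $v_l^\oo\in(0,1]$. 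Finally, you state $m_j^t\in\{\pm1,\pm2\}$ but then silently assume $|\kappa_t(\{x(t)\})|=1$; excluding the saddle multiplicities $\pm2$ requires Proposition~\ref{prop:polyomegat}, which relies on the Alberti--Bianchini--Crippa result that for a.e.~$t$ the closures of the connected components of $\om_t$ are disjoint.
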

\begin{remark}
 Let us point out that the sign of $m(\bar x)$ cannot be directly computed from the values of $v^\oo$. Indeed, for both $u(x_1,x_2)=\min(x_1,x_2)$ and $\tilde{u}(x_1,x_2)=\max(x_1,x_2)$ we have $v^\oo= (1,1)$ on $\Sigma= \{x_1=x_2\}$ but $\mu[u]=-\mu[\tilde{u}]= (1/\sqrt2)\HH^1\restr \Sigma$.
\end{remark}

We start with some preliminaries. Let $v\in S^\oo(\Om)$ such that $\mu[v]\in\cM(\Om)$ and  $u$ be such that $v=\nb u$.  For $t\in \R$, we let $\Gamma_t=\{u=t\}$
and $\om_t=\{u>t\}$. The function $t\mapsto |\om_t|$ is bounded and decreasing thus have bounded variation\footnote{Since $|\om_t|=|\Om|>0$ for $t<\inf u$, the function $t\in\R\mapsto |\om_t|$ is not integrable over $\R$, so, strictly speaking, it is only locally $BV$.}. By the co-area formula (see for instance~\cite{AlbBianCrip}), we have that for almost every $t$,  $\om_t$ is a set of finite perimeter with, up to a $\HH^1$-negligible set, $\pt \om_t \cap \Om= \Gamma_t$. 
We have moreover
\be\label{boundH1Gamma}
 \int_\R \HH^1(\Gamma_t)\,dt =\int_{\Om} |\nb u|\,\le |\Om|.
\ee

Our first goal is to establish that the measure $\mu$ decomposes naturally on the level sets of $u$. For this purpose we introduce a measure $\tilde \mu$ on $ \Om\times\R$ defined by the property: 
 \be\label{def_mutilde}
\int_{\Om\times \R} \vhi(x,t)\,d\tilde{\mu}(x,t)=\int_{\Om} \vhi(x,u(x))\,d\mu(x)\qquad\text{for }\vhi\in C_c( \Om\times \R).
 \ee
 Notice that the definition makes sense since $u\in \mathrm{Lip}(\Om)\sub C(\Om)$.
 The next result characterizes $\tilde\mu$  it in terms of the family of measures
 \be\label{kappa_t}
 \kappa_t:=\pt_1\pt_2\un_{\om_t}\quad\qquad\text{for }t\in\R.
 \ee 
\begin{proposition}\label{prop:decompmu}
For almost every $t\in\R$ there holds $\kappa_t\in \cM(\Om)$ and we have the identity 
\be\label{decomptildemu}
 \tilde{\mu}= \kappa_t\otimes dt.
\ee
As a consequence, for every Borel set $A\sub \Om$,
\be\label{eq:integN}
 |\mu|(A)=\int_\R |\kappa_t|(A) \,dt.
\ee
\end{proposition}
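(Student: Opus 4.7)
My plan is to establish the chain rule identity $\pt_1\pt_2[\Psi(u)] = \psi(u)\,\mu$ for $\Psi\in C^2(\R)$ with $\psi=\Psi'\in C^\oo_c(\R)$, combine it with a layer-cake representation of $\Psi\circ u$ to identify $\kappa_t$, and then invoke the disintegration theorem on $\tilde\mu$.

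For the chain rule, I would regularize by convolution, setting $u^\eps := u*\rho_\eps$, so that $v^\eps = \nb u^\eps = v*\rho_\eps$. A direct computation on the smooth function $\Psi(u^\eps)$ gives
\[
\pt_1\pt_2[\Psi(u^\eps)] \,=\, \psi'(u^\eps)\,v_1^\eps\,v_2^\eps \,+\, \psi(u^\eps)\,\mu[u^\eps],
\]
where $\mu[u^\eps] = \mu*\rho_\eps$. Because $v\in K$ almost everywhere we have $v_1 v_2 = 0$ a.e.\ on $\Om$; combined with $v^\eps \to v$ in $L^2_{loc}$, this forces $v_1^\eps v_2^\eps \to 0$ in $L^1_{loc}$. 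The uniform convergence $u^\eps\to u$ and the weak-$*$ convergence $\mu[u^\eps]\rightharpoonup \mu$ then yield $\pt_1\pt_2[\Psi(u)] = \psi(u)\mu$ as distributions. On the other hand, choosing $\Psi(s) := \int_{-\oo}^s \psi(\tau)\,d\tau$, the layer-cake identity
\[
\Psi(u(x)) \,=\, \int_\R \psi(t)\,\un_{\om_t}(x)\,dt
\]
combined with Fubini gives $\lt<\pt_1\pt_2[\Psi(u)],\phi\rt> = \int_\R\psi(t)\,\lt<\kappa_t,\phi\rt>\,dt$ for every $\phi\in C^\oo_c(\Om)$. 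Comparing the two computations produces the master identity
\begin{equation}\label{eq:master_plan}
\int_\Om \phi(x)\,\psi(u(x))\,d\mu(x) \,=\, \int_\R \psi(t)\,\lt<\kappa_t,\phi\rt>\,dt,
\end{equation}
where the right-hand side is interpreted \emph{a priori} as a distributional pairing in $t$.

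Identity~\eqref{eq:master_plan} shows that for every fixed $\phi\in C^\oo_c(\Om)$ the pushforward $u_\#(\phi\mu)\in\MM(\R)$ is absolutely continuous with density $t\mapsto \lt<\kappa_t,\phi\rt>\in L^1(\R)$. Approximating the polar function $\sigma=d\mu/d|\mu|$ in $L^1(|\mu|)$ by continuous functions with values in $[-1,1]$ (Lusin's theorem) writes $\mu^\pm$ as total-variation limits of measures of the form $\phi_n\mu$; since absolutely continuous finite measures form a closed subspace in the TV topology, we deduce $u_\#|\mu|\ll\mc L^1$. Now $\tilde\mu = (\mathrm{id},u)_\#\mu$ is the pushforward of $\mu$ under a continuous \emph{injective} map, so $|\tilde\mu| = (\mathrm{id},u)_\#|\mu|$ and its second marginal is exactly $u_\#|\mu|$, hence absolutely continuous. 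The disintegration theorem then produces a family of finite signed Radon measures $\{\kappa'_t\}$ on $\Om$ with $\tilde\mu = \kappa'_t\otimes dt$ and $|\tilde\mu| = |\kappa'_t|\otimes dt$. Testing this decomposition against product test functions $\phi(x)\psi(t)$ and comparing with~\eqref{eq:master_plan}, a countable dense family of $\phi$'s absorbs the null set and gives $\kappa'_t = \kappa_t$ for a.e.\ $t$. This simultaneously yields $\kappa_t\in\MM(\Om)$ for a.e.\ $t$ and the decomposition~\eqref{decomptildemu}; the consequence~\eqref{eq:integN} follows from $|\mu|(A) = |\tilde\mu|(A\times\R) = \int_\R |\kappa_t|(A)\,dt$.

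The main obstacle I anticipate is the chain rule identity: the crucial point is the vanishing in the limit of the spurious product $\psi'(u^\eps)v_1^\eps v_2^\eps$. This is where the differential inclusion $v\in K$ enters essentially; without it this term would persist in the distributional limit and destroy the identity $\pt_1\pt_2[\Psi(u)] = \psi(u)\mu$ that drives the whole argument.
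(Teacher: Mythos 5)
Your proposal is correct and its first half coincides with the paper's: you regularize $u$, observe that the spurious term $\psi'(u_\eps)\,\pt_1u_\eps\,\pt_2 u_\eps$ vanishes in the limit because $\pt_1 u\,\pt_2 u=0$ a.e.\ (the paper uses dominated convergence together with $\nb u_\eps\to\nb u$ a.e.; you use $L^2_{loc}$ convergence of $v^\eps$ — both are fine), and then pair with the layer-cake formula to get the master identity $\int_\Om\phi\,\psi(u)\,d\mu=\int_\R\psi(t)\,\langle\kappa_t,\phi\rangle\,dt$. The paper passes through the intermediate identity $\int_\Om\phi\,\psi(u)\,d\mu=\int_\Om\Psi(u)\,\pt_1\pt_2\phi\,dx$ and also truncates $u$ to compact support as a technical device; these are cosmetic differences.

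Where you genuinely diverge is the disintegration step. The paper first disintegrates $\tilde\mu=\mu_t\otimes\lambda$ against the (a priori unknown) second marginal $\lambda$, then splits $\lambda=f\,dt+\lambda^s$ and uses the master identity to force $\lambda^s=0$ and identify $\kappa_t=f(t)\mu_t$. You instead establish the absolute continuity $u_\#|\mu|\ll\mathcal L^1$ \emph{before} disintegrating: from the master identity $u_\#(\phi\mu)\ll\mathcal L^1$ for every smooth $\phi$, and then you pass to $u_\#|\mu|$ by Lusin-approximating the polar function $\sigma=d\mu/d|\mu|$ in $L^1(|\mu|)$ and using that AC measures form a TV-closed subspace; the injectivity of $x\mapsto(x,u(x))$ then transfers this to the second marginal of $|\tilde\mu|$, so you can disintegrate directly against Lebesgue. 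Both routes reach the same conclusion; the paper's is slightly more economical because it avoids the $L^1(|\mu|)$-approximation of $\sigma$ (and the attendant cutoff/mollification bookkeeping), absorbing the killing of the singular part into the comparison with the master identity. The final identification $\kappa'_t=\kappa_t$ via a countable family of test functions dense in $C^2_c(\Om)$ is the same in both: you implicitly need the $C^2$-density (rather than merely $C^0$) because $\kappa_t=\pt_1\pt_2\un_{\om_t}$ is a priori only a distribution of order two; the paper faces the same point.
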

\begin{proof}~
Since $\Om$ is bounded and $u$ is Lipschitz continuous, after possibly adding a constant, we assume that $\inf u= 0$ and we set $T:=\sup u\ge0$.\medskip

 \setcounter{proof-step}{0}
\noindent{\textit{Step \stepcounter{proof-step}\arabic{proof-step}.}} Let us establish that for every $\vhi\in C^{\oo}_c (\Om)$ and every $\psi\in C^1(\R)$,
 \be\label{decompmu:firsteq}
  \int_{\Om} \vhi(x) \psi(u(x))\,d\mu(x)=\int_0^T \psi(t) \lt<\kappa_t,\vhi\rt>\,dt.
 \ee
For this we first prove that, denoting $\Psi(t):=\int_0^t \psi(s)\,ds$, we have the identity 
 \be\label{decompmu:intermediate}
   \int_{\Om} \vhi(x) \psi(u(x))\,d\mu(x)=\int_{\Om} \Psi(u) \pt_1\pt_2 \vhi\,dx.
 \ee
Since $\vhi\in C^\oo_c(\Om)$, there exists  an open set $\Om'$ compactly supported in $\Om$ and such that $\vhi\in C^\oo_c(\Om')$. Up to replacing $u$ by $\chi u$ where $\chi\in C^\oo_c(\Om)$ with $\chi=1$ on $\Om'$ we may assume that $u$ has compact support in $\Om$. We then let $u_\eps=u*\rho_\eps$ so that $\pt_1\pt_2 u_\eps\rightharpoonup \mu$ as measures, $u_\eps\to u$ in $C(\Om)$ and $\nb u_\eps\to \nb u$ almost everywhere. Therefore
\begin{align*}
\int_\Om\vhi(x)\psi(u(x))\,d\mu(x)&=\lim_{\eps\to 0}\int_\Om\vhi(x)\psi(u_\eps(x))\pt_1\pt_2u_\eps\,dx\\
 &=-\lim_{\eps\to 0}\lt[\int_\Om\pt_1\vhi(x)\psi(u_\eps(x))\pt_2 u_\eps\,dx+\int_\Om\vhi(x)\psi'(u_\eps(x)) \pt_1u_\eps\pt_2u_\eps\,dx\rt].
\end{align*}
On the one hand we have
\[
 \lim_{\eps\to 0}\int_\Om\pt_1\vhi(x)\psi(u_\eps(x))\pt_2u_\eps\,dx=\int_\Om\pt_1\vhi(x)\psi(u)\pt_2u\,dx.
\]
On the other hand, by dominated convergence theorem, we get
\[
\lim_{\eps\to 0}\int_\Om\vhi(x)\psi'(u_\eps(x))\pt_1u_\eps\pt_2u_\eps\,dx=\int_\Om\vhi(x)\psi'(u(x))\underbrace{\pt_1u\,\pt_2u}_{=0\text{ \ae}}\,dx=0.
\]
Thus
\[\int_\Om \vhi(x) \psi(u(x))\,d\mu(x)=-\int_\Om \pt_1 \vhi(x) \psi(u)\pt_2 u\,dx=-\int_\Om \pt_1 \vhi(x) \pt_2 \lt[\Psi(u)\rt]\,dx.\]
Integrating by parts once again we obtain~\eqref{decompmu:intermediate}. Finally, using the layer-cake formula, see~\cite[Theorem~1.13]{liebloss}, we deduce 
\[
 \int_\Om\Psi(u)\pt_1\pt_2\vhi\,dx=\int_0^T \lt[\int_{\om_t}\pt_1\pt_2 \vhi\,dx\rt] \psi(t)\,dt.
\]
By definition of $\kappa_t$ this concludes the proof of~\eqref{decompmu:firsteq}.
\medskip

\noindent{\textit{Step \stepcounter{proof-step}\arabic{proof-step}.}} We now prove~\eqref{decomptildemu}. In light of the definition~\eqref{def_mutilde} of $\tilde\mu$ and of the identity~\eqref{decompmu:firsteq}, we have to check that $\kappa_t$ is a measure for almost every $t$ and that the function $t\mapsto |\kappa_t|(\Om)$ is integrable with respect to the Lebesgue measure on $\R$.\\
Let us disintegrate $\tilde \mu$ along the level sets $\Gamma_t$. We obtain $\tilde \mu=\mu_t\otimes \lambda$ where $\lambda$ is a finite positive measure supported in $\ov{u(\Om)}$ and $|\mu_t|(\Om)=1$ for $\lambda\,$-almost every $t$. For $\vhi\in C^\oo_c(\Om)$ and $\psi\in C^1(\R)$, we have
\[
 \int_0^T\psi(t)\lt(\int_\Om \vhi(x)\,d\mu_t(x)\rt)\,d\lambda(t)=\int_\Om\vhi(x)\psi(u(x))\,d\mu(x)\stackrel{\eqref{decompmu:firsteq}}=\int_0^T \psi(t)\lt<\kappa_t,\vhi\rt>\,dt.
 \]
Therefore, for fixed $\vhi$, we have, as measures, 
\be\label{decompmu:last}
 \lt(\int_\Om \vhi(x)\,d\mu_t(x)\rt)\,d\lambda(t)=\lt<\kappa_t,\vhi\rt>\,dt.
\ee
Decomposing $\lambda$ into absolutely continuous and singular parts with respect to the Lebesgue measure,  we write $\lambda=f(t)\, dt+\lambda^s$. Putting this in~\eqref{decompmu:last} and identifying, we get the identities
\begin{align*}
 f(t)\lt<\mu_t,\vhi\rt>&=\lt<\kappa_t,\vhi\rt>&&\!\!\!\!\!\!\!\!\!\!\!\text{for almost every }t,\\
 \lt<\mu_t,\vhi\rt> \,d\lambda^s(t)&=0&&\!\!\!\!\!\!\!\!\!\!\!\text{as measure}.
\end{align*}
We deduce that $\kappa_t$ is a measure for almost every $t$ with $\kappa_t=f(t)\mu_t$ so that $t\mapsto |\kappa_t|(\Om)$ is integrable. Moreover the contribution of $\mu_t\otimes\lambda^s$ in the disintegration vanishes, hence $\lambda^s=0$. This concludes the proof of identity~\eqref{decomptildemu}.
\end{proof}
We deduce from Theorem~\ref{theo:dirac1}, that for almost every level $t$ the set $\om_t$ is a polygon.
\begin{proposition}\label{prop:polyomegat}
 There exists $\II_{\mathrm{poly}}\sub\R$ of full measure such that for \ae$t\in \II_{\mathrm{poly}}$, the set $\om_t$ is a finite disjoint union of open polygons with sides parallel to the coordinate axes. Moreover, for such $t$, $N(t):=|\kappa_t|(\Om)$ is the number of vertices of $\Gamma_t$ in $\Om$. More precisely,
 \be
 \label{kappa_t-N(t)}
\kappa_t =\sum_{j=1}^{N(t)} \kappa_t(x_t^j)\delta_{x^j_t},
 \ee
 where the $x^j_t$'s are the vertices in $\Om$ of the polygons forming $\om_t$ and $\kappa_t(x_t^j)=\pm1$.\\
 In the following these inner vertices are called corners and we denote 
 \[
 \CC(\om_t):=\{x_t^j:1\le j\le N(t)\}.
 \]
%
\end{proposition}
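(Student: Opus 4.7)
The strategy is to apply Theorem~\ref{theo:dirac1} levelwise to $\un_{\om_t}$, using that for almost every $t$ we have both $\kappa_t\in\cM(\Om)$ and good perimeter bounds on $\Gamma_t$.

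I would define $\II_{\mathrm{poly}}$ as the intersection of two full-measure subsets of $\R$: the set $\II_1$ of levels $t$ for which $\kappa_t\in\cM(\Om)$, of full measure by Proposition~\ref{prop:decompmu}; and the set $\II_2:=\{t\in\R : \HH^1(\Gamma_t)<\oo\}$, of full measure by the coarea formula~\eqref{boundH1Gamma} for the $1$-Lipschitz function $u$. For $t\in\II_{\mathrm{poly}}$, the characteristic function $\un_{\om_t}$ satisfies the differential inclusion~\eqref{eq:diffinclu} with $\mu[\un_{\om_t}]=\kappa_t$, so Theorem~\ref{theo:dirac1} applies. Part (i) gives a finite atomic decomposition $\kappa_t=\sum_{j=1}^{N(t)} m_j(t)\,\delta_{x_t^j}$ with $m_j(t)\in\{\pm1,\pm2\}$ and the $x_t^j$'s pairwise distinct. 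Part (ii) gives that, up to a Lebesgue-negligible set, $\om_t$ is a finite union of polygons with sides parallel to the coordinate axes, together with horizontal or vertical stripes of the form $I_1\times J$ or $J\times I_2$. Each such stripe contributes at least $2\min(\HH^1(I_1),\HH^1(I_2))$ to $\HH^1(\Gamma_t)$, so the condition $t\in\II_2$ forces the number of stripes (which are themselves open polygons of $\Om$) to be finite. Combined with the finite polygonal part, this yields the claimed representation of $\om_t$ as a finite disjoint union of open polygons, whose inner vertices coincide exactly with the atoms of $\kappa_t$.

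The main obstacle is the claim $m_j(t)=\pm 1$, which requires a further refinement of $\II_{\mathrm{poly}}$ to remove the ``saddle levels'' $E:=\{t\in\II_1\cap\II_2 : \kappa_t \text{ has an atom of mass } \pm2\}$. Geometrically, a mass $\pm2$ at $x_0$ corresponds to two polygons of $\om_t$ sharing the vertex $x_0$, so that $u(x_0)=t$ and (where it exists) $\nb u(x_0)=0$, since $u$ straddles the value $t$ in every direction near $x_0$. The crucial remark is that for each fixed $x_0\in\Om$, the set of levels $t$ for which $x_0$ is an atom of $\kappa_t$ is contained in the singleton $\{u(x_0)\}$; plugging this into~\eqref{eq:integN} already shows that $\mu$ is non-atomic. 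To obtain $|E|=0$, I would perform a measurable selection $t\in E\mapsto x_0(t)\in\Om$ of the $\pm2$ corner, observe that $u\circ x_0=\mathrm{Id}_E$ exhibits $E$ as the image under the Lipschitz function $u$ of a subset of its critical set (on which $\nb u$ vanishes), and exploit the strong constraint $\nb u\in K$ almost everywhere to derive the required coarea-type bound forcing $|E|=0$. Removing $E$ from $\II_{\mathrm{poly}}$ yields $m_j(t)=\pm1$ for every inner vertex and the identity $N(t)=|\kappa_t|(\Om)$. I expect this last measure-theoretic step about the negligibility of saddle levels to be the main technical difficulty; once settled, the rest of the proof is a direct translation of Theorem~\ref{theo:dirac1} and the coarea formula.
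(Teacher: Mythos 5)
Your first two steps — applying Theorem~\ref{theo:dirac1} levelwise to $\un_{\om_t}$ and using the coarea bound~\eqref{boundH1Gamma} to show that for a.e.\ $t$ the stripe part of $\om_t$ is finite — are correct and match the paper's proof. The gap is exactly where you expect it to be, in the exclusion of the multiplicity-$\pm 2$ corners, but the route you sketch to close it does not work.

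Two concrete problems with your argument for $|E|=0$. First, the geometric observation that $\nb u(x_0)=0$ at a $\pm 2$ corner is incorrect: at such a saddle (where two polygons of $\om_t$ meet at $x_0$, as for $u=\min(|x_1|,|x_2|)$ at the origin), the Lipschitz function $u$ is \emph{not differentiable} at $x_0$; $x_0$ belongs to the non-differentiability set of $u$, not to $\{\nb u=0\}$, so one cannot speak of a ``critical set.'' Second, even granting that the selection $x_0(E)\sub\Om$ is Lebesgue-null (a subset of a Rademacher-null set), a Lipschitz map from $\R^2$ to $\R$ does \emph{not} satisfy the Lusin (N) property in the relevant direction: it can map an $\HH^2$-null (indeed even an $\HH^1$-positive curve-like) set onto a set of positive $\HH^1$ measure in $\R$. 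The coarea inequality controls $\int_\R\HH^1(\Gamma_t\cap A)\,dt$ in terms of $|A|$, not the Lebesgue measure of the image $u(A)$, so it does not yield $|E|=|u(x_0(E))|=0$. The constraint $\nb u\in K$ would have to be used in a way you have not specified, and it is not at all clear that a self-contained Sard-type argument can be extracted.

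The paper instead invokes the structure theorem~\cite[Theorem~2.5]{AlbBianCrip} for level sets of Lipschitz functions, which directly gives that for a.e.\ $t$ the closures of the connected components of the superlevel set are pairwise disjoint; this is what rules out the polygons touching at a corner, hence the multiplicities $\pm 2$. (The paper's footnote also sketches an elementary alternative based on the erosion estimate $\om_s\sub\{x\in\om_t: d(x,\om_t^c)\ge s-t\}$ for $s>t$, which is yet another route, different from yours.) Unless you can supply a correct proof of $|E|=0$, you should replace your Sard-type argument with an appeal to that structure theorem.
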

\begin{remark}~\\
 (i)~Let us point out that  Proposition~\ref{prop:polyomegat} implicitly states that  $\overline{\om}_t$ is also made of a  finite disjoint union of polygons. In particular, this excludes polygons intersecting at a corner as in Figure~\ref{Figure_polyhedron}. In other words, we excluded the multiplicities $\pm 2$ from Theorem~\ref{theo:dirac1}.\smallskip\\
 (ii)  For shortness, in the proof of Proposition~\ref{prop:polyomegat} below we deduce this simplification from the general result\footnote{We could also give a more elementary proof in our context based on the following observation. Since  $u$ is 1-Lipschitz continuous, the erosion rate of $\om_t$ as $t$ increases is bounded from below. Namely, 
\[
\left\{
\begin{array}{rcll}
\om_s\subset& \lt\{x\in\om_t:d(x,(\om_t)^c)\ge s-t\rt\}\,&\subset\subset \om_t&\text{ for }0\le t<s\le T,\smallskip\\
(\om_s)^c\subset&\lt\{x\in(\om_t)^c:d(x,\om_t)\ge t-s\rt\}\,&\subset\subset(\om_t)^c&\text{ for }0\le s<t\le T.
\end{array}
\right.
\]
}~\cite[Theorem~2.5]{AlbBianCrip}.
\end{remark}

\begin{proof}[Proof of Proposition~\ref{prop:polyomegat}]~
By Theorem~\ref{theo:dirac1} applied with $u=\un_{\om_t}$ we have for \ae $t$,
\[
\om_t= \om_t^{\text{poly}}\cup S_t
\]
where:
\begin{enumerate}[$(*)$]
\item[(i)] $\om_t^{\text{poly}}$ is a finite union of polygons with sides parallel to the coordinate axes and which may intersect only at the corners, 
\item[(ii)] $S_t$ is a union of stripes, either all vertical or all horizontal.
\end{enumerate}
Moreover, by \cite[Theorem~2.5]{AlbBianCrip}, for almost every $t$, the closure of the connected components of $\om_t^{\text{poly}}$ are actually disjoint. Besides, by~\eqref{boundH1Gamma} $S_t$ is a finite union of stripes for almost every $t$ and since  $\om_t=\{u>t\}$ is open we conclude that this set is a disjoint finite union of open polygons with sides parallel to the coordinate axes. \\
Eventually, recalling~\eqref{kappa_t} and Proposition~\ref{prop:decompmu}, we have $\kappa_t=\pt_1\pt_2\un_{\om_t}$ which yields  identity~\eqref{kappa_t-N(t)} and the related properties.
\end{proof}


Thanks to Proposition~\ref{prop:decompmu}, studying the measure $\mu=\mu[\nb u]$ reduces to studying the mapping $s\mapsto \kappa_s$. The next lemma allows us to focus on levels $t$ near which $s\mapsto N(s)$ is (almost) constant and such that we can follow individually the trajectories $s\mapsto x_s^j$. Later, establishing the approximate differentiability of these trajectories is an important step in the proof of the 1-rectifiability of $\mu$.\\
Let us introduce some further notation. 
\begin{notation}
From now on for $x\in\R^2$ and $\lambda,r>0$, we denote
\[
Q_r:=(-r,r)^2,\qquad\qquad Q_r(x):=x+Q_r\qquad\text{and}\qquad \lambda Q_r(x):=x+\lambda Q_r.
\] 
Besides, for a measurable set $J\sub\R$, we define the set of points of density of $J$ as
\[
\dens(J):=\lt\{t\in J:\lim_{\eta\dw0}\dfrac{\HH^1(J\cap(t-\eta,t+\eta))}{2\eta}=1\rt\}.
\]
Notice that, as opposed to the standard definition, we enforce $\dens(J)\sub J$. However, with the present definition, the set $J\sm\dens(J)$ is still Lebesgue negligible.
\end{notation}
\begin{lemma}\label{lem:corner}
 Let $t\in \II_{\mathrm{poly}}$ be such that $|\Gamma_t|=0$ (\emph{i.e.} a point of continuity of $t\mapsto|\om_t|$). Then, there exist $\ov r=\ov r(t)$ such that the following statements hold true.
\begin{itemize}
\item[(i)] For every $0<r\le\ov r$ there exists $\eps=\eps(t,r)>0$ such that
\[
1=|\kappa_t(x)|
\le\sum_{y\in Q_r(x)}|\kappa_s(y)|\qquad\ \text{for }s\in (t-\eps,t+\eps)\cap \II_{\mathrm{poly}}\ \text{ and }\ x\in\CC(\om_t).
\]
\item[(ii)] As a consequence, for every such $s$, $N(s)\ge N(t)$.
\item[(iii)] If moreover $t$ is a Lebesgue point of $N$ then, denoting, for $0<r\le \ov r$,
\[
\JJ_t(r):=\lt\{s\in \II_{\mathrm{poly}}:N(s)=N(t)\text{ and }|\kappa_s|(Q_r(x))=1\text{ for every }x\in\CC(\om_t)\rt\},
\]
we have $t\in\dens(\JJ_t(r))$.
\end{itemize}

\end{lemma}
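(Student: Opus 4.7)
My plan is to exploit the piecewise-polygonal structure of $\om_t$ provided by Proposition~\ref{prop:polyomegat} together with the continuity of $u$ and the four-corner identity~\eqref{id4pts} applied to the indicators $\un_{\om_s}$. Since $\om_t$ is a finite disjoint union of open polygons with sides parallel to the axes, I would first choose $\ov r=\ov r(t)>0$ small enough so that the closed boxes $\{\ov Q_{\ov r}(x)\}_{x\in\CC(\om_t)}$ are pairwise disjoint and contained in $\Om$, each contains $x$ as its only corner of $\om_t$, and inside each of them $\pt\om_t$ is exactly the local L-shape at $x$ (two segments for a convex corner, three for a concave one). In particular, none of the four vertices $p_\sigma(r):=x+(\sigma_1 r,\sigma_2 r)$, $\sigma\in\{\pm1\}^2$, belongs to $\Gamma_t$ for any $0<r\le\ov r$.

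For (i), fix $0<r\le\ov r$ and $x\in\CC(\om_t)$ and set
\[
\eps(t,r):=\tfrac12\min\bigl\{|u(p_\sigma(r))-t|:x\in\CC(\om_t),\,\sigma\in\{\pm1\}^2\bigr\}>0.
\]
Continuity of $u$ then ensures that for $|s-t|<\eps(t,r)$ and $r'$ sufficiently close to $r$ from below, $\sign(u(p_\sigma(r'))-s)=\sign(u(p_\sigma(r))-t)$, so that $\un_{\om_s}(p_\sigma(r'))=\un_{\om_t}(p_\sigma(r))$ for every $\sigma$. Applying Lemma~\ref{lemma:4points} to $\un_{\om_s}$ on the box $Q_{x-(r',r'),(2r',2r')}\sub Q_r(x)$ (valid for almost every such $r'$, with $\kappa_s$-null boundary since $\kappa_s$ has only finitely many atoms), the four-corner formula yields
\[
\kappa_s(Q_{r'}(x))=\sum_\sigma\sigma_1\sigma_2\,\un_{\om_s}(p_\sigma(r'))=\sum_\sigma\sigma_1\sigma_2\,\un_{\om_t}(p_\sigma(r))=\kappa_t(\{x\})=\pm 1,
\]
where the last two equalities follow from applying the same formula to $\un_{\om_t}$ and from the fact that $x$ is the only corner of $\om_t$ in $\ov Q_{\ov r}(x)$. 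Since by Proposition~\ref{prop:polyomegat} the measure $\kappa_s$ is atomic with unit masses supported on $\CC(\om_s)$, this gives $\sum_{y\in Q_r(x)}|\kappa_s(y)|=|\kappa_s|(Q_r(x))\ge 1$, as claimed. Point (ii) then follows at once by summing over the pairwise disjoint boxes:
\[
N(s)=|\kappa_s|(\Om)\ge\sum_{x\in\CC(\om_t)}|\kappa_s|(Q_r(x))\ge N(t).
\]

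For (iii), assuming $t$ is a Lebesgue point of $N$, the one-sided bound $N(s)\ge N(t)$ from (ii) combined with the Lebesgue-point condition forces the set $\{s\in\II_{\mathrm{poly}}:N(s)=N(t)\}$ to have density one at $t$. For any $s$ in this set with $|s-t|<\eps(t,r)$, the chain of inequalities above must become a chain of equalities, which forces $|\kappa_s|(Q_r(x))=1$ for each $x\in\CC(\om_t)$; such $s$ thus belong to $\JJ_t(r)$, which proves $t\in\dens(\JJ_t(r))$.

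The main technical subtlety I anticipate lies in (i): the four-corner identity from Lemma~\ref{lemma:4points} applies only to boxes whose boundary is $\kappa_s$-null, which, for fixed $s$, is only guaranteed for almost every sidelength. This is why I work with a generic $r'$ slightly below $r$ rather than directly with $r$, the sign information at the vertices being then transferred back to $p_\sigma(r)$ by continuity of $u$.
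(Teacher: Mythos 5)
Your overall strategy is close to the paper's: exploit the polygonal structure of $\om_t$, the $L^1$-convergence $\un_{\om_s}\to\un_{\om_t}$ (from $|\Gamma_t|=0$), and the integer quantization of $|\kappa_s|(Q_r(x))$ to deduce (i), then sum over disjoint boxes for (ii), and finish (iii) exactly as the paper does. However, there is a genuine gap in your proof of (i). You set $\eps(t,r):=\tfrac12\min_{\sigma,x}|u(p_\sigma(r))-t|$ and assert this is positive because ``none of the four vertices $p_\sigma(r)$ belongs to $\Gamma_t$''. But $\Gamma_t=\{u=t\}$ can strictly contain $\pt\om_t$: the polygonal description controls only $\pt\om_t$, whereas $u$ may have, say, a local maximum at level $t$ in the open set $\Om\sm\ov\om_t$. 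In that case $u(p_\sigma(r))=t$ for some $\sigma\ne(1,1)$ and some $r\le\ov r$, so $\eps(t,r)=0$; worse, for $s$ slightly below $t$ one then has $\un_{\om_s}(p_\sigma(r'))=1\ne 0=\un_{\om_t}(p_\sigma(r))$, and the four-corner sum $\kappa_s(Q_{r'}(x))$ can come out $0$ instead of $\pm1$, so the conclusion of your computation fails. (Concretely, take $u(y)=\min(\phi(y_1),y_2)$ where $\phi$ is $1$-Lipschitz, $\phi(s)=s$ near $0$, $\phi(-1/2)=0$, $\phi<0$ elsewhere on a neighborhood of $-1/2$; then $\om_0=\{y_1>0,y_2>0\}$, $|\Gamma_0|=0$, but $p_{(-1,1)}(1/2)\in\Gamma_0$.) The hypotheses $t\in\II_{\mathrm{poly}}$ and $|\Gamma_t|=0$ do not rule out such behavior; they only give $\Gamma_t=\pt\om_t$ up to an $\HH^1$-negligible set, which can still hit $p_\sigma(r)$ for specific values of $r$.

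The paper circumvents this by working distributionally: it writes $|\kappa_t(x)|=\sup_{\zeta\in C^\oo_c(Q_r,[-1,1])}\int\zeta(y-x)\,d\kappa_t(y)$ and passes to the limit $\int\zeta(y-x)\,d\kappa_s(y)\to\int\zeta(y-x)\,d\kappa_t(y)$ using $\un_{\om_s}\to\un_{\om_t}$ in $L^1$, which is insensitive to the pointwise behavior of $u$ at the box vertices and works for every $r\le\ov r$. Your argument could be patched --- for instance, restrict first to the full-measure set of $r\in(0,\ov r]$ for which $p_\sigma(r)\notin\Gamma_t$ for all $\sigma$ and all corners $x$, run your four-corner computation there, and then extend to all $r$ by monotonicity of $r\mapsto|\kappa_s|(Q_r(x))$ --- but as written this step is missing and the claim ``$p_\sigma(r)\notin\Gamma_t$ for any $0<r\le\ov r$'' is not justified.
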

\begin{proof}~
We start with the proof of $(i)$ and $(ii)$. Let $t\in \II_{\mathrm{poly}}$ such that $|\Gamma_t|=0$. We then have $|\om_t\Delta\om_s|\to 0$ as $s\to t$, that is $\un_{\om_s}\to\un_{\om_t}$ in $L^1(\Om)$. Since $\Gamma_t$ is a polygon, there exists $\ov r>0$ such that if $x\in\CC(\om_t)$, there is no other corner of $\om_t$ in $\Qrb(x)$ and the squares $Q_{\bar x}$ for $x\in\CC(\om_t)$ are pairwise disjoint.\\  Now let $0<r\le \ov r$. 
Recalling that $\kappa_s=\pt_1\pt_2\un_{\om_s}$ and that $\un_{\om_s}\to\un_{\om_t}$ in $L^1$ as $s\to t$ we get that $s\in \R\mapsto \kappa_s\in\DD'(\Om)$ is continuous at $t$. Therefore, for every $x\in\CC(\om_t)$,
\begin{align*}
|\kappa_t(x)|&
=\sup_{\stackrel{\zeta\in C^\oo_c(Q_r,[-1,1])}{|\zeta|\le1}} \int \zeta(y-x)\,d\kappa_t(y)\\
&=\sup_{\stackrel{\zeta\in C^\oo_c(Q_r,[-1,1])}{|\zeta|\le1}} \lim_{s\to t}\int \zeta(y-x)\,d\kappa_s(y)
\,\le\,\liminf_{s\to t}|\kappa_s|(Q_r(x)).
\end{align*}
Since the quantities $|\kappa_s|(Q_r(x))$ are integers we see that there exists $\eps>0$ such that for $s\in \II_{\mathrm{poly}}\cap(t-\eps,t+\eps)$ and every corner $x\in\CC(\om_t)$ there holds,
 \be\label{proof_lem:corner_1} 
 |\kappa_t(x)|\le|\kappa_s|(Q_r(x)).
 \ee 
This proves~$(i)$.  Eventually, summing~\eqref{proof_lem:corner_1} over $x\in\CC(\om_t)$ and recalling that the squares $Q_r(x)$ are disjoint, we get
\be\label{proof_lem:corner_2}
N(t)=\sum_{x\in\CC(\om_t)}|\kappa_t|(x)\le\sum_{x\in\CC(\om_t)}|\kappa_s|(Q_r(x))\le\sum_{y\in\CC(\om_s)}|\kappa_s|(y)=N(s),
\ee
which establishes $(ii)$.\medskip

\noindent
Finally, we prove $(iii)$. If $N(s)=N(t)$ the chain of inequalities in~\eqref{proof_lem:corner_2} are identities and we get that~\eqref{proof_lem:corner_1} is also an identity. Thus, assuming that $t\in \II_{\mathrm{poly}}$ is a Lebesgue point of $N$ (notice that by~\eqref{eq:integN}, $N\in L^1( \R)$), we get that $t\in\dens(\JJ_t(r))$ as claimed.
\end{proof}
Even after the preceding lemma, we still need to select further the levels sets. Indeed the continuity of the trajectories $s\mapsto x_s^i$ does not imply the continuity of the shape of the polygons forming $\om_s$ and unfortunately shape changes are associated with non-differentiability of the corner trajectories. The following example illustrates this fact
\begin{example}\label{example:bifurcation}
Let us consider the function defined on $\Om=(-1,1)^2$, by
\[
u(x_1,x_2)=
\begin{cases} 
\quad-x_1&\text{for }x_1\le0,\\
\min(x_1,x_2)&\text{for }x_1>0,\,x_2\ge0,\\
\max(-x_1,x_2)&\text{for }x_1>0,\,x_2<0.
\end{cases}
\]
For this function and $t\in(-1,1)$, the level set $\om_t$ is given by, see Figure~\ref{Fig:uctrex},
\[
 y\in \om_t\iff 
\begin{cases}
\quad y_1<x_1(t)\text{ or }y_2>x_2(t) &\text{if }t<0,\\
x_1<-t\text{ or } \lt[y_1>x_1(t)\text{ and }y_2>x_2(t)\rt]&\text{if }t\ge0.
\end{cases}
\]  
Let us highlight the following facts. 
\begin{enumerate}[(1)] 
\item $\om_t$ is connected for $t<0$, but has two connected components for $t\ge0$. 
\item The set $\om_t$ switches from locally concave to locally convex in the neighborhood of the vertex $x(t)$.
\item In any case, $\CC(\om_t)$ has a single element $x(t)=(|t|,t)$ with $\kappa_t(x(t))=1$ but  the derivative of $t\mapsto x(t)$ jumps at $t=0$. 
\item The function $s\mapsto \HH^1(\Gamma_s)$ is discontinuous at $t=0$.
\end{enumerate}
\begin{figure}[h]
\centering
\begin{tikzpicture}[scale=.8]
\begin{scope}
\clip (-.2,-.2) rectangle (5.2,5.2);
\draw[very thin] (0,0) rectangle (5,5);
\draw[dashed] (5,0) -- (2.5,2.5) -- (5,5); 
\draw[dashed] (2.5,2.5) -- (2.5,5);
\draw (1.25,3) node{$\Om$};
\draw[-{Latex[length=2mm, width=1.3mm]}] (4,2.2)--(4,2.8);
\draw[-{Latex[length=2mm, width=1.3mm]}] (2.95,4.2)--(3.55,4.2);
\draw[-{Latex[length=2mm, width=1.3mm]}] (2.05,1.5)--(1.45,1.5);
\end{scope}
\begin{scope}[xshift=6.8cm]
\clip (-.2,-.2) rectangle (5.2,5.2);
\draw[fill, color=gray!45]  (3,2) -- (3,0) -- (0,0)--(0,5)--(5,5)--(5,2)--cycle;
\draw[very thin] (0,0) rectangle (5,5);
\draw[black] (3,2) node{$\bullet$};
\draw (3,2) node[below right]{$x(t_-)$};
\draw (2,3) node{$\om_{t_-}$};
\end{scope}
\begin{scope}[xshift=13.6cm]
\clip (-.2,-.2) rectangle (5.2,5.2);
\draw[fill, color=gray!45]  (3,5) -- (3,3) -- (5,3)--(5,5)--cycle;
\draw[fill, color=gray!45]  (2,0) -- (2,5) -- (0,5)--(0,0)--cycle;
\draw[very thin] (0,0) rectangle (5,5);
\draw[black] (3,3) node{$\bullet$};
\draw (3,3) node[below]{$x(t_+)$};
\draw (4,4) node{$\om_{t_+}$};
\end{scope}
\end{tikzpicture}
\caption{\label{Fig:uctrex}Left: the vector field $\nb u$. Middle and right: some sets $\om_{t_\pm}$,   $t_-<0<t_+$.}
\end{figure}
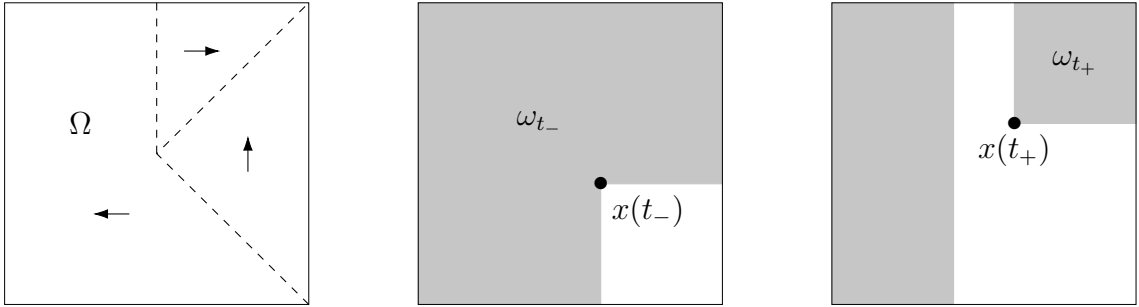
\end{example}
The third fact described in the example is the situation we wish to avoid. To this aim it turns out that the last fact is the most useful. Indeed, the co-area formula~\eqref{boundH1Gamma} provides some control on the quantities $\HH^1(\Gamma_s)$.  The idea is then to exclude the points of discontinuity of $s\mapsto \HH^1(\Gamma_s)$. More precisely, we consider a sequence of local versions of this constraint: we impose that $t$ is a Lebesgue point of $s\mapsto \HH^1(\Gamma_s\cap Q)$ for a dense countable collection of squares $Q$. However, this is still not sufficient to ensure the differentiability of the trajectories of the corners at $t$. For this, we also need to enforce the differentiability of the functions $s\mapsto |\om_s \cap Q|$.  These observations lead to the following definition. 
\begin{definition}\label{def:QQ}
We set
\[
\QQ:=\lt\{Q_r(y)\sub \Om: \text{such that }y_1,y_2\text{ and }r\text{ are rational numbers}\rt\}.
\]
Moreover, for $Q\in\QQ$ we define the function $\Vol_Q:\R\to\R_+$ by
\[
\Vol_Q(s):= |\om_s\cap Q|.
\] 
\end{definition}
The functions $\Vol_Q$  are bounded and nonincreasing, hence $BV$.  We  decompose their distributional derivatives $D\Vol_Q(s)$  in their absolutely continuous and singular parts with respect to the Lebesgue measure:
\[
D\Vol_Q=\Vol_Q'\, dt +D^s\Vol_Q.
\]
Recall that by \cite[Theorem 3.28]{Am_Fu_Pal}, $\Vol_Q$ is  differentiable almost everywhere and its derivative coincides with $\Vol_Q'$.
We are now ready to introduce the subset of levels of $t\mapsto |\om_t|$ meeting the constraints we have just outlined.  
\begin{definition}\label{def:IIQQ}
\begin{multline*}
\II_\QQ:=\big\{t\in \R: \text{ for every }Q\in\QQ,\ \Vol_Q\text{ is  differentiable at }t,\\
\text{ moreover }t\text{ is a Lebesgue point of }\Vol_Q'\text{ and of }s\mapsto \HH^1(\Gamma_s\cap Q)\Big\}.
\end{multline*}
\end{definition}

\begin{lemma}\label{lem:approxdiff}
The set $\II_\QQ$ is of full measure in $\R$.
\end{lemma}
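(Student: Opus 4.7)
The plan is to observe that $\II_\QQ$ is defined by countably many conditions (indexed by the countable family $\QQ$ from Definition~\ref{def:QQ}), each of which holds for almost every $t \in \R$. Since a countable union of Lebesgue-negligible sets is negligible, this will immediately give the conclusion.

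First, I would fix $Q \in \QQ$. The function $\Vol_Q: \R \to \R_+$ is bounded (by $|Q|$), nonnegative, and nonincreasing (since $s < s'$ implies $\om_{s'} \sub \om_s$). Hence it is $BV_{\mathrm{loc}}(\R)$ and, by the classical Lebesgue differentiation theorem for monotone functions (see e.g.\ \cite[Theorem~3.28]{Am_Fu_Pal}), it is differentiable at $\HH^1$-almost every $t \in \R$, with its classical derivative coinciding almost everywhere with its absolutely continuous derivative $\Vol_Q'$. In particular, $\Vol_Q' \in L^1(\R)$ and standard Lebesgue differentiation theory gives that $\HH^1$-almost every $t$ is a Lebesgue point of $\Vol_Q'$.

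Next, the function $s \mapsto \HH^1(\Gamma_s \cap Q)$ is measurable and, by the coarea formula applied to $u$ restricted to $Q$,
\[
\int_\R \HH^1(\Gamma_s \cap Q)\, ds = \int_Q |\nb u|\, dx \le |Q| < \oo,
\]
exactly as in~\eqref{boundH1Gamma}. Hence this function lies in $L^1(\R)$, so $\HH^1$-almost every $t \in \R$ is one of its Lebesgue points. Let $\mathcal{N}_Q \sub \R$ denote the Lebesgue-negligible set where at least one of the three conditions defining membership of $\II_\QQ$ fails for this particular $Q$.

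Finally, the family $\QQ$ is countable (its elements are parametrized by triples of rationals), so $\mathcal{N} := \bigcup_{Q \in \QQ} \mathcal{N}_Q$ is Lebesgue-negligible. By construction $\R \sm \mathcal{N} \sub \II_\QQ$, which proves that $\II_\QQ$ has full Lebesgue measure in $\R$. No step here is a serious obstacle; the only minor point worth checking is the $L^1$ integrability of $s \mapsto \HH^1(\Gamma_s \cap Q)$, which follows immediately from coarea and the Lipschitz bound $|\nb u| \le 1$.
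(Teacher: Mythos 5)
Your proof is correct and takes essentially the same route as the paper: each of the three conditions in Definition~\ref{def:IIQQ} holds for a.e.\ $t$ (the first two by monotonicity of $\Vol_Q$ and standard Lebesgue differentiation, the third because the coarea formula puts $s \mapsto \HH^1(\Gamma_s\cap Q)$ in $L^1(\R)$), and one concludes by taking a countable union over $Q \in \QQ$. The paper's own proof simply omits spelling out the first two (standard) points, focusing on the coarea argument, but the structure is identical.
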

\begin{proof}~
As a consequence of  the co-area formula (\ie~\eqref{boundH1Gamma} with $Q$ in place of $\Om$) the function $s\mapsto  \HH^1(\Gamma_s\cap Q)$ lies in $L^1(\R)$ and the set of its Lebesgue points is of full measure. Recalling that $\QQ$ is countable we get the result.  
\end{proof}

We can then define the set of good levels $t$.
\begin{lemma}\label{lem:goodGLip}
 Let $\II$ be the set of $t$ such that 
\begin{itemize}
  \item[(i)] $t\in \II_{\mathrm{poly}}$\text{ and }$|\Gamma_t|=0$,
 \item[(ii)] $t$ is a Lebesgue point of $s\mapsto N(s)$ (consequently~$t\in\dens(\JJ_t(r))$ for $r\le\ov r(t)$),
 \item[(iii)] $t\in\II_\QQ$,
 \item[(iv)] $\HH^1$-almost every point of $\Gamma_t$ is a Lebesgue point of $\nb u$.
\end{itemize}
Then  $\II$ is of full measure in $\R$. As a consequence (see~\eqref{decomptildemu}), for $|\mu|$-\ae$x$ there exists $t\in \II$ such that $x\in\CC(\om_t)$.
\end{lemma}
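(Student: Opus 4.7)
The plan is to show that each of the four defining conditions (i)–(iv) holds on a subset of $\R$ of full Lebesgue measure; since $\II$ is the intersection of these four sets, it will also have full measure.

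For condition (i), membership in $\II_{\mathrm{poly}}$ is already a full-measure property by Proposition~\ref{prop:polyomegat}, so I only need to control the set $\{t:|\Gamma_t|>0\}$. The function $t\mapsto|\om_t|$ is bounded and nonincreasing on $\R$, hence admits at most countably many discontinuities. Since the jump at $t$ equals $|\Gamma_t|$ (the level sets being pairwise disjoint and $\Om$ of finite measure), the exceptional set is at most countable, and therefore Lebesgue-negligible. The parenthetical statement about $\dens(\JJ_t(r))$ will then follow automatically from Lemma~\ref{lem:corner}(iii) once (ii) is secured.

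For (ii) I would apply~\eqref{eq:integN} with $A=\Om$ to obtain $\int_\R N(t)\,dt=|\mu|(\Om)<\oo$, so that $N\in L^1(\R)$ and the Lebesgue differentiation theorem immediately yields that almost every $t$ is a Lebesgue point of $N$. Condition (iii) is the content of Lemma~\ref{lem:approxdiff}. For (iv), the plan is to invoke the co-area formula for the Lipschitz function $u$: letting $B\sub\Om$ denote the Borel set of non-Lebesgue points of $\nb u\in L^\oo(\Om)$, we have $|B|=0$, so
\[
\int_\R\HH^1(\Gamma_t\cap B)\,dt=\int_B|\nb u|\,dx=0,
\]
which forces $\HH^1(\Gamma_t\cap B)=0$ for almost every $t$.

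For the stated consequence, I would set $E:=\Om\sm\bigcup_{t\in\II}\CC(\om_t)$ and apply~\eqref{eq:integN} once more. For each $t\in\II\sub\II_{\mathrm{poly}}$, Proposition~\ref{prop:polyomegat} ensures that $\kappa_t$ is concentrated on $\CC(\om_t)$, whence $|\kappa_t|(E)=0$; since $\R\sm\II$ is Lebesgue-negligible, integrating gives $|\mu|(E)=0$, as desired. The argument is overall routine; the only step requiring a genuine idea is (iv), where the co-area formula serves to transfer an almost-everywhere property of $\nb u$ on $\Om$ into an $\HH^1$-almost-everywhere property along almost every level curve $\Gamma_t$.
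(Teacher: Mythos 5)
Your proposal is correct and follows essentially the same route as the paper, which dispatches (i)--(iii) by reference to the setup of Lemma~\ref{lem:corner} and Lemma~\ref{lem:approxdiff} and proves (iv) via the co-area formula exactly as you do; your treatment of the consequence via \eqref{eq:integN} is likewise the intended one. You are merely more explicit about the elementary points (monotonicity of $t\mapsto|\om_t|$ giving $|\Gamma_t|=0$ off a countable set, and $N\in L^1(\R)$ from \eqref{eq:integN}) that the paper compresses into a single sentence.
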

\begin{proof}~
Points~$(i)$ and $(ii)$ follow from Lemma~\ref{lem:corner}. Point $(iii)$ from Lemma~\ref{lem:approxdiff}.  The last point is the consequence of $\nb u
\in L^1(\Om)$ and of  the co-area formula $\un_\Om|\nb u|\,dx = (\un_{\Gamma_t}\HH^1)\otimes dt$.
 \end{proof}

We may now embark on the proof of Theorem~\ref{theo:rectifLipmain}. By Lemma~\ref{lem:goodGLip}, for $|\mu|\,$-\ae$\bar x$ there exists $\bar t\in \II$ such that $\bar x\in\CC(\om_{\bar t})$ and $\kappa_{\bar t}(\{\bar x\})=1$. Up to a translation we assume without loss of generality that $\bar t=0$ and $\bar x=0$. Moreover, possibly replacing $u$ by $\pm u\circ R$ where $R$ is a rotation of angle $k\pi/2$ for some $k\in\{0,1,2,3\}$ we assume that for some ${\ov r}>0$  there holds 
\be
\label{perfect_om0}
\om_0\cap\Qrb=\{y\in\Qrb:y_1>0,y_2>0\}.
\ee
From now on, we denote (recall the definition of $\JJ_0(r)$ from point (iii) of Lemma~\ref{lem:corner})
\[
\JJ_0:=\JJ_0(\ov r/4)\cap \II=\lt\{t\in \II : \text{there exists }x\in \Qrbq\text{ such that }\CC(\om_t)\cap\Qrb=\{x\}\rt\},
\]
and for $t\in\JJ_0$, we denote $x(t)=(x_1(t),x_2(t))$ the unique element of $\CC(\om_t)\cap\Qrb$ so that
\be\label{eq:kappaBall}
|\kappa_t|\restr\Qrb=\delta_{x(t)}\qquad \text{ for }t\in\JJ_0. 
\ee
By Lemmas~\ref{lem:corner} \&~\ref{lem:goodGLip}, there holds 
\be\label{0indensJJ0}
0\in\dens(\JJ_0).
\ee

Our first goal is to identify the trajectory $t\in\JJ_0\mapsto x(t)$ as the restriction of a mapping $t\in(-\bar t,\bar t)\mapsto h(t)$ differentiable at $0$. For instance, for the second coordinate $x_2(t)$ we could use the square $Q\sh_0:=(\ov r/2,\ov r)\times (-\ov r/4,\ov r/4)$ and the identity 
\[
x_2(t)=\dfrac{\Vol_{Q\sh_0}(t)-\Vol_{Q\sh_0}(0)}{\ov r/2}=:h_2^0(t),
\] 
which is valid for $t\in\JJ$.
Unfortunately, we have no guarantee on the differentiability of the function $t\mapsto \Vol_Q(t)$ at $0$ for a general square $Q$ unless $Q\in\QQ$.  To overcome this, we  substitute for $Q\sh_0$, a square $Q\sh_+\in\QQ$ such that (see Figure~\ref{Fig:Qhvpm}), 
\be\label{Q_h}
\{\ov r/2\}\times [-\ov r/4,\ov r/4] \ \sub\ Q\sh_+\ \sub \ ({\ov r}/4,{\ov r})\times(-{\ov r},{\ov r}).
\ee
Such square exists by density of $\QQ$ in the set of squares inside $\Om$.
Symmetrically, to control $x_1(t)$ we pick $Q\sv_+\in\QQ$ with
\be\label{Q_v}
 {[-\ov r/4,\ov r/4]}\times\{\ov r/2\} \ \sub\ Q\sv_+\ \sub \ (-{\ov r},{\ov r})\times({\ov r}/4,{\ov r}).
 \ee
\begin{figure}[h]
\centering
\begin{tikzpicture}[scale=.9]
\clip (-.8,-.2) rectangle (5.2,5.2);
\draw[fill, color=gray!45]  (2.5,2.5) rectangle (5,5);
\draw[thin]  (2.5,2.5)node{$\bullet$} node[below left]{$0$}; 
\draw (1,4) node{$\Qrb$};
\draw (4,3.8) node{$\om_0$};
\draw[thin, dashed] (1.8,3.3) rectangle (3.3,4.8);
\draw (2.5,4) node{$Q\sv_+$};
\draw[thin, dashed] (1.9,.2) rectangle (3.3,1.6);
\draw (2.5,.8) node{$Q\sv_-$};
\draw[thin, dashed] (3.4,1.7) rectangle (4.8,3.1);
\draw (4.2,2.4) node{$Q\sh_+$};
\draw[thin, dashed] (.2,1.8) rectangle (1.5,3.1);
\draw (.9,2.4) node{$Q\sh_-$};
\draw[very thin] (0,0) rectangle (5,5);
\end{tikzpicture}
\caption{\label{Fig:Qhvpm} The set $\om_0\cap \Qrb$ and the four control squares $Q\sh_\pm$, $Q\sv_\pm$ (dashed lines) inside $\Qrb$.}
\end{figure}  
Next, denoting $\ell\sh$ and $\ell\sv$ the respective side lengths of $Q\sh_+$ and $Q\sv_+$, we set for $t\in\R$,
\be\label{defh}
h_1(t):=\dfrac{\Vol_{Q\sv_+}(0)-\Vol_{Q\sv_+}(t)}{\ell\sv}, \qquad\qquad
 h_2(t):=\dfrac{\Vol_{Q\sh_+}(0)-\Vol_{Q\sh_+}(t)}{\ell\sh}
 \ee
and then set $h:=(h_1,h_2)$.
The functions $h_l$ are bounded and nondecreasing, thus $BV$. Besides, from the definition of $\II_\QQ$, they are differentiable at $0$ and writing $Dh_l=h_l'(t)\, dt +D^sh_l$ we have that $0$ is Lebesgue point of $h_l'$ and  
\be\label{property:h2}
h'_l(0)\ge0,\quad\qquad D^sh_l\ge0,\qquad\text{and}\qquad\lim_{\eta\dw0}\dfrac1{2\eta}D^sh_l([-\eta,\eta])=0.
\ee
Next, in order to check the absence of shape changes as in Example~\ref{example:bifurcation}, we introduce two other squares $Q\sh_-, Q\sv_-
\in\QQ$ which are in symmetric position with respect to $Q\sh_-, Q\sv_+$.
\be\label{Q_hQ_v-}
\begin{array}{rcccl}
\{-{\ov r}/2\}\times[-{\ov r}/4,{\ov r}/4] & \sub & Q\sh_- & \sub & (-{\ov r},-{\ov r}/4)\times(-{\ov r},{\ov r}),\medskip\\
{ [-{\ov r}/4,{\ov r}/4]} \times\{-{\ov r}/2\}& \sub & Q\sv_- & \sub &(-{\ov r},{\ov r})\times(-{\ov r},-{\ov r}/4),
\end{array}
\ee
see again Figure~\ref{Fig:Qhvpm}. Eventually we define
\be\label{def:JJ}
\JJ:=\JJ_0\cap \JJ\sh_+\cap\JJ\sv_+\cap\JJ\sh_-\cap\JJ\sv_-,
\ee
where
\[
\begin{array}{rlcrl}
\JJ\sh_+&:=\{t\in\R:\HH^1(Q\sh_+\cap\Gamma_t)=\ell\sh\},
&\qquad\quad&\JJ\sv_+&:=\{t\in\R:\HH^1(Q\sv_+\cap\Gamma_t)=\ell\sv\},
\smallskip\\
\JJ\sh_-&:=\{t\in\R:\HH^1(Q\sh_-\cap\Gamma_t)=0\},
&&\JJ\sv_-&:=\{t\in\R:\HH^1(Q\sv_-\cap\Gamma_t)=0\}.
\end{array}
\]
The Figure~\ref{Fig:trajectory} illustrates the definition.
\begin{figure}[h]
\centering
\begin{tikzpicture}[scale=.9]
\begin{scope}
\clip (-.2,-.2) rectangle (5.2,5.2);

\draw[fill, color=gray!45]  (2.5,2.5) rectangle (5,5);
\draw[thin] (2.5,2.5)node{$\bullet$} node[below left]{$0$}; 
\draw (1.5,1) node{$\Qrb$};
\draw (4,3.8) node{$\om_0$};
\draw[thin, dashed] (1.8,3.3) rectangle (3.3,4.8);
\draw (2.5,4) node{$Q\sv_+$};
\draw[thin, dashed] (3.4,1.7) rectangle (4.8,3.1);
\draw (4.1,2.4) node {$Q\sh_+$};
\draw[very thin] (0,0) rectangle (5,5);
\end{scope}

\begin{scope}[xshift=6.2cm]
\clip (-.2,-.2) rectangle (5.2,5.2);

\draw[fill, color=gray!45]  (2.3,2.1) rectangle (5,5);
\draw[thin] (2.3,2.1)node{$\bullet$}node[left]{$x(t_-)$}; 
\draw (2.5,2.5) node{$\bullet$}node[right]{$0$};
\draw (4,3.8) node{$\om_{t_-}$};
\draw[thin, dashed] (1.8,3.3) rectangle (3.3,4.8);
\draw[thin, dashed] (3.4,1.7) rectangle (4.8,3.1);
\draw[fill, color=gray!45]  (0,.8) rectangle (5,1.2);
\draw[very thin] (0,0) rectangle (5,5);
\end{scope}

\begin{scope}[xshift=12.4cm]
\clip (-.2,-.2) rectangle (5.2,5.2);

\draw[fill, color=gray!45]  (2.8,5) -- (2.8,2.9) -- (3.9,2.9)--(3.9,2.7) -- (5,2.7) -- (5,5) -- cycle;   
\draw (2.5,2.5) node{$\bullet$}node[below left]{$0$};
\draw (4,3.8) node{$\om_{t_+}$};
\draw[thin, dashed] (1.8,3.3) rectangle (3.3,4.8);
\draw[thin, dashed] (3.4,1.7) rectangle (4.8,3.1);
\draw[very thin] (0,0) rectangle (5,5);
\end{scope}
\end{tikzpicture}
\caption{\label{Fig:QhQv}Left: the set $\om_0\cap\Qrb$ together with the squares $Q\sh_+$, $Q\sv_+$ (dashed lines). Middle: a set $\om_{t_-}\cap\Qrb$ for some  $t_-<0$ such that $t_-\in\JJ$. Right: a set $\om_{t_+}\cap\Qrb$ for some $t_+>0$ with $t_+\not\in \JJ\sh_+$ (and also $t_+\not\in\JJ_0$).}
\end{figure} 

The following result proves that we have achieved our first objective.

\begin{lemma}\label{lem:step1}
There holds  $x(t)=h(t)$ for $t\in\JJ$, we have $0\in\dens(\JJ)$ and consequently,
\be
\label{dx/dt}
\lim_{\substack{t\to0,\, t\ne0,\,t\in\JJ}} \lt|\dfrac{x(t)}t-h'(0)\rt|=0.
\ee
Moreover, denoting $\Qtrb:=(-\ov r/4,\ov r)^2$, there holds 
\be\label{omtQtrb}
\om_t\cap \Qtrb=(x_1(t),\ov r)\times(x_2(t),\ov r)\qquad\quad\text{for }t\in\JJ.
\ee
\end{lemma}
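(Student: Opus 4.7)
The plan is to first establish $0\in\dens(\JJ)$, then to exploit all five defining constraints of $\JJ$ to identify the geometry of $\om_t\cap\Qtrb$ for $t\in\JJ$, read $x(t)$ off the volumes $\Vol_{Q\sh_+},\Vol_{Q\sv_+}$, and conclude by the classical differentiability of $h$ at $0$. First, $0\in\JJ$ follows by direct inspection of~\eqref{perfect_om0} and~\eqref{Q_h}--\eqref{Q_hQ_v-}: $\Gamma_0\cap Q\sh_+$ is the horizontal segment $\{y_2=0\}\cap Q\sh_+$ of length $\ell\sh$, $\Gamma_0$ misses $Q\sh_-$ and $Q\sv_-$, and symmetrically $\Gamma_0\cap Q\sv_+$ is a vertical segment of length $\ell\sv$. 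The density $0\in\dens(\JJ_0)$ is~\eqref{0indensJJ0}; for the density of $\JJ\sh_+$ at $0$, observe that on $\JJ_0\cap\II_\QQ$ (a set of density one at $0$), the absence of any corner of $\om_s$ in $Q\sh_+$ forces $\Gamma_s\cap Q\sh_+$ to be a disjoint union of full axis-parallel crossings, hence $g(s):=\HH^1(\Gamma_s\cap Q\sh_+)\in\ell\sh\,\Z_{\ge0}$; since $0\in\II_\QQ$ is a Lebesgue point of $g$ with $g(0)=\ell\sh$ and $|g-\ell\sh|\ge\ell\sh$ on $\JJ_0\sm\JJ\sh_+$, the Lebesgue-point estimate forces $\JJ\sh_+$ to have density one at $0$. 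The symmetric argument handles $\JJ\sh_-$ and $\JJ\sv_\pm$; intersecting the five density-one sets yields $0\in\dens(\JJ)$.

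For $t\in\JJ$ (after restricting to $|t|$ small, which is harmless by the density statement), $\JJ_0$ guarantees that the only corner of $\om_t$ in $\Qrb$ is $x(t)\in\Qrbq$, so each connected component of $\Gamma_t\cap\Qrb$ is either a single axis-parallel full crossing of $\Qrb$ or the L formed by one horizontal and one vertical half-segment meeting at $x(t)$ and joining it to $\pt\Qrb$. The equality $\HH^1(\Gamma_t\cap Q\sh_+)=\ell\sh$ combined with the absence of corners in $Q\sh_+$ pins $\Gamma_t\cap Q\sh_+$ to a single axis-parallel full crossing; the $L^1$-continuity of $t\mapsto\un_{\om_t}$ compared with $\om_0\cap Q\sh_+=\{y_2>0\}\cap Q\sh_+$ rules out a vertical crossing and forces a horizontal crossing at some height $a(t)$ close to $0$. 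Applied to $Q\sv_+$, the analogous argument yields a vertical crossing at $y_1=b(t)$ close to $0$. The vanishing conditions $\HH^1(\Gamma_t\cap Q\sh_-)=\HH^1(\Gamma_t\cap Q\sv_-)=0$ together with $L^1$-continuity yield $\om_t\cap Q\sh_-=\om_t\cap Q\sv_-=\emptyset$; moreover the horizontal crossing at $y_2=a(t)$ cannot extend leftward all the way to $\{y_1=-\ov r\}$, because for $|t|$ small $a(t)$ lies in the $y_2$-range of $Q\sh_-$ and the extension would violate $\HH^1(\Gamma_t\cap Q\sh_-)=0$; it must therefore terminate at the corner $x(t)$, giving $x_2(t)=a(t)$, and symmetrically $x_1(t)=b(t)$. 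Any further connected component of $\Gamma_t\cap\Qrb$ would be a full crossing of $\Qrb$ adding to the $\HH^1$-mass of at least one of the four control squares and violating one of the corresponding constraints of $\JJ$; hence $\Gamma_t\cap\Qtrb$ is exactly the L emanating from $x(t)$ rightward and upward, which, combined with the local upper-right quadrant structure of $\om_t$ at $x(t)$, determines $\om_t\cap\Qtrb=(x_1(t),\ov r)\times(x_2(t),\ov r)$, giving~\eqref{omtQtrb}.

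With this shape established, $\om_t\cap Q\sh_+$ is the horizontal slab in $Q\sh_+$ above the line $y_2=x_2(t)$, so $\Vol_{Q\sh_+}(0)-\Vol_{Q\sh_+}(t)=\ell\sh\cdot x_2(t)$ (using $x_2(0)=0$) and dividing by $\ell\sh$ gives $h_2(t)=x_2(t)$; symmetrically $h_1(t)=x_1(t)$, proving $x(t)=h(t)$ on $\JJ$. Since $0\in\II_\QQ$, each $\Vol_Q$ with $Q\in\QQ$ is classically differentiable at $0$, so $h$ is classically differentiable at $0$ with $h(0)=0$ and $\lim_{t\to0}h(t)/t=h'(0)$ in the classical sense; combined with $x(t)=h(t)$ for $t\in\JJ$ this gives~\eqref{dx/dt}.

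The main obstacle is the shape-determination argument in the second paragraph: one must play the five defining constraints of $\JJ$ off against each other, together with the $L^1$-continuity of $t\mapsto\un_{\om_t}$ near $t=0$, to rule out each possible unwanted configuration---vertical rather than horizontal crossing of $Q\sh_+$, spurious extra full crossings of $\Qrb$, and extension of the main horizontal or vertical edges at $x(t)$ past the corner into $Q\sh_-$ or $Q\sv_-$.
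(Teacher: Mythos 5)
Your proposal follows the same overall strategy as the paper's three-step proof. The density argument (the quantization $\HH^1(\Gamma_s\cap Q)\in\ell\,\Z_{\ge 0}$ on $\JJ_0$ combined with the Lebesgue-point property built into $\II_\QQ$) and the final appeal to classical differentiability of $h$ at $0$ match the paper's Step~2 and end of Step~1 essentially verbatim. The only organizational difference is that you establish the shape identity~\eqref{omtQtrb} first, using all five constraints defining $\JJ$, and then read $x(t)=h(t)$ off the volumes, whereas the paper derives $x=h$ directly from $\JJ_0,\JJ\sh_+,\JJ\sv_+$ with a monotonicity argument (Step~1) and treats~\eqref{omtQtrb} separately (Step~3); the two organizations are equivalent.

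There is one step that would fail as written. To exclude extra connected components of $\Gamma_t\cap\Qrb$ (which, having no corner in $\Qrb$ other than $x(t)$, must be full crossings of $\Qrb$), you claim that any such crossing would increase $\HH^1(\Gamma_t\cap Q)$ for one of the four control squares. This is not quite true: conditions~\eqref{Q_h}--\eqref{Q_hQ_v-} only force the $y_2$-range of $Q\sh_\pm$ to contain $[-\ov r/4,\ov r/4]$ and the $y_2$-range of $Q\sv_+$ to lie inside $(\ov r/4,\ov r)$ and contain $\ov r/2$; nothing forces these ranges to cover $(-\ov r/4,\ov r)$, so a horizontal full crossing at a level $c$ with $\max(\sup_{Q\sh_+}y_2,\sup_{Q\sv_+}y_2)<c<\ov r$ misses all four control squares while still intersecting $\Qtrb$. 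The missing ingredient, which is what the paper's ``by monotonicity of $t\mapsto\om_t$'' is invoking, is the sign constraint from~\eqref{perfect_om0}: a full crossing of $\Qrb$ at height $c$ is (up to an $\HH^1$-null set) a subset of $\{u=t\}$; for $t>0$ it contains $(-\ov r/2,c)\notin\om_0=\{u>0\}$, forcing $t=u(-\ov r/2,c)\le 0$, a contradiction, and for $t<0$ and $c>0$ one uses $(\ov r/2,c)\in\om_0$ instead; for $t<0$ and $c\le0$ the crossing does hit $Q\sh_+$ and the control-square argument you give applies. With this repair your proof is complete (and in fact no longer needs the restriction to small $|t|$, matching the paper's ``for all $t\in\JJ$'' statement). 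Your use of $L^1$-continuity in place of monotonicity to rule out a vertical crossing of $Q\sh_+$ is also fine, again with the small-$|t|$ restriction which you correctly note is harmless.
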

The example of Figure~\ref{Fig:trajectory} illustrates the lemma.
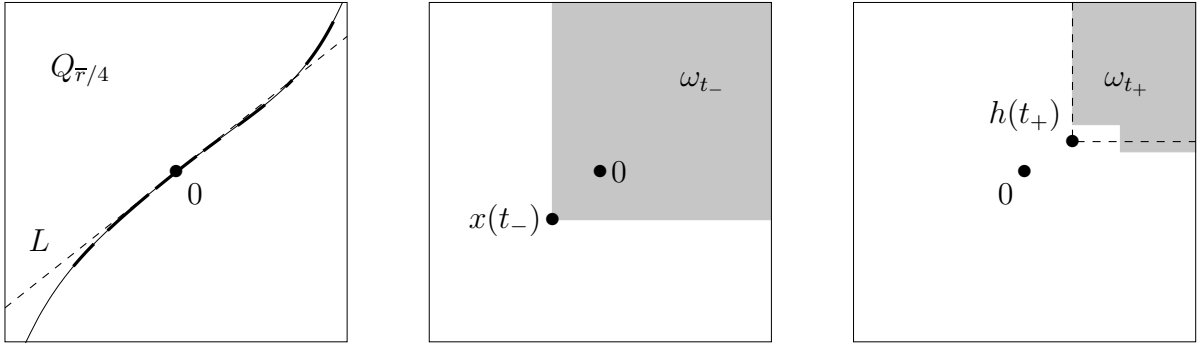
\begin{figure}[H]
\centering
\begin{tikzpicture}[scale=.9]
\begin{scope}
\clip (-.2,-.2) rectangle (5.2,5.2);
\draw (2.5,2.5) node{$\bullet$} node[below right]{$0$}; 
\draw (1.1,4) node{$\Qrbq$};
\pgfmathsetmacro{\a}{.8}
\pgfmathsetmacro{\b}{-.05}
\pgfmathsetmacro{\c}{0}
\pgfmathsetmacro{\d}{.01}
\draw [very thin, smooth]	plot[variable=\x, domain={-2.2:2.43}]
		({\x+2.5}, {2.5+\a*\x+\b*\x*\x+\c*\x*\x*\x+\d*\x^5});
\draw [very thick, smooth]	plot[variable=\x, domain={-.3:.3}]
		({\x+2.5}, {2.5+\a*\x+\b*\x*\x+\c*\x*\x*\x+\d*\x^5});
		\draw [very thick, smooth]	plot[variable=\x, domain={.4:.65}]
		({\x+2.5}, {2.5+\a*\x+\b*\x*\x+\c*\x*\x*\x+\d*\x^5});
\draw [very thick, smooth]	plot[variable=\x, domain={.9:1.3}]
	({\x+2.5}, {2.5+\a*\x+\b*\x*\x+\c*\x*\x*\x+\d*\x^5});
\draw [very thick, smooth]	plot[variable=\x, domain={1.6:1.7}]
		({\x+2.5}, {2.5+\a*\x+\b*\x*\x+\c*\x*\x*\x+\d*\x^5});
\draw [very thick, smooth]	plot[variable=\x, domain={1.9:2.3}]
		({\x+2.5}, {2.5+\a*\x+\b*\x*\x+\c*\x*\x*\x+\d*\x^5});
\draw [very thick, smooth]	plot[variable=\x, domain={-1:-.4}]
		({\x+2.5}, {2.5+\a*\x+\b*\x*\x+\c*\x*\x*\x+\d*\x^5});
\draw [very thick, smooth]	plot[variable=\x, domain={-1.5:-1.2}]
		({\x+2.5}, {2.5+\a*\x+\b*\x*\x+\c*\x*\x*\x+\d*\x^5});
\draw [dashed]	plot[variable=\x, domain={-2.5:2.5}]
		({\x+2.5}, {2.5+\a*\x});
\draw (.5,1.5) node{$L$};
\draw[very thin] (0,0) rectangle (5,5);
\end{scope}

\begin{scope}[xshift=6.2cm]
\clip (-.2,-.2) rectangle (5.2,5.2);
\draw[fill, color=gray!45]  (1.8,1.8) rectangle (5,5);
\draw[thin] (1.8,1.8)node{$\bullet$}node[left]{$x(t_-)$}; 
\draw (2.5,2.5) node{$\bullet$}node[right]{$0$};
\draw (4,3.8) node{$\om_{t_-}$};
\draw[very thin] (0,0) rectangle (5,5);
\end{scope}

\begin{scope}[xshift=12.4cm]
\clip (-.2,-.2) rectangle (5.2,5.2);
\draw[fill, color=gray!45]  (3.2,5) -- (3.2,3.2) -- (3.9,3.2)--(3.9,2.8) -- (5,2.8) -- (5,5) -- cycle;  
\draw[thin, dashed] (3.2,5) -- (3.2,2.95) node{$\bullet$}node[above left]{$h(t_+)$} -- (5,2.95); 
\draw (2.5,2.5) node{$\bullet$}node[below left]{$0$};
\draw (4,3.8) node{$\om_{t_+}$};
\draw[very thin] (0,0) rectangle (5,5);
\end{scope}
\end{tikzpicture}
\caption{\label{Fig:trajectory}Left:  example of trajectories $\{h(t)\}$ (thin) and $\{x(t)\}_{t\in\JJ}$ (thick) with their tangent $L$ at $0$ (dashed). Middle: $t_-\in\II$ so $x(t_-)=h(t_-)$. Right: $t_+\not\in\JJ$ and $x(t_+)$ is not defined.}
\end{figure} 

\begin{proof}[Proof of Lemma~\ref{lem:step1}]~\\
\noindent
\textit{Step 1. Proof of the identity $x(t)=h(t)$ for $t\in\JJ$.}\\
Let $t\in\JJ_0$ we have $\CC(\om_t)=\{x(t)\}$ with $x(t)\in\Qrbq$. If moreover $t\in\JJ\sh_+$, we see that
\[
\Gamma_t\cap Q\sh_+ = [\{x_2\}\times \R]\cap Q\sh_+.
\]
Hence
\[
\om_t\cap Q\sh_+ =
\begin{cases}
\text{ either}& \{y\in Q\sh_+:y_2>x_2(t)\},\smallskip\\
\quad\text{or}& \{y\in Q\sh_+:y_2<x_2(t)\}.
\end{cases}
\]
Recalling~\eqref{perfect_om0} we see that for $t=0$ the first case holds and by monotonicity of $t\mapsto \om_t$, we get that the first case holds for every $t\in\JJ_0\cap\JJ\sh_+$. Recalling the  definition~\eqref{defh} of $h_1(t), h_2(t)$ we deduce that $x_2(t)=h_2(t)$ for $t\in\JJ_0\cap\JJ\sh_+$. Arguing similarly with $Q\sv_+$ we obtain $x_1(t)=h_1(t)$ for $t\in\JJ_0\cap\JJ\sv_+$. We conclude that $x(t)=h(t)$ for $t\in\JJ$.
\medskip

\noindent
\textit{Step 2. Proof that $0\in\dens(\JJ)$ and of~\eqref{dx/dt}.}\\
First, by~\eqref{perfect_om0} we have $0\in\JJ$ and recalling~\eqref{0indensJJ0}, we have $0\in\dens(\JJ_0)$. Next, for $t\in\JJ_0$, there is no corner of $\om_t$ in $Q\sh_+$ so $Q\sh_+\cap\om_t$ is a (finite) union of stripes and $\HH^1(Q\sh_+\cap\Gamma_t)=k_t\ell\sh$ for some integer $k_t\ge 0$. Since by assumption, $0$ is a Lebesgue point of  $t\mapsto \HH^1(Q\sh_+\cap\Gamma_t)$, we have $0\in\dens(\JJ\sh_+)$. With the same argument, the same property holds for the three other sets $\JJ\sv_+$, $\JJ\sh_-$, $\JJ\sv_-$ and we deduce that $0\in\dens(\JJ)$. Eventually,~\eqref{dx/dt} follows from the differentiability of $h$ at $0$.  \medskip

\noindent
\textit{Step 3. Proof of the identity~\eqref{omtQtrb}.}\\
Using again that for $t\in \JJ_0$, we have $\CC(\om_t)\cap \Qrb=\{x(t)\}$ we see that, for $t\in\JJ$,
\[
\Gamma_t\cap\lt[(z_1,\ov r)\times(z_2,\ov r)\rt]=\lt[(x_1(t),\ov r)\times\{x_2(t)\}\rt]\, \cup\,\{x(t)\}\, \cup\, \lt[\{x_1(t)\} \times(x_2(t),\ov r)\rt].
\]
\[
\text{where}\hfill\qquad\begin{cases}
z_1:=\max\lt(\inf\{y_1,y\in Q\sv_-\},\inf\{y_1,y\in Q\sv_+\}\rt),\smallskip\\
z_2:=\max\lt(\inf\{y_2,y\in Q\sh_-\},\inf\{y_2,y\in Q\sh_+\}\rt).
\end{cases}
\]
From the constraints~\eqref{Q_h},\eqref{Q_v}\&\eqref{Q_hQ_v-} on $Q\sh_\pm$ and $Q\sv_\pm$, we have $z_1,z_2<-\ov r/4$ and we get 
\[
\Gamma_t\cap\Qtrb=(x_1(t),\ov r)\times\{x_2(t)\}\ \cup\,\{x(t)\}\ \cup\ \{x_1(t)\} \times(x_2(t),\ov r).
\]
Eventually, we deduce~\eqref{omtQtrb} by monotonicity of $t\mapsto\om_t$. \end{proof}
Let us describe further the trajectory $t\in\JJ \mapsto x(t)$ and express its derivative in terms of the values of $\nb u$ on the sets $\Gamma_t$. For this we introduce some more notation. For $t\in\JJ$, we denote 
\be\label{Gammat}
\Gamma_t\sh:=(x_1(t),\ov r)\times\{x_2(t)\}\quad\qquad\text{and}\quad\qquad\Gamma_t\sv:=\{x_1(t)\}\times(x_2(t),\ov r),
\ee 
hence, 
\[
\Gamma_t\cap \Qtrb=\Gamma_t\sh\cup\{x(t)\}\cup\Gamma_t\sv.
\]
Remark for later use that for $t\in\JJ$, since $x(t)\in \Qrbq$  the lengths of the segments $\Gamma_t\sh$ and $\Gamma_t\sv$ are bounded from below by $3\ov r/4>0$.\medskip

We start with a simple result. 
\begin{lemma}\label{lem:Dxi_Dh_i}
For every $s,t\in \JJ$, $s<t$ and $l=1,2$, there holds
 \[
  t-s\le x_l(t)-x_l(s)=h_l(t)-h_l(s) = D h_l((s,t)).
 \]
 \end{lemma}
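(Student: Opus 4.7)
My plan is to handle the three parts of the claim separately. The identity $x_l(t)-x_l(s)=h_l(t)-h_l(s)$ is immediate from the identification $x(t)=h(t)$ on $\JJ$ established in Lemma~\ref{lem:step1}. For the identity $h_l(t)-h_l(s)=Dh_l((s,t))$, I will note that each $h_l$ is bounded and nondecreasing (directly from~\eqref{defh} and the monotonicity $\om_t\subset\om_s$ for $s\le t$), so that $Dh_l$ is a finite positive Radon measure; and that $h_l$ has no atom at $s$ or $t$ because the reference functions $\Vol_{Q\sv_+}$ and $\Vol_{Q\sh_+}$ are classically differentiable (hence continuous) at both $s,t\in\JJ\subset\II_{\QQ}$. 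Combining these two facts gives $Dh_l(\{s\})=Dh_l(\{t\})=0$, so $Dh_l((s,t))=Dh_l([s,t])=h_l(t)-h_l(s)$.

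The substantive part is the inequality $t-s\le x_l(t)-x_l(s)$. I will exploit that $u$ is $1$-Lipschitz, the description $\om_\tau\cap\Qtrb=(x_1(\tau),\ov r)\times(x_2(\tau),\ov r)$ from~\eqref{omtQtrb}, and the fact that $u\equiv\tau$ on $\Gamma_\tau$ for $\tau\in\JJ\subset\II_{\mathrm{poly}}$. The latter is obtained from $\Gamma_\tau=\partial\om_\tau\subset\overline{\om_\tau}\cap\om_\tau^{\,c}$: continuity of $u$ yields $\overline{\om_\tau}\subset\{u\ge\tau\}$, while $\om_\tau^{\,c}=\{u\le\tau\}$ by definition. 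In particular $u(x(s))=s$ and $u(x(t))=t$. For $l=1$, I will consider the auxiliary point $y':=(x_1(s),x_2(t))$. Combining $\om_t\subset\om_s$ with~\eqref{omtQtrb} yields $x_l(s)\le x_l(t)$ for $l=1,2$, which places $y'$ on the left edge of the rectangle $\om_s\cap\Qtrb$, so $y'\in\Gamma_s$ and $u(y')=s$. The $1$-Lipschitz bound then collapses to a single coordinate:
\[
t-s=u(x(t))-u(y')\le|x(t)-y'|=x_1(t)-x_1(s).
\]
The case $l=2$ is symmetric with $y'':=(x_1(t),x_2(s))$.

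I do not expect a significant obstacle here; the main care is to pick the right auxiliary point so that the $1$-Lipschitz inequality reduces to a one-coordinate difference, and to invoke the polygonal structure~\eqref{omtQtrb} to identify $y'$ (resp. $y''$) with a point of $\Gamma_s$ where $u$ must take the value $s$. As a byproduct the strict inequality $x_l(s)<x_l(t)$ is automatic: if $x_1(s)$ were equal to $x_1(t)$, then $y'$ would coincide with $x(t)$, giving the contradiction $s=u(y')=u(x(t))=t$, which rules out the degenerate case where the argument could fail.
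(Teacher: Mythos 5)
Your proposal is correct and follows essentially the same route as the paper: the middle equality from $x=h$ on $\JJ$ via Lemma~\ref{lem:step1}, the last equality from continuity of $h_l$ at points of $\II_\QQ\supset\JJ$, and the inequality from the $1$-Lipschitz bound applied to two points of $\Gamma_s$ and $\Gamma_t$ sharing the same second coordinate (the paper takes both on the vertical segments at a common height $y_2\in(\ov r/4,\ov r)$, you take the corner $x(t)$ and the point $(x_1(s),x_2(t))\in\Gamma_s$ — a harmless variant of the same trick).
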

 \begin{proof}~
Without loss of generality, we assume that $l=1$. Let $y_2\in(\ov r/4,\ov r)$. By Lemma~\ref{lem:step1}, for $s,t\in\JJ$, we have $(x_1(s),y_2)\in\Gamma_s\sv$ and  $(x_1(t),y_2)\in\Gamma_t\sv$. Hence, using the fact that $u$ is 1-Lipschitz continuous, we get
\[
t-s=u(x_1(t),y_2)-u(x_1(s),y_2)\le |x_1(t)-x_1(s)|,
\]
and since $x_1$ is nondecreasing  we can remove the absolute value in the last term. For the other inequality, we recall that $x_1=h_1$ on $\JJ$ and that $h_1$ is $BV$ and continuous at any point of $\JJ_0\supset \JJ$.
\end{proof}
We now identify the derivatives $h_l'(0)$ in terms of $v=\nb u$.
\begin{lemma}
\label{lem:h_i'}
Let $l\in\{1,2\}$. The function $h_l$ is differentiable at any point $t\in\dens(\JJ)$ and 
\[
h_l'(t)\ge 1.
\]
Moreover:
\begin{enumerate}[(i)]
\item For almost every $y_2\in (x_2(t),\ov r)$  there holds 
\[
v_1(x_1(t)e_1 + y_2e_2)=0\qquad\text{ and }\qquad h'_2(t) v_2(x_1(t)e_1 + y_2e_2)=1.
\] 
As a consequence  $v=\nb u$ is well defined and \ae on $\Gamma_t\sh$ with $v_1=0$ and $v_2>0$.
\item Symmetrically, for almost every $y_1\in (x_1(t),\ov r)$  there holds 
\[
h'_1(t) v_1(y_1e_1+x_2(t)e_2)=1\qquad\text{ and }\qquad v_2(y_1 e_1 + x_2(t)e_2)=0.
\] 
As a consequence  $v$ is constant \ae on $\Gamma_t\sv$ with $v_1>0$ and $v_2=0$.
\end{enumerate}
In particular, since $0\in\dens(\JJ)$ we have for almost every $y_1,y_2\in (0,\ov r)$,
\[
v(y_1,0)=(0,v_2^\oo)\qquad\text{ and }\qquad v(0,y_2)=(v_1^\oo,0),
\]
where we have used the notation
\be\label{idvooh'(0)}
v_1^\oo:=\dfrac1{h_1'(0)},\qquad\qquad v_2^\oo:=\dfrac1{h_2'(0)}.
\ee
\end{lemma}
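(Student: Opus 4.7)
First I would address the differentiability of $h_l$ at $t\in\dens(\JJ)$ and the lower bound $h_l'(t)\ge1$. Since $\dens(\JJ)\sub\JJ\sub\II\sub\II_\QQ$, Definition~\ref{def:IIQQ} guarantees that every $\Vol_Q$ with $Q\in\QQ$ is classically differentiable at $t$; as $h_l$ is, by~\eqref{defh}, an affine function of one of $\Vol_{Q\sh_+}$ or $\Vol_{Q\sv_+}$, it too is differentiable at $t$. For the lower bound, I would use $t\in\dens(\JJ)$ to pick sequences $s_n,s_n'\in\JJ$ with $s_n\up t$ and $s_n'\dw t$; Lemma~\ref{lem:Dxi_Dh_i} yields $h_l(s_n')-h_l(s_n)\ge s_n'-s_n$, and dividing by $s_n'-s_n$ before letting $n\up\oo$ gives $h_l'(t)\ge1$.

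Next I would prove~(i); statement~(ii) follows by the symmetric argument exchanging the two axes. For $s\in\JJ$ close to $t$, Lemma~\ref{lem:step1} yields the clean L-shape $\om_s\cap\Qtrb=(x_1(s),\ov r)\times(x_2(s),\ov r)$, hence $u(y_1,x_2(s))=s$ for every $y_1\in(x_1(s),\ov r)$. By Fubini there is a full-measure set $G\sub(x_1(t),\ov r)$ such that $\eta\mapsto u(y_1,\eta)$ is absolutely continuous with derivative $v_2(y_1,\cdot)$ a.e.\ whenever $y_1\in G$. For such $y_1$ and $s\in\JJ$ close enough to $t$ that $y_1>x_1(s)$, the fundamental theorem of calculus yields
\[
s-t \,=\, u(y_1,x_2(s))-u(y_1,x_2(t)) \,=\, \int_{x_2(t)}^{x_2(s)} v_2(y_1,\eta)\,d\eta.
\]

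The main obstacle is the passage to the limit $s\to t^+$ in $\JJ$: dividing the above by $x_2(s)-x_2(t)$ produces a 1D line average of $v_2(y_1,\cdot)$, which does \emph{not} converge to $v_2(y_1,x_2(t))$ at a generic 2D Lebesgue point. To bypass this, I would first integrate in $y_1$ over $(y_1^*-r,y_1^*+r)$, choosing $y_1^*\in\Gamma_t\sh\cap G$ so that $(y_1^*,x_2(t))$ is a Lebesgue point of $v$ (guaranteed $\HH^1$-\ae\ on $\Gamma_t\sh$ by Lemma~\ref{lem:goodGLip}(iv)). This yields
\[
2r(s-t) \,=\, \int_{R_{r,s}} v_2 \qquad\text{with}\qquad R_{r,s}\,:=\,(y_1^*-r,y_1^*+r)\times(x_2(t),x_2(s)),
\]
and the choice $r=r_s:=h_2(s)-h_2(t)$ makes $R_{r_s,s}$ a rectangle of bounded eccentricity shrinking to $(y_1^*,x_2(t))$. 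Dividing by $|R_{r_s,s}|$ produces
\[
\dashint_{R_{r_s,s}} v_2 \,=\, \dfrac{s-t}{h_2(s)-h_2(t)}.
\]
By the first step the right-hand side tends to $1/h_2'(t)\in(0,1]$, while the left-hand side tends to $v_2(y_1^*,x_2(t))$ by 2D Lebesgue differentiation. This forces $v_2=1/h_2'(t)>0$ $\HH^1$-\ae\ on $\Gamma_t\sh$, and the constraint $v\in K$ (\ie\ $v_1v_2=0$ a.e.) then yields $v_1=0$ there. The concluding identities at the origin follow by taking $t=0\in\dens(\JJ)$.
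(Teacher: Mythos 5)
Your proposal is correct, but it follows a genuinely different route from the paper's proof. Where you see a ``main obstacle'' (that the 1D line average $\dashint_{x_2(t)}^{x_2(s)}v_2(y_1,\cdot)$ need not converge at a 2D Lebesgue point), the paper does not face it at all: it observes that for a Lipschitz $u$, every 2D Lebesgue point of $\nabla u$ is a point of full (Fr\'echet) differentiability of $u$ itself, so by Lemma~\ref{lem:goodGLip}(iv) $u$ is differentiable at $(y_1,x_2(t))\in\Gamma_t\sh$ for a.e.\ $y_1$. It then gets $\partial_1 u(y_1,x_2(t))=0$ directly from the constancy of $u$ along $\Gamma_t\sh$, and $h_2'(t)\partial_2 u(y_1,x_2(t))=1$ from the chain rule applied to $s\mapsto u(y_1,h_2(s))=s$ along $s\in\JJ$, $s\to t$. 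Your workaround---integrating the FTC identity over a $2r_s$-wide band, choosing $r_s:=h_2(s)-h_2(t)$ to produce a bounded-eccentricity rectangle shrinking to $(y_1^*,x_2(t))$, and invoking the regular-family version of Lebesgue differentiation---is a correct way to avoid invoking the Lipschitz-differentiability fact, at the cost of extra bookkeeping (the side condition $y_1^*-r_s>x_1(s)$, Fubini selection of $G$). Note also that you deduce $v_1=0$ from $v\in K$ together with the closedness of $K$ at a Lebesgue point, whereas the paper obtains it directly as $\partial_1 u=0$ from the constancy of $u$ on $\Gamma_t\sh$; both are valid. Your first step (differentiability of $h_l$ at $t\in\II_\QQ$ and the lower bound via Lemma~\ref{lem:Dxi_Dh_i}) matches the paper exactly.
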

\begin{proof}~
 We consider $l=1$, the case $l=2$ being identical. Let $t\in\dens(\JJ)$.  For almost every $y_1\in (x_1(t),\ov r)$, the function $u$ is differentiable at $(y_1,x_2(t))\in\Gamma_t\sh$. Since $u(\tilde y_1,x_2(t))=t$ for $\tilde y_1$ in the neighborhood of $y_1$, we have $\pt_1u(y_1,x_2(t))=0$. Moreover,  writing $u(\tilde y_1,x_2(s))=s$ for $s\in\JJ$, the chain rule leads to
\[
h_2'(t)\pt_2 u(y_1,x_2(t))=1,
\]
as claimed. Consequently $\pt_2 u(y_1,x_2(t))\ne 0$ and up to a $\HH^1$-negligible set, $v=\nb u$ is constant on $\Gamma_t\sh$. Eventually, the inequality $h_2'(t)\ge1$ follows from Lemma~\ref{lem:Dxi_Dh_i}.
\end{proof}

\noindent
We are now ready to establish the first part of Theorem~\ref{theo:rectifLipmain} namely~\eqref{eq:rectifLipclaim}.
\begin{proposition}\label{prop:theorectif_1}
For every $\vhi\in C_c(\R^2)$,
\[
\lim_{r\to 0}\dfrac1r\int_{\R^2}\vhi(r^{-1} y) \,d|\mu|(y)=\int_{\R}\vhi(s h'(0))\,ds.
\]
\end{proposition}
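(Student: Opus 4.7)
The plan is to disintegrate $|\mu|$ over the level sets of $u$ via Proposition~\ref{prop:decompmu} and reduce the problem to the trajectory $t\mapsto h(t)$ studied in Lemma~\ref{lem:step1}. Starting from
\[
I_r := \frac1r\int_{\R^2}\vhi(y/r)\,d|\mu|(y) = \frac{1}{r}\int_\R\lt[\int_{\R^2}\vhi(y/r)\,d|\kappa_t|(y)\rt]dt,
\]
I fix $R>0$ with $\supp\vhi\subset B_R$ and take $r<\ov r/R$, so that the inner integral sees only the mass of $|\kappa_t|$ in $B_{rR}\subset\Qrb$. After the substitution $t=rs$ this becomes $I_r=\int_\R G_r(s)\,ds$ with $G_r(s)=\int\vhi(y/r)\,d|\kappa_{rs}|(y)$; since $u$ is $1$-Lipschitz and $u(0)=0$, any corner $y\in B_{rR}$ of $\om_{rs}$ satisfies $|rs|=|u(y)|\le|y|\le rR$, so the integrand is supported in $|s|\le R$.

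On $\{s:rs\in\JJ\}$, equation~\eqref{eq:kappaBall} together with Lemma~\ref{lem:step1} gives $G_r(s)=\vhi(h(rs)/r)$. By Lemma~\ref{lem:h_i'}, $h$ is differentiable at $0$ with $h'_l(0)\ge1$, so $h(rs)/r\to s\,h'(0)$ pointwise as $r\to0$ and the integrands remain uniformly compactly supported in $s$. The density property $0\in\dens(\JJ)$ from Lemma~\ref{lem:step1} yields $\un_{\JJ/r}\to1$ in $L^1_{\mathrm{loc}}(\R)$, and dominated convergence leads to
\[
\int_\R\un_{\JJ/r}(s)\,\vhi(h(rs)/r)\,ds\ \longto\ \int_\R\vhi(sh'(0))\,ds,
\]
which is the claimed limit.

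It remains to control the remainder $\int(1-\un_{\JJ/r})G_r\,ds$. Its restriction to $\JJ_0/r\setminus\JJ/r$ is bounded by $\|\vhi\|_\oo\,r^{-1}|(-rR,rR)\cap(\JJ_0\setminus\JJ)|$, which is $o(1)$ because $\JJ\sub\JJ_0$ and $0\in\dens(\JJ)$. The delicate piece, and in my view the main obstacle, is the contribution of $\{s:rs\notin\JJ_0\}$, where I only have the crude estimate $G_r(s)\le\|\vhi\|_\oo|\kappa_{rs}|(\Qrb)\le\|\vhi\|_\oo N(rs)$ and $N$ need not be bounded. The key observation is the structural identity $N(t)=N(0)$ built into the definition $\JJ_0=\JJ_0(\ov r/4)\cap\II$ via Lemma~\ref{lem:corner}(iii); combined with $0\in\dens(\JJ_0)$ and the Lebesgue point property of $N$ at $0$ (from the definition of $\II$ in Lemma~\ref{lem:goodGLip}), this gives
\[
\int_{(-rR,rR)\setminus\JJ_0} N(t)\,dt=\int_{-rR}^{rR} N(t)\,dt-N(0)\,|\JJ_0\cap(-rR,rR)|=o(r),
\]
so the entire remainder is $o(1)$ after dividing by $r$, closing the argument.
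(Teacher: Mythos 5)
Your proof is correct and follows essentially the same route as the paper: disintegrate $|\mu|$ via $\kappa_t$ (Proposition~\ref{prop:decompmu}), use the $1$-Lipschitz bound to localize to $|t|\lesssim r$, split into the good set $\JJ$ (where \eqref{eq:kappaBall} and Lemma~\ref{lem:step1} identify $G_r(s)=\vhi(h(rs)/r)$ and differentiability of $h$ at $0$ plus $0\in\dens(\JJ)$ give the limit) and its complement (controlled via $N(t)$, the Lebesgue point of $N$ at $0$, and density). Two cosmetic remarks: (1) the intermediate split through $\JJ_0\setminus\JJ$ is not needed — the paper bounds the whole remainder directly by $\|\vhi\|_\oo r^{-1}\int_{(-\ell r,\ell r)\sm\JJ}N(t)\,dt$ and uses $\int |N(t)-N(0)|=o(r)$ together with $|(-\ell r,\ell r)\sm\JJ|=o(r)$; and (2) invoking ``dominated convergence'' on $\un_{\JJ/r}(s)\vhi(h(rs)/r)$ is slightly loose since $\un_{\JJ/r}$ does not converge pointwise — it is cleaner to separate the two effects, e.g.\ $\bigl|\int\un_{\JJ/r}\vhi(h(rs)/r)\,ds-\int\vhi(sh'(0))\,ds\bigr|\le\|\vhi\|_\oo\int_{-R}^{R}|\un_{\JJ/r}-1|\,ds+\int_{-R}^{R}|\vhi(h(rs)/r)-\vhi(sh'(0))|\,ds$, which is what the paper does via the $q_{1,1}/q_{1,2}$ split.
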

\begin{remark}
 Let us point out that~\eqref{eq:rectifLipclaim} indeed follows from Proposition~\ref{prop:theorectif_1} since we have, with the change of variable $s=\sigma/|h'(0)|$ and using~\eqref{idvooh'(0)},
 \[
 \int_\R\vhi(s h'(0))\,ds= \dfrac1{|h'(0)|}\int_\R \vhi\lt(\sigma\dfrac{h'(0)}{|h'(0)|}\rt)\, d\sigma =\dfrac{v_1^\oo v_2^\oo}{|v^\oo|}\int_L \vhi\,d\HH^1,
 \]
 with 
 \[
 L:=\Span h'(0)=\Span(1/v_1^\oo,1/v_2^\oo).
 \]
Moreover by the rectifiability criterion of~\cite[Theorem~16.7]{Mattila}, the existence of approximate tangent measures to $|\mu|$ provided by the proposition yields the $\HH^1$-rectifiability of $|\mu|$ and thus of $\mu$. 
\end{remark}
\begin{proof}[Proof of Proposition~\ref{prop:theorectif_1}.]~
Fix $\vhi\in C_c(\R^2)$ with $\supp \vhi\sub Q_\ell$ for some $\ell>0$.
   By~\eqref{decomptildemu} we have for $r>0$,
\[
\dfrac1r\int_\Om \vhi\lt(r^{-1}y\rt) d|\mu|(y)= \dfrac1r\int_{\R} \int_{Q_{\ell r}} \vhi\lt(r^{-1}y\rt)\, d|\kappa_t|(y)\,dt.
\]
Notice that for $r<\ov r/\ell$ we have $Q_{\ell r}\sub\Qrb$. For almost every $t\in \R$, if $|\kappa_t|(Q_{\ell r})\neq 0$ then $\Gamma_t\cap Q_{\ell r}\neq \emptyset$, \ie there exists $x\in Q_{\ell r}$ such that $u(x)=t$. Using that $u(0)=0$ and the fact that $u$ is $1-$Lipschitz we deduce that 
\[
 |\kappa_t|(Q_{\ell r})\neq 0 \qquad \Longrightarrow \qquad |t|=|u(x)|\le |x|\le \ell r.
\]
Thus,
\[
\int_{Q_{\ell r}} \vhi\lt(r^{-1}y\rt)\, d|\kappa_t|(y)=
\begin{cases}
\displaystyle \int_{Q_{\ell r}} \vhi\lt(r^{-1}y\rt)\, d|\kappa_t|(y)&\text{ if }|t|< \ell r,\\
\qquad 0&\text{ for }|t|\ge \ell r.
\end{cases}
\]
Hence for $r< \ov r/\ell$,
\begin{multline}\label{prop:rectifLipclaim1}
\dfrac1r\int_\Om \vhi\lt(r^{-1}y\rt) d|\mu|(y)= \dfrac1r\int_{- \ell r}^{ \ell r} \int_{\Qrb} \vhi\lt(r^{-1}y\rt)\, d|\kappa_t|(y)\,dt\\
\stackrel{\eqref{eq:kappaBall}}{=} \dfrac1r \int_{\JJ\cap (- \ell r, \ell r)}\vhi\lt(r^{-1}x(t)\rt)\,dt + \dfrac1r \int_{(-\ell r , \ell r)\sm\JJ} \int_{\Qrb}\vhi\lt(r^{-1}y\rt)\,d|\kappa_t|(y)\,dt\\
=:q_1(r)+q_2(r).
\end{multline}
Let us treat the first term. We write
\begin{align*}
  q_1(r)
  &=  \dfrac1r \int_{\JJ\cap (- \ell r, \ell r)}\vhi(r^{-1} th'(0))\, dt +   \dfrac1r  \int_{\JJ\cap (- \ell r, \ell r)}[\vhi\lt(r^{-1}x(t)\rt)-\vhi(r^{-1}th'(0))]\, dt\\
&=:q_{1,1}(r)+q_{1,2}(r).
\end{align*}
Using the change of variable $t=rs$ and recalling that $0\in\dens(\JJ)$, we can pass to the limit in $q_{1,1}(r)$: 
\be\label{prop:rectifLipclaim16}
 q_{1,1}(r)=\int_{\frac1r \JJ\cap (-\ell,\ell)}\vhi(sh'(0))\, ds\ \st{r\dw0}\longto\ 
\int_\R\vhi(sh'(0))\, ds.
\ee
Notice that we used that $h'_i(0)\ge 1$  by Lemma~\ref{lem:h_i'} and thus $sh'(0)\notin Q_\ell$ for $|s|\ge\ell$.\\
To show that $q_{1,2}(r)$ is negligible. We introduce a modulus of continuity $\eta_\vhi\in C(\R_+,\R_+)$ of $\vhi$, increasing and with $\eta_\vhi(0)=0$. We also define 
\[
 \delta(r):=\sup\lt\{ \lt|\dfrac{x(t)}{t}-h'(0)\rt| : t\in \JJ,\, 0<|t|<r\rt\}.
\]
Remark that by~\eqref{dx/dt}, $\delta(r)\to 0$ as $r\to 0$. We then have 
\begin{align*}
|q_{1,2}(r)|
 &\le  \dfrac1r \int_{\JJ\cap (-\ell r,\ell r)}\eta_\vhi\lt(r^{-1}(x(t)-th'(0))\rt)\, dt\\
 &\le \ell \eta_\vhi(\ell \delta(\ell r)).
\end{align*}
Sending $r$ to $0$, we get $\lim_{r\dw0} q_{1,2}(r)=0$ and with~\eqref{prop:rectifLipclaim16} we conclude  that
\be\label{prop:rectifLipclaim18}
\lim_{r\dw0} q_1(r)=\int_\R\vhi(sh'(0))\, ds.
\ee
We still have to establish that the second term in the right-hand side of~\eqref{prop:rectifLipclaim1} is negligible. Let us write 
\[
|q_2(r)|\le\dfrac{\|\vhi\|_\oo}r\int_{(-\ell r,\ell r)\sm\JJ }|\kappa_t|(\Qrb)\, dt\le \dfrac{\|\vhi\|_\oo}r\int_{(-\ell r,\ell r)\sm\JJ} N(t)\, dt.
\]
Recalling that $ 0\in \dens(\JJ)$ and that 0 is a Lebesgue point of the $L^1$ function $t\mapsto N(t)$, we get that $q_2(r)\to0$ as $r\to0$. Together with~\eqref{prop:rectifLipclaim1}$\And$\eqref{prop:rectifLipclaim18} this yields the result.
\end{proof}

\noindent
We can finally end the proof of Theorem~\ref{theo:rectifLipmain} by establishing the trace identity~\eqref{eq:strongtracesclaim}. Denoting 
\[
 \nu:= \dfrac1{|v^\oo|}(-v_1^\oo,v_2^\oo) = \dfrac1{|h'(0)|}[h'(0)]^\perp,
\]
we set
\[
 V^\oo(y)=\begin{cases}
              (v^\oo_1,0)=\lt(\dfrac1{h'_1(0)},0\rt) & \text{if } y\cd  \nu >0, \bigskip\\
              (0,v^\oo_2)=\lt(0,\dfrac1{h'_2(0)}\rt) & \text{if } y\cd \nu<0.
             \end{cases}
\]
Notice that $\nb\we V^\oo=0$ in the sense of distributions. 
\begin{proposition}\label{prop:theorectif_2} There holds
\be\label{L1_trace}
\lim_{r\dw0}\dfrac1{r^2}\int_{B_r}|v-V^\oo|\,=0.
\ee
\end{proposition}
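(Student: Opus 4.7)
By symmetry between the two coordinates (exchanging the roles of $h_1$ and $h_2$, of $v_1^\oo$ and $v_2^\oo$), it suffices to establish
\[
 \lim_{r\dw0}\frac1{r^2}\int_{B_r^+}\big|v-(v_1^\oo,0)\big|\,dy=0,
\]
where $B_r^+:=B_r\cap\{y\cd\nu>0\}$. The analogous estimate on $B_r^-$ with target $(0,v_2^\oo)$ follows by the same reasoning, and summing yields~\eqref{L1_trace}.

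The strategy is to isolate a ``good'' subset of $B_r^+$ on which $v$ has the explicit form provided by Lemma~\ref{lem:h_i'}. Choose $r<\ov r/4$ so that $B_r\sub\Qtrb$, and define
\[
 G_r:=\big\{y\in B_r^+:u(y)\in\dens(\JJ)\text{ and }y\in\Gamma_{u(y)}\sv\big\}.
\]
By Lemma~\ref{lem:step1}, for every $t\in\JJ$ the intersection $\Gamma_t\cap\Qtrb$ reduces to $\Gamma_t\sh\cup\{x(t)\}\cup\Gamma_t\sv$; by Lemma~\ref{lem:h_i'}(ii) the vector field $v$ equals $(1/h_1'(t),0)$ at $\HH^1$-\ae point of $\Gamma_t\sv$ when $t\in\dens(\JJ)$. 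Consequently, on $G_r$ one has $v(y)=(1/h_1'(u(y)),0)$ almost everywhere.

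The main step is to show $|B_r^+\sm G_r|=o(r^2)$. Up to a null set, the complement splits as $A_1\cup A_2$ where
\[
 A_1:=\!\!\!\bigcup_{t\in\dens(\JJ)\cap[-r,r]}\!\!\!\Gamma_t\sh\cap B_r^+,\qquad A_2:=\{y\in B_r^+:u(y)\notin\dens(\JJ)\}.
\]
For $A_1$: a point $y\in\Gamma_t\sh\cap B_r^+$ satisfies $y_2=h_2(t)$, $y_1\ge h_1(t)$, and $y_1/h_1'(0)<y_2/h_2'(0)$. Using the Lebesgue differentiability $h_l(t)=h_l'(0)t+o(t)$ on $\dens(\JJ)$ (which is~\eqref{dx/dt}), the horizontal slice has length $\HH^1(\Gamma_t\sh\cap B_r^+)\le h_1'(0) h_2(t)/h_2'(0)-h_1(t)=o(t)=o(r)$ uniformly in $t\in\dens(\JJ)\cap[-r,r]$; pushing forward by $t\mapsto h_2(t)$ (whose total variation on $[-r,r]$ is $O(r)$) gives $|A_1|=o(r^2)$. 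For $A_2$: since $0\in\dens(\JJ)$ we have $|[-r,r]\sm\dens(\JJ)|=o(r)$, and combining this with the structural information that for $t\in\JJ_0$ the level set $\Gamma_t\cap\Qtrb$ is an $L$-shape of $\HH^1$-length at most $Cr$, together with the coarea bound $\int_{[-r,r]}\HH^1(\Gamma_t\cap B_r)\,dt\le|B_r|$, yields $|A_2|=o(r^2)$. This second estimate is the principal technical hurdle, since away from $\JJ$ we have no a priori shape information on $\Gamma_t$ and must rely on quantifying how closely level sets near $t=0$ conform to the nice foliation of Lemma~\ref{lem:step1}.

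Once $|B_r^+\sm G_r|=o(r^2)$ is in hand, we bound $\int_{B_r^+\sm G_r}|v-(v_1^\oo,0)|\,dy\le(1+|v_1^\oo|)|B_r^+\sm G_r|=o(r^2)$ using $|v|\le1$. On $G_r$ we have $|\nb u|=1/h_1'(u)$, so the coarea formula with $g(t):=|1/h_1'(t)-1/h_1'(0)|$ yields
\[
 \int_{G_r}\Big|\tfrac1{h_1'(u(y))}-\tfrac1{h_1'(0)}\Big|\,dy=\frac1{h_1'(0)}\int_{-r}^{r}|h_1'(t)-h_1'(0)|\,\HH^1(\Gamma_t\sv\cap G_r)\,dt.
\]
The vertical segment satisfies $\HH^1(\Gamma_t\sv\cap G_r)\le 2r$, and since $0$ is a Lebesgue point of $h_1'$, $\int_{-r}^r|h_1'(t)-h_1'(0)|\,dt=o(r)$. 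This produces $\int_{G_r}|v-(v_1^\oo,0)|\,dy=o(r^2)$, completing the proof.
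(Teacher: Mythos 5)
Your overall strategy mirrors the paper's opening move (split $B_r$ along $\{y\cd\nu=0\}$ and treat each half separately), and your coarea computation on the good set $G_r$ and your estimate $|A_1|=o(r^2)$ are both correct in spirit. But the step $|A_2|=o(r^2)$ contains a genuine gap, and it is not a small one. The tools you cite — the coarea bound $\int_{[-r,r]}\HH^1(\Gamma_t\cap B_r)\,dt\le|B_r|$, and the $L$-shape structure of $\Gamma_t$ for $t\in\JJ_0$ — control $\int_{A_2}|\nb u|\,dy$, not $|A_2|$. These are not comparable: $|\nb u|$ can vanish on a set of positive Lebesgue measure (for instance on any plateau of $u$, which is precisely the situation produced by a jump of $h_1$ or $h_2$), and in that case the preimage $u^{-1}\big([-r,r]\sm\dens(\JJ)\big)$ can be far larger than what coarea suggests. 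Also, the $L$-shape structural information applies to $t\in\JJ_0$, whereas $A_2$ consists exactly of the points whose level $u(y)$ lies \emph{outside} $\dens(\JJ)$, so this input is inapplicable to $A_2$. You flag the step as ``the principal technical hurdle'' but then assert the conclusion without an actual argument.

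The claim $|A_2|=o(r^2)$ is true, but proving it requires essentially the ingredients you omitted. The paper never needs to bound $|A_2|$ at all: it Fubini-slices the integral (parametrizing by $y_2$ rather than by the level $t$), bounds the ``bad'' contribution trivially by $\frac{C}{r}\HH^1\big(h_2((-2r,2r)\sm\JJ)\big)$, and then invokes the pushforward estimate~\eqref{H1(h2(I))}, namely $\HH^1(h_2(I))\le\nu(I)+\int_I h_2'$, together with $0\in\dens(\JJ)$, the fact that $0$ is a Lebesgue point of $h_2'$, and the vanishing~\eqref{property:h2} of $D^sh_2$ near $0$, to show this is $o(1)$. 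The monotone $BV$ structure of $h_l$ is the mechanism that converts ``small measure of bad $t$-levels'' into ``small measure of bad slices,'' and that mechanism is exactly what is missing from your sketch. One could salvage your decomposition by performing a vertical Fubini slicing of $A_2$ (parametrizing by $y_1$ and using the same $BV$ estimates applied to $h_1$, including the control of its jump set near $0$), but that is precisely the paper's argument in a different coordinate. So the two proofs are closely related, and the difference between them — coarea slicing versus Fubini slicing by $h_l$ — is not a matter of taste: the former runs into exactly the obstruction you hit, and the latter is what makes the bad set tractable.
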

\begin{proof}~
Let $0<r<\ov r$ and let us denote (see Figure~\ref{Fig:BhBv}),
\[
B_r\sh:=\{y\in B_r: y\cd \nu<0\} \qquad \textrm{and} \qquad B_r\sv:=\{y\in B_r: y\cd \nu>0\}. 
\]
\begin{figure}[H]
\centering
\begin{tikzpicture}[scale=1.2]
\clip (-.2,-.2) rectangle (5.2,5.2);
\pgfmathsetmacro{\a}{.8}
\pgfmathsetmacro{\b}{-.05}
\pgfmathsetmacro{\c}{0}
\pgfmathsetmacro{\d}{.01}
\pgfmathsetmacro{\r}{1.7}
\pgfmathsetmacro{\R}{2.5}
\pgfmathsetmacro{\angle}{atan(\a)}
\pgfmathsetmacro{\C}{cos(\angle)}
\pgfmathsetmacro{\S}{sin(\angle)}

\draw[thin,fill, color=gray!40] (2.5+\r*\C,2.5+\r*\S) arc (\angle:\angle+180:\r) -- cycle;
\draw[thin,fill, color=gray!12] (2.5-\r*\C,2.5-\r*\S) arc (\angle+180:\angle+360:\r) -- cycle;
\draw[very thin] (0,0) rectangle (5,5);

\draw[-{>[scale=3,length=2,width=1.5]},very thin, dashed] (2.5,2.5)--({2.5+\r},2.5);
\draw[-{>[scale=3,length=2,width=1.5]},very thin, dashed] ({2.5+\r},2.5) -- (2.5,2.5);
\draw (2.6,1.8) node{$B_r\sh$};
\draw (1.5,2.7) node{$B_r\sv$};

\draw ({2.5+.6*\r},{2.5}) node[above]{$r$};

\draw[-{Latex[length=3mm, width=2mm]},thick] (2.5,2.5)--++({-\R*\S},{\R*\C});
\draw ({2.5-.85*\R*\S},{2.5+.85*\R*\C}) node[above right]{$\nu$};

\pgfmathsetmacro{\vx}{.5/cos(\angle)}
\pgfmathsetmacro{\vy}{.5/sin(\angle)}
\draw[-{Latex[length=2mm, width=1.3mm]}](.5,2)--++(\vx,0);
\draw[-{Latex[length=2mm, width=1.3mm]}](.5,3)--++(\vx,0);
\draw[-{Latex[length=2mm, width=1.3mm]}](.5,4)--++(\vx,0);
\draw[-{Latex[length=2mm, width=1.3mm]}](2,3.5)--++(\vx,0);
\draw[-{Latex[length=2mm, width=1.3mm]}](2,4.5)--++(\vx,0);
\draw[-{Latex[length=2mm, width=1.3mm]}](3.2,4.2)--++(\vx,0);

\draw[-{Latex[length=2mm, width=1.3mm]}](1.5,.4)--++(0,\vy);
\draw[-{Latex[length=2mm, width=1.3mm]}](2.5,.4)--++(0,\vy);
\draw[-{Latex[length=2mm, width=1.3mm]}](3.5,.4)--++(0,\vy);
\draw[-{Latex[length=2mm, width=1.3mm]}](4.5,.4)--++(0,\vy);
\draw[-{Latex[length=2mm, width=1.3mm]}](3.4,1.5)--++(0,\vy);
\draw[-{Latex[length=2mm, width=1.3mm]}](4.4,1.5)--++(0,\vy);
\draw[-{Latex[length=2mm, width=1.3mm]}](4.5,2.7)--++(0,\vy);

\draw [thin, smooth]	plot[variable=\x, domain={-2.2:2.43}]
		({\x+2.5}, {2.5+\a*\x+\b*\x*\x+\c*\x*\x*\x+\d*\x^5});
\draw [dashed]	plot[variable=\x, domain={-2.5:2.5}]
		({\x+2.5}, {2.5+\a*\x});

\end{tikzpicture}
\caption{\label{Fig:BhBv}The vector field $V^\oo$, the half balls $B_r\sh$ (light gray) and $B_r\sv$ (dark gray) and, as in Figure~\ref{Fig:trajectory}, the trajectory $\{h(t)\}$ (solid curve) with its tangent at $0$ (dashed line).}
\end{figure}
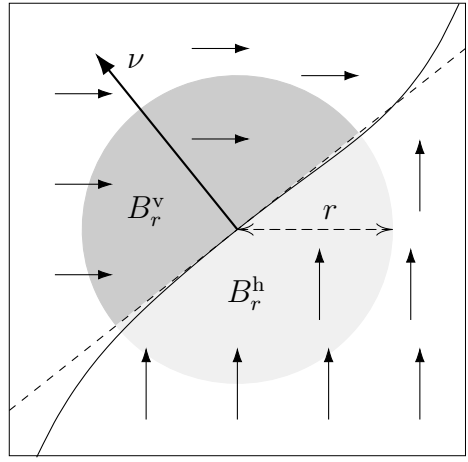 

We split the integral in~\eqref{L1_trace} as
\[
\dfrac1{r^2}\int_{B_r}|v-V^\oo|\,=\dfrac1{r^2}\int_{B_r\sh}|v-(0,v_2^\oo)|\,+\dfrac1{r^2}\int_{B_r\sv}|v-(v_1^\oo,0)|\,=:q\sh(r)+q\sv(r).
\]
We show that $q\sh(r)$ goes to 0 with $r$ (the treatment of $q\sv(r)$ is identical).  Since $h_2$ is differentiable at $0$ and $h_2'(0)\ge1$, there holds $(-r,r)\sub h_2((-2r,2r))$ for $r>0$ small enough. Setting 
\[ 
\lambda:=\dfrac{h'_1(0)}{h'_2(0)}=\dfrac{v_2^\oo}{v_1^\oo}
\]
we have by Fubini,
\[
q\sh(r)\le \dfrac1{r^2} \int_{h_2((-2r,2r))}\int_{\lambda y_2}^{2r}|v(y_1,y_2)-(0,v_2^\oo)|\,dy_1\,dy_2.
\]
We split the domain of integration with respect to $y_2$ in a ``good'' and a ``bad'' set: 
\[
D_{\mathrm{g}}(r):=h_2((-2r,2r)\cap\JJ)\qquad\text{ and }\qquad D_{\mathrm{b}}(r):=h_2((-2r,2r)\sm\JJ).
\]
We have $q\sh(r)\le q\sh_{\mathrm{g}}(r)+q\sh_{\mathrm{b}}(r)$ where
\begin{align*}
q\sh_{\mathrm{g}}(r)&:= \dfrac1{r^2} \int_{D_{\mathrm{g}}(r)}\int_{\lambda y_2}^{2r}|v(y_1,y_2)-(0,v_2^\oo)|\,dy_1\,dy_2,\\
q\sh_{\mathrm{b}}(r)&:= \dfrac1{r^2} \int_{D_{\mathrm{b}}(r)}\int_{\lambda y_2}^{2r}|v(y_1,y_2)-(0,v_2^\oo)|\,dy_1\,dy_2.
\end{align*}
 \smallskip

Let us first consider the term $q\sh_{\mathrm{g}}(r)$. For $y_2\in D_{\mathrm{g}}(r)$ there exists a unique $t$ such that $h_2(t)=y_2$. We denote $T(y_2):=t$ and then $X_1(y_2):=h_1(T(y_2))=h_1(h_2^{-1}(y_2))$. Taking into account that $|v|,|v^\oo|\le 1$ we get 
\begin{align*}
q\sh_{\mathrm{g}}(r)\le & \dfrac1{r^2}\int_{D_{\mathrm{g}}(r)}\int_{X_1(y_2)}^{2r}|v(y_1,y_2)-(0,v_2^\oo)|\,dy_1\,dy_2+\dfrac2{r^2}\int_{D_{\mathrm{g}}(r)}|X_1(y_2)-\lambda y_2|\, dy_2\\
&=: q\sh_{\mathrm{g},1}(r)+q\sh_{\mathrm{g},2}(r).
\end{align*}
Recalling the definition~\eqref{Gammat} of $\Gamma_t\sh$ and $\Gamma_t\sv$,  we have by Lemma~\ref{lem:h_i'} that for $t\in \JJ$, $v$ is constant on $\HH^1$-almost all $\Gamma_t\sh$ and $\HH^1$-almost every point of $\Gamma_t\sh$ is a Lebesgue point of $v$. Let $y^*_1\in(\ov r/4,\ov r)$ such that $y^*:=(y^*_1,0)$ is a Lebesgue point of $v$. In particular $v(y^*)=(0,v_2^\oo)$ and we have the estimate,
\[
q\sh_{\mathrm{g},1}(r)\le\dfrac1{r^2}\int_{Q_{2r}(y^*)}|v(y)-v(y^*)|\,dy\ \ \st{r\dw0}\longto\ 0.
\]
Then, since $h$ is differentiable at $0$ with $h_2'(0)\ge 1$, we have 
\[
\dfrac{X_1(y_2)}{y_2}-\frac{h'_1(0)}{h_2'(0)}=\frac{X_1(y_2)}{y_2}-\lambda\  \ \st{y_2\dw0} \longto\ 0.
\]
Using again that $h_2$ is differentiable at $0$ we have for $r>0$ small enough (and using $ h_2'(0)\ge1>0$)
\[
D_{\mathrm g}(r)\subset h_2((-2r,2r)) \subset  (-4h_2'(0) r,4h_2'(0) r).
\]
We infer, for $r>0$ small enough,
\[
q^{ \mathrm h}_{{\mathrm g},2}(r)= \dfrac2r\int_{D_{\mathrm g}(r)} \dfrac{|y_2|}r\lt|\frac{X_1(y_2)}{y_2}-\lambda \rt|\, dy_2\le \dfrac{8h_2'(0)}r\int_{-4h_2'(0) r}^{4h_2'(0) r} \lt|\frac{X_1(y_2)}{y_2}-\lambda \rt|\, dy_2\  \st{r\dw0} \longto\ 0.
\] 
We conclude that
\be\label{split_int_trace_1}
\lim_{r\dw0}q\sh_{\mathrm g}(r)= 0.
\ee 
Eventually, we estimate $q\sh_{\mathrm b}(r)$. Using $|v|,|v^\oo|\le 1$ and recalling that $h_2$ is increasing, we have
\[
q\sh_{\mathrm b}(r)\le \frac2r \HH^1\lt(h_2((-2r,2r)\sm\JJ)\rt).
\]
 Recalling the decomposition of $Dh_2=h_2'\HH^1 + D^sh_2$ and that $h_2$ is increasing, we claim that for $I\subset (-\ov r,\ov r)$ Lebesgue measurable (notice that since $h_2$ is increasing, $h_2(I)$ is also measurable), there holds 
\be\label{H1(h2(I))}
\HH^1(h_2(I))\le \nu(I) + \int_I h_2'(s)\, ds,
\ee
where\footnote{$\nu$ is the smallest outer measure built from $D^sh_2$.} $\nu(I):=\inf\{D^sh_2(J):J\text{ open subset, }I\subset J\}$. Indeed, the inequality holds true for open subsets with equality and then extends to every measurable set since, by regularity of $h_2'\HH^1$, 
\[
\int_I h_2'(s)\, ds = \inf\lt\{\int_J h_2'(s)\, ds :J\text{ open subset, }I\subset J\rt\}.
\]
Applying inequality~\eqref{H1(h2(I))} with $I=(-2r,2r)\sm\JJ$ we compute 
\begin{align*}
q\sh_{\mathrm b}(r)
&\le \frac2r\nu((-2r,2r)\sm\JJ)+\frac2r\int_{(-2r,2r)\sm\JJ} h_2'(s)\, ds\\
&\le  \frac2rD^sh_2((-2r,2r)) +\frac{2h_2'(0)}r\HH^1((-2r,2r)\sm\JJ) +\frac2r\int_{-2r}^{2r} |h_2'(s)-h_2'(0)|\, ds.
\end{align*}
Sending $r$ to 0, the first term goes to 0 by~\eqref{property:h2}, so does the second term because $0\in\dens(\JJ)$ as well as the last term because 0 is a Lebesgue point of $h_2'$. We conclude that $q\sh_{\mathrm b}(r)$ goes to 0 and with~\eqref{split_int_trace_1} the proposition is established. This ends the proof of Theorem~\ref{theo:rectifLipmain}.
\end{proof}


\subsubsection{Compactness of $S^\oo(\Om)$}\label{sec:compactness}
We now prove Proposition~\ref{prop:compactnessintro} about the compactness properties of $S^\oo(\Om)$.
\begin{proof}[Proof of Proposition~\ref{prop:compactnessintro}]~

 \setcounter{proof-step}{0}

 \noindent{\textit{Step \stepcounter{proof-step}\arabic{proof-step}.
 Compactness of finite energy states.}}\\
 Let $v^k\in S^\oo(\Om)$ be such that 
 \[
  \sup_k |\mu[v^k]|(\Om)<\oo.
 \]
Since $|v_k|\le 1$, up to extraction there is $|v|\le 1$ such that $v_k$ converges  to $v$ in the weak-$*$ topology of $L^\oo$. By weak convergence we have that $\nb \times v=0$, that $\mu[v^k]$ weakly converges to $\mu[v]$ and the estimate
\[
 |\mu[v]|(\Om)\le \liminf_{k\to \oo} |\mu[v^k]|(\Om)<\oo.
\]
Therefore, to prove that $v\in S^\oo(\Om)$, we just need to show that  $v\in K$ for almost every $x$. Let $\vhi_l\in C^1(\R)$ for $l\in\{1,2\}$. We consider the entropies $\Phi_1(v)= \vhi_1(v_2) e_1$ and $\Phi_2(v)= \vhi_2(v_1) e_2$. For $l=1,2$ we have 
\be\label{eq:divphizero} 
 |\nb \cdot (\Phi_l(v^k))|(\Om)\le \|\vhi_l'\|_\oo |\mu[v^k]|(\Om).
\ee
We deduce that the two sequences $(\Phi_1(v^k))$ and $(\Phi_2(v^k))$ are compact in $H^{-1}(\Om)$ (see e.g.~\cite[Lemma~6]{DKMO01}). Thus, by the div-curl lemma we have as weak limits,
\be\label{eq:weaklim}
\lim_{k\up\oo} \vhi_1(v_2^k)\vhi_2(v_1^k)\, =\,  \lt( \lim_{k\up\oo}  \vhi_1(v_2^k)\rt)  \lt( \lim_{k\up\oo} \vhi_2(v_1^k)\rt).
\ee
Applying this with $\vhi_1=\vhi_2={\rm Id}$ we find $0 = v_2 v_1$ and thus $v\in K$.
\medskip

\noindent{\textit{Step \stepcounter{proof-step}\arabic{proof-step}. The case of vanishing defect measure: compactness in the strong topology.}}\\
We now assume that $\mu[v]=0$. We have $\pt_2 v_1=\pt_1 v_2=0$ so that  $v_l(x)= v_l(x_l)$ for $l=1,2$. Since $v\in K$ almost everywhere this leads to $v_1=0$ or $v_2=0$. We now  improve the weak convergence of $v^k$ to strong convergence of $v_1^k$ or $v_2^k$ to zero.\\
Let $(\lambda_x)_{x\in \Om}$ be the Young measure generated by $v^k$, \ie
\[
 \lim_{k\up \oo} \int_{\Om} \vhi(x, v^k(x))\,dx=\int_{\Om}\lt(\int_{B_1} \vhi(x,z)\,d\lambda_x(z)\rt)\,dx \qquad \text{for every } \vhi\in C_c(\Om\times \R^2).
\]
Arguing as in~\cite{DKMO01}, we see that $\lambda_x$ is a probability measure on $K$ with
\[
 v(x)=\int_K z\, d\lambda_x \qquad \text{for almost every } x\in \Om.
\]
After localizing, we can write~\eqref{eq:weaklim} as 
\[
\int_K  \vhi_1(z_2)\vhi_2(z_1)\, d\lambda_x(z)\, =\, \lt(\int_K  \vhi_1(z_2)\, d\lambda_x(z)\rt)\lt(\int_K  \vhi_2(z_1)\, d\lambda_x(z)\rt).
\]
Therefore $\lambda_x$ is tensorized and thus supported on $\{0\}\times \R$ or on $\R\times\{0\}$. Let us prove that, up to a set of Lebesgue measure zero, it is either always supported on $\{0\}\times \R$ or always on $\R\times\{0\}$.
We decompose $\lambda_x$ as
\be
\label{decomp}
\lambda_x=\alpha_x\delta_0\otimes\delta_0 + \lambda_x\sh\otimes \delta_0+\delta_0\otimes \lambda_x\sv,
\ee
 with $\alpha_x\ge 0$ and  $\lambda_x\sh$, $\lambda_x\sv$ positive measures on $\R$ such that $\lambda_x\sh(\{0\})=\lambda_x\sv(\{0\})=0$ and $\alpha_x+\lambda_x\sh(\R)+\lambda_x\sv(\R)=1$.
Let
\[
\om\sh=\{x\in \Om \ : \lambda_x\sh(\R)> 0\},\qquad\qquad\om\sv=\{x\in \Om \ : \lambda_x\sv(\R)> 0\}.
\]
We claim that either $|\om\sh|=0$ or $|\om\sv|=0$. Assume instead  that $|\om\sh|>0$, $|\om\sv|>0$. Notice first that since $\lambda_x$ is tensorized for almost every $x$, we have 
\be\label{om1capom2=0}
|\om\sh\cap \om\sv|=0.
\ee
Now, since $\mu[v]=0$, the estimate~\eqref{eq:divphizero} with $v$ in place of $v^k$ leads to the following identities, in the sense of distribution,
\[
 \pt_1 \lt[\int_{K} \vhi_1 (z_2)\,d\lambda_x(z)\rt]=0,\qquad\qquad  \pt_2 \lt[\int_{K} \vhi_2 (z_1)\,d\lambda_x(z)\rt]=0,
\]
for every $\vhi_1,\vhi_2\in C^1(\R)$. Using $\vhi_1(z_2)=|z_2|^2$  in the first identity and substituting the decomposition of $\lambda_x$, we get 
\[
\pt_1 \lt[ \un_{\om\sv}(x) \int_{K} |z_2|^2\,d\lambda\sv_x(z)\rt]=0.
\]
We deduce that for almost every $x_2\in (-1,1)$, the function 
\[
y_1\in (-1,1)\mapsto  \un_{\om\sv}(y_1,x_2) \int_{K} |z_2|^2\,d\lambda\sv_{(y_1,x_2)}(z)\qquad
\text{is constant (in the \ae sense).}
\] As the two factors either both vanish or are both positive we deduce that the function 
\be\label{unom2=cst}
y_1\in (-1,1)\mapsto  \un_{\om\sv}(y_1,x_2)\qquad\text{is constant for \ae\ }x_2\in (-1,1).
\ee
By assumption $\om\sv$ have positive measure, hence, by Fubini, there exists $J_2\sub (-1,1)$ measurable and with positive length such that for every $x_2\in J_2$ the set 
\[
  \{y_1\in (-1,1) : (y_1,x_2)\in \om\sv\text{ for some }x_2\in J_2\}
\]
has positive length.  We deduce from~\eqref{unom2=cst} that, up to a negligible set,
\[
(-1,1)\times J_2\sub\om\sv.
\]
Similarly using the second identity, we obtain a measurable subset $J_1\sub (-1,1)$ of positive length such that $J_1\times (-1,1)\sub\om\sh$. As a consequence  $J_1\times J_2\sub \om\sh\cap\om\sv$ and since $|J_1\times J_2|>0$ this contradicts~\eqref{om1capom2=0}. We conclude that either $|\om\sh|=0$ or $|\om\sv|=0$.

If for instance $|\om\sv|=0$ we obtain  from~\eqref{decomp} that the projection of $\lambda_x$ on the vertical axis is a Dirac delta at $0$ and thus $v_2^k\to 0$ in $L^1(\Om)$.
\end{proof}

As a final observation we construct $v\in S^\oo(\Om)$ such that $|\mu[v]|(B_r)$ goes to 0 faster than $r$ but such that $0$ is neither a Lebesgue point for $v_1$ nor for $v_2$.
\begin{proposition}\label{prop:counterexamp}
Let us consider a sequence $1>r_0>r_1>\dots$ decreasing to 0 such that $1/2>r_1/r_0>r_2/r_1>\dots$ also decreases to 0, see for instance the family of examples of Remark~\ref{rem:48} below.\\
Then, there exists $v\in S^\oo(\Om)$ such that
\be\label{contrex_|mu|_small}
 \lim_{r\dw 0}\dfrac{ |\mu[v]|(B_r)}r=0.
\ee
and for $l=1,2$
\be\label{eq:notLebesgue}
\dfrac1{|B_{r_{2k'+l}}|}\int_{B_{r_{2k'+l}}}\! \lt|v_l\rt|\,\ \st{k'\up\oo}\longto\ q:=\dfrac23-\dfrac{\sqrt3}{2\pi}>0.
\ee
\end{proposition}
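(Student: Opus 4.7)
The idea is to build $v=\nb u$ for an explicit 1-Lipschitz function $u:\Om\to\R$ whose level sets are axis-aligned polygons alternating in orientation between the scales $r_k$. The key geometric observation is that
\[
q=\frac{2}{3}-\frac{\sqrt3}{2\pi}=\frac{|B_1\cap\{|x_2|>1/2\}|}{|B_1|}=\frac{|B_1\cap\{|x_1|>1/2\}|}{|B_1|},
\]
so if we can arrange in each annulus $A_k:=B_{r_k}\sm B_{r_{k+1}}$ that $|v_l|=1$ essentially on the set $\{|x_l|>r_k/2\}$ and $v=0$ elsewhere (with $l\in\{1,2\}$ matched to the parity of $k$), then the average of $|v_l|$ over $B_{r_k}$ is exactly $q$, up to a contribution from $B_{r_{k+1}}$ whose volume fraction $(r_{k+1}/r_k)^2$ vanishes by assumption.

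The basic building block is the 1-Lipschitz function $\vhi_k^{(1)}(x):=\max(|x_1|,r_k/2)$, whose gradient $\sgn(x_1)\,e_1\un_{|x_1|>r_k/2}$ lies in $K$ with $\|\nb\vhi_k^{(1)}\|_\oo=1$ and produces \emph{no} defect measure since $\pt_1\pt_2\vhi_k^{(1)}=0$. The analogous $\vhi_k^{(2)}(x):=\max(|x_2|,r_k/2)$ does the job for $v_2$. The global $u$ is defined on $\Om$ in such a way that, on $A_k$ for $k$ odd (resp.\ even), $u$ equals $\vhi_k^{(1)}$ (resp.\ $\vhi_k^{(2)}$) up to an additive constant. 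Continuity of $u$ across $\pt B_{r_k}$ is engineered via short transition layers at scale $r_{k+1}$, as described below. Provided each transition contributes $O(1)$ to the total variation $|\mu[v]|(A_k)$ (coming from finitely many corners in the polygonal level sets), the super-geometric decay $r_{k+1}/r_k\to 0$ gives
\[
|\mu[v]|(B_{r_m})\;\le\;\sum_{k\ge m}O(1)\cdot\lt(\text{geometric factor}\rt)\,=\,o(r_m),
\]
which is \eqref{contrex_|mu|_small}. For \eqref{eq:notLebesgue}, since $|B_{r_{m+1}}|=o(|B_{r_m}|)$, the ball $B_{r_m}$ is essentially $A_m$, and on $A_m$ the construction yields $|v_l|=1$ on a set of area $q|B_{r_m}|(1+o(1))$ and $v_l=0$ elsewhere up to a transition region of negligible volume; thus the average tends to $q$.

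The main obstacle is the design of the \emph{transition layer} across $\pt B_{r_k}$. The constraint $v\in K$ combined with continuity of $u$ precludes any sharp straight interface: if on one side $u$ depends on $x_1$ and on the other on $x_2$ (or on a constant), continuity of $u$ along a segment of the interface forces one of the two to collapse to a constant, destroying the desired structure. The remedy is a \emph{staircase interface} consisting of finitely many axis-aligned segments at scale $r_{k+1}$, along which $u$ alternately depends on $x_1$ and on $x_2$; each corner of the staircase carries one Dirac atom of $\mu[v]$ of bounded mass. One must verify that only $O(1)$ corners per annulus are needed to interpolate between the two regimes (so that $|\mu[v]|(A_k)$ stays bounded), that the transition region has area $O(r_k r_{k+1})=o(|A_k|)$ (so it does not affect the limiting averages), and that $\|\nb u\|_\oo\le 1$ is preserved throughout, which fixes the geometry of the staircase (horizontal step length equals vertical step length equals the local jump in $u$). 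Once the staircase is set up, the three verifications above are routine.
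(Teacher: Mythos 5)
Your construction shares the paper's nested, alternating structure (the paper also builds a single vector field as a super\-position of rescaled/rotated copies of a base block), but the quantitative heart of the proof — the estimate \eqref{contrex_|mu|_small} — cannot hold for the field you describe, and the gap cannot be fixed by the claimed ``routine'' verifications. The fatal feature is that your $v=\nb u$ takes values only in $\{0,\pm e_1,\pm e_2\}$: the tent blocks $\max(|x_l|,r_k/2)$ have $|\nb|\in\{0,1\}$, and you explicitly fix the staircase so that ``horizontal step length equals vertical step length equals the local jump in $u$'', i.e.\ $|v|=1$ on every step. By Theorem~\ref{thm:rectiftheta=1}, the density of $|\mu[v]|$ on its support equals $|v_1^\oo v_2^\oo|/|v^\oo|$; when $|v|\in\{0,1\}$ both one-sided traces at any corner have magnitude $1$, so this density is $1/\sqrt2$ \emph{everywhere} on $\supp\mu[v]$. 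Equivalently, each corner of a staircase level set moves with velocity $(\pm1,\pm1)$ as the level $t$ varies, so its trajectory is a diagonal segment carrying $|\mu[v]|$ with linear density $1/\sqrt2$.

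To make \eqref{eq:notLebesgue} hold, the level sets of $u$ must have corners inside $B_r$ for every small $r$ (otherwise $v$ would be $x_1$- or $x_2$-independent in a neighbourhood of $0$), and the transition layer near scale $r_{k+1}$ must span a level interval of length $\Theta(r_{k+1})$ (since $u$ is $1$-Lipschitz and oscillates by $\Theta(r_{k+1})$ inside $B_{r_{k+1}}$). Hence the corresponding corner trajectories are diagonal segments of length $\Theta(r_{k+1})$ lying at distance $O(r_{k+1})$ from the origin, and by Proposition~\ref{prop:decompmu} this transition alone contributes $\gtrsim r_{k+1}$ to $|\mu[v]|(B_{Cr_{k+1}})$ for a fixed $C$. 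Therefore $\limsup_{r\dw0}|\mu[v]|(B_r)/r>0$ and \eqref{contrex_|mu|_small} fails. The paper circumvents this precisely by using the \emph{anisotropic} block $v^\eps$ of Figure~\ref{Fig:counterexample}, which takes the small value $\pm\eps e_1$ on two of its four lobes; the corners of its level sets have traces $(\pm\eps,0)$ and $(0,\pm1)$, hence density $\approx\eps$, and move with the highly anisotropic velocity $(\pm1/\eps,\pm1)$, so $\mu[v^\eps]$ lives on lines of slope $\eps$ with $|\mu[v^\eps]|(B_s)\le4\eps s$. Rescaling and nesting with $\eps_k=2r_k/r_{k-1}\to0$ makes the density near the origin vanish, which is what the limit in \eqref{contrex_|mu|_small} requires. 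To salvage your approach you would have to allow $v$ to take the small nonzero values $\pm\eps_k e_l$ (equivalently, use an anisotropic staircase with step ratio $1/\eps_k$, which in the continuum limit is a slanted interface), at which point you essentially reproduce the paper's $v^\eps$. Your further unsubstantiated claims (that $O(1)$ corners suffice, that the checks are routine, and that each corner carries a ``Dirac atom of $\mu[v]$'' — for Lipschitz $u$ the defect measure is $1$-rectifiable, not atomic) are secondary to this structural obstruction.
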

\begin{proof}~ 
Let us set 
\be\label{hyp:repsk}
\eps_k:=2\dfrac{r_k}{r_{k-1}}\qquad\text{for }k\ge1.
\ee
By assumption, we have $1>\eps_1>\eps_2>\dots$ and $\eps_k$ goes to $0$.  Let $v^\eps$ be given as in Figure~\ref{Fig:counterexample}.
\begin{figure}[h]
\begin{center}
\begin{tikzpicture}[scale=1.1]
\pgfmathsetmacro{\L}{6}
\pgfmathsetmacro{\l}{1.2}
\newcommand{\Vv}[3]{	\draw[-{Latex[length=2mm, width=1.3mm]}] (#1,#2) --+(0,#3);	}
\newcommand{\Vh}[3]{	\draw[-{Latex[length=2mm, width=1.3mm]}] (#1,#2) --+(#3,0);	}
\draw[dashed] (-\L,-2*\l) rectangle (\L,2*\l);
\draw[dashed] (-\l,-\l) rectangle (\l,\l);
\draw (\l,\l) -- (\L,2*\l);
\draw (\l,-\l) -- (\L,-2*\l);
\draw (-\l,\l) -- (-\L,2*\l);
\draw (-\l,-\l) -- (-\L,-2*\l);
\pgfmathsetmacro{\av}{.15*\L}
\pgfmathsetmacro{\bv}{1.75*\l}
\pgfmathsetmacro{\cv}{.5*\l}
 \foreach \i in {-2,...,1}
	{	\Vv{\i*\av}{\bv}{-\cv}		\Vv{\i*\av}{-\bv}{\cv}  }
\pgfmathsetmacro{\ah}{.15*\L}
\pgfmathsetmacro{\bh}{2*\l}
\pgfmathsetmacro{\dbh}{.8*\l}
\pgfmathsetmacro{\ch}{.2*\l}
\foreach \j in {0,1}
	{\foreach \i in {-1,...,1}
		{	\Vh{\bh+\j*\dbh}{\i*\ah}{-\ch}		\Vh{-\bh-\j*\dbh}{\i*\ah}{\ch}   }
 	}      
\foreach \j in {2}
	{\foreach \i in {-1.5,1.5}
		{	\Vh{\bh+\j*\dbh}{(\i*\ah}{-\ch} 		\Vh{-\bh-\j*\dbh}{\i*\ah}{\ch}   }
 	}   	
\foreach \j in {3}
	{\foreach \i in {-1.5,...,1.5}
		{	\Vh{\bh+\j*\dbh}{(\i*\ah}{-\ch}		\Vh{-\bh-\j*\dbh}{\i*\ah}{\ch}  }
 	}    
\draw (.3*\L,1.6*\l) node{$-e_2$};
\draw (.3*\L,-1.6*\l) node{$e_2$};
\draw (-\l/5,\l/5) node{$v^\eps=0$};
\draw (.7*\L,0) node{$-\eps e_1$};
\draw (-.7*\L,0) node{$\eps e_1$};
\draw (-1.2*\L,0) node{$v^\eps=0$};
\pgfmathsetmacro{\ss}{.83}
\pgfmathsetmacro{\s}{.95}
\draw[<->,very thin] (-\s*\L,2.3*\l) --(\s*\L,2.3*\l);
\draw (0,2.3*\l) node[above]{$2/\eps$};
\draw[<->,very thin] (\L+.3*\l,-2*\s*\l) --(\L+.3*\l,2*\s*\l);
\draw ((\L+.3*\l,0) node[right]{$2$};
\draw[<->,very thin] (.85*\l,-\ss*\l) --(.85*\l,\ss*\l);
\draw (.85*\l,0) node[left]{$1$};
\draw[<->,very thin] (-\ss*\l,-.85*\l) --(\ss*\l,-.85*\l);
\draw (0,-.85*\l) node[above]{$1$};
\end{tikzpicture}
\caption{\label{Fig:counterexample}The vector field $v^\eps$. It takes values in $\{0,\pm\eps e_1,\pm e_2\}$.}
\end{center}
\end{figure}
\noindent
Denoting $R$ the rotation of angle $\pi/2$ in $\R^2$, we define the vector field $v$ by
\[
 v:=\sum_{k\ge 0} \wt v^k\qquad\text{where}\ \ \quad \wt v^k(x):=R^{-k}v^{\eps_k}\lt(R^k \dfrac{x}{r_k}\rt)\quad\text{for }k\ge0\text{ and }x\in\R^2.
\]
Thanks to~\eqref{hyp:repsk}, the functions $\wt v^k$ have essentially disjoint supports (they overlap on segments), see Figure~\ref{Fig:supportofv}. More precisely, for $j>k$, $\wt v^j$ is supported in the central square $[-r_k/2,r_k/2]^2$ where $\wt v^k$ vanishes. 
\begin{figure}[H]
\begin{center}
\begin{tikzpicture}[scale=.7]
\newcommand{\suppv}[2]
	{
	\fill[color=gray!45] ({-#1},{-#2}) rectangle (#1,#2);
	}
\newcommand{\hole}[1]
	{
	\fill[color=white] ({-#1},{-#1}) rectangle (#1,#1);
	}
\pgfmathsetmacro{\c}{1}
\pgfmathsetmacro{\d}{.8}
\pgfmathsetmacro{\D}{.5}
\pgfmathsetmacro{\L}{7}
\pgfmathsetmacro{\deca}{.25}
\pgfmathsetmacro{\decb}{.5}
\pgfmathsetmacro{\s}{.99}
\pgfmathsetmacro{\ss}{.97}
\pgfmathsetmacro{\l}{\L}
\suppv{\L}{\l}
\draw[<->,very thin] (-\s*\L,1.07*\l) --(\s*\L,1.07*\l);
\draw (0,1.07*\l) node[above]{$2r_0$};
\draw[<->,very thin] ({-\L-\decb},-\s*\l) --({-\L-\decb},\s*\l);
\draw ({-\L-\decb},0) node[left]{$\phantom{\dfrac{r_1}{\eps_1}=}2r_0$};
 \hole{.5*\l}
\draw[<->,very thin] ({\L+\decb},-.5*\s*\l) --({\L+\decb},.5*\s*\l);
\draw ({\L+\decb},0) node[right]{$r_0=\dfrac{2r_1}{\eps_1}$};
\pgfmathsetmacro{\L}{.5*\c*\l}
\pgfmathsetmacro{\D}{\d*\D}
\pgfmathsetmacro{\l}{\D*\L}
\suppv{\l}{\L}
\hole{.5*\l}
\pgfmathsetmacro{\d}{.7}
\pgfmathsetmacro{\L}{.5*\c*\l}
\pgfmathsetmacro{\D}{\d*.7*\D}
\pgfmathsetmacro{\l}{\L*\D}
\suppv{\L}{\l}\
 \hole{.5*\l}
\pgfmathsetmacro{\L}{.5*\l}
\pgfmathsetmacro{\l}{\D*\L}
\suppv{\l}{\L}
\end{tikzpicture}
\caption{\label{Fig:supportofv}The support of the vector field $v$.}
\end{center}
\end{figure}
We deduce that the sum $v=\sum_{k\ge 0}\wt v^k$ is well defined. Since moreover, for $\eps>0$, 
\[
\nb \times v^\eps=0,\qquad|v^\eps|\le 1\quad\ \text{ and }\ \quad v^\eps_1v^\eps_2=0,
\]
we also have that $\nb \times v=0$ and that $v$ takes values in 
\[K\cap {\ov B}_1=([-1,1]\times\{0\})\cup(\{0\}\times[-1,1]).\]
Let us check that~\eqref{eq:notLebesgue} holds true. Let $k\ge2$ be an even integer, we have 
\[
\dfrac1{|B_{r_{k}}|}\int_{B_{r_k}} |v_2|
= \dfrac1{|B_{r_{k}}|}\int_{B_{r_k}} |\tilde v^k_2|\, + \dfrac1{|B_{r_{k}}|}\int_{(-\frac{r_k}2,\frac{r_k}2)^2} \sum_{j>k} |\tilde v^j_2|\,
=: q_k + \eta_k.
\]
For the first term, a look at Figure~\ref{Fig:vtildek} shows that as $k$ goes to $+\oo$,
\[
q_k =\underbrace{\dfrac{\lt|\{x\in B_1:|x_2|>1/2\}\rt|}{|B_1|}}_{q=\frac23-\frac{\sqrt3}{2\pi}}+O\lt(\eps_k+\dfrac{r_{k+1}}{r_k}\rt)
\ \st{k\up\oo}\longto\ q.
\]
For the remainder $\eta_k$, using the bound $|v|\le 1$ and the fact that  $\sum_{j>k} \tilde v^j$ is supported in $[-r_{k+1},r_{k+1}]\times[-r_k/2,r_k/2]$ (see again Figure~\ref{Fig:vtildek}), we obtain 
\[
\eta_k\le\dfrac2\pi \dfrac{r_{k+1}}{r_k}\st{\eqref{hyp:repsk}}=\dfrac{\eps_{k+1}}\pi\quad \st{k\up\oo}\longto\ 0.
\] 
This proves~\eqref{eq:notLebesgue} in the case $l=2$ (that is $2(k'+l)$ even). The case of odd integers is the same up to  a rotation.
\begin{figure}[H]
\begin{center}
\begin{tikzpicture}[scale=2]
\pgfmathsetmacro{\L}{10}
\pgfmathsetmacro{\l}{.8}
\pgfmathsetmacro{\cf}{.94}
\fill[color=gray!12] (0,0) circle (2*\l); 
\newcommand{\Vv}[3]{	\draw[-{Latex[length=2mm, width=1.3mm]},thick] (#1,#2) --+(0,#3);	}
\begin{scope}
\clip (-3*\l,-2.1*\l) rectangle (3*\l,2.1*\l)  ;
\draw (-\L,-2*\l) rectangle (\L,2*\l);
\draw[dashed] (-\cf*\l,-\cf*\l) rectangle (\cf*\l,\cf*\l);
\draw (-\L,-2*\l) -- (-\cf*\l,-\cf*\l) -- (\cf*\l,-\cf*\l) -- (\L,-2*\l);
\draw (-\L,2*\l) -- (-\cf*\l,\cf*\l) -- (\cf*\l,\cf*\l) -- (\L,2*\l);
\pgfmathsetmacro{\av}{.09*\L}
\pgfmathsetmacro{\bv}{1.75*\l}
\pgfmathsetmacro{\cv}{.5*\l}
 \foreach \i in {-2.5,...,2.5}
	{	\Vv{\i*\av}{\bv}{-\cv}		\Vv{\i*\av}{-\bv}{\cv}  }
\end{scope}
\draw (-2.5*\l,0) node{${\wt v}^k_2=0$};
\pgfmathsetmacro{\ss}{.83}
\pgfmathsetmacro{\s}{.97}
\draw[<->,very thin] (3.2*\l,-2*\s*\l) --(3.2*\l,2*\s*\l);
\draw ((3.2*\l,0) node[right]{$2r_k$};
\pgfmathsetmacro{\s}{.95*\cf}
\draw[<->,very thin] (1.15*\l,-\s*\l) --(1.15*\l,\s*\l);
\draw (1.15*\l,0) node[right]{$r_k$};
\newcommand{\suppv}[2]
	{
	\fill[color=gray!65] ({-#1},{-#2}) rectangle (#1,#2);
	}
\pgfmathsetmacro{\c}{1}
\pgfmathsetmacro{\d}{.85}
\pgfmathsetmacro{\D}{.15}
\pgfmathsetmacro{\L}{\c*\cf*\l}
\pgfmathsetmacro{\decb}{.15}
\pgfmathsetmacro{\l}{\L*\D}
\suppv{\l}{\L}
\fill[color=gray!12] (-.5*\l,-.5*\l) rectangle (.5*\l,.5*\l); 
 \draw[<->,very thin] (-\l*\ss,\L+\decb) --(\l*\ss,\L+\decb);
\draw (0,\L+\decb) node[above]{$2r_{k+1}$};
\pgfmathsetmacro{\L}{.5*\l}
\pgfmathsetmacro{\D}{\d*\D}
\pgfmathsetmacro{\l}{\D*\L}
\suppv{\L}{\l}
\end{tikzpicture}
\caption{\label{Fig:vtildek}The vector field ${\wt v}^k_2 e_2\in\{0,\pm e_2\}$ in a neighborhood of $B_{r_k}$ for some (large) even integer $k$. The support of $\sum_{j>k}{\wt v}^j$ (dark gray) and the ball $B_{r_k}$ (light gray).}
\end{center}
\end{figure}

We are left with the proof of the vanishing energy limit~\eqref{contrex_|mu|_small}. Let us first notice the following scaling identity. There holds for $\eps>0$, $r>0$ and $A$ Borel subset of $\R^2$,
\[
\mu\lt[ v^\eps(\cdot/r)\rt](A)=r\mu[v^\eps]((1/r)A).
\]
In particular we have for $j\ge0$ and $r>0$,
\be\label{ctrex_scalingId}
\mu[\wt v^j](B_r)= r_j\mu[v^{\eps_j}](B_{r/r_j}).
\ee 
Next, by direct computation, we get for $\eps>0$ and $s>0$,
\[
\lt|\mu[v^\eps]\rt|(B_s)\le 4\eps s\qquad\ \text{ and }\ \qquad\lt|\mu[(v^\eps)]\rt|(\R^2)=4.
\]
With~\eqref{ctrex_scalingId}, we deduce  for $j\ge0$ and $r>0$, 
\be\label{ctrex_mu_veps}
\lt|\mu[\wt v^j]\rt|(B_r)\le 4\eps_jr\qquad\ \text{ and }\ \qquad \lt|\mu[(\wt v^j)]\rt|(\R^2)\le 4r_j.
\ee
Now we fix $r\in(0,r_0/2]$ and we denote $k=k(r)$ the unique positive integer such that $r_k/2< r\le r_{k-1}/2$. Taking into account the supports of the ${\wt v^j}$'s, we compute
\be \label{contrex_estim_mu}
 |\mu[v]|(B_r)
 =\lt|\mu[\wt v^k]\rt|(B_r) + \sum_{j\ge k+1} \lt|\mu[\wt v^j]\rt|(\R^2)\st{\eqref{ctrex_mu_veps}}
 \le 4\eps_kr +4\sum_{j\ge k+1}r_j.
\ee
To estimate the second term in the right-hand side we deduce from~\eqref{hyp:repsk}  the following chain of inequalities: $r_j<r_{j-1}/2<\dots <r_{k+1}/2^{j-k-1}=\eps_{k+1}r_k/2^{j-k}$ for  $j\ge k+1$. Hence,
\[
 \sum_{j\ge k+1} r_j < \eps_{k+1}r_k\lt(\sum_{j\ge k+1}\dfrac1{2^{j-k}}\rt)=\eps_{k+1}r_k.
\]
Using this estimate in~\eqref{contrex_estim_mu} we obtain, 
\[
 |\mu[v]|(B_r)<4\eps_kr+4\eps_{k+1}r_k<12\eps_k r.
\]
Dividing by $r$ and sending $r$ to 0, the condition $r_{k(r)}<2r$ leads to $k(r)\to+\oo$. Thus $\eps_{k(r)}\to0$ and~\eqref{contrex_|mu|_small} follows. This ends the proof of the proposition.
\end{proof}

\begin{remark}\label{rem:48}
 Let $\alpha>0$. The sequence  defined recursively by 
 \[
 r_0:=2^{-1/\alpha}\qquad\text{ and }\qquad r_{k+1}:=r_k^{\alpha+1}\quad\text{for }k\ge0
 \] 
 complies to the assumptions of the proposition. With this choice we have 
 \[
 |\mu[v]|(B_r)\le 48\,r^\beta\qquad\text{ with}\quad\beta:=2-\dfrac1{\alpha+1}.
 \] 
As we can make $\beta$ arbitrarily close to 2 by choosing $\alpha$ large enough we conclude that even a constraint of the form
 \[
\lt|\mu[v]\rt|(B_r)\le C r^{2-\eps},
 \]
 for some $\eps>0$ and $C\ge1$ does not ensure that $0$ is a Lebesgue point of either $v_1$ or $v_2$. 
\end{remark}

\subsection*{Acknowledments}
B. Merlet is partially supported by the INRIA team RAPSODI and the Labex CEMPI (ANR-11-LABX-0007-01). This work was supported by a public grant from the Fondation Mathématique Jacques Hadamard.

\bibliographystyle{alpha}
\bibliography{BibStripes}
\end{document}